\newcommand{\K}{\mathbb{C}}
\newcommand{\g}{\mathfrak{g}}
\newcommand{\Spec}{\operatorname{Spec}}
\newcommand{\Z}{\mathbb{Z}}
\newcommand{\A}{\mathcal{A}}
\newcommand{\gr}{\operatorname{gr}}
\newcommand\M{\mathcal{M}}
\newcommand\Loc{\operatorname{Loc}}
\newcommand\Ext{\operatorname{Ext}}
\newcommand\GL{\operatorname{GL}}
\newcommand\I{\mathcal{I}}
\newcommand\J{\mathcal{J}}
\newcommand\End{\operatorname{End}}
\newcommand\Hom{\operatorname{Hom}}
\renewcommand\a{\mathfrak{a}}
\newcommand\quo{/\!/}
\newcommand\C{\mathbb{C}}
\newcommand{\CW}{\mathfrak{CW}}
\newcommand\param{\mathfrak{P}}
\newcommand\HC{\operatorname{HC}}
\newcommand\ZZ{\mathbb{Z}}
\newcommand{\CC}{\mathsf{CC}}
\newcommand{\WC}{\mathfrak{WC}}
\newcommand{\R}{\mathbb{R}}
\newcommand{\Q}{\mathbb{Q}}
\newcommand{\VA}{\operatorname{V}}
\newcommand{\B}{\mathcal{B}}
\newcommand{\Cat}{\mathcal{C}}
\newcommand{\OCat}{\mathcal{O}}
\newcommand{\Ca}{\mathsf{C}}
\newcommand{\Ring}{\mathcal{R}}
\newcommand{\bnabla}{\overline{\nabla}}
\newcommand{\bDelta}{\overline{\Delta}}
\newcommand{\Supp}{\operatorname{Supp}}
\newcommand{\Coh}{\operatorname{Coh}}
\newtheorem{Thm}{Theorem}[section]
\newtheorem{Prop}[Thm]{Proposition}
\newtheorem{Cor}[Thm]{Corollary}
\newtheorem{Lem}[Thm]{Lemma}
\theoremstyle{definition}
\newtheorem{defi}[Thm]{Definition}
\newtheorem{Rem}[Thm]{Remark}
\numberwithin{equation}{section}
\author{Ivan Losev}
\address{Department
of Mathematics, Northeastern University, Boston MA 02115 USA}
\email{i.loseu@neu.edu}
\thanks{MSC 2010: Primary 16G99; Secondary 16G20,53D20,53D55}
\title{Wall-crossing functors for quantized symplectic resolutions: perversity and partial Ringel dualities}
\begin{document}
\begin{abstract}
In this paper we study wall-crossing functors between categories of modules over
quantizations of symplectic resolutions. We prove that wall-crossing functors
through faces are perverse equivalences and use this to verify an Etingof type conjecture
for quantizations of Nakajima quiver varieties associated to affine quivers.
In the case when there is a Hamiltonian torus action on the resolution
with finitely many fixed points
so that it makes sense to speak about categories $\mathcal{O}$ over quantizations,
we introduce new standardly stratified structures on these categories $\mathcal{O}$
and relate the wall-crossing functors to the Ringel duality functors associated
to these standardly stratified structures.
\end{abstract}
\maketitle
\markright{WALL-CROSSING FUNCTORS FOR QUANTIZED SYMPLECTIC RESOLUTIONS}
\section{Introduction}
In this paper we study the wall-crossing (a.k.a. twisting) functors between categories of modules
over quantizations of symplectic resolutions. These functors are derived equivalences introduced
in this generality in \cite[Section 6.4]{BPW} and further studied in \cite{BL}.

More precisely, let $X^0$ denote a normal Poisson variety admitting a symplectic resolution
of singularities. We also assume that $X^0$ is conical in the sense that it comes with a
$\C^\times$-action that contracts $X^0$ to a single point and rescales the Poisson bracket.
The symplectic resolutions of $X^0$ are parameterized by cones (to be called {\it chambers})
of a certain rational hyperplane arrangement in $H^2(X,\R)$, where $X$ is any of these resolutions. By $X^\theta$
we denote a resolution corresponding to the open cone containing a generic element
$\theta\in H^2(X,\R)$. We can speak about filtered quantizations $\A_\lambda^\theta$
of (the structure sheaf of) $X^\theta$, where $\lambda\in H^2(X,\C)$ is a quantization parameter.
Further, it makes sense to speak about the category of coherent $\A_\lambda^\theta$-modules
to be denoted by $\Coh(\A_\lambda^\theta)$. For $\theta,\theta'$ lying in two different
chambers we have a derived equivalence $\WC_{\theta'\leftarrow \theta}: D^b(\Coh(\A_\lambda^\theta))
\xrightarrow{\sim}D^b(\Coh(\A_\lambda^{\theta'}))$ to be called the {\it wall-crossing functor}.
We will be interested in the special situation when either $\theta,\theta'$
or $\theta,-\theta'$ lie in two chambers
that are opposite to one another with respect to their common face. The two extremes
here is when these chambers  share a common codimension $1$ face and when one is the negative
of the other.

There are two important results about the wall-crossing functors
in this situation that we will obtain in this paper. First, we will show that
the wall-crossing functor for the cones opposite with respect to
a common face is perverse in the sense of Chuang and Rouquier and give some
description of the corresponding filtration. This result was obtained in
some special cases in \cite[Sections 4,7]{BL} and in the closely related setting of
rational Cherednik algebras in \cite{rouq_der}. The proof in the present situation
closely follows \cite{rouq_der} but we need to replace some missing ingredients
such as the restriction functors for Harish-Chandra bimodules.

The perversity of the wall-crossing functors is used to prove a generalization
of the main result of \cite{BL}, Etingof's conjecture on the number of finite
dimensional irreducible modules for symplectic reflection algebras. In \cite{BL}, Bezrukavnikov  and the author re-interpreted the conjecture
in terms of quantized Nakajima quiver varieties. The conjecture was proved
for quivers of finite type and also of affine type with special framing.
In this paper we give a proof for affine quivers with an arbitrary framing\footnote{The proof of this generalization
was previously obtained in \cite{count_affine} but the proof in the present paper is much
simpler and more straightforward.}.

Our second main result concerns the situation when there is a Hamiltonian torus $T$
acting on $X$ with finitely many fixed points. In this case one can fix a generic
one-parameter subgroup $\nu:\C^\times\rightarrow T$ and consider the corresponding
category $\mathcal{O}_\nu(\A_\lambda^\theta)\subset \operatorname{Coh}(\A_\lambda^\theta)$,
introduced in this generality in \cite{BLPW}. This is a highest weight category
whose simple objects are labelled by $X^T$. In \cite{CWR} we have introduced
{\it compatible standardly stratified structures} on $\OCat_{\nu}(\A_\lambda^\theta)$
that roughly speaking come from degenerating $\nu$. In this paper we will produce a
new kind of compatible standardly stratified structures that come from deforming
$\lambda$. The structures are labelled by the faces of the chamber containing $\theta$.
Namely, let $\Gamma$ be a face of the chamber containing $\theta$ and
$\chi$ be an integral point in the interior of $\Gamma$. Set $\theta':=\theta-N\chi$
for $N\gg 0$. We show that the wall-crossing functor $\WC_{\theta\leftarrow \theta'}^{-1}$
is the Ringel duality functor coming from the standardly stratified structure given
by $\theta$.

The paper is organized as follows. Section \ref{S_prelim} recalls various preliminaries
on symplectic resolutions and their quantizations following mostly \cite{BPW}
and \cite{BL}. These preliminaries include the structure of resolutions and their quantizations,
Harish-Chandra bimodules, localization theorems and wall-crossing functors.
In Section \ref{S_perv} we will state and prove a theorem on a perversity of
wall-crossing functors through faces. For this, we will need to study classical
and quantum slices to symplectic leaves and restriction functors for
Harish-Chandra bimodules. We use the perversity to prove an Etingof-type conjecture
for quantized quiver varieties associated to quivers of affine types.
Finally, in Section \ref{S_Ringel} we deal with standardly stratified
categories and their Ringel dualities. The most nontrivial part is to introduce
the standardly stratified structures on $\OCat(\A_\lambda^\theta)$ coming from
deforming $\lambda$. Then we introduce Ringel duality for standardly stratified
categories and show that wall-crossing functors as in the previous paragraph
give Ringel duality functors.

{\bf Acknowledgements}. I am grateful to Roman Bezrukavnikov, Andrei Negut, Andrei Okounkov,
Raphael Rouquier and Ben Webster for stimulating discussions.   This work was
partially supported by the NSF under grants DMS-1161584, DMS-1501558.

\section{Preliminaries}\label{S_prelim}
\subsection{Symplectic resolutions}
Let $X$ be a smooth symplectic variety (with form $\omega$) equipped also with an action of
$\C^\times$ subject to the following conditions:
\begin{itemize}
\item[(a)] There is a positive integer $d$ such that $t.\omega=t^d\omega$.
\item[(b)] The algebra $\C[X]$ is finitely generated and is positively graded: there are
no negative components and the zero component consists of scalars.
\item[(c)] The natural morphism $\rho:X\rightarrow X^0:=\operatorname{Spec}(\C[X])$
is a projective resolution of singularities.
\end{itemize}

We will say that $X$ is a conical symplectic resolution.
Thanks to (b), we can talk about the point $0\in X^0$. Conditions (b) and (c)
imply that $\lim_{t\rightarrow 0}t.x$ exists and lies in $\rho^{-1}(0)$.
So we will call the $\C^\times$-actions on $X$ and $X^0$ {\it contracting}.

By the Grauert-Riemenschneider theorem, we have $H^i(X,\mathcal{O}_X)=0$ for $i>0$.
By results of Kaledin, \cite[Theorem 2.3]{Kaledin}, $X^0$ has finitely many symplectic
leaves.

We will be interested in {\it conical deformations}  $X_{\param}/\param$, where $\param$ is a finite dimensional vector space,
and $X_{\param}$ is a symplectic scheme over $\param$ with symplectic form $\widehat{\omega}\in
\Omega^2(X_\param/\param)$ that specializes to $\omega$ and also with a $\C^\times$-action
on $X_{\param}$ having the following properties:
\begin{itemize}
\item the morphism $X_{\param}\rightarrow \param$ is $\C^\times$-equivariant, where we consider
the action of $\C^\times$ on $\param$ given by $t.p=t^{-d}p$,
\item the restriction of the action to $X$ coincides with the contracting action,
\item $t.\widehat{\omega}:=t^{d}\widehat{\omega}$.
\end{itemize}
It turns out that there is a universal conical deformation $\tilde{X}$ over $\tilde{\param}:=H^2(X,\C)$
(any other deformation is obtained via the pull-back with respect to a linear map
$\param\rightarrow \tilde{\param}$). We will often write $X_{\tilde{\param}}$ instead
of $\tilde{X}$.

For $\lambda\in \tilde{\param}$, let us write $X_{\lambda}$ for the corresponding fiber of $X_{\tilde{\param}}
\rightarrow \tilde{\param}$. 
If $X,X'$ are two conical symplectic resolutions of $X$, then there are
open subvarieties $\breve{X}\subset X, \breve{X}'\subset X'$ with $\operatorname{codim}_X X\setminus \breve{X},
\operatorname{codim}_X X\setminus \breve{X}\geqslant 2$ and $\breve{X}\xrightarrow{\sim} \breve{X}'$,
see, e.g., \cite[Proposition 2.19]{BPW}. This allows to identify the Picard groups $\operatorname{Pic}(X)=\operatorname{Pic}(X')$. Moreover, the Chern class map defines an isomorphism $\C\otimes_{\Z}\operatorname{Pic}(X)\xrightarrow{\sim} H^2(X,\C)$,
See, e.g., \cite[Section 2.3]{BPW}. Let $\tilde{\param}_{\Z}$ be the image of $\operatorname{Pic}(X)$
in $H^2(X,\C)$.

Set $\tilde{\param}_{\R}:=\R\otimes_{\Z}\tilde{\param}_{\Z}$. There is a finite group $W$ acting on
$\tilde{\param}_{\R}$ as a reflection group, such that the movable cone $C$ of $X$ (that does not
depend on the choice of a resolution) is a fundamental chamber for $W$. We can find hyperplanes
$H_1,\ldots,H_k$ that are either walls of $C$ or pass through the interior of $C$ partitioning $C$
into the union of cones $C_1,\ldots,C_m$ such that the possible conical symplectic resolutions of
$X_0$ are in one-to-one correspondence with $C_1,\ldots,C_m$ in such a way that the cone corresponding
to a resolution $X'$ is its  ample cone.  Set
\begin{equation}\label{eq:singular} \mathcal{H}_{\C}:=\bigcup_{1\leqslant i\leqslant k,w\in W}w(\C\otimes_{\R}H_i).\end{equation}
Then $X_\lambda$ is affine if and only if $\lambda\not\in \mathcal{H}_{\C}$.
The results in this paragraph are due to Namikawa, \cite{Namikawa13}, see
also \cite[Section 2.3]{BPW} for an exposition.

\begin{defi}\label{defi:chamb_termin}
We say that an element $\theta\in \tilde{\param}_{\Q}$ is {\it generic} if it does not lie in $\mathcal{H}_\C$.
The cones $wC_i$ will be called {\it chambers}.
\end{defi}

For $\theta\not\in \mathcal{H}_{\C}$, we will write $X^\theta$ for the resolution corresponding to the ample cone
containing $W\theta\cap C$. Further, if $w\theta\in C$, we will choose a different identification
of $H^2(X^\theta,\C)$ with $\tilde{\param}$, one twisted by $w$ (so that the ample cone actually
contains $\theta$).

\subsection{Quantizations}
We start by introducing the various versions of quantizations that we are going to consider.

Let $X$ be a  Poisson scheme (of finite type over $\C$) and $d\in \Z_{>0}$. By a formal quantization  of $X$
(relative to $d$) we mean a sheaf of $\C[[\hbar]]$-algebras $\mathcal{D}_\hbar$ in Zariski topology
on $X$ together with an isomorphism $\iota:\mathcal{D}_\hbar/(\hbar)\xrightarrow{\sim}\mathcal{O}_X$
of sheaves of algebras that satisfy the following conditions.

\begin{itemize}
\item $\mathcal{D}_\hbar$ is flat over $\C[[\hbar]]$.
\item The $\hbar$-adic filtration on $\mathcal{D}_\hbar$ is complete and separated.
\item $[\mathcal{D}_\hbar,\mathcal{D}_\hbar]\subset \hbar^d \mathcal{D}_\hbar$. This gives
a Poisson bracket on $\mathcal{D}_\hbar/(\hbar)$.
\item $\iota$ is a Poisson isomorphism.
\end{itemize}

It is a standard fact that to give such a quantization in the case when $X$ is affine is the same
thing as to give a single algebra that is a formal quantization of $\C[X]$.

Now suppose that $\C^\times$ acts on $X$ in such a way that $t\in \C^\times$ rescales the Poisson bracket
on $\mathcal{O}_X$ by $t^d$. Then we can speak about graded formal quantization. By definition, this
is a formal quantization $(\mathcal{D}_\hbar,\iota)$ such that $\mathcal{D}_\hbar$ is equipped with a
$\C^\times$-action by algebra isomorphisms with $t.\hbar=t\hbar$ and such that $\iota$ is $\C^\times$-equivariant.

Let us now introduce the notion of a filtered quantization. Suppose that $\C^\times$ acts on $X$
as in the previous paragraph. Assume, in addition, that every point in $X$ has a $\C^\times$-stable
open affine neighborhood. This is the case when  $X$ is quasi-projective or by a theorem of
Sumihiro, \cite{Sumihiro}, when $X$ is normal. By the conical topology on $X$ we mean the
topology, where ``open'' means Zariski open and
$\C^\times$-stable. Note that $\mathcal{O}_X$ is a sheaf of graded algebras in the conical
topology.  By a filtered quantization of $\mathcal{O}_X$ we mean a pair $(\mathcal{D},\iota)$,
where $\mathcal{D}$ is a sheaf of $\Z$-filtered algebras (the filtration is ascending) in the conical topology on $X$
and $\iota$ is an isomorphism $\gr\mathcal{D}\xrightarrow{\sim}\mathcal{O}_X$
of sheaves of graded algebras. These data are supposed to satisfy the following axioms.
\begin{itemize}
\item The topology on $\mathcal{D}$ induced by the filtration is complete and separated.
\item If $\mathcal{D}_{\leqslant i}$ denotes the $i$th filtration component,
then $[\mathcal{D}_{\leqslant i},\mathcal{D}_{\leqslant j}]\subset \mathcal{D}_{\leqslant i+j-d}$
for all $i,j\in \Z$.
\item The isomorphism $\iota:\gr\mathcal{D}\xrightarrow{\sim} \mathcal{O}_X$ is Poisson.
\end{itemize}

Let us explain a connection between graded formal and filtered quantizations. Let $\mathcal{D}_\hbar$
be a graded formal quantization. Then we can consider the subsheaf of $\C^\times$-finite section
$\mathcal{D}_{\hbar,fin}$ of $\mathcal{D}_\hbar$ restricted to conical topology. Then
$\mathcal{D}_{\hbar,fin}/(\hbar-1)$ is a filtered quantization. Conversely,
let $\mathcal{D}$ be a filtered quantization. Then we can consider the Rees sheaf
$R_\hbar(\mathcal{D}):=\bigoplus_{i}\mathcal{D}_{\leqslant i}\hbar^i$. The $\hbar$-adic
completion $R_\hbar(\mathcal{D})$ uniquely extends to a sheaf in the Zariski topology that is a graded formal
quantization. It is easy to see that these two procedures give mutually inverse bijections
between the set of filtered quantizations and the set of graded formal quantizations.

Now let us discuss the classification of quantizations obtained in \cite{BK,quant}.
Let $\mathcal{Q}_{\hbar}(X)$ denote the set of isomorphism classes of formal quantizations of
$X$. Bezrukavnikov and Kaledin, \cite[Section 4]{BK}, defined a natural (in particular, compatible with pull-backs
under open embeddings) map $\mathsf{Per}: \mathcal{Q}_{\hbar}(X)\rightarrow H^2(X,\C)[[\hbar]]$.
They have shown in \cite[Theorem 1.8]{BK} that if $H^i(X,\mathcal{O}_X)=0$ for $i=1,2$, then this map is an isomorphism.
Now let $\mathcal{Q}_{\hbar,\C^\times}(X)$ denote the set of isomorphism classes of graded formal quantizations.
It was shown in \cite[Section 2.3]{quant} that the composition
$\mathcal{Q}_{\hbar,\C^\times}(X)\rightarrow \mathcal{Q}_\hbar(X)\rightarrow H^2(X,\C)[[\hbar]]$
restricts to  a bijection $\mathcal{Q}_{\hbar,\C^\times}(X)\xrightarrow{\sim}H^2(X,\C)$.
We conclude that the filtered quantizations of $X$ are parameterized by $H^2(X,\C)$.
For $\lambda\in H^2(X,\C)$, we write $\mathcal{D}_\lambda$ for the corresponding filtered
quantization.

The definitions of a quantization admit several ramifications. First, instead of $X$
we can consider a Poisson scheme $\widehat{X}$ over a vector space $\param$ and talk about
quantizations of $\widehat{X}/\param$ that are now required to be sheaves of $\C[\param]$-algebras. When we
speak about graded quantizations, we will always assume that $\C^\times$ acts on
$\param$ by $t.p=t^{-d}p$. Also we can talk about quantizations of formal schemes.
The classification results quoted in the previous paragraph still hold for formal quantizations.

Finally, let us discuss quantizations of a conical symplectic resolution $X=X^\theta$
(see \cite[Section 3]{BPW}). The  equalities $H^i(X,\mathcal{O}_X)=0$ hold so the filtered quantizations are classified
by $\tilde{\param}=H^2(X,\C)$. The quantization corresponding to $\lambda\in \tilde{\param}$ will be
denoted by $\A_\lambda^\theta$. Moreover,  recall that we have the universal conical deformation
$X_{\tilde{\param}}$ of $X$. It was shown in \cite[Section 6.2]{BK} that $X_{\tilde{\param}}$
admits a canonical quantization to be denoted by $\A^{\theta}_{\tilde{\param}}$.
It satisfies the following property: its specialization to $\lambda\in \tilde{\param}$
coincides with $\A^\theta_\lambda$.

Since $H^i(X,\mathcal{O}_X)=0$ for all $i>0$, we see that $\A_\lambda:=\Gamma(\A_\lambda^\theta)$
is a quantization of $\C[X]$, while $H^i(X,\mathcal{A}_\lambda^\theta)=0$. It was shown in
\cite[Section 3.3]{BPW} that $\A_\lambda$ is independent of the choice of $\theta$.

Note that $\A^\theta_{-\lambda}$ is identified with $(\A^\theta_\lambda)^{opp}$, \cite[Section 2.3]{quant},
and hence $\A_\lambda\cong \A_{-\lambda}^{opp}$.
Also we have $\A_\lambda\cong \A_{w\lambda}$ for all $\lambda\in \tilde{\param},w\in W$,
see \cite[Proposition 3.10]{BPW}.

\subsection{Example: quiver varieties}\label{SS_quiver}
Let us recall a special class of symplectic resolutions that will be of importance for us later.
This class is the Nakajima quiver varieties.

Let $Q$ be a quiver (=oriented graph, we allow loops and multiple edges). We can formally represent $Q$ as a quadruple
$(Q_0,Q_1,t,h)$, where $Q_0$ is a finite set of vertices, $Q_1$ is a finite set of arrows,
$t,h:Q_1\rightarrow Q_0$ are maps that to an arrow $a$ assign its tail and head. In this paper we
are interested in the case when $Q$ is of affine type, i.e., $Q$ is an extended quiver of type $A,D,E$.

Pick vectors $v,w\in \ZZ_{\geqslant 0}^{Q_0}$ and vector spaces $V_i,W_i$ with
$\dim V_i=v_i, \dim W_i=w_i$. Consider the (co)framed representation space
$$R=R(v,w):=\bigoplus_{a\in Q_1}\Hom(V_{t(a)},V_{h(a)})\oplus \bigoplus_{i\in Q_0} \Hom(V_i,W_i).$$
We will also consider its cotangent bundle $T^*R=R\oplus R^*$, this is a symplectic vector space that can be identified with
$$\bigoplus_{a\in Q_1}\left(\Hom(V_{t(a)},V_{h(a)})\oplus \Hom(V_{h(a)}, V_{t(a)})\right)\oplus \bigoplus_{i\in Q_0} \left(\Hom(V_i,W_i)\oplus \Hom(W_i,V_i)\right).$$
The group $G:=\prod_{k\in Q_0}\GL(V_k)$ naturally acts on $T^*R$ and this action is Hamiltonian. Its moment map $\mu:T^*R\rightarrow
\g^*$ is dual to
$x\mapsto x_R:\g\rightarrow \C[T^*R]$, where $x_R$ stands for the vector field on $R$ induced by $x\in \g$.

Fix a stability condition $\theta\in \Z^{Q_0}$ that is thought as a character of $G$ via $\theta((g_k)_{k\in Q_0})=\prod_{k\in Q_0}\det(g_k)^{\theta_k}$. Then, by definition, the quiver variety $\M^\theta(v,w)$ is the GIT Hamiltonian reduction $\mu^{-1}(0)^{\theta-ss}\quo G$. We are interested in two extreme cases: when $\theta$ is generic (and so $\M^\theta(v,w)$ is smooth and symplectic) and when $\theta=0$ (and so $\M^\theta(v,w)$ is affine). We will write $\M(v,w)$ for $\Spec(\C[\M^\theta(v,w)])$, this is an affine variety independent of $\theta$ and a natural projective morphism $\rho:\M^\theta(v,w)\rightarrow \M(v,w)$ is a resolution of singularities, see, for example,
\cite[Section 2.1]{BL}.
Note also that we have compatible $\C^\times$-actions on $\M^\theta(v,w),\M(v,w)$ induced from the action on $T^*R$
given by $t.(r,\alpha):=(t^{-1}r,t^{-1}\alpha), r\in R, \alpha\in R^*$. So $\M^\theta(v,w)\rightarrow \M(v,w)$
is a conical symplectic resolution. Note that if $\mu$ is flat, then $\M(v,w)\xrightarrow{\sim}\M^0(v,w)$.
There is a combinatorial necessary and sufficient condition on $\mu$ to be flat due to Crawley-Boevey, \cite{CB},
but we do not need that.

Now let us proceed to the quantum setting. We will work with quantizations of $\M^\theta(v,w),\M(v,w)$. Consider
the algebra $D(R)$ of differential operators on $R$. The group $G$ naturally acts on $D(R)$ with a quantum
comoment map $\Phi:\g\rightarrow D(R), x\mapsto x_R$. We can consider the quantum Hamiltonian reduction
$\A^0_\lambda(v,w)=[D(R)/D(R)\{x_R-\langle \lambda,x\rangle| x\in \g\}]^G$. It is a quantization of $\M^0(v,w)=\M(v,w)$
when the moment map $\mu$ is flat. We can also define a quantization $\A^\theta_\lambda(v,w)$ of $\M^\theta(v,w)$
 by quantum Hamiltonian reduction. Namely, we can microlocalize $D(R)$ to a sheaf in  the conical topology
so that we can consider the restriction of $D(R)$ to the $(T^*R)^{\theta-ss}$, let $\mathcal{D}^{\theta-ss}$ denote the restriction. Let $\pi$ stand for the quotient morphism $\mu^{-1}(0)^{\theta-ss}\rightarrow\mu^{-1}(0)^{\theta-ss}/G=\M^\theta(v,w)$.
Let us notice that $\mathcal{D}^{\theta-ss}/\mathcal{D}^{\theta-ss}\{x_R-\langle \lambda,x\rangle| x\in \g\}$
is scheme-theoretically supported on $\mu^{-1}(0)^{\theta-ss}$ and so can be regarded as a sheaf in
conical topology on that variety. Set
$$\A^\theta_{\lambda}(v,w):=[\pi_*\left(\mathcal{D}^{\theta-ss}/\mathcal{D}^{\theta-ss}\{x_R-\langle \lambda,x\rangle\}\right)]^G,$$
this is a quantization of $\M^\theta(v,w)$.
We note that the period of $\A^\theta_\lambda(v,w)$ equals to the cohomology class in $H^2(\M^\theta(v,w),\C)$
defined by $\lambda$
up to a shift by a fixed element in $H^2(\M^\theta(v,w),\C)$.
We will write $\A_\lambda(v,w)$ for $\Gamma(\A^\theta_\lambda(v,w))$.
We have $\A_\lambda(v,w)=\A_\lambda^0(v,w)$ for a  Zariski generic $\lambda$, see \cite[Section 2.2]{BL}.

Below we will need a standard and well-known result about symplectic leaves in quiver varieties.

\begin{Lem}\label{Lem:leaves}
Let $v'\leqslant v$ be a root of $Q$. Pick a Zariski generic parameter $\lambda$ with $v'\cdot \lambda=0$.
If the variety $\M_\lambda(v,w)$ has a single symplectic leaf that is a point, then $v'$ is a real root.
\end{Lem}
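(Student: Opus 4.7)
The plan is to argue by contraposition. Since $Q$ is of affine type, the only positive roots that are not real are the positive integer multiples of the minimal imaginary root $\delta$, so if $v'$ is not real then $v'=k\delta$ for some $k\geq 1$, and in particular $(v',v')=0$. I aim to deduce that $\M_\lambda(v,w)$ has a symplectic leaf of dimension at least two, contradicting the hypothesis that the entire variety is a single point.

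The key input is a theorem of Crawley-Boevey on the representation theory of the deformed preprojective algebra $\Pi^\lambda$: for $\lambda$ Zariski-generic in the hyperplane $v'\cdot\lambda=0$, the set of isomorphism classes of simple $\Pi^\lambda$-modules of dimension vector $v'$ forms a smooth irreducible variety $\Sigma$ of dimension $2-(v',v')=2$. Combining this with the hypotheses $v'\leq v$ and $\M_\lambda(v,w)\neq\emptyset$, I would produce a polystable framed $\Pi^\lambda$-representation $R_1$ of dimension $(v-v',w)$ and then consider the morphism $\Sigma\to\M_\lambda(v,w)$ defined by $S\mapsto [R_1\oplus S]$. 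Since non-isomorphic simples $S$ yield non-isomorphic polystable direct sums $R_1\oplus S$, this morphism is injective on closed points, forcing $\dim\M_\lambda(v,w)\geq 2$ and contradicting the assumption.

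The main obstacle is establishing the existence of $R_1$, equivalently that $\M_\lambda(v-v',w)\neq\emptyset$. For this I would analyze the unique polystable representation $R_0$ corresponding to the single point of $\M_\lambda(v,w)$: via the Crawley-Boevey infinity-vertex trick its decomposition into simple summands gives a decomposition of $(v,w)$ into $\lambda$-orthogonal positive roots of the extended quiver. If some simple summand of $R_0$ has imaginary dimension vector, one immediately gets a two-parameter deformation of $R_0$ by varying that summand through its $2$-dimensional moduli, contradicting the single-point hypothesis; otherwise all summands of $R_0$ are real, and one extracts $R_1$ by peeling off summands totaling $v'$ from $R_0$ and invoking Crawley-Boevey's non-emptiness criterion for quiver varieties to realize the resulting dimension vector $(v-v',w)$ by a polystable framed representation.
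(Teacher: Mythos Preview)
You have misread the hypothesis. The phrase ``has a single symplectic leaf that is a point'' means ``has a symplectic leaf consisting of a single point,'' i.e., admits a $0$-dimensional leaf; it does \emph{not} assert that the entire variety $\M_\lambda(v,w)$ is a point. This reading is forced both by the paper's own proof (which picks a polystable representation $r$ corresponding to an arbitrary closed point and analyzes when that point is itself a leaf) and by the application in the proof of Theorem~\ref{Thm:counting}, where one must rule out $0$-dimensional leaves in $\M_p(v,w)$ while explicitly allowing $\M_p(v,w)$ to be positive-dimensional.

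Under the correct reading your main plan fails outright: exhibiting an injection $\Sigma\to\M_\lambda(v,w)$ via $S\mapsto R_1\oplus S$ shows only $\dim\M_\lambda(v,w)\geq 2$, which says nothing about whether a $0$-dimensional leaf exists elsewhere in the variety. The idea you relegate to the ``obstacle'' paragraph---deforming an imaginary simple summand of the polystable representation $R_0$ sitting at the putative $0$-dimensional leaf---is in fact the entire argument, and it works for the right reason: such a deformation preserves the isomorphism type of the stabilizer, hence stays in the \emph{same} symplectic leaf, contradicting that leaf being a point. The paper runs exactly this argument via the slice: decompose $R_0$ into simples of dimension vectors $v^0$ and $v^i$ ($i>0$); genericity of $\lambda$ forces every $v^i$ to be proportional to $v'$; the leaf through $R_0$ is a point iff $U^H=\{0\}$ iff the local quiver has no loops iff each $v^i$ is real; hence $v'$ is real. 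Your Case~2 (``all summands real'') is then vacuous when $v'$ is assumed imaginary and some $v^i$ with $i>0$ occurs, and the detour through constructing $R_1$ and Crawley-Boevey's non-emptiness criterion is unnecessary.
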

\begin{proof}
Consider a representation $r$ of $\bar{Q}^w$ (the notation from \cite[Section 2.1]{BL}) lying in
$\mu^{-1}(\lambda)$ and having closed orbit. Let $H$ denote the stabilizer of $r$ in $G$ and $U$
be the symplectic part of the slice representation in $X$. Then the point corresponding to
$x$ in the quotient $\mu^{-1}(\lambda)\quo G$ is a symplectic leaf if and only if $U^H=\{0\}$.
Recall that the space $U$ is recovered as follows. We decompose $r$ into the sum of the irreducible
$\bar{Q}^w$-modules: $r=r^0\oplus (r^1)^{\oplus n_1}\oplus\ldots\oplus (r_k)^{\oplus n_k}$,
where $r_0$ is an irreducible representation with dimension vector of the form $(v^0,1)$ and
$r_i$ are pairwise non-isomorphic irreducible representations of $\bar{Q}$ of dimension vector
$v^i$. By the choice of  $\lambda$, we see that all $v^i, i>0,$ are proportional to $v'$.
We note that $U$ is the representation of another quiver that has  vertices
$1,\ldots,k,$ with $1-\frac{1}{2}(v^i,v^i)$ loops at the vertex $i$. If $U^H=\{0\}$, then there are no
loops and so each $v^i$ is a real root and hence $v'$ is  a real root.
\end{proof}


\subsection{Harish-Chandra bimodules}
Now let $\A$ be a $\Z$-filtered algebra with a complete and separated filtration $\A=\bigcup_{i\in \Z}\A_{\leqslant i}$.
We assume that $[\A_{\leqslant i},\A_{\leqslant j}]\subset \A_{\leqslant i+j-d}$ for some fixed
$d\in \Z_{>0}$ and that $\gr\A$ is finitely generated.

By a Harish-Chandra $\A$-bimodule we mean a bimodule $\B$ that can be equipped with a complete and separated
filtration $\B=\bigcup_{i\in \Z}\B_{\leqslant i}$ such that
\begin{itemize}
\item The filtration is compatible with the filtration on $\A$.
\item $[\A_{\leqslant i}, \B_{\leqslant j}]\subset \B_{\leqslant i+j-d}$.
\item $\gr\B$ is a finitely generated $\gr\A$-module.
\end{itemize}

For a HC bimodule $\B$ we can define its associated variety $\VA(\B)$
inside of the reduced scheme associated to $\gr\A$ in the usual way.
Note that $\VA(\B)$ is a Poisson subvariety.

When $\A^1,\A^2$ are quotients of $\A$ we can speak about HC $\A^1$-$\A^2$-bimodules.

The following lemma is proved analogously to \cite[Proposition 3.8]{rouq_der}.

\begin{Lem}\label{Lem:HC_TorExt}
Let $\B^1,\B^2$ be HC $\A$-bimodules. Then $\operatorname{Tor}^{\A}_i(\B^1,\B^2),
\operatorname{Ext}^i_{\A}(\B^1,\B^2), \operatorname{Ext}^i_{\A^{opp}}(\B^1,\B^2)$ are HC.
\end{Lem}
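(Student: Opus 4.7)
The approach, following \cite[Proposition 3.8]{rouq_der}, is to exploit good filtrations on the $\B^i$ and the Rees construction to reduce the HC condition on the Tor/Ext bimodules to finite generation of their associated graded in the commutative setting, where noetherianness of $\gr\A$ is the crucial input.

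Concretely, I would fix good filtrations on $\B^1,\B^2$ so that each $\gr\B^i$ is a finitely generated $\gr\A$-module, and form the Rees algebra $R_\hbar\A:=\bigoplus_i \A_{\leqslant i}\hbar^i$ together with the Rees bimodules $R_\hbar\B^i:=\bigoplus_j \B^i_{\leqslant j}\hbar^j$. These are $\Z$-graded and $\C[\hbar]$-flat; quotienting by $(\hbar)$ recovers the associated graded objects, while inverting $\hbar-1$ recovers $\A$ and $\B^i$. Since $\gr\A$ is finitely generated commutative, hence noetherian, a standard filtered-to-graded argument shows $R_\hbar\A$ is noetherian on graded modules, so one can choose a resolution $P^\bullet\to R_\hbar\B^1$ by finitely generated graded free $R_\hbar\A$-bimodules. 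Then $P^\bullet\otimes_{R_\hbar\A} R_\hbar\B^2$ and $\operatorname{Hom}_{R_\hbar\A}(P^\bullet,R_\hbar\B^2)$ (graded Hom) are complexes of finitely generated graded $R_\hbar\A$-bimodules, and therefore so are their homologies.

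Localizing at $\hbar-1$ is exact and identifies these homologies with $\operatorname{Tor}^\A_i(\B^1,\B^2)$ and $\operatorname{Ext}^i_\A(\B^1,\B^2)$; the grading by powers of $\hbar$ descends to a filtration under this localization, and finite generation over $R_\hbar\A$ translates into the associated graded being finitely generated over $\gr\A$. The required commutator bound $[\A_{\leqslant i},\cdot_{\leqslant j}]\subset \cdot_{\leqslant i+j-d}$ follows from the fact that commutators in $R_\hbar\A$ are divisible by $\hbar^d$, which shifts filtration degrees by $d$ after setting $\hbar=1$. The case $\operatorname{Ext}^i_{\A^{opp}}(\B^1,\B^2)$ is identical, using a resolution of $\B^1$ as a right $R_\hbar\A$-module.

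The main obstacle is to ensure that $\hbar$-torsion in the Tor/Ext of Rees bimodules does not spoil the argument: one must replace each of these by its $\hbar$-torsion-free quotient (still finitely generated by noetherianness of $R_\hbar\A$) in order to get a well-defined filtration on the $\hbar=1$ fiber, and then verify that this filtration is exhaustive, complete, and separated. Completeness and separatedness are inherited from the $\hbar$-adic structure of the Rees construction combined with finite generation over the noetherian ring $R_\hbar\A$, while compatibility with the filtration on $\A$ is built into the construction from the outset.
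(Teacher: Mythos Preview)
Your strategy is exactly what the paper intends: the paper gives no argument of its own and simply refers to \cite[Proposition 3.8]{rouq_der}, which is precisely the Rees-algebra/associated-graded reduction you sketch, including the passage to the $\hbar$-torsion-free quotient.

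One technical point needs adjustment. If $P^\bullet$ resolves $R_\hbar\B^1$ by finitely generated free $R_\hbar\A$-\emph{bimodules} (i.e.\ free over $R_\hbar\A\otimes_{\C[\hbar]}R_\hbar\A^{opp}$), then the terms $P^i\otimes_{R_\hbar\A}R_\hbar\B^2$ are not of Harish--Chandra type: for $P^i=R_\hbar\A\otimes_{\C[\hbar]}R_\hbar\A$ one gets $R_\hbar\A\otimes_{\C[\hbar]}R_\hbar\B^2$, on which the adjoint action $[a,x\otimes m]=ax\otimes m-x\otimes ma$ does \emph{not} land in $\hbar^d$ times the module, so the induced filtration on the terms is not good and your claim that the terms are ``finitely generated graded $R_\hbar\A$-bimodules'' in the HC sense fails. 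The standard fix (and what is done in \cite{rouq_der}) is to resolve $R_\hbar\B^1$ by finitely generated graded free \emph{right} $R_\hbar\A$-modules; then each $P^i\otimes_{R_\hbar\A}R_\hbar\B^2$ is a finite direct sum of copies of $R_\hbar\B^2$, the homology is a finitely generated graded right $R_\hbar\A$-module, and the left $R_\hbar\A$-action on $\operatorname{Tor}$ is obtained by functoriality from the left action on $R_\hbar\B^1$. Reducing modulo $\hbar$ then exhibits $\gr\operatorname{Tor}^{\A}_i(\B^1,\B^2)$ as a subquotient of $\operatorname{Tor}^{\gr\A}_i(\gr\B^1,\gr\B^2)$, which is finitely generated over the noetherian ring $\gr\A$; the commutator bound follows because the left and right $\gr\A$-actions on this subquotient agree. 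With this correction the rest of your outline goes through unchanged.
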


Now let us give an example of a HC bimodule. Let $\A=\A_{\tilde{\param}}$. Pick $\chi\in \tilde{\param}_{\Z}$.
Consider a line  bundle $\mathcal{O}(\chi)$ on $X^\theta_{\tilde{\param}}$ (recall
that $\tilde{\param}_{\Z}$ is the image of $\operatorname{Pic}(X)$ in $H^2(X,\C)$,
for $\mathcal{O}(\chi)$ we take any line bundle corresponding to a lift of $\chi$
to $\operatorname{Pic}(X)$). Since $H^i(X^\theta_{\tilde{\param}},\mathcal{O}_{X^\theta_{\tilde{\param}}})=0$
for $i>0$, we see that $\mathcal{O}(\chi)$ admits a unique quantization $\A^\theta_{\tilde{\param},\chi}$
to a $\A^\theta_{\tilde{\param}}$-bimodule, where for $a\in \tilde{\param}^*$
we have $[a,m]=\langle\chi,a\rangle m$ for any local section $m$ of $\B$,
see \cite[Section 6.3]{BPW} for details.

Now pick an affine subspace $\param\subset \tilde{\param}$. Set $\A^\theta_{\param,\chi}:=
\A^\theta_{\tilde{\param},\chi}\otimes_{\C[\tilde{\param}]}\C[\param]$. Then
the global section bimodule $\A^{(\theta)}_{\param,\chi}:=\Gamma(\A^\theta_{\param,\chi})$
is HC over $\A_{\tilde{\param}}$.

Under some conditions, the bimodule $\A^{(\theta)}_{\param,\chi}$ is independent of
$\theta$.

\begin{Lem}\label{Lem:bimod_indep}
Suppose that the vector subspace of $\tilde{\param}$ associated to
$\param$ is not contained in the singular locus $\mathcal{H}_{\C}$. Then
$\A^{(\theta)}_{\param,\chi}$ is independent of $\theta$.
\end{Lem}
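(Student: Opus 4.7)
The plan is to identify both $\A^{(\theta)}_{\param,\chi}$ and $\A^{(\theta')}_{\param,\chi}$ with the global sections of the same quantized line bundle on a common open subvariety of $X^\theta_\param$ and $X^{\theta'}_\param$, via a Hartogs-type extension in codimension at least two. The hypothesis that the direction of $\param$ avoids $\mathcal{H}_{\C}$ enters precisely to bound the codimension of the exceptional locus of the relevant flop.

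First I would set up the common open locus. The birational flop $X^\theta_\param \dashrightarrow X^{\theta'}_\param$ inherited from $X^\theta_{\tilde{\param}} \dashrightarrow X^{\theta'}_{\tilde{\param}}$ is an isomorphism over $U := \param \setminus \mathcal{H}_{\C}$, which is dense in $\param$ by the hypothesis: for $\lambda \in U$ the fiber $X^\theta_\lambda$ is affine and coincides with $X^{\theta'}_\lambda = X^0_\lambda$. Let $E \subset X^\theta_\param$ denote its exceptional locus. For each $\lambda \in \mathcal{H}_{\C} \cap \param$ the fiberwise flop $X^\theta_\lambda \dashrightarrow X^{\theta'}_\lambda$ is a crepant birational map between smooth symplectic varieties, hence an isomorphism away from codimension two, so $\operatorname{codim}_{X^\theta_\lambda}(E \cap X^\theta_\lambda) \geqslant 2$. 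Combined with $\operatorname{codim}_{\param}(\mathcal{H}_{\C} \cap \param) \geqslant 1$ (the content of the hypothesis), dimension counting yields $\operatorname{codim}_{X^\theta_\param} E \geqslant 3$.

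Next I would identify the quantized line bundles on $\breve{X}_\param := X^\theta_\param \setminus E$ and its image $\breve{X}'_\param \subset X^{\theta'}_\param$ under the flop. The identification of Picard groups recalled in Section \ref{S_prelim} matches $\mathcal{O}^\theta(\chi)|_{\breve{X}_\param}$ with $\mathcal{O}^{\theta'}(\chi)|_{\breve{X}'_\param}$, and the algebra quantizations $\A^\theta_\param|_{\breve{X}_\param}, \A^{\theta'}_\param|_{\breve{X}'_\param}$ of the common symplectic scheme $\breve{X}_\param \cong \breve{X}'_\param$ agree by Bezrukavnikov--Kaledin uniqueness; the cohomological hypothesis $H^i(\breve{X}_\param, \mathcal{O}) = 0$ for $i=1,2$ is inherited from the corresponding vanishing on $X^\theta_{\tilde{\param}}$ together with the codimension $\geqslant 3$ bound of the previous step. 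The uniqueness of the bimodule quantization of $\mathcal{O}(\chi)$ recalled just before the statement then provides a canonical sheaf isomorphism
\[
\A^\theta_{\param,\chi}|_{\breve{X}_\param} \;\cong\; \A^{\theta'}_{\param,\chi}|_{\breve{X}'_\param}.
\]

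Finally I would take global sections. Because $X^\theta_\param$ is smooth (being the pullback to the smooth subscheme $\param \subset \tilde{\param}$ of the smooth morphism $X^\theta_{\tilde{\param}} \to \tilde{\param}$), $E$ has codimension at least two, and $\A^\theta_{\param,\chi}$ deforms a locally free rank-one sheaf, Hartogs extension applied to the Rees sheaf gives $\Gamma(X^\theta_\param, \A^\theta_{\param,\chi}) = \Gamma(\breve{X}_\param, \A^\theta_{\param,\chi}|_{\breve{X}_\param})$, and similarly for $\theta'$. Combined with the sheaf isomorphism from the previous step, this yields $\A^{(\theta)}_{\param,\chi} \cong \A^{(\theta')}_{\param,\chi}$ as $\A_\param$-bimodules. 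The main obstacle is ensuring that this identification genuinely respects the filtered bimodule structure, not merely the underlying bimodules; this reduces to verifying that Bezrukavnikov--Kaledin uniqueness and the resulting identification of line bundle quantizations carry through in the graded (Rees) setting, which in turn follows from the same $H^{>0}(\mathcal{O}) = 0$ vanishing applied to Rees sheaves.
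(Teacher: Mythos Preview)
Your proposal is correct and follows essentially the same strategy as the proof in \cite[Proposition 5.5]{BL} to which the paper defers: identify the quantized line bundles $\A^\theta_{\param,\chi}$ and $\A^{\theta'}_{\param,\chi}$ on the common open locus where the two resolutions agree, then recover global sections by a Hartogs-type extension using the codimension bound on the exceptional locus. Your additional observation that the hypothesis on $\param$ pushes the exceptional locus to codimension $\geqslant 3$ (rather than just $\geqslant 2$), letting you verify the $H^1=H^2=0$ input for Bezrukavnikov--Kaledin uniqueness directly on $\breve{X}_\param$, is a clean way to organize the argument.
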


The proof was given in \cite[Proposition 5.5]{BL}. The following lemma is \cite[Proposition 6.26]{BPW}.

\begin{Lem}\label{Lem:spec}
Suppose that $H^1(X,\mathcal{O}(\chi))=0$. Then the specialization of $\A^{(\theta)}_{\param,\chi}$
to $\lambda\in \param$ coincides with $\A^{(\theta)}_{\lambda,\chi}$.
\end{Lem}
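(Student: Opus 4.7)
The plan is to prove the base-change isomorphism
\[
\psi\colon \A^{(\theta)}_{\param,\chi}\otimes_{\C[\param]}\C_\lambda\longrightarrow \A^{(\theta)}_{\lambda,\chi}
\]
by establishing the cohomology vanishing $H^i(X^\theta,\A^\theta_{\param,\chi})=0$ for $i\geqslant 1$. The sheaf $\A^\theta_{\param,\chi}$ is flat over $\C[\param]$ by construction, since it is the unique quantization of a flat family of line bundles on $X^\theta_\param$ extending $\mathcal{O}(\chi)$, so $\A^\theta_{\lambda,\chi}=\A^\theta_{\param,\chi}\otimes_{\C[\param]}\C_\lambda$ without higher Tors. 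Resolving $\C_\lambda$ by the Koszul complex on a regular sequence $f_1,\ldots,f_r\in\m_\lambda\subset\C[\param]$ and applying $R\Gamma$ to the resulting resolution of $\A^\theta_{\lambda,\chi}$, the map $\psi$ becomes an isomorphism as soon as $H^i(X^\theta,\A^\theta_{\param,\chi})=0$ for $i\geqslant 1$: the vanishing implies that global sections commute with Koszul-style specialization, and it also guarantees that $\A^{(\theta)}_{\param,\chi}$ inherits the $\C[\param]$-flatness of the sheaf.

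To obtain this vanishing I would pass to the associated graded under the canonical filtration. Since $\C^\times$ acts on $\param$ with weight $-d<0$, the associated graded of $\A^\theta_\param$, regarded as a graded sheaf on $X^\theta$, splits as $\mathcal{O}_{X^\theta}\otimes_{\C} S(\tilde{\param}^*)$, and analogously
\[
\gr \A^\theta_{\param,\chi}\;\cong\;\mathcal{O}(\chi)\otimes_{\C} S(\tilde{\param}^*).
\]
Consequently $H^i(X^\theta,\gr \A^\theta_{\param,\chi})=H^i(X,\mathcal{O}(\chi))\otimes_{\C} S(\tilde{\param}^*)$. The filtration spectral sequence, which converges because the filtration is exhaustive and bounded below on each $\C^\times$-isotypic component, then yields $H^1(X^\theta,\A^\theta_{\param,\chi})=0$ from the hypothesis $H^1(X,\mathcal{O}(\chi))=0$, with the potentially dangerous differentials controlled by the Grauert--Riemenschneider vanishing $H^{\geqslant 1}(X,\mathcal{O}_X)=0$. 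To handle $i\geqslant 2$ one proceeds inductively on the length of a regular sequence in $\m_\lambda$: at each specialization one only needs the $H^1$-vanishing of the current bimodule, which is propagated by the same associated-graded argument applied to the intermediate quotients.

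The principal obstacle is the identification of $\gr\A^\theta_{\param,\chi}$ with the split sheaf $\mathcal{O}(\chi)\otimes S(\tilde{\param}^*)$: one has to check that the conical filtration really produces this decomposition rather than some nontrivially twisted graded sheaf. This rests on the fact that the $\C^\times$-action on $\param$ has strictly negative weight, so the $\param$-deformation of $X^\theta$ becomes trivial in the associated graded, and similarly for the quantized line bundle. Once this is pinned down, the cohomology argument sketched above goes through, $\psi$ is an isomorphism, and the lemma follows.
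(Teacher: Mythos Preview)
The paper does not supply its own proof of this lemma; it simply records that it is \cite[Proposition~6.26]{BPW}. Your argument is the standard one and is essentially what one finds there: reduce to showing $H^1(X^\theta,\A^\theta_{\param,\chi})=0$ (and the analogous vanishing for each intermediate specialization), and deduce this from the hypothesis $H^1(X,\mathcal{O}(\chi))=0$ by passing to the associated graded.

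Two points are worth sharpening. First, the associated graded of $\A^\theta_{\param,\chi}$ is the line bundle $\mathcal{O}(\chi)$ on the deformation $X^\theta_{\param_0}$ (with $\param_0$ the linear space underlying $\param$), not literally the split sheaf $\mathcal{O}(\chi)\otimes_\C S(\tilde{\param}^*)$ on $X^\theta$; the deformation $X^\theta_{\param_0}$ is genuinely nontrivial as a scheme. What is true---and is all you need---is that as a \emph{graded $\C[\param_0]$-module} the structure sheaf (and hence the line bundle) is graded-free, because a flat graded module over a polynomial ring graded in a single positive degree is free. This gives the cohomology identification you want. Second, writing $S(\tilde{\param}^*)$ is only correct when $\param=\tilde{\param}$; in general it should be $\C[\param_0]$. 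With these adjustments your plan goes through: the $H^1$-vanishing passes to $\A^\theta_{\param,\chi}$ via the filtration spectral sequence, the Koszul/inductive specialization then gives the base-change isomorphism $\psi$, and the same associated-graded argument applied to each intermediate quotient $\A^\theta_{\param',\chi}$ (for $\lambda\in\param'\subset\param$) supplies the $H^1$-vanishing needed at every step.
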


Now let us discuss $\widetilde{\param}$-supports of HC bimodules. Let $\B$ be a HC $\A_{\tilde{\param}}$-bimodule.
By its right $\tilde{\param}$-support we mean the set $\Supp^r_{\tilde{\param}}(\B)$ consisting of all
$\lambda\in \tilde{\param}$ such that $\B_\lambda:=\B\otimes_{\C[\tilde{\param}]}\C_\lambda$ is nonzero.

The following result was obtained in \cite[Proposition 2.6]{CWR}.

\begin{Lem}\label{Lem:supp_HC}
The subset $\Supp^r_{\tilde{\param}}(\B)\subset \tilde{\param}$ is Zariski closed. Its asymptotic
cone is $\Supp_{\tilde{\param}}(\gr\B)$, where the associated graded is taken with respect to
any good filtration.
\end{Lem}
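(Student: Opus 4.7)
The plan is to interpolate between $\B$ and $\gr\B$ via the Rees construction and deduce both assertions by analyzing the associated family of right supports together with its natural $\C^\times$-equivariance. Fixing a good filtration on $\B$ compatible with that on $\A_{\tilde{\param}}$, I form the Rees bimodule $\B_\hbar := \bigoplus_i \B_{\leqslant i}\hbar^i$, which is $\C[\hbar]$-flat and satisfies $\B_\hbar/(\hbar-1)\cong \B$ and $\B_\hbar/(\hbar)\cong\gr\B$. Equip $\tilde{\param}\times\mathbb{A}^1_\hbar$ with the $\C^\times$-action given by $s\cdot(p,t)=(s^{-d}p,\, st)$; then $\B_\hbar$ is $\C^\times$-equivariant, with $\hbar$ of weight $1$ and the generators of $\C[\tilde{\param}]$ of weight $d$.

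Let $S_\hbar\subset \tilde{\param}\times\mathbb{A}^1_\hbar$ denote the scheme-theoretic right $\C[\tilde{\param}][\hbar]$-support of $\B_\hbar$. Closedness of $\Supp^r_{\tilde{\param}}(\B)$ will follow once one identifies this with the fiber $S_\hbar\cap\{\hbar=1\}$, which is a closed subscheme of $\tilde{\param}$; the identification uses the good filtration and a graded version of Nakayama applied to $\B_\hbar$, noting that $\B_\hbar$ is graded with generators in positive $\C^\times$-weight, so vanishing of $\B_\hbar\otimes\C_{(\lambda,1)}$ forces $(\lambda,1)$ to avoid the zero locus of the annihilator ideal of $\B_\hbar$.

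For the asymptotic cone identity, one identifies the other fiber $S_\hbar\cap\{\hbar=0\}$ with $\Supp_{\tilde{\param}}(\gr\B)$ by the analogous argument applied to $\B_\hbar/(\hbar)=\gr\B$. The $\C^\times$-equivariance then shows that for every $s\in\C^\times$ the fiber $S_s$ equals $s^{-d}\cdot S_1$, and the flat limit as $s\to 0$ yields $S_0$ on one hand and $\operatorname{AC}(S_1)$ on the other (by the standard characterization of the asymptotic cone as the flat limit of $\C^\times$-scalings, using that the generators of $\C[\tilde{\param}]$ all carry the same weight $d$); combining these two identifications gives $\Supp_{\tilde{\param}}(\gr\B)=\operatorname{AC}(\Supp^r_{\tilde{\param}}(\B))$, as claimed.

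The main obstacle is precisely the identification $S_\hbar\cap\{\hbar=0\} = \Supp_{\tilde{\param}}(\gr\B)$ as schemes (or even as sets), rather than only a containment in one direction. The easy inclusion is immediate, since any $\hbar$-homogeneous element of $\operatorname{Ann}_{\C[\tilde{\param}][\hbar]}(\B_\hbar)$ reduces mod $\hbar$ to an element of $\operatorname{Ann}_{\C[\tilde{\param}]}(\gr\B)$; however, equality requires ruling out extraneous $\hbar$-primary behavior of $S_\hbar$ at $\hbar=0$ and uses essentially the $\C[\hbar]$-flatness of $\B_\hbar$, the finite generation of $\gr\B$ over the Noetherian algebra $\gr\A_{\tilde{\param}}$, and the compatibility of the good filtration with the $\C[\tilde{\param}]$-subalgebra.
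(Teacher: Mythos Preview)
The paper does not give its own proof of this lemma; it simply cites \cite[Proposition 2.6]{CWR}.  Your Rees-module approach, using the $\C^\times$-equivariant family over $\tilde{\param}\times\mathbb{A}^1_\hbar$ interpolating between $\B$ at $\hbar=1$ and $\gr\B$ at $\hbar=0$, is precisely the standard argument and is what is carried out in the cited reference.  So at the level of strategy your proposal matches the intended proof.

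One point in your sketch deserves more care.  You invoke ``graded Nakayama'' to identify the fiberwise support of $\B_\hbar$ at $(\lambda,1)$ with the scheme-theoretic support $S_\hbar\cap\{\hbar=1\}$, but $(\lambda,1)$ is not a $\C^\times$-fixed point, so a graded Nakayama lemma does not apply directly; and $\B_\hbar$ is not finitely generated over $\C[\tilde{\param}][\hbar]$, so the naive identification of $V(\operatorname{Ann})$ with the fiberwise support fails in general.  What rescues the argument is the observation that each graded piece $(\B_\hbar)_i\cong \B_{\leqslant i}$ is \emph{finite-dimensional} over $\C$: this holds because $\gr\B$ is finitely generated over $\gr\A_{\tilde{\param}}$, which is a finitely generated $\Z_{\geqslant 0}$-graded $\C$-algebra with degree-$0$ part $\C$.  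With this finiteness in hand one can control the fiberwise vanishing locus directly (or equivalently argue via the finitely generated module $\gr\B$ over the commutative Noetherian ring $\gr\A_{\tilde{\param}}$), and the rest of your outline---$\C^\times$-equivariance and the identification of the flat limit with the asymptotic cone---goes through as you describe.
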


For a HC $\A_{\tilde{\param}}$-bimodule $\B$, by $\VA_0(\B)$ we will denote $\VA(\B)\cap X_0$.

\subsection{Localization theorems}
Let $X=X^\theta$ be a conical symplectic resolution. Let $\lambda\in \tilde{\param}$
and $\A_\lambda^\theta$ be the corresponding filtered quantization
of $X$ with global sections $\A_\lambda$. We can consider the category
$\operatorname{Coh}(\A_\lambda^\theta)$ of all coherent sheaves of $\A_\lambda^\theta$-modules
and its derived category $D^b(\operatorname{Coh}(\A_\lambda^\theta))$.
We have the global section functor $\Gamma_\lambda=\Hom_{\A^\theta_\lambda}(\A_\lambda^\theta,\bullet):
\operatorname{Coh}(\A_\lambda^\theta)\rightarrow \A_\lambda\operatorname{-mod}$
and its left adjoint functor $\Loc_\lambda:=\A_\lambda^\theta\otimes_{\A_\lambda}\bullet:
\A_\lambda\operatorname{-mod}\rightarrow \A_\lambda^\theta\operatorname{-mod}$.

We say that abelian localization holds for $(\lambda,X)$ (or $(\lambda,\theta)$)
if $\Gamma_\lambda, \Loc_\lambda$ are mutually inverse equivalences.
The following result (proved in \cite[Section 5.3]{BPW}) gives a necessary and sufficient condition for
the abelian localization to hold. Let $\chi$ be an ample element in
$\tilde{\param}_{\Z}$.

\begin{Lem}\label{Lem:ab_loc_equiv}
The following two conditions on $\lambda\in \tilde{\param}$ are equivalent:
\begin{itemize}
\item Abelian localization holds for $(\lambda,\theta)$.
\item There is $n>0$ such that the bimodules $\A_{\lambda+mn\chi,n\chi}^{(\theta)}$
and $\A_{\tilde{\param},-n\chi}^{(\theta)}|_{\lambda+m(n+1)\chi}$ are mutually
inverse Morita equivalences for all $m\in \Z_{\geqslant 0}$.
\end{itemize}
\end{Lem}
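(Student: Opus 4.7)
The strategy rests on two standard facts. First, at the sheaf level, tensoring with the quantized line bundle $\A^\theta_{\mu,n\chi}$ is always an invertible Morita equivalence $\Coh(\A^\theta_\mu)\xrightarrow{\sim}\Coh(\A^\theta_{\mu+n\chi})$ for any $\mu$, because its associated graded is tensoring with the invertible sheaf $\mathcal{O}(n\chi)$. Second, since $\chi$ is ample, for $n$ sufficiently large we have $H^i(X^\theta,\mathcal{O}(\pm jn\chi))=0$ for all $i,j\geqslant 1$ (Serre vanishing), and abelian localization holds at $\mu+mn\chi$ for every $m\geqslant 1$ and any fixed $\mu$ (a standard consequence of ample positivity as in \cite[Section 5.3]{BPW}).

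For $(1)\Rightarrow(2)$, assume abelian localization holds at $\lambda$ and choose $n$ large enough that both of the above facts apply with $\mu=\lambda$. Then abelian localization holds at $\lambda+mn\chi$ for every $m\geqslant 0$. For each such $m$, the functor $\Gamma$ intertwines the sheaf-level Morita equivalence given by $\A^\theta_{\lambda+mn\chi,n\chi}$ with tensoring by its global sections, which by Lemma \ref{Lem:spec} coincides with $\A^{(\theta)}_{\lambda+mn\chi,n\chi}$; hence the latter implements a Morita equivalence at the algebra level. The inverse bimodule is handled identically using $\mathcal{O}(-n\chi)$, invoking Lemma \ref{Lem:bimod_indep} to identify the specialization of $\A^{(\theta)}_{\tilde{\param},-n\chi}$ at the relevant parameters.

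For $(2)\Rightarrow(1)$, fix the $n$ provided by the hypothesis, and choose $m_0\gg 0$ so that abelian localization holds at $\lambda+m_0 n\chi$. Composing the first $m_0$ assumed algebra-level Morita equivalences yields an equivalence $\A_\lambda\operatorname{-mod}\xrightarrow{\sim}\A_{\lambda+m_0n\chi}\operatorname{-mod}$, while composing the corresponding sheaf-level twists by $\mathcal{O}(n\chi)$ yields an equivalence $\Coh(\A^\theta_\lambda)\xrightarrow{\sim}\Coh(\A^\theta_{\lambda+m_0n\chi})$. The key step is to verify that these two equivalences are intertwined by $\Gamma$ and $\Loc$; granting this, since $\Gamma_{\lambda+m_0n\chi}$ and $\Loc_{\lambda+m_0n\chi}$ are mutually inverse equivalences, so are $\Gamma_\lambda$ and $\Loc_\lambda$.

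The principal technical obstacle is exactly this intertwining: one must show that $\Gamma$ commutes with tensoring by the quantized line bundle on the modules of interest. For $n$ already large this follows directly from Lemma \ref{Lem:spec} together with the higher cohomology vanishing. For the small-$n$ regime required in $(2)\Rightarrow(1)$, one can bootstrap by composing enough Morita equivalences to reach a parameter where the vanishing hypothesis of Lemma \ref{Lem:spec} is available, or alternatively work with the parametric bimodule $\A^{(\theta)}_{\tilde{\param},-n\chi}$ (which is why the statement of the lemma uses the parametric specialization on the inverse side) and use Lemma \ref{Lem:bimod_indep} to ensure $\theta$-independence and the correct global sections.
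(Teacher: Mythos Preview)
The paper does not give its own proof of this lemma; it simply cites \cite[Section 5.3]{BPW}. So the comparison is with the standard argument there, and your outline for $(1)\Rightarrow(2)$ is essentially that argument: once abelian localization holds at every $\lambda+mn\chi$, the sheaf-level equivalence $\A^\theta_{\lambda+mn\chi,n\chi}\otimes_{\A^\theta_{\lambda+mn\chi}}\bullet$ transports via $\Gamma$ and $\Loc$ to an algebra-level Morita equivalence given by the global sections bimodule.

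Your treatment of $(2)\Rightarrow(1)$, however, has a genuine gap. You correctly identify the ``intertwining'' of $\Gamma$ with the two Morita equivalences as the crux, but the final paragraph does not actually establish it: Lemma~\ref{Lem:spec} and Lemma~\ref{Lem:bimod_indep} are about identifying which bimodule arises as global sections, not about whether $\Gamma$ or $\Loc$ is an equivalence. The clean argument goes through $\Loc$ rather than $\Gamma$, and runs by \emph{downward induction}. The key identity is
\[
\left(\A^\theta_{\mu,n\chi}\otimes_{\A^\theta_\mu}\bullet\right)\circ \Loc_\mu \;\cong\; \Loc_{\mu+n\chi}\circ\left(\A^{(\theta)}_{\mu,n\chi}\otimes_{\A_\mu}\bullet\right),
\]
which holds whenever abelian localization holds at $\mu+n\chi$: indeed, in that case $\Loc_{\mu+n\chi}\bigl(\A^{(\theta)}_{\mu,n\chi}\bigr)=\Loc_{\mu+n\chi}\Gamma_{\mu+n\chi}\bigl(\A^\theta_{\mu,n\chi}\bigr)=\A^\theta_{\mu,n\chi}$, and tensoring both sides with $M$ over $\A_\mu$ gives the identity. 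Now choose $m_0$ so that abelian localization holds at $\lambda+m_0 n\chi$. Applying the identity with $\mu=\lambda+(m_0-1)n\chi$, the right-hand side is a composition of two equivalences (the hypothesized Morita equivalence and $\Loc_{\lambda+m_0 n\chi}$) and the sheaf twist on the left is always an equivalence; hence $\Loc_{\lambda+(m_0-1)n\chi}$ is an equivalence. Iterating down to $\mu=\lambda$ gives that $\Loc_\lambda$ is an equivalence, and since $\Gamma_\lambda$ is its right adjoint, abelian localization holds at $(\lambda,\theta)$. Your ``bootstrap by composing enough Morita equivalences'' gestures at this, but without the inductive step the intertwining claim is unjustified.
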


Recall that a open subset $U\subset \tilde{\param}$ is said to be {\it asymptotically generic}
(a terminology from \cite{BL}) if the asymptotic cone of its complement is in $\mathcal{H}_{\C}$
(the union of singular hyperplanes).

\begin{Cor}\label{Cor:ab_loc}
Let $\chi\in \tilde{\param}_{\Z}$ be ample and
$m$ be such that the line bundle $\mathcal{O}_X(m\chi)$ has no higher cohomology.
Then there is an asymptotically generic open subset $U_{m,\chi}\subset \tilde{\param}$ such that
abelian localization holds for $(\lambda,\theta)$ provided $\lambda+nm\chi\in U_{m,\chi}$
for all $n\in \Z_{\geqslant 0}$.
\end{Cor}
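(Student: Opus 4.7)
My strategy is to verify the criterion of Lemma \ref{Lem:ab_loc_equiv} with ``$n$'' there taken to be the present $m$. This reduces abelian localization at $(\lambda,\theta)$ to showing that for every $k\in\Z_{\geqslant 0}$, certain specializations of the universal HC bimodules $\A^{(\theta)}_{\tilde{\param},\pm m\chi}$ at the points $\lambda+km\chi$ give mutually inverse Morita equivalences between the corresponding $\A_\mu$'s. The cohomology hypothesis $H^i(X,\mathcal{O}(m\chi))=0$ for $i>0$ enters through Lemma \ref{Lem:spec}: it guarantees that specializations of the $m\chi$-twisted universal bimodule agree with $\A^{(\theta)}_{\cdot,m\chi}$ and, in particular, that this bimodule is flat over $\C[\tilde{\param}]$.

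First I would consider the two multiplication morphisms of HC $\A_{\tilde{\param}}$-bimodules
\[
\phi_+\colon \A^{(\theta)}_{\tilde{\param},m\chi}\otimes_{\A_{\tilde{\param}}}\A^{(\theta)}_{\tilde{\param},-m\chi}\longrightarrow \A_{\tilde{\param}},\qquad
\phi_-\colon \A^{(\theta)}_{\tilde{\param},-m\chi}\otimes_{\A_{\tilde{\param}}}\A^{(\theta)}_{\tilde{\param},m\chi}\longrightarrow \A_{\tilde{\param}}
\]
(with an appropriate parameter shift on one tensor factor so each composition really lands in $\A_{\tilde{\param}}$). By Lemma \ref{Lem:HC_TorExt} the kernels $K_\pm$ and cokernels $C_\pm$ of $\phi_\pm$ are HC bimodules over $\A_{\tilde{\param}}$. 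Lemma \ref{Lem:supp_HC} then tells me that the right $\tilde{\param}$-supports of $K_\pm$ and $C_\pm$ are Zariski closed subsets of $\tilde{\param}$ whose asymptotic cones are contained in the $\tilde{\param}$-supports of $\gr K_\pm,\gr C_\pm$.

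The key geometric input is to show these asymptotic cones lie in $\mathcal{H}_{\C}$. On the associated graded side, $\phi_\pm$ degenerate to the global-sections multiplication $\Gamma(X_{\tilde{\param}},\mathcal{O}(m\chi))\otimes\Gamma(X_{\tilde{\param}},\mathcal{O}(-m\chi))\to\C[X_{\tilde{\param}}]$ induced by the canonical pairing of line bundles. For $\lambda\notin\mathcal{H}_{\C}$ the fiber $X_\lambda$ is affine, so taking global sections commutes with tensor product of locally free sheaves and the map is an isomorphism over that fiber; hence $\gr K_\pm$ and $\gr C_\pm$ are supported over $\mathcal{H}_{\C}$. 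I then set
\[
U_{m,\chi}:=\tilde{\param}\setminus\bigl(\Supp^r_{\tilde{\param}}(K_+)\cup\Supp^r_{\tilde{\param}}(K_-)\cup\Supp^r_{\tilde{\param}}(C_+)\cup\Supp^r_{\tilde{\param}}(C_-)\bigr),
\]
which is asymptotically generic by construction. For $\lambda$ with $\lambda+km\chi\in U_{m,\chi}$ for every $k\geq 0$, all four bimodules vanish after specialization at each such point, the specialized $\phi_\pm$ become isomorphisms, and Lemma \ref{Lem:ab_loc_equiv} yields abelian localization for $(\lambda,\theta)$.

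The main subtlety I anticipate is passing from vanishing of the specializations of $K_\pm$ and $C_\pm$ at a point $\mu$ to the conclusion that $\phi_\pm|_\mu$ is actually an isomorphism and not merely a surjection, since specialization is only right-exact in general. One resolves this either by establishing flatness of $\A^{(\theta)}_{\tilde{\param},\pm m\chi}$ over $\C[\tilde{\param}]$ (for the $+m\chi$ factor this is immediate from the hypothesis via Lemma \ref{Lem:spec}; the negative sign requires a small additional argument) or by working with derived tensor products and invoking Lemma \ref{Lem:HC_TorExt} to remain within HC bimodules throughout.
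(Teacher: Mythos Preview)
Your proposal is correct and follows essentially the same approach as the paper. The paper's proof is extremely terse: it asserts that the locus where $\A_{\lambda,m\chi}^{(\theta)}$ and $\A^{(\theta)}_{\tilde{\param},-m\chi}|_{\lambda+m\chi}$ are inverse Morita equivalences is Zariski open and asymptotically generic, referring to Lemma~\ref{Lem:supp_HC} and to \cite[Proposition 5.17]{BL} for the mechanism, and then invokes Lemma~\ref{Lem:ab_loc_equiv}. You have unpacked exactly this mechanism---forming the multiplication maps $\phi_\pm$, taking HC kernels and cokernels, and using Lemma~\ref{Lem:supp_HC} to bound their $\tilde{\param}$-supports---and you have correctly flagged the right-exactness subtlety and how the cohomology vanishing (via Lemma~\ref{Lem:spec}) addresses it on the $+m\chi$ side.
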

\begin{proof}
The locus of $\lambda$, where the bimodules $\A_{\lambda,m\chi}^{(\theta)}$ and
$\A^{(\theta)}_{\param,-m\chi}|_{\lambda+m\chi}$ are inverse Morita equivalences
is Zariski open and asymptotically generic. This is proved using Lemma
\ref{Lem:supp_HC}, compare to the proof of (2) of \cite[Proposition 5.17]{BL}.
The claim of our corollary follows from Lemma \ref{Lem:ab_loc_equiv}.
\end{proof}

A weaker version of this result was obtained in \cite[Section 5.3]{BPW}.

We consider the derived functors $R\Gamma_\lambda:
D^b(\operatorname{Coh}(\A_\lambda^\theta))\rightarrow D^b(\A_\lambda\operatorname{-mod})$
and $L\Loc_\lambda: D^-(\A_\lambda\operatorname{-mod})\rightarrow
D^-(\operatorname{Coh}(\A_\lambda^\theta))$ (if $\A_\lambda$ has finite homological
dimension, then $L\Loc_\lambda$ restricts to a functor between bounded derived categories).

We say that {\it derived localization} holds for $(\lambda,X)$ (or $(\lambda,\theta)$)
if $R\Gamma_\lambda$ and $L\Loc_\lambda$ are mutually inverse equivalences.
In this case, $\A_\lambda$ has finite homological dimension. In all known examples,
it is known that the converse is also true, however, this fact is not proved in  general.

\subsection{Wall-crossing functors}
Let $\theta,\theta'$ be two generic elements of $H^2(X,\mathbb{Q})$ and $\lambda\in \tilde{\param}$.
Following \cite[Section 6.3]{BPW}, we are going to produce a derived equivalence $\WC_{\theta'\leftarrow \theta}:D^b(\Coh(\A_\lambda^\theta))\xrightarrow{\sim}
D^b(\Coh(\A_\lambda^{\theta'}))$ assuming abelian localization holds for $(\lambda,\theta)$
and derived localization holds for  $(\lambda,\theta')$. Then we set  $\WC_{\theta'\leftarrow \theta}:=L\Loc_{\lambda}^{\theta'}\circ \Gamma_\lambda^\theta$. Note that this functor
is right t-exact.

We can give a different realization of $\WC_{\theta'\leftarrow \theta}$. Namely, pick $\lambda'\in
\lambda+\tilde{\param}_{\Z}$ such that abelian localization holds for $(\lambda',\theta')$. We identify
$\Coh(\A_{\lambda}^\theta)$ with $\A_\lambda\operatorname{-mod}$ by means of $\Gamma_{\lambda}^\theta$
and $\Coh(\A_{\lambda}^{\theta'})$ with $\A_{\lambda'}\operatorname{-mod}$ by means of $\Gamma_{\lambda'}^{\theta'}(
\A^{\theta'}_{\lambda,\lambda'-\lambda}\otimes_{\A_\lambda^{\theta'}}\bullet)$. Under these identifications,
the functor $\WC_{\theta'\leftarrow \theta}$ becomes $\WC_{\lambda'\leftarrow \lambda}:=\A_{\lambda,\lambda'-\lambda}^{(\theta)}\otimes^L_{\A_\lambda}\bullet$, see \cite[Section 6.4]{BPW}.

We will need to study the functor $\WC_{\lambda, \lambda'-\lambda}$ as $\lambda'-\lambda$ is fixed
and $\lambda$ varies along a suitable affine subspace of $\tilde{\param}$. Namely, we take a face
$\Gamma$ of a chamber $C$ and consider the chamber $C'$ opposite to $C$ with respect $\Gamma$.
For example, if $\Gamma=\{0\}$, then $C'=-C$, while for $\Gamma$ of codimension $1$, we get
the unique chamber $C'$ sharing the face $\Gamma$ with $C$. Now pick a parameter $\lambda_0$
such that abelian localization holds for $(\lambda,\theta)$ with $\theta$ in the interior of $C$.
Let $\param_0$ be the vector subspace in $\tilde{\param}$ spanned by $\Gamma$ and
$\param:=\lambda_0+\param_0$. Further, fix $\chi\in \tilde{\param}_{\Z}$ such that abelian
localization holds for $\lambda'_0:=\lambda_0+\chi$ and $\theta'$, an element in the interior
of $C'$.

\begin{Prop}\label{Prop:WC_equiv}
Possibly after replacing $\lambda_0$ with an element $\lambda_0+\psi$ (and $\chi$ with $\chi-\psi$), where $\psi\in \tilde{\param}_{\Z}$ and abelian localization holds for $(\lambda_0+\psi, \theta)$, we have the following:  for a Zariski generic $\lambda\in \param$,
the functor $\A_{\param,\chi}^{(\theta')}|_{\lambda}\otimes^L_{\A_\lambda}\bullet$
is an equivalence $D^b(\A_\lambda\operatorname{-mod})\xrightarrow{\sim} D^b(\A_{\lambda+\chi}\operatorname{-mod})$.
\end{Prop}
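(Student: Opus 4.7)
The plan is, for generic $\lambda \in \param$, to show that $\A_{\param,\chi}^{(\theta')}|_\lambda$ gives a (derived) Morita equivalence between $\A_\lambda$ and $\A_{\lambda+\chi}$ by establishing abelian localization at both $(\lambda, \theta')$ and $(\lambda+\chi, \theta')$ and identifying the restricted bimodule via Lemma \ref{Lem:spec}.

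First I would use Corollary \ref{Cor:ab_loc} to pick $\psi \in \tilde\param_\Z$ such that not only $(\lambda_0+\psi, \theta)$ has abelian localization (as the proposition allows) but simultaneously $(\lambda_0+\psi, \theta')$ does: the two asymptotically generic sufficient-condition loci of that corollary (for $\theta$ and for $\theta'$) have complements with asymptotic cone in $\mathcal{H}_\C$, so their intersection is still asymptotically generic and non-empty in the appropriate direction. Replacing $\lambda_0$ by $\lambda_0+\psi$ and $\chi$ by $\chi-\psi$ preserves $\lambda'_0 = \lambda_0+\chi$ together with its $\theta'$-abelian localization; after this adjustment both $\lambda_0$ and $\lambda'_0$ admit $\theta'$-abelian localization.

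Next, I claim that for generic $\lambda\in\param$, both $(\lambda, \theta')$ and $(\lambda+\chi, \theta')$ admit abelian localization. By Corollary \ref{Cor:ab_loc} the $\theta'$-ab-loc locus in $\tilde{\param}$ contains a countable intersection of Zariski open subsets (indexed by $n$ in the statement of the corollary), and this intersection meets $\param$ at $\lambda_0$ and $\param+\chi$ at $\lambda'_0$; intersected with $\param$ (resp.\ $\param+\chi$) it is thus a countable intersection of Zariski dense open subsets of $\param$, hence generic. Under these two ab-loc hypotheses, the sheaf-level object $\A^{\theta'}_{\lambda,\chi}$---which is the quantization of the invertible line bundle $\mathcal{O}(\chi)$---induces a Morita equivalence $\Coh(\A^{\theta'}_\lambda) \xrightarrow{\sim} \Coh(\A^{\theta'}_{\lambda+\chi})$, and under the ab-loc identifications $\Coh(\A^{\theta'}_\lambda)\cong\A_\lambda\operatorname{-mod}$ and $\Coh(\A^{\theta'}_{\lambda+\chi})\cong\A_{\lambda+\chi}\operatorname{-mod}$ this is realized on global sections by $\A^{(\theta')}_{\lambda,\chi}\otimes_{\A_\lambda}\bullet$, giving an equivalence of abelian, hence derived, categories.

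Finally, Lemma \ref{Lem:spec} identifies $\A^{(\theta')}_{\param,\chi}|_\lambda$ with $\A^{(\theta')}_{\lambda,\chi}$ provided $H^1(X^{\theta'}, \mathcal{O}(\chi)) = 0$; this vanishing follows from $\chi$ being ample for $\theta'$, using that $X^{\theta'}\to X^0$ is a projective contraction onto the affine $X^0$, so relative ampleness plus affineness of $X^0$ yields the cohomology vanishing. Ampleness of $\chi$ for $\theta'$ can be ensured because the original $\chi$ may be taken deep in the $\theta'$-ample cone to force abelian localization at $\lambda'_0$, and the subsequent shift $\chi-\psi$ remains in that cone since $\psi$ lies in the $\theta$-ample cone, which is on the opposite side of $\operatorname{Span}_\R\Gamma$ from the $\theta'$-ample cone. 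The main obstacle is to coordinate all these genericity conditions -- $\theta$-ab-loc at $\lambda_0$, $\theta'$-ab-loc at generic $\lambda$ and at $\lambda+\chi$, and the cohomology vanishing -- on a common generic subset of $\param$, especially when $\param_0=\operatorname{Span}_\R\Gamma$ lies entirely in $\mathcal{H}_\C$ so that the fibers $X^{\theta'}_\lambda$ need not be affine; nonetheless, each ab-loc condition is defined by a countable intersection of Zariski opens meeting $\param$ non-trivially via the reference points $\lambda_0, \lambda'_0$, which suffices for ``Zariski generic'' in the sense of the proposition.
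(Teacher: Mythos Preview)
Your argument has a genuine gap at the step where you try to arrange abelian localization for $(\lambda_0+\psi,\theta')$ (and then for generic $\lambda\in\param$). You invoke Corollary~\ref{Cor:ab_loc} as though the abelian localization locus for a given $\theta$ were itself an asymptotically generic open subset of $\tilde\param$, so that intersecting the loci for $\theta$ and $\theta'$ would still be asymptotically generic. That is not what the corollary says: the locus guaranteed there is the \emph{countable intersection} $\bigcap_{n\geqslant 0}(U_{m,\chi}-nm\chi)$ of translates of an asymptotically generic open set in the direction $-\chi$, with $\chi$ ample for the chosen stability condition. For $\theta$ this pushes $\lambda$ in the $C$-direction, for $\theta'$ in the $C'$-direction, and these are on opposite sides of $\operatorname{Span}_{\R}\Gamma$. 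There is no reason the two countable intersections should share a common point of the form $\lambda_0+\psi$, let alone meet $\param=\lambda_0+\param_0$ in a set that is Zariski generic; since $\param_0=\operatorname{Span}_{\C}\Gamma$ lies inside $\mathcal{H}_{\C}$, the argument that each translate meets $\param$ densely also breaks down. In short, you are asking $\lambda$ to be simultaneously ``sufficiently $\theta$-positive'' and ``sufficiently $\theta'$-positive'', which is exactly the tension the wall-crossing is meant to measure.

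The paper avoids this by not seeking abelian localization for $(\lambda,\theta')$ at all. It only needs \emph{derived} localization there, and obtains it via the homological duality: by Lemma~\ref{Lem:D_Gamma} and Corollary~\ref{Cor:der_loc}, derived localization for $(\lambda,\theta')$ is equivalent to derived localization for $(-\lambda,\theta')$. Now $-\lambda$ lies in $-\lambda_0+\param_0$, and after the allowed shift of $\lambda_0$ one can arrange (using Corollary~\ref{Cor:ab_loc} for $\theta'$, applied to $-\param$) that abelian localization holds for $(-\lambda,\theta')$ for a Weil generic $\lambda\in\param$; this gives derived localization for $(\lambda,\theta')$. The passage from Weil generic to Zariski generic is then a standard specialization argument. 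The duality step (and the Weil-to-Zariski reduction) is the missing idea in your proposal.
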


In the proof we will need a connection between derived localization and global sections functors and homological duality
functors that we are now going to introduce. Assume that the algebra $\A_\lambda$ has finite
homological dimension. First, we have a functor
$D_\lambda: D^b(\A_\lambda\operatorname{-mod})\xrightarrow{\sim} D^b(\A_{-\lambda}\operatorname{-mod})^{opp}$
given by $R\Hom_{\A_\lambda}(\bullet,\A_\lambda)$ (here we use the identification $\A_\lambda^{opp}
\cong \A_{-\lambda}$). Second, for a generic element $\vartheta\in \tilde{\param}_{\Q}$ we have a
functor $D^\vartheta_\lambda:D^b(\Coh(\A_\lambda^{\vartheta}))\xrightarrow{\sim} D^b(\Coh(\A_{-\lambda}^\vartheta))^{opp}$
given by $R\mathcal{H}om_{\A_\lambda^\vartheta}(\bullet,\A_\lambda^\vartheta)$.

\begin{Lem}\label{Lem:D_Gamma}
We have $D_\lambda\cong R\Gamma_{-\lambda}^\vartheta\circ  D^\vartheta_\lambda \circ L\Loc_{\lambda}^\vartheta$.
\end{Lem}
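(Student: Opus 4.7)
The plan is to unwind the right-hand side through three standard identifications and land on the definition of $D_\lambda$. First, for any $\mathcal{M}\in D^b(\Coh(\A_\lambda^\vartheta))$ and any sheaf $\mathcal{N}$ of $\A_\lambda^\vartheta$-bimodules, there is a natural isomorphism $R\Gamma \circ R\mathcal{H}om_{\A_\lambda^\vartheta}(\mathcal{M},\mathcal{N}) \cong R\Hom_{\A_\lambda^\vartheta}(\mathcal{M},\mathcal{N})$. This is the usual local-to-global identity: replace $\mathcal{M}$ by a complex of locally projective $\A_\lambda^\vartheta$-modules $\Pcal^\bullet$, use $\Gamma \circ \mathcal{H}om(\Pcal,\mathcal{N}) = \Hom(\Pcal,\mathcal{N})$ on each term, and take totalizations. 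Applied with $\mathcal{N}=\A_\lambda^\vartheta$ and $\mathcal{M}=L\Loc^\vartheta_\lambda M$, this gives
$$R\Gamma_{-\lambda}^\vartheta \circ D^\vartheta_\lambda \circ L\Loc_\lambda^\vartheta(M) \;\cong\; R\Hom_{\A_\lambda^\vartheta}\!\bigl(L\Loc_\lambda^\vartheta M,\,\A_\lambda^\vartheta\bigr),$$
where we keep track of the residual right $\A_\lambda^\vartheta$-structure, i.e.\ the left $\A_{-\lambda}$-module structure, on both sides.

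Next I would invoke the adjunction $(L\Loc_\lambda^\vartheta,R\Gamma_\lambda^\vartheta)$ recalled in the subsection on localization theorems, to rewrite
$$R\Hom_{\A_\lambda^\vartheta}\!\bigl(L\Loc_\lambda^\vartheta M,\,\A_\lambda^\vartheta\bigr) \;\cong\; R\Hom_{\A_\lambda}\!\bigl(M,\,R\Gamma_\lambda^\vartheta(\A_\lambda^\vartheta)\bigr).$$
The adjunction only uses the left $\A_\lambda^\vartheta$-structure on $\A_\lambda^\vartheta$; the right structure is carried through and becomes the residual right $\A_\lambda$-action on the target, which under the identification $\A_\lambda^{opp}\cong \A_{-\lambda}$ is precisely the left $\A_{-\lambda}$-action with which we must end up.

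Finally, the vanishing $H^i(X^\vartheta,\A_\lambda^\vartheta)=0$ for $i>0$, recorded in the subsection on quantizations, together with $\A_\lambda = \Gamma(\A_\lambda^\vartheta)$, identifies $R\Gamma_\lambda^\vartheta(\A_\lambda^\vartheta)$ with $\A_\lambda$ as an $\A_\lambda$-bimodule. Substituting yields
$$R\Gamma_{-\lambda}^\vartheta \circ D^\vartheta_\lambda \circ L\Loc_\lambda^\vartheta(M) \;\cong\; R\Hom_{\A_\lambda}(M,\A_\lambda) \;=\; D_\lambda(M),$$
naturally in $M\in D^b(\A_\lambda\operatorname{-mod})$, which is the required isomorphism.

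The only delicate point, and the one I would treat most carefully, is the bimodule bookkeeping: at each of the three steps one must verify that the residual $\A_{-\lambda}$-action matches the one on $D_\lambda(M)$, using the canonical identifications $\A_\lambda^{opp}\cong \A_{-\lambda}$ and $(\A_\lambda^\vartheta)^{opp}\cong \A_{-\lambda}^\vartheta$ from the subsection on quantizations. Once conventions are fixed this is a routine trace, and no further input (in particular no information about the position of $\vartheta$ in $\tilde{\param}$ beyond genericity) is needed.
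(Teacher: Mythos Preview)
Your proof is correct and follows essentially the same three-step route as the paper: identify $R\Gamma_{-\lambda}^\vartheta\circ D^\vartheta_\lambda$ with the global $R\Hom_{\A_\lambda^\vartheta}(\bullet,\A_\lambda^\vartheta)$, apply the $(L\Loc_\lambda^\vartheta,R\Gamma_\lambda^\vartheta)$ adjunction, and then use $R\Gamma_\lambda^\vartheta(\A_\lambda^\vartheta)\cong\A_\lambda$ from the higher cohomology vanishing. The paper phrases the last step slightly differently (checking that $R\Hom_{\A_\lambda}(M,\A_\lambda)\to R\Hom_{\A_\lambda}(M,\A_\lambda^\vartheta)$ is an isomorphism by reducing to $M=\A_\lambda$), but this is the same content; your added remarks on tracking the $\A_{-\lambda}$-structure are a welcome clarification.
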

\begin{proof}
Note that $R\Gamma_{-\lambda}^\vartheta\circ D^\vartheta_\lambda(\bullet)=R\Hom_{\A^\vartheta_\lambda}(\bullet, \A_\lambda^\vartheta)$. On the other hand, $R\Hom_{\A^\vartheta_\lambda}(\A_\lambda^\vartheta\otimes^L_{\A_\lambda}M,
\A_\lambda^\vartheta)=R\Hom_{\A_\lambda}(M, \A_\lambda^\vartheta)=R\Hom_{\A_\lambda}(M,\A_\lambda)$. Here
$R\Hom_{\A_\lambda}(M,\A_\lambda)\rightarrow R\Hom_{\A_\lambda}(M,\A_\lambda^\vartheta)$ is induced
by $\A_\lambda\rightarrow \A_\lambda^\vartheta$, it is an isomorphism for any $M$ because it is
an isomorphism for $M=\A_\lambda$.  This completes the proof.
\end{proof}

\begin{Cor}\label{Cor:der_loc}
Suppose that $\A_\lambda,\A_{-\lambda}$ have finite homological dimension. The functor $R\Gamma_{-\lambda}^\vartheta$ is
an equivalence if and only if $L\Loc_\lambda^\vartheta$ is. Equivalently, derived localization holds
for $(\lambda,\vartheta)$ if and only if it holds for $(-\lambda,\vartheta)$.
\end{Cor}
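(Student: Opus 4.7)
The plan is to feed Lemma \ref{Lem:D_Gamma} into a standard "two-out-of-three" argument about compositions of equivalences. The isomorphism
\[
D_\lambda \cong R\Gamma_{-\lambda}^\vartheta \circ D^\vartheta_\lambda \circ L\Loc_{\lambda}^\vartheta
\]
expresses the algebraic duality $D_\lambda$ as a triple composition whose middle factor $D^\vartheta_\lambda$ is always an equivalence (it is the Grothendieck-style sheaf duality on coherent $\A^\vartheta_\lambda$-modules, whose quasi-inverse, up to shift, is $D^\vartheta_{-\lambda}$). Under the hypothesis that $\A_\lambda$ and $\A_{-\lambda}$ have finite homological dimension, the functor $D_\lambda$ is also an equivalence between $D^b(\A_\lambda\operatorname{-mod})$ and $D^b(\A_{-\lambda}\operatorname{-mod})^{opp}$.

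First I would deduce the equivalence $R\Gamma_{-\lambda}^\vartheta \Leftrightarrow L\Loc_\lambda^\vartheta$. If $L\Loc_\lambda^\vartheta$ is an equivalence, then $D^\vartheta_\lambda \circ L\Loc_\lambda^\vartheta$ is one, and writing $R\Gamma_{-\lambda}^\vartheta = D_\lambda \circ (D^\vartheta_\lambda \circ L\Loc_\lambda^\vartheta)^{-1}$ shows that $R\Gamma_{-\lambda}^\vartheta$ is an equivalence. Conversely, if $R\Gamma_{-\lambda}^\vartheta$ is an equivalence, then $R\Gamma_{-\lambda}^\vartheta \circ D^\vartheta_\lambda$ is one, and $L\Loc_\lambda^\vartheta = (R\Gamma_{-\lambda}^\vartheta \circ D^\vartheta_\lambda)^{-1} \circ D_\lambda$ is an equivalence. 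This yields the first assertion.

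For the "equivalently" clause, I would use that $(L\Loc_\mu^\vartheta, R\Gamma_\mu^\vartheta)$ is an adjoint pair for every parameter $\mu$: whenever one member of such an adjoint pair is an equivalence the other one is automatically its quasi-inverse. Applied to $\mu=-\lambda$, this says $R\Gamma_{-\lambda}^\vartheta$ is an equivalence if and only if $L\Loc_{-\lambda}^\vartheta$ is, i.e.\ if and only if derived localization holds for $(-\lambda,\vartheta)$. Combining with the previous paragraph gives the equivalence of derived localization for $(\lambda,\vartheta)$ and $(-\lambda,\vartheta)$.

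The only real subtlety is verifying that $D^\vartheta_\lambda$ is a genuine equivalence of bounded derived categories of coherent modules; this is where the smoothness of $X^\vartheta$ enters (so that $\A^\vartheta_\lambda$ is locally a quantization of a smooth symplectic variety, hence has finite homological dimension in the sheaf-theoretic sense), making the dualizing sheaf trivial up to a shift and forcing $D^\vartheta_\lambda \circ D^\vartheta_{-\lambda}$ to be isomorphic to the identity. Once this is granted, everything else is formal manipulation of adjunctions.
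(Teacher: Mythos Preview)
Your argument is correct and is precisely the intended one: the paper states this as an immediate corollary of Lemma~\ref{Lem:D_Gamma} without writing out a proof, and what you have supplied is exactly the standard two-out-of-three argument (using that $D_\lambda$ and $D^\vartheta_\lambda$ are equivalences) together with the adjunction $(L\Loc^\vartheta_\mu, R\Gamma^\vartheta_\mu)$ to pass from $R\Gamma^\vartheta_{-\lambda}$ to $L\Loc^\vartheta_{-\lambda}$. There is nothing to add.
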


\begin{proof}[Proof of Proposition \ref{Prop:WC_equiv}]
It is enough to prove this claim for a Weil generic element of $\param$, compare to \cite[Section 5.3]{rouq_der}.
We only need to check that derived localization holds for $(\lambda,\theta')$, by Corollary \ref{Cor:der_loc},
this is equivalent to the claim that derived localization holds for $(-\lambda,\theta')$.
We note that (perhaps, after replacing $\lambda_0$ with $\lambda_0+\psi$ as in the statement
of the proposition) abelian localization holds for  $(-\lambda,\theta')$ (this follows from the choice of $\lambda$ (Weil generic) and Corollary \ref{Cor:ab_loc}).
The claim of the proposition follows.
\end{proof}

Now let us discuss a special class of wall-crossing functors, the long wall-crossing functors.

Pick a generic $\theta\in \tilde{\param}_{\Q}$ and $\lambda,\lambda^-\in \tilde{\param}$ subject to
the following conditions:
\begin{enumerate}
\item Abelian localization holds for $(\lambda,\theta),(-\lambda,-\theta),(\lambda^-,\theta)$.
\item $\lambda^--\lambda\in \tilde{\param}_{\Z}$.
\end{enumerate}
So we get the wall-crossing functor $\WC_{\lambda^-\leftarrow \lambda}:D^b(\A_\lambda\operatorname{-mod})
\rightarrow D^b(\A_{\lambda^-}\operatorname{-mod})$.

We want to study the behavior of this functor  on the subcategory
$D^b_{hol}(\A_\lambda)\subset D^b(\A_\lambda\operatorname{-mod})$ of all objects
with holonomic homology. Recall that we say that an $\A_\lambda$-module $M$ is
{\it holonomic} if  $\VA(M)$ intersects all leaves of $X_0$ at isotropic
subvarieties, equivalently, if $\pi^{-1}(\VA(M))$ is an isotropic subvariety
of $X$, see \cite[Section 5]{B_ineq}.

It is easy to see that $\VA(H^i(DM))\subset \VA(M)$ for all $i$. From here and  $D^2=\operatorname{id}$, it
follows that  $D$ restricts to an equivalence
$D^b_{hol}(\A_\lambda)\xrightarrow{\sim}D^b_{hol}(\A_{-\lambda})^{opp}$.

Similarly, we can define the full subcategory $D^b_{hol}(\A_\lambda^\vartheta)\subset
D^b(\Coh(\A_\lambda^\vartheta))$. It was checked in \cite[Section 4]{BL}, that
$D^\vartheta_\lambda[\frac{1}{2}\dim X]$ is a t-exact equivalence
$D^b_{hol}(\A_\lambda^\vartheta)\xrightarrow{\sim} D^b_{hol}(\A_{-\lambda}^\vartheta)$.

The functor $\WC_{\lambda^-\leftarrow \lambda}$ also restricts to
an equivalence $D^b_{hol}(\A_\lambda)\xrightarrow{\sim} D^b_{hol}(\A_{\lambda^-})$.

The following result was obtained (in a special case but the proof in the general
case is the same) in \cite[Section 4]{BL}.

\begin{Prop}\label{Prop:WC_D}
Under the assumptions (1),(2) above, there is a t-exact equivalence $D^b_{hol}(\A_{\lambda^-})\xrightarrow{\sim}
D^b_{hol}(\A_{-\lambda})^{opp}$ that intertwines $\WC_{\lambda^-\leftarrow \lambda}$
with $D$.
\end{Prop}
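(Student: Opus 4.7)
The plan is to define $\Phi := D_\lambda \circ \WC_{\lambda^- \leftarrow \lambda}^{-1}$; by construction this is a derived equivalence $D^b(\A_{\lambda^-}\operatorname{-mod}) \xrightarrow{\sim} D^b(\A_{-\lambda}\operatorname{-mod})^{opp}$ that intertwines $\WC_{\lambda^-\leftarrow\lambda}$ with $D_\lambda$. As noted in the text preceding the statement, both $\WC_{\lambda^-\leftarrow\lambda}$ and $D_\lambda$ preserve the holonomic subcategories, so $\Phi$ restricts to a functor $D^b_{hol}(\A_{\lambda^-}) \to D^b_{hol}(\A_{-\lambda})^{opp}$. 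The substance of the proposition is that this restriction is $t$-exact.

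The strategy is to decompose $D_\lambda$ and $\WC_{\lambda^- \leftarrow \lambda}$ sheaf-theoretically through the common chamber $-\theta$. Applying Lemma~\ref{Lem:D_Gamma} with $\vartheta=-\theta$ gives
\[
D_\lambda \cong R\Gamma_{-\lambda}^{-\theta} \circ D^{-\theta}_\lambda \circ L\Loc_\lambda^{-\theta},
\]
where $R\Gamma_{-\lambda}^{-\theta}$ is an abelian equivalence by the hypothesis for $(-\lambda,-\theta)$, and $L\Loc_\lambda^{-\theta}$ is a derived equivalence by Corollary~\ref{Cor:der_loc} applied to the same pair. On the wall-crossing side, Lemma~\ref{Lem:bimod_indep} identifies $\A^{(\theta)}_{\lambda,\chi}$ with $\A^{(-\theta)}_{\lambda,\chi}$ for $\chi=\lambda^--\lambda$ (after, if necessary, a Weil-generic perturbation of $\lambda,\lambda^-$ keeping the three abelian localization hypotheses in force, so that the affine span of $\lambda,\lambda^-$ avoids $\mathcal{H}_\C$), yielding
\[
\WC_{\lambda^- \leftarrow \lambda} \cong R\Gamma_{\lambda^-}^{-\theta} \circ \big(\A^{-\theta}_{\lambda,\chi} \otimes_{\A_\lambda^{-\theta}}^L \bullet\big) \circ L\Loc_\lambda^{-\theta};
\]
the middle factor is a $t$-exact sheaf-level equivalence (quantized line-bundle twist), and since the total composition is a derived equivalence, $R\Gamma_{\lambda^-}^{-\theta}$ must also be one, i.e.\ derived localization additionally holds for $(\lambda^-,-\theta)$. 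Substituting into $D_\lambda\circ\WC_{\lambda^-\leftarrow\lambda}^{-1}$ and cancelling $L\Loc_\lambda^{-\theta}\circ R\Gamma_\lambda^{-\theta}\cong \mathrm{id}$ produces
\[
\Phi \cong R\Gamma_{-\lambda}^{-\theta} \circ D^{-\theta}_\lambda \circ \big(\A^{-\theta}_{\lambda,\chi}\otimes_{\A_\lambda^{-\theta}}^L\bullet\big)^{-1} \circ L\Loc_{\lambda^-}^{-\theta}.
\]

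On holonomic subcategories, $R\Gamma_{-\lambda}^{-\theta}$ and the inverse line-bundle twist are $t$-exact unconditionally, and $D^{-\theta}_\lambda[\tfrac12\dim X]$ is $t$-exact by the fact recalled just before the statement. The only remaining factor is $L\Loc_{\lambda^-}^{-\theta}$, and I show it is $t$-exact on holonomic up to the complementary shift $[-\tfrac12\dim X]$ by applying the same Lemma~\ref{Lem:D_Gamma} analysis to $D_{\lambda^-}$: comparing the decompositions of $D_{\lambda^-}$ through $\vartheta=\theta$ (where $L\Loc_{\lambda^-}^\theta$ is an abelian equivalence by the hypothesis for $(\lambda^-,\theta)$ and $D_{\lambda^-}^\theta[\tfrac12\dim X]$ is $t$-exact on holonomic) against the one through $\vartheta=-\theta$ forces the desired $t$-exactness property for $L\Loc_{\lambda^-}^{-\theta}$. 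The two shifts by $\pm\tfrac12\dim X$ occurring in $\Phi$ cancel, establishing its $t$-exactness. The main obstacle is precisely this shift-matching verification; it is the content of the special case proved in \cite[Section 4]{BL}, and, as the paper remarks, the argument carries over to the present generality verbatim.
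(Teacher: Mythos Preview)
Your overall strategy---factoring both $D_\lambda$ and $\WC_{\lambda^-\leftarrow\lambda}$ through the chamber $-\theta$ and cancelling the common $L\Loc^{-\theta}_\lambda$---is exactly the approach of \cite[Section 4]{BL} that the paper invokes. But two steps in your execution do not go through.

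First, the appeal to Lemma~\ref{Lem:bimod_indep} via a ``Weil-generic perturbation of $\lambda,\lambda^-$'' is not legitimate: the proposition is stated for \emph{fixed} parameters satisfying (1) and (2), and you are not free to move them. In any case this detour is unnecessary: the decomposition $\WC_{\lambda^-\leftarrow\lambda}\cong G\circ L\Loc^{-\theta}_\lambda$, with $G$ the identification $\Coh(\A^{-\theta}_\lambda)\xrightarrow{\sim}\A_{\lambda^-}\operatorname{-mod}$, is just the definition $\WC_{-\theta\leftarrow\theta}=L\Loc^{-\theta}_\lambda\circ\Gamma^\theta_\lambda$ rewritten, no bimodule comparison required.

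Second---and this is the real gap---your argument that $L\Loc^{-\theta}_{\lambda^-}$ is $t$-exact on holonomics up to a shift by $-\tfrac12\dim X$ does not work. Comparing the two decompositions of $D_{\lambda^-}$ through $\theta$ and through $-\theta$ gives
\[
L\Loc^{-\theta}_{\lambda^-}\;\cong\;(D^{-\theta}_{\lambda^-})^{-1}\circ\bigl(L\Loc^{-\theta}_{-\lambda^-}\circ R\Gamma^{\theta}_{-\lambda^-}\bigr)\circ D^{\theta}_{\lambda^-}\circ L\Loc^{\theta}_{\lambda^-},
\]
and the bracketed factor is precisely the long wall-crossing $\WC_{-\theta\leftarrow\theta}$ at parameter $-\lambda^-$, which has no reason to be $t$-exact: there is no abelian-localization hypothesis at $-\lambda^-$ for either $\theta$ or $-\theta$. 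So nothing forces a ``complementary shift,'' and the claimed cancellation of two shifts is unfounded. In the actual \cite{BL} argument the identification $G$ is an \emph{abelian} equivalence (one arranges abelian localization for $(\lambda^-,-\theta)$), so $\Phi$ is simply $R\Gamma^{-\theta}_{-\lambda}\circ D^{-\theta}_\lambda\circ G^{-1}$ with a single shift coming from $D^{-\theta}_\lambda$; no second shift appears and none is needed.
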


Here is a corollary of this proposition also proved in \cite[Section 4]{BL}.

\begin{Cor}\label{Cor:fin_dim_shift}
For $M\in \A_{\lambda}\operatorname{-mod}$, the following two conditions are equivalent:
\begin{enumerate}
\item $H_i(\WC_{\lambda^-\leftarrow \lambda}M)=0$ for $i<\frac{1}{2}\dim X$.
\item $\dim M<\infty$.
\end{enumerate}
\end{Cor}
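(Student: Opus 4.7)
The plan is to invoke Proposition~\ref{Prop:WC_D}, which intertwines the long wall-crossing $\WC_{\lambda^-\leftarrow\lambda}$ with the homological duality $D_\lambda=R\Hom_{\A_\lambda}(\bullet,\A_\lambda)$ via a t-exact equivalence $F\colon D^b_{hol}(\A_{\lambda^-})\xrightarrow{\sim}D^b_{hol}(\A_{-\lambda})^{opp}$, and then to read off finite-dimensionality of $M$ from the resulting Ext-vanishing condition using Auslander regularity of $\A_\lambda$.

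First I would unwind the intertwining degree-by-degree.  Since $F$ is a t-exact equivalence to the opposite t-structure on the target, cohomological degree $i$ in the source corresponds to cohomological degree $-i$ in the target; combined with the shift $[\tfrac12\dim X]$ built into the t-exactness of the sheaf-level duality $D^\vartheta_\lambda$ (and hence, via Lemma~\ref{Lem:D_Gamma}, into the comparison between $D_\lambda$ and $R\Hom_{\A_\lambda}(\bullet,\A_\lambda)$ on the heart), this gives an identification of the form
\[
  H_i(\WC_{\lambda^-\leftarrow\lambda}M)\;\cong\;\Ext^{\,i+\tfrac12\dim X}_{\A_\lambda}(M,\A_\lambda).
\]
Under this identification, condition (1) --- vanishing of $H_i(\WC M)$ for $i<\tfrac12\dim X$ --- is equivalent to the vanishing of $\Ext^k_{\A_\lambda}(M,\A_\lambda)$ for all $k<\dim X$.

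For direction $(2)\Rightarrow(1)$: a finite-dimensional $M$ is holonomic with $\mathrm{GKdim}(M)=0$, and Auslander regularity of $\A_\lambda$ gives grade $j(M)=\dim X-\mathrm{GKdim}(M)=\dim X$, so $\Ext^k_{\A_\lambda}(M,\A_\lambda)=0$ for $k<\dim X$, and the translated form of~(1) holds.  Conversely, for $(1)\Rightarrow(2)$, the translated Ext-vanishing forces $j(M)\geq\dim X$, whence $\mathrm{GKdim}(M)=0$ by Auslander regularity, i.e., $M$ is finite-dimensional.

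The hardest part will be the degree book-keeping inside the identification of $H_i(\WC M)$ with $\Ext^{i+\tfrac12\dim X}_{\A_\lambda}(M,\A_\lambda)$: one must carefully propagate the shift $[\tfrac12\dim X]$ through the composition $D_\lambda=R\Gamma_{-\lambda}^\vartheta\circ D^\vartheta_\lambda\circ L\Loc^\vartheta_\lambda$ of Lemma~\ref{Lem:D_Gamma} and through the opposite t-structure used on the target of the equivalence $F$ of Proposition~\ref{Prop:WC_D}; once this bookkeeping is in place, the rest of the argument reduces to a standard appeal to Auslander regularity.
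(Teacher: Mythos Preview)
Your approach is essentially the one the paper intends (it defers the proof to \cite[Section 4]{BL}, and you are unpacking precisely the mechanism behind Proposition~\ref{Prop:WC_D}: intertwine $\WC_{\lambda^-\leftarrow\lambda}$ with $D_\lambda$ up to a shift by $\tfrac12\dim X$, then read off the grade). Your degree bookkeeping is correct: the t-exact equivalence $F$ of Proposition~\ref{Prop:WC_D} satisfies $F\circ\WC\cong D_\lambda[\tfrac12\dim X]$ on holonomic objects, so $H_i(\WC M)=0$ iff $\Ext^{\,i+\frac12\dim X}_{\A_\lambda}(M,\A_\lambda)=0$, and condition (1) becomes $j(M)\geqslant\dim X$.

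There is one genuine gap you should close. Proposition~\ref{Prop:WC_D} is formulated on $D^b_{hol}$, so your translation of condition (1) into Ext-vanishing is only justified once $M$ is known to be holonomic. For $(2)\Rightarrow(1)$ this is automatic, but for $(1)\Rightarrow(2)$ you must first show that (1) forces holonomicity. This is easy to insert: the identity $D_\lambda=R\Gamma^{-\theta}_{-\lambda}\circ D^{-\theta}_\lambda\circ L\Loc^{-\theta}_\lambda$ of Lemma~\ref{Lem:D_Gamma} holds on all of $D^b(\A_\lambda\operatorname{-mod})$, and under the abelian-localization hypotheses it becomes $D_\lambda=G\circ\WC_{\lambda^-\leftarrow\lambda}$ with $G$ (essentially $D^{-\theta}_\lambda$) a contravariant functor sending $D^{\leqslant a}$ into $D^{\geqslant -a}$, because $R\mathcal{H}om(\bullet,\A^{-\theta}_\lambda)$ carries modules into nonnegative cohomological degrees. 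Thus $\WC M\in D^{\leqslant -\frac12\dim X}$ already yields $D_\lambda M\in D^{\geqslant \frac12\dim X}$, i.e.\ $j(M)\geqslant\tfrac12\dim X$, so $M$ is holonomic; then your t-exact argument applies and upgrades this to $j(M)\geqslant\dim X$. You should also add one sentence justifying the Auslander--Gorenstein and Cohen--Macaulay properties of $\A_\lambda$ (so that $j(M)+\operatorname{GK}\text{-}\dim M=\dim X$); this follows from abelian localization, since $\A^\theta_\lambda$ is locally a matrix algebra over a completed Weyl algebra.
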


\section{Perversity of wall-crossing}\label{S_perv}
\subsection{Main result}
Let $(\lambda^1,\theta^1),(\lambda^2,\theta^2)\in \tilde{\param}\times \tilde{\param}_{\Q}$,
where $\theta^1,\theta^2$ are generic,  be  such that abelian localization holds and
$\lambda^2-\lambda^1\in \tilde{\param}_{\Z}$. Let $C^1,C^2$ denote  classical
chambers of $\theta^1,\theta^2$, respectively. We assume that $C^1$ and $C^2$ are opposite
to each other with respect to their common face, say $\Gamma$. In other words, there is
an interval whose midpoint is generic in $\Gamma$, while the end points are generic
in $C^1,C^2$. We are going to prove that the functor $\WC_{\lambda^2\leftarrow \lambda^1}$
is a {\it perverse equivalence} in the sense of Chuang and Rouquier, \cite[Section 2.6]{Rouquier_ICM}.

Let us recall the general definition.
Let $\mathcal{T}^1,\mathcal{T}^2$ be  triangulated categories equipped with  $t$-structures
that are homologically finite (each object in $\mathcal{T}^i$ has only finitely many nonzero
homology groups). Let $\mathcal{C}^1,\mathcal{C}^2$ denote the hearts of $\mathcal{T}^1,\mathcal{T}^2$, respectively.

We  are going to define a perverse equivalence with respect to  filtrations
$\mathcal{C}^i=\mathcal{C}_0^i\supset \mathcal{C}_1^i
\supset\ldots \supset\mathcal{C}_k^i=\{0\}$ by Serre subcategories. By definition, this is a triangulated equivalence
$\mathcal{T}^1\rightarrow \mathcal{T}^2$ subject to the following
conditions:
\begin{itemize}
\item[(P1)] For any $j$, the equivalence $\mathcal{F}$ restricts to an equivalence
$\mathcal{T}^1_{\mathcal{C}_j^1}\rightarrow \mathcal{T}^2_{\mathcal{C}_j^2}$, where
we write $\mathcal{T}^i_{\mathcal{C}_j^i}, i=1,2,$ for the category of all objects
in $\mathcal{T}^i$ with homology (computed with respect to the t-structures of interest)
in $\mathcal{C}_j^i$.
\item[(P2)] For $M\in \Cat_j^1$, we have $H_\ell(\mathcal{F}M)=0$ for $\ell<j$
and $H_\ell(\mathcal{F}M)\in \Cat^2_{j+1}$ for $\ell>j$.
\item[(P3)] The  functor $M\mapsto H_j(\mathcal{F}M)$ induces an equivalence $\Cat^1_j/\Cat^1_{j+1}\xrightarrow{\sim}
\Cat^2_j/\Cat^2_{j+1}$  of abelian categories.
\end{itemize}

Now let $\param^1,\param^2\subset \tilde{\param}$ denote the affine subspaces
$\param^i:=\lambda^i+\operatorname{Span}_{\C}(\Gamma)$. We can shift the space
$\param^1$,  see Proposition \ref{Prop:WC_equiv}, such that the derived localization
holds for a Weil generic point of this space and the stability condition $\theta^2$.
We will produce the chains of two-sided ideals
$$\A_{\param^i}:=\I_{\param^i}^0\supset \I_{\param^i}^1\supset\ldots
\I_{\param^i}^q\supset \I_{\param^i}^{q+1}=\{0\},$$
where $q=\frac{1}{2}\dim X$,  having the following property:
\begin{itemize}
\item[(*)] For a Weil generic parameter $\widehat{\lambda}^i\in \param^i$,
the specialization $\I^j_{\widehat{\lambda}^i}$ is the minimal two-sided
ideal $\I\subset \A_{\widehat{\lambda}^i}$ such that $\operatorname{GK}$-$\dim\A_{\widehat{\lambda}^i}/\I<2j$.
\end{itemize}

(*) implies that $(\I^j_{\lambda^i})^2=\I^j_{\lambda^i}$ for a Weil generic $\lambda^i\in \param^i$
and hence also for a Zariski generic $\lambda^i$. Note that the ideal $\I^j_{\lambda^i}$
is well-defined for a Zariski generic $\lambda^i$ as in the proof of  \cite[Lemma 2.9]{Cher_supp}. We set $\Cat^i_j=(\A_{\lambda^i}/\I^j_{\lambda^i})\operatorname{-mod}$. This is a Serre
subcategory of $\Cat^i$.

\begin{Thm}\label{Thm:perv}
We assume that $X^0$ has conical slices (see Definition \ref{defi:con_slices} below).
Suppose that derived localization holds for a Weil generic point  $\widehat{\lambda}^1\in\param^1$ and $\theta^2$.
Pick $\chi$ in the chamber of $\theta_2$ such that $H^1(X^{\theta_2}, \mathcal{O}(\chi))=0$.
For a Zariski generic $\lambda^1\in\param^1$ and $\chi\in \tilde{\param}_{\Z}$ such that
$\param^2=\param^1+\chi$ and abelian localization holds for $(\lambda^i,\theta^i)$, where
$\lambda^2=\lambda^1+\chi$,
the functor $\WC_{\lambda^1\rightarrow \lambda^2}$ is a perverse equivalence with respect to the filtrations
$\Cat_j^i\subset \Cat^i, i=1,2$.
\end{Thm}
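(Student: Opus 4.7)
The plan is to follow the strategy of rouq\_der, where a similar perversity theorem is proved for wall-crossing between categories $\OCat$ over rational Cherednik algebras, and to reduce condition-by-condition to the long wall-crossing input Corollary \ref{Cor:fin_dim_shift} via restriction functors to symplectic leaves. The guiding picture is that a module $M$ generically supported on a leaf $\Orb \subset X^0$ of dimension $2k$ should be cohomologically shifted by $q - k = \tfrac{1}{2}\operatorname{codim}\Orb$ under $\WC_{\lambda^1\to\lambda^2}$; this number is exactly the long wall-crossing shift on a transverse slice $Y$ to $\Orb$, which by the conical slice hypothesis is itself the singular base of a conical symplectic resolution of dimension $2(q-k)$. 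To begin, I would construct the ideal chain $\I^j_{\param^i}$ satisfying (*). By Lemma \ref{Lem:supp_HC} applied to HC quotients of $\A_{\tilde\param}$, associated varieties of such quotients are unions of symplectic leaf closures in $X_0$, and the minimal ideal with a prescribed GK-dimension bound is well-behaved under generic specialization along $\param^i$. The resulting Serre subcategories $\Cat^i_j$ consist of modules whose support is contained in the locus of leaves of dimension $< 2j$. For condition (P1), recall that $\WC_{\lambda^1\to\lambda^2}$ is the derived tensor product with the HC bimodule $\A^{(\theta^1)}_{\lambda^1,\chi}$ (Proposition \ref{Prop:WC_equiv}) and that its inverse has the same form; for Zariski generic $\lambda^1$ both bimodules have full associated variety, so tensoring preserves annihilator loci (using Lemma \ref{Lem:HC_TorExt} to keep the Tor terms HC), and hence $\WC$ restricts to an equivalence $D^b(\Cat^1_j) \xrightarrow{\sim} D^b(\Cat^2_j)$ for every $j$.

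For conditions (P2) and (P3) I would use restriction functors to the symplectic leaves. For each leaf $\Orb$ of dimension $2k$, the plan is to build an exact restriction functor $\bullet_\dagger^\Orb$ from HC bimodules over $\A_{\param^i}$ to HC bimodules over the quantized slice $\underline{\A}_{\underline{\param}^i}$, together with a compatible restriction for modules, and to check that the restriction of the wall-crossing bimodule $\A^{(\theta^i)}_{\param^i,\chi}$ is identified with the long wall-crossing bimodule on the slice. Heuristically, the common face $\Gamma$ of $C^1$ and $C^2$ lies in the subspace complementary to the transverse slice, so on the slice the wall-crossing crosses every wall, which is why one lands in the long wall-crossing setup. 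Granted this identification, any $M \in \Cat^1_j$ generically supported on $\Orb$ restricts to a finite-dimensional $\underline{\A}_{\underline{\lambda}^1}$-module $\underline M$, and Corollary \ref{Cor:fin_dim_shift} applied on the slice shows that $H_\ell(\WC M)_\dagger^\Orb$ vanishes for $\ell < q-k$, giving (P2). Moreover, Proposition \ref{Prop:WC_D} applied on the slice identifies the top homology $H_{q-k}(\WC M)_\dagger^\Orb$ with the homological dual $D(\underline M)$, which, assembled across the stratification by leaves, yields the subquotient equivalence required by (P3). Induction on the stratum (largest leaf first), combined with careful matching of the indexing of the perverse filtration with the GK-dimension indexing of $\I^\bullet$, turns these local statements into the global perversity assertion.

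The principal obstacle will be the construction of the restriction functor $\bullet_\dagger^\Orb$ for HC bimodules in the symplectic-resolution setting, together with the proof that under it the wall-crossing bimodule restricts to the long wall-crossing bimodule on the slice; this is the missing ingredient the author flags in the introduction, and requires a parallel development of classical and quantum slices, of their universal quantizations, and of uniform control as $\lambda$ varies along the affine subspace $\param^i$. A subsidiary technical hurdle is ensuring that the various genericity and vanishing hypotheses (Zariski generic $\lambda^1$, the vanishing $H^1(X^{\theta_2},\Str(\chi))=0$, abelian localization for $(\lambda^i,\theta^i)$) are simultaneously compatible with restriction, so that Lemma \ref{Lem:HC_TorExt} and the specialization statements of Lemma \ref{Lem:spec} apply uniformly along $\param^i$.
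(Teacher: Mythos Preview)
Your overall plan---follow \cite{rouq_der}, build the ideal chain, verify (P1)--(P3) using restriction functors to slices and the long wall-crossing input Corollary~\ref{Cor:fin_dim_shift}---is the same strategy the paper uses. Two points, however, diverge from the paper in ways that matter.

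First, the paper does \emph{not} work at the module level. It packages everything into a bimodule statement, Proposition~\ref{Prop:perv_techn}, exactly parallel to \cite[Proposition~6.3]{rouq_der}: claims (a)--(f) there are about Tor's and annihilators of HC bimodules, and perversity of $\WC$ on module categories is deduced formally. The restriction functor $\bullet_{\dagger,x}$ is constructed only for HC bimodules (Section~3.3), and it is Lemma~\ref{Lem:dagger_Tor_Ext} (compatibility with Tor and Ext of bimodules) that drives the induction. Your plan calls for ``a compatible restriction for modules'' so that you can restrict $M$ itself; no such functor is built here, and in this generality it is not clear one exists with the required exactness and compatibility properties. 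Rewriting your (P2)--(P3) argument at the bimodule level, as the paper does, sidesteps this entirely.

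Second, your key heuristic---that after restriction to a slice the face $\Gamma$ becomes $\{0\}$, so the restricted wall-crossing is the \emph{long} wall-crossing---is not established and is not what the paper proves. Proposition~\ref{Prop:WC_restr} shows only that $(\A^{(\theta)}_{\lambda,\chi})_{\dagger,x}\cong\underline{\A}^{(\theta)}_{\lambda,\chi}$: the restricted bimodule is again a wall-crossing bimodule of the \emph{same} type (same $\theta$, same $\chi$), through the image of $\Gamma$ in $\underline{\tilde{\param}}$. That image need not be zero. The paper's induction is organized differently: one first proves the $j=1$ case of (c)--(f) directly from Corollary~\ref{Cor:fin_dim_shift} at a Weil generic $\widehat{\lambda}^1\in\param^1$ (where the needed localization hypotheses of Proposition~\ref{Prop:WC_D} are available), and then for general $j$ one restricts to a leaf and invokes the $j=1$ case on the slice. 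So the reduction is ``general $j$ on $X$ $\Rightarrow$ $j=1$ on $\underline{X}$,'' not ``face wall-crossing on $X$ $\Rightarrow$ long wall-crossing on $\underline{X}$.'' Your intuition points to the right endpoint (finite-dimensional modules on the slice, half-dimension shift), but the justification you give for why Corollary~\ref{Cor:fin_dim_shift} applies there is the wrong one.
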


\subsection{Slices}\label{SS_slices}
Here we are going to impose an additional assumption. Pick a point $x\in X^0$.
Let $\mathcal{L}=\mathcal{L}_x$ denote the symplectic leaf through $x$. Consider the
completion $\C[X^0]^{\wedge_x}$ of $\C[X^0]$. Then we can embed $\C[\mathcal{L}_x]^{\wedge_x}$
into $\C[X^0]^{\wedge_x}$ and this embedding is unique up to a twist with a Hamiltonian
automorphism of $\C[X^0]^{\wedge_x}$, see \cite[Section 3]{Kaledin}. Moreover, $\C[X^0]^{\wedge_x}$ splits into
the completed tensor product $\C[X^0]^{\wedge_x}=\C[\mathcal{L}]^{\wedge_x}\otimes \underline{\widehat{A}}_x$,
where $\underline{\widehat{A}}_x$ is the centralizer of $\C[\mathcal{L}]^{\wedge_x}$ in $\C[X^0]^{\wedge_x}$.
Let $\underline{\widehat{X}}^0(=\underline{\widehat{X}}^0_x)$ denote the formal spectrum of $\underline{\widehat{A}}_x$,
we can view $\underline{\widehat{X}}_0$ as a formal subscheme of $X^0$ so that $X^{0\wedge_x}=
\mathcal{L}^{\wedge_x}\times \underline{\widehat{X}}_0$. We call $\underline{\widehat{X}}^0$ a slice to $x$ in
$X^0$.

\begin{defi}\label{defi:con_slices}
We say that $\underline{\widehat{X}}^0$ is {\it conical} if there is a pro-rational $\C^\times$-action on
$\underline{\widehat{A}}$ that rescales the Poisson bracket on $\underline{\widehat{A}}$ by $t^{-d}$ and whose weights
on the maximal ideal of $\underline{\widehat{A}}$ are positive. Further, we say that $X^0$
{\it has conical slices} if $\underline{\widehat{X}}^0$ is conical for all $x\in X^0$. This holds for all
examples of $X^0$  that we know.
\end{defi}

Let $\underline{A}$ denote the $\C^\times$-finite part of $\underline{\widehat{A}}$. This is a Poisson subalgebra.
Then $\underline{X}^0:=\operatorname{Spec}(\underline{A})$ is a Poisson variety with a contracting action
of $\C^\times$. Moreover, $\underline{X}^0$ admits a symplectic resolution: the formal neighborhood of
the zero fiber in the resolution of $\underline{X}^0$ coincides with the formal neighborhood of
$\rho^{-1}(x)$. So $\underline{\tilde{\param}}:=H^2(\underline{X},\C)=H^2(\rho^{-1}(x),\C)$.

Now let $\A$ be a quantization of $\C[X^0]$. We are going to produce a slice quantization of $\C[\underline{X}^0]$,
a construction that first appeared in \cite{HC} with some refinements given in \cite{W_prim}. In what follows
we assume that $d$ is even (we can always replace $d$ with its multiple by replacing the
contracting torus with its cover). Let $V:=T_x\mathcal{L}$, this is
a symplectic vector space (let $\omega_V$ denote the form).
Consider the homogenized Weyl algebra $\mathbb{A}_\hbar(V)$ with the
relations $uv-vu=\hbar^d \omega_V(u,v)$. We can consider the completion $\mathbb{A}_\hbar(V)^{\wedge_0}$
of $\mathbb{A}_\hbar(V)$ at $0\in V$. Note that this is an algebra flat over $\C[[\hbar]]$
and $\mathbb{A}_\hbar(V)^{\wedge_0}/(\hbar)=\C[\mathcal{L}]^{\wedge_x}$.

Now consider the Rees algebra $\A_\hbar$ of $\A$ and its completion $\A_\hbar^{\wedge_x}$.
We can lift the embedding $\C[\mathcal{L}]^{\wedge_x}\hookrightarrow \C[X^0]^{\wedge_x}$ to
an embedding $\mathbb{A}_\hbar(V)^{\wedge_0}\hookrightarrow \A_\hbar^{\wedge_x}$ (that is unique
up to a twist with an automorphism of the form $\exp(\hbar^{1-d}f)$ for $f\in \A_\hbar^{\wedge_x}$),
see \cite[Section 2.1]{W_prim}.
Moreover, the centralizer $\underline{\widehat{\A}}_\hbar$ of $\mathbb{A}_\hbar(V)^{\wedge_0}$
in $\A_\hbar^{\wedge_x}$ satisfies $\underline{\widehat{\A}}_\hbar/(\hbar)=\underline{\widehat{A}}$
so that we have $\A_\hbar^{\wedge_x}=\mathbb{A}_\hbar(V)^{\wedge_0}\widehat{\otimes}_{\C[[\hbar]]}
\underline{\widehat{\A}}_\hbar$.

Assume now that $\A=\A_\lambda$, where $\lambda\in \tilde{\param}$.

\begin{Lem}\label{Lem:slice_quant}
The following is true:
\begin{enumerate}
\item The action of $\C^\times$ on $\underline{\widehat{A}}$ lifts to a pro-rational $\C^\times$-action
on $\underline{\widehat{\A}}_\hbar$ by algebra automorphisms with $\hbar$ of degree $1$.
\item Let $\underline{\A}_\hbar$ denote the $\C^\times$-finite part of $\underline{\widehat{\A}}_\hbar$,
then $\underline{\A}:=\underline{\A}_\hbar/(\hbar-1)$ is the algebra of global
sections of the filtered quantization of $\underline{X}$, whose period is the pull-back
of $\lambda$ to $\rho^{-1}(x)$.
\end{enumerate}
\end{Lem}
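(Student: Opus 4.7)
The plan is to treat (1) by lifting the Euler-type generator of the $\C^\times$-action across the flat deformation $\underline{\widehat{A}} \rightsquigarrow \underline{\widehat{\A}}_\hbar$, and to deduce (2) from (1) by combining the classification of filtered quantizations with the naturality of the Bezrukavnikov--Kaledin period map.

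For (1), let $E \in \Der(\underline{\widehat{A}})$ be the pro-finite derivation generating the given $\C^\times$-action, so that $E$ rescales the Poisson bracket by $-d$. I would construct, inductively in the $\hbar$-adic filtration, a continuous $\C$-linear derivation $\tilde{E}$ of $\underline{\widehat{\A}}_\hbar$ with $\tilde{E}(\hbar)=\hbar$ and $\tilde{E}\bmod\hbar = E$. The obstruction to extending $\tilde{E}$ from order $n$ to order $n+1$ is a Hochschild $2$-cocycle on $\underline{\widehat{A}}$ of a fixed $\C^\times$-weight; since the conical-slice hypothesis forces the weights on the maximal ideal of $\underline{\widehat{A}}$ to be strictly positive, this obstruction lies in a weight component where it can be annihilated by modifying $\tilde{E}$ at the previous order by an inner derivation. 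This is precisely the lifting argument used in \cite{HC} and \cite{W_prim} in the setting of Slodowy slices. Because the eigenvalues of $\tilde{E}$ are integral and the action is pro-finite, $\tilde{E}$ exponentiates to the required pro-rational $\C^\times$-action with $\hbar$ of weight $1$.

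For (2), the positivity of the weights on the maximal ideal implies that the $\C^\times$-finite subalgebra $\underline{\A}_\hbar \subset \underline{\widehat{\A}}_\hbar$ is $\Z_{\geqslant 0}$-graded, flat over $\C[\hbar]$, and satisfies $\underline{\A}_\hbar/(\hbar)=\underline{A}=\C[\underline{X}^0]$. Specializing to $\hbar=1$ therefore produces a filtered quantization $\underline{\A}$ of $\underline{A}$. Since $\underline{X}$ is a conical symplectic resolution of $\underline{X}^0$, we have $H^{>0}(\underline{X},\mathcal{O}_{\underline{X}})=0$, so the classification of filtered quantizations recalled in the preliminaries realizes $\underline{\A}$ uniquely as the algebra of global sections of a filtered quantization sheaf on $\underline{X}$.

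The period identification comes from the naturality of $\mathsf{Per}$ under two operations applied to $\A_\hbar$: restriction to the formal neighborhood of the fiber $\rho^{-1}(x) \subset X$, and removal of the Weyl-algebra factor $\mathbb{A}_\hbar(V)^{\wedge_0}$ via the centralizer $\underline{\widehat{\A}}_\hbar$. The former sends $\lambda \in H^2(X,\C)$ to its image in $H^2(\rho^{-1}(x),\C)=\underline{\tilde{\param}}$; the latter is period-preserving because a Darboux-type splitting off a symplectic vector-space factor contributes nothing to $H^2$. The main technical step to pin down is this second naturality statement, which I would verify by unpacking the \v{C}ech definition of $\mathsf{Per}$ from \cite{BK} on an affine cover of the formal neighborhood of $\rho^{-1}(x)$.
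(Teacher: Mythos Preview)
Your approach to (1) is genuinely different from the paper's, and as written it has a gap. The paper does not lift the derivation $E$ on $\underline{\widehat{A}}$ directly through the $\hbar$-adic filtration of $\underline{\widehat{\A}}_\hbar$. Instead it exploits the fact that a $\C^\times$-action already exists on the ambient algebra $\A_\hbar$: the corresponding Euler derivation $\mathsf{eu}$ extends to the completion $\A_\hbar^{\wedge_x}\cong \mathbb{A}_\hbar(V)^{\wedge_0}\widehat{\otimes}_{\C[[\hbar]]}\underline{\widehat{\A}}_\hbar$, so one only has to compare $\mathsf{eu}$ with the desired slice derivation $\underline{\mathsf{eu}}$ at the \emph{classical} level. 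Their difference $\delta$ is a Poisson derivation of $\underline{\widehat{A}}\widehat{\otimes}\C[[V]]$, and the key input is Kaledin's result that $H^1(\rho^{-1}(x),\C)=0$, which forces the corresponding symplectic vector field on the formal neighbourhood of the fiber to be Hamiltonian. Then $\underline{\mathsf{eu}}=\mathsf{eu}-\hbar^{-d}[a,\cdot]$ for some $a$, and this visibly lifts to the quantized level.

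Your inductive argument, by contrast, never uses the resolution $\underline{X}$, and that is where it breaks down. The obstruction to extending $\tilde{E}$ at each step is a class in $HH^2(\underline{\widehat{A}})$; modifying $\tilde{E}$ by an inner derivation only changes the obstruction cocycle by a coboundary, so you need the \emph{class} to vanish, not just to live in a specific weight component. Positivity of weights on the maximal ideal does not by itself force the relevant graded pieces of $HH^2$ to be zero for a singular conical algebra. The arguments in \cite{HC} and \cite{W_prim} that you cite also rely on ambient structure (the enveloping algebra and its $\C^\times$-action), not on an intrinsic Hochschild-vanishing statement; so the analogy does not supply the missing step. If you want to salvage your route, you would need to prove the vanishing via the resolution --- e.g.\ by relating $HH^2$ to $H^2_{dR}$ of $\underline{\widehat{X}}$ and invoking the same topological input --- at which point you are essentially reproducing the paper's argument in a less direct form.

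Your outline for (2) is close to the paper's: the paper also identifies the period by restricting $\A_\hbar^\theta$ to the formal neighbourhood of $\rho^{-1}(x)$, taking the centralizer of $\mathbb{A}_\hbar(V)^{\wedge_0}$, and noting that this is a quantization of the formal slice in $X$ with period equal to the pull-back of $\lambda$. The paper does this at the sheaf level on $\underline{X}$ and then reads off global sections, which is slightly more direct than your proposed \v{C}ech computation but equivalent in content.
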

\begin{proof}
Let us prove (1). It is enough to lift the $\C^\times$-action on $\underline{\widehat{A}}\widehat{\otimes}
\C[[V]]$ to $\underline{\widehat{\A}}_\hbar\widehat{\otimes}_{\C[[\hbar]]}\mathbb{A}_\hbar(V)^{\wedge_0}$
(by changing the embedding of $V$ into the latter tensor product we may achieve that $V$ is
$\C^\times$-stable so that the $\C^\times$-action will restrict to $\underline{\widehat{\A}}_\hbar$).

The action of $\C^\times$ on $\A_\hbar$ gives rise to the Euler derivation that we denote by $\mathsf{eu}$.
The derivation extends to the completion $\A_\hbar^{\wedge_x}$ that we have identified with $\underline{\widehat{\A}}_\hbar\widehat{\otimes}_{\C[[\hbar]]}\mathbb{A}_\hbar(V)^{\wedge_0}$.
Now on $\underline{\widehat{A}}\widehat{\otimes}\C[[V]]$ we have two derivations $\mathsf{eu}$ and $\underline{\mathsf{eu}}$,
the latter comes from the $\C^\times$-action on $\underline{\widehat{A}}\widehat{\otimes}\C[[V]]$.
The difference $\delta:=\mathsf{eu}-\underline{\mathsf{eu}}$ is a Poisson derivation
of $\underline{\widehat{A}}\widehat{\otimes}\C[[V]]$. It is enough to show that it is Hamiltonian,
then we can lift $\underline{\mathsf{eu}}$ to a derivation  $\underline{\mathsf{eu}}$
of $\underline{\widehat{\A}}_\hbar\widehat{\otimes}_{\C[[\hbar]]}\mathbb{A}_\hbar(V)^{\wedge_0}$,
which is easily seen to integrate to a $\C^\times$-action.

To prove that $\delta$ is Hamiltonian, we note that both $\mathsf{eu}$ and $\underline{\mathsf{eu}}$
lift to $\underline{\widehat{X}}$. So $\delta$ also lifts to a  symplectic vector field on $\underline{\widehat{X}}$.
On the other hand, by a result of Kaledin, \cite[Corollary 1.5]{Kaledin_survey}, $H^1(\pi^{-1}(x),\C)=0$. It follows that
$\underline{\widehat{X}}$ has no 1st de Rham cohomology. This shows that $\delta$ is Hamiltonian
and finishes the proof of (1).

Let us prove (2). We can still form the completion $(\A_\hbar^{\theta})^{\wedge_{\pi^{-1}(x)}}$
that will be a formal quantization of the formal neighborhood $X^{\wedge_{\pi^{-1}(x)}}$ of
$\pi^{-1}(x)$ in $X$. The notion of  period still makes sense. It follows from the construction in \cite{BK}
that the period of $(\A_\hbar^{\theta})^{\wedge_x}$ is the pull-back $\underline{\lambda}$ of $\lambda$
to $\pi^{-1}(x)$. On the other hand, the centralizer of $\mathbb{A}_\hbar(V)^{\wedge_0}$
in $(\A_\hbar^{\theta})^{\wedge_x}$ (to be denoted by $\underline{\widehat{\A}}^{\theta}_\hbar$)
is a quantization of the slice in  $X^{\wedge_{\pi^{-1}(x)}}$. Note that the global sections of
$\underline{\widehat{\A}}^{\theta}_\hbar$ are $\underline{\widehat{\A}}_\hbar$. By the proof of (1), $\C^\times$ acts
by automorphisms of $(\A_\hbar^{\theta})^{\wedge_{\pi^{-1}(x)}}$ preserving $V$ so that, modulo $\hbar$,
we get the contracting action on $X^{\wedge_{\pi^{-1}(x)}}$. So $\C^\times$ acts on
$\underline{\widehat{\A}}^\theta_\hbar$. From here we get a filtered quantization $\underline{\A}^\theta$
of $\underline{X}$ with period $\underline{\lambda}$. By the construction, its
global sections are $\underline{\A}$. This finishes the proof of (2).
\end{proof}

\begin{Rem}
We can also consider slices for the varieties $X^0_{\tilde{\param}},X^\theta_{\tilde{\param}}$
and the algebra $\A_{\tilde{\param}}$. For the same reasons as before, we get the deformations
$\underline{X}^0_{\tilde{\param}},\underline{X}^\theta_{\tilde{\param}}$ of $\underline{X}^0,\underline{X}$.
Further, we get the quantization $\underline{\A}^\theta_{\tilde{\param}}$ of $\underline{X}_{\tilde{\param}}$
and its algebra of global sections $\underline{\A}_{\tilde{\param}}$. Part (2) of Lemma
\ref{Lem:slice_quant} shows that $\underline{\A}^\theta_{\tilde{\param}}$ is obtained from
the canonical quantization of $\underline{X}^\theta_{\tilde{\underline{\param}}}$ under the
pull-back with respect to the natural map map $\tilde{\param}\rightarrow \tilde{\underline{\param}}$.
Recall that we write $\tilde{\underline{\param}}$ for $H^2(\underline{X},\C)$.
\end{Rem}

\subsection{Restriction functors for HC bimodules}
Now we are going to define restriction functors between the categories of HC bimodules.
Namely, we pick a point $x\in X^0$. This allows us to define the slice algebras
$\underline{\A}_\lambda$ for $\A_\lambda$ and $\underline{\A}_{\tilde{\param}}$ for
$\A_{\tilde{\param}}$. We are going to produce an exact functor $\bullet_{\dagger,x}:\HC(\A_{\tilde{\param}})
\rightarrow \HC(\underline{\A}_{\tilde{\param}})$.

Pick $\B\in \HC(\A_{\tilde{\param}})$. Choose a good filtration on $\B$ and form the corresponding
Rees $\A_{\tilde{\param},\hbar}$-bimodule $\B_\hbar$. This bimodule comes with the Euler derivation
$\mathsf{eu}$. The derivation extends to the completion $\B_\hbar^{\wedge_x}$ that is an $\A_{\tilde{\param},\hbar}^{\wedge_x}$-bimodule. Similarly to \cite[Section 3.3]{HC}, we
see that $\B_\hbar^{\wedge_x}$ decomposes into the product
$\mathbb{A}_\hbar(V)^{\wedge_0}\widehat{\otimes}_{\C[[\hbar]]}\underline{\widehat{\B}}_\hbar$,
where $\underline{\widehat{\B}}_\hbar$ stands for the centralizer of $V$ in $\B_\hbar^{\wedge_x}$.
Note that we can equip $\underline{\widehat{\B}}_\hbar$ with an Euler derivation that is compatible
with the derivation $\underline{\mathsf{eu}}$ on $\underline{\widehat{\A}}_{\tilde{\param},\hbar}$.
Namely, recall, Lemma \ref{Lem:slice_quant}, that there is an element $a\in\mathbb{A}_\hbar(V)^{\wedge_0}\widehat{\otimes}_{\C[[\hbar]]}\underline{\widehat{\A}}_{\tilde{\param},\hbar}$
such that $\mathsf{eu}-\underline{\mathsf{eu}}=\hbar^{-d}[a,\cdot]$. Now we can {\it define}
the derivation $\underline{\mathsf{eu}}$ of $\B^{\wedge_x}_\hbar$ as $\mathsf{eu}-\hbar^{-d}[a,\cdot]$.
It restricts to $\underline{\widehat{\B}}_\hbar$. Then we define the $\underline{\A}_{\tilde{\param},\hbar}$-subbimodule
$\underline{\B}_\hbar$ of $\underline{\widehat{\B}}_\hbar$ as the the $\underline{\mathsf{eu}}$-finite part of
$\underline{\widehat{\B}}_\hbar$. The bimodule $\underline{\B}_\hbar$ is gradeable and is finitely generated
over $\underline{\A}_\hbar$. We set $\B_{\dagger,x}:=\underline{\B}_\hbar/(\hbar-1)\underline{\B}_\hbar$.
The assignment $\B\mapsto \B_{\dagger,x}$ is indeed a functor, it follows easily from the construction
that this functor is exact.

The following two properties of $\bullet_{\dagger,x}$ are established as in \cite[Section 5.5]{BL}.

\begin{Lem}\label{Lem:dagger_assoc_var}
The associated variety $\VA(\B_{\dagger,x})$ is the unique
conical subvariety in $\underline{X}_{0,\tilde{\param}}$ such that
$\VA(\B_{\dagger,x})\times \mathcal{L}^{\wedge_x}=\VA(\B)^{\wedge_x}$.
\end{Lem}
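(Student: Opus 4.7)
The plan is to trace the definition of $\B_{\dagger,x}$ through the passage from the Rees bimodule to its classical limit and match supports on both sides. Fix a good filtration on $\B$ and form the Rees bimodule $\B_\hbar$; by definition $\VA(\B)$ is the support of $\gr\B=\B_\hbar/(\hbar)$ viewed as a coherent sheaf on $X_{\tilde{\param}}^0$. The completion $\B_\hbar^{\wedge_x}$ decomposes, by construction, as $\mathbb{A}_\hbar(V)^{\wedge_0}\widehat{\otimes}_{\C[[\hbar]]}\underline{\widehat{\B}}_\hbar$, compatibly with the analogous decomposition of the algebra. Reducing modulo $\hbar$ yields
\[
(\gr\B)^{\wedge_x}\;=\;\C[V]^{\wedge_0}\,\widehat{\otimes}\,(\underline{\widehat{\B}}_\hbar/(\hbar)),
\]
and since $\C[\mathcal{L}]^{\wedge_x}\cong \C[V]^{\wedge_0}$ under the identification $V=T_x\mathcal{L}$, this identifies $\VA(\B)^{\wedge_x}$ with the product $\mathcal{L}^{\wedge_x}\times\Supp(\underline{\widehat{\B}}_\hbar/(\hbar))$ inside $X_{\tilde{\param}}^{0,\wedge_x}=\mathcal{L}^{\wedge_x}\times\underline{\widehat{X}}_{\tilde{\param}}^0$.

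Next, I would relate $\Supp(\underline{\widehat{\B}}_\hbar/(\hbar))$ to $\VA(\B_{\dagger,x})$. Recall that $\underline{\B}_\hbar$ is the $\underline{\mathsf{eu}}$-finite part of $\underline{\widehat{\B}}_\hbar$, and $\B_{\dagger,x}=\underline{\B}_\hbar/(\hbar-1)$. The grading induced by $\underline{\mathsf{eu}}$ makes $\underline{\B}_\hbar$ the Rees bimodule of $\B_{\dagger,x}$ for a canonical good filtration, so $\gr\B_{\dagger,x}=\underline{\B}_\hbar/(\hbar)$. Because the $\C^\times$-action on $\underline{\widehat{\A}}$ has only positive weights on the maximal ideal (the conical slice hypothesis), taking $\C^\times$-finite vectors is an exact functor from finitely generated $\underline{\widehat{\A}}_\hbar$-modules to graded $\underline{\A}_\hbar$-modules, and it is inverse to the completion functor on the classical level. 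In particular, $\Supp(\underline{\widehat{\B}}_\hbar/(\hbar))$ is precisely the formal neighborhood $\VA(\B_{\dagger,x})^{\wedge_0}$ of $0$ in $\VA(\B_{\dagger,x})\subset \underline{X}_{\tilde{\param}}^0$. Combining this with the previous paragraph yields
\[
\VA(\B)^{\wedge_x}\;=\;\mathcal{L}^{\wedge_x}\times\VA(\B_{\dagger,x})^{\wedge_0},
\]
which is the required product formula (note $\VA(\B_{\dagger,x})$ is conical since it is $\C^\times$-stable).

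For uniqueness, I would use that $\underline{X}_{\tilde{\param}}^0$ carries a contracting $\C^\times$-action with unique fixed point $0$: a closed conical subvariety $Y\subset \underline{X}_{\tilde{\param}}^0$ is recovered from its formal completion $Y^{\wedge_0}$ because the defining ideal of $Y$ in $\underline{A}_{\tilde{\param}}$ is graded and therefore determined by its image in the completion $\underline{\widehat{A}}_{\tilde{\param}}$. Thus the identity $\mathcal{L}^{\wedge_x}\times Y^{\wedge_0}=\VA(\B)^{\wedge_x}$ forces $Y=\VA(\B_{\dagger,x})$.

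The main obstacle is the exchange between $\C^\times$-finite vectors, the passage to $\hbar=1$, and the associated graded: one must check that the canonical good filtration on $\B_{\dagger,x}$ coming from $\underline{\mathsf{eu}}$ is genuinely good (finitely generated) and that its Rees bimodule recovers $\underline{\B}_\hbar$, so that the support computations for $\gr\B_{\dagger,x}$ and for $\underline{\widehat{\B}}_\hbar/(\hbar)$ match. This is where the conical slice hypothesis (and the resulting exactness and faithfulness of the $\C^\times$-finite functor, as in the corresponding step of \cite[Section 5.5]{BL}) does the real work.
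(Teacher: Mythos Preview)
Your argument is correct and is exactly the approach the paper has in mind: the paper does not spell out a proof but simply refers to \cite[Section 5.5]{BL}, and what you have written is precisely the unwinding of that argument (completion, tensor decomposition modulo $\hbar$, passage to $\C^\times$-finite vectors under the conical slice hypothesis, and recovery of a conical subvariety from its formal germ). Your identification of the key technical point---that the $\underline{\mathsf{eu}}$-grading on $\underline{\B}_\hbar$ makes it the Rees bimodule of $\B_{\dagger,x}$ for a good filtration---is the same step singled out in \cite{BL}.
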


\begin{Lem}\label{Lem:dagger_Tor_Ext}
The functor $\bullet_{\dagger,x}$ intertwines the Tor and the Ext functors. More precisely,
we get the following:
\begin{align*}
&\operatorname{Tor}^{\A_{\tilde{\param}}}_i(\B^1,\B^2)_{\dagger,x}\cong
\operatorname{Tor}^{\underline{\A}_{\tilde{\param}}}_i(\B^1_{\dagger,x}, \B^2_{\dagger,x}),\\
&\operatorname{Ext}_{\A_{\tilde{\param}}}^i(\B^1,\B^2)_{\dagger,x}\cong
\operatorname{Ext}_{\underline{\A}_{\tilde{\param}}}^i(\B^1_{\dagger,x}, \B^2_{\dagger,x}). \end{align*}
The latter equality holds for Ext's of left $\A_{\tilde{\param}}$-modules and of right modules.
\end{Lem}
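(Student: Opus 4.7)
The plan is to factor the functor $\bullet_{\dagger,x}$ into a chain of operations each of which is manifestly compatible with $\operatorname{Tor}$ and $\operatorname{Ext}$ on Harish-Chandra bimodules, and then string the compatibilities together. Concretely, we pick compatible good filtrations on $\B^1,\B^2$ and work at the Rees level: $\B^i_\hbar$ is a finitely generated graded $\A_{\tilde{\param},\hbar}$-bimodule. The construction of $\bullet_{\dagger,x}$ decomposes as (i) $\hbar$-adic/$x$-adic completion $\B^i_\hbar \rightsquigarrow \B^{i,\wedge_x}_\hbar$; (ii) passage to the centralizer of $\mathbb{A}_\hbar(V)^{\wedge_0}$, which under the tensor decomposition $\A_{\tilde{\param},\hbar}^{\wedge_x}\cong \mathbb{A}_\hbar(V)^{\wedge_0}\widehat{\otimes}_{\C[[\hbar]]}\underline{\widehat{\A}}_{\tilde{\param},\hbar}$ extracts the factor $\underline{\widehat{\B}}^i_\hbar$; (iii) taking the $\underline{\mathsf{eu}}$-finite part $\underline{\B}^i_\hbar$; and (iv) specialization at $\hbar=1$. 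The plan is to verify that each of (i)--(iv) commutes with $\operatorname{Tor}$ and $\operatorname{Ext}$ in the setting of HC bimodules.

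For (i), the completion is flat over the Noetherian Rees algebra, and the HC condition guarantees that the Rees bimodules are finitely generated, so $\operatorname{Tor}$ commutes with completion. For $\operatorname{Ext}$ one uses that $\B^1_\hbar$ admits a resolution by finitely generated projective (or free) bimodules, which is preserved by flat base change; finite generation ensures the natural map $\operatorname{Ext}^i(\B^1_\hbar,\B^2_\hbar)^{\wedge_x}\to \operatorname{Ext}^i(\B^{1,\wedge_x}_\hbar,\B^{2,\wedge_x}_\hbar)$ is an isomorphism. Step (iv) is analogous: specialization kills the non-zero-divisor $\hbar-1$, hence is flat, and HC bimodules remain finitely generated throughout, so $\operatorname{Tor}$ and $\operatorname{Ext}$ commute with it. Step (iii) uses that taking the $\C^\times$-finite part of a pro-rational $\C^\times$-module is exact and commutes with $\operatorname{Hom}$ and with tensor products between two locally finite modules; since Lemma \ref{Lem:HC_TorExt} keeps us inside HC bimodules (hence gradeable ones), this step commutes with $\operatorname{Tor}$ and $\operatorname{Ext}$.

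The main obstacle is step (ii): checking that the Weyl algebra factor $\mathbb{A}_\hbar(V)^{\wedge_0}$ drops out cleanly. Here the key point is that $\mathbb{A}_\hbar(V)^{\wedge_0}$ admits a Koszul-type resolution of its diagonal bimodule by finite free bimodules (built from the symplectic generators of $V$), and the tensor decomposition of $\A_{\tilde{\param},\hbar}^{\wedge_x}$ implies that each $\B^{i,\wedge_x}_\hbar$ is isomorphic to $\mathbb{A}_\hbar(V)^{\wedge_0}\widehat{\otimes}_{\C[[\hbar]]}\underline{\widehat{\B}}^i_\hbar$ as an $\A^{\wedge_x}_{\tilde{\param},\hbar}$-bimodule. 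Applying the Koszul resolution on the Weyl side and a free resolution on the slice side gives a bi-resolution whose total complex simultaneously computes $\operatorname{Tor}$ (respectively $\operatorname{Ext}$) over $\A_{\tilde{\param},\hbar}^{\wedge_x}$ and, after contracting the acyclic Weyl factor, over $\underline{\widehat{\A}}_{\tilde{\param},\hbar}$. This yields canonical isomorphisms
\[
\operatorname{Tor}^{\A^{\wedge_x}_{\tilde{\param},\hbar}}_i(\B^{1,\wedge_x}_\hbar,\B^{2,\wedge_x}_\hbar)\cong \operatorname{Tor}^{\underline{\widehat{\A}}_{\tilde{\param},\hbar}}_i(\underline{\widehat{\B}}^1_\hbar,\underline{\widehat{\B}}^2_\hbar),
\]
and the analogous statement for $\operatorname{Ext}$, valid for left and for right modules by the symmetry of the construction.

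Combining (i)--(iv) in the indicated order, and using Lemma \ref{Lem:HC_TorExt} to ensure that the intermediate HC bimodules on both sides are preserved by each step, delivers the claimed intertwining isomorphisms; an inductive argument on cohomological degree (using a hyper-cohomology spectral sequence comparing the two sides) handles the higher $\operatorname{Tor}$ and $\operatorname{Ext}$ uniformly, as in \cite[Section 5.5]{BL}.
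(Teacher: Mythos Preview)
Your proposal is correct and follows the same approach the paper intends: the paper does not give an independent argument but simply asserts that the lemma is ``established as in \cite[Section 5.5]{BL}'', and your four-step decomposition (completion, extracting the Weyl factor, passing to $\underline{\mathsf{eu}}$-finite vectors, specializing $\hbar=1$) is precisely the content of that reference. Your account is in fact more explicit than what the paper records; the only minor imprecision is calling step (iv) ``flat'' --- what you mean (and use) is that $\hbar-1$ acts as a non-zero-divisor on graded Rees bimodules, so the specialization is exact and compatible with $\operatorname{Tor}/\operatorname{Ext}$ on finitely generated bimodules.
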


Now pick a point $x\in X^0$ and let $\mathcal{L}$ denote the leaf through $x$.
Consider the full subcategory $\HC_{\overline{\mathcal{L}}}(\A_{\tilde{\param}})
\subset \HC(\A_{\tilde{\param}})$ consisting of all HC bimodules $\B$ such that
$\VA_0(\B)\subset \overline{\mathcal{L}}$. Also consider the full subcategory
$\HC_{fin}(\underline{\A}_{\tilde{\param}})\subset \HC(\underline{\A}_{\tilde{\param}})$
of all bimodules that have finite rank $\C[\tilde{\param}]$.
By Lemma \ref{Lem:dagger_assoc_var}, the functor $\bullet_{\dagger,x}$
restricts to $\HC_{\overline{\mathcal{L}}}(\A_{\tilde{\param}})\rightarrow
\HC_{fin}(\underline{\A}_{\tilde{\param}})$.

The following lemma is proved in the same way as a similar claim in \cite{BL},
see Proposition 5.21 in {\it loc.cit}.

\begin{Lem}\label{Lem:adj_functor}
The functor $\bullet_{\dagger,x}:\HC_{\overline{\mathcal{L}}}(\A_{\tilde{\param}})\rightarrow
\HC_{fin}(\underline{\A}_{\tilde{\param}})$ admits a right adjoint functor,
to be denoted by $\bullet^{\dagger,x}$.
\end{Lem}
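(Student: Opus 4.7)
The plan is to imitate \cite[Proposition 5.21]{BL}: construct $\underline{\B}^{\dagger,x}$ via a coinduction through the Weyl algebra factor of the slice decomposition and verify the adjunction directly. Given $\underline{\B}\in \HC_{fin}(\underline{\A}_{\tilde{\param}})$, choose a good filtration and form the $\hbar$-adic completion $\underline{\widehat{\B}}_\hbar$ of the Rees bimodule. Using the decomposition
\[
\A_{\tilde{\param},\hbar}^{\wedge_x}=\mathbb{A}_\hbar(V)^{\wedge_0}\,\widehat{\otimes}_{\C[[\hbar]]}\,\underline{\widehat{\A}}_{\tilde{\param},\hbar}
\]
from Section \ref{SS_slices}, form the $\A_{\tilde{\param},\hbar}^{\wedge_x}$-bimodule
\[
\widehat{\B}^{\mathrm{ind}}_\hbar:=\mathbb{A}_\hbar(V)^{\wedge_0}\,\widehat{\otimes}_{\C[[\hbar]]}\,\underline{\widehat{\B}}_\hbar,
\]
equipped with the $\C^\times$-action whose infinitesimal generator is $\underline{\mathsf{eu}}+\hbar^{-d}[a,\cdot\,]$ (with $a$ as in the proof of Lemma \ref{Lem:slice_quant}) combined with the standard grading on $\mathbb{A}_\hbar(V)^{\wedge_0}$. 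Let $\B^{\mathrm{ind}}_\hbar$ denote its $\C^\times$-finite part, set $\B^{\mathrm{ind}}:=\B^{\mathrm{ind}}_\hbar/(\hbar-1)$, and define $\underline{\B}^{\dagger,x}$ as the sum of images of all morphisms $\B\to \B^{\mathrm{ind}}$ sourced at HC bimodules $\B\in \HC_{\overline{\mathcal{L}}}(\A_{\tilde{\param}})$.

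The adjunction $\Hom_{\HC_{\overline{\mathcal{L}}}(\A_{\tilde{\param}})}(\B,\underline{\B}^{\dagger,x})\cong \Hom_{\HC_{fin}(\underline{\A}_{\tilde{\param}})}(\B_{\dagger,x},\underline{\B})$ is then checked by unwinding the construction. A morphism $f:\B\to \underline{\B}^{\dagger,x}$ completes at $x$ to a map $\B^{\wedge_x}_\hbar\to \widehat{\B}^{\mathrm{ind}}_\hbar$; projecting onto the second tensor factor via the Weyl algebra decomposition of Lemma \ref{Lem:slice_quant} and restricting to $\C^\times$-finite parts yields a morphism $\B_{\dagger,x}\to \underline{\B}$. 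Conversely, a morphism $\underline{\varphi}:\B_{\dagger,x}\to \underline{\B}$ tensors up to $\id\,\widehat{\otimes}\,\underline{\varphi}:\mathbb{A}_\hbar(V)^{\wedge_0}\,\widehat{\otimes}\,\underline{\widehat{\B}}_{\dagger,x,\hbar}\to \widehat{\B}^{\mathrm{ind}}_\hbar$; precomposing with the isomorphism $\B^{\wedge_x}_\hbar\xrightarrow{\sim}\mathbb{A}_\hbar(V)^{\wedge_0}\,\widehat{\otimes}\,\underline{\widehat{\B}}_{\dagger,x,\hbar}$ of Lemma \ref{Lem:slice_quant} and invoking the maximality in the definition of $\underline{\B}^{\dagger,x}$, this descends to a morphism $\B\to \underline{\B}^{\dagger,x}$, and the two assignments are mutually inverse by construction.

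The main obstacle is to verify that $\underline{\B}^{\dagger,x}$ is actually a HC bimodule, i.e.\ finitely generated over $\A_{\tilde{\param}}$, with $\VA_0\subseteq\overline{\mathcal{L}}$. The associated variety bound follows from Lemma \ref{Lem:dagger_assoc_var} applied together with the tautological equality $\VA(\mathbb{A}(V)\otimes\underline{\B})=V\times\VA(\underline{\B})$ on the slice. Finite generation is more delicate, and this is where the conical slice hypothesis (Definition \ref{defi:con_slices}) enters crucially: the positive $\C^\times$-weights on the maximal ideal of $\underline{\widehat{A}}$ guarantee that the $\C^\times$-finite part is dense and permit a Noetherian/Artin-Rees argument that descends finite generation from the completed bimodule $\widehat{\B}^{\mathrm{ind}}_\hbar$ to a global $\A_{\tilde{\param}}$-bimodule, paralleling the argument in \cite[Proposition 5.21]{BL}.
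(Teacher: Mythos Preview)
Your proposal is correct and follows the same approach as the paper: the paper does not give a self-contained proof but simply refers to \cite[Proposition 5.21]{BL}, and your sketch is a faithful expansion of that argument---build the induced bimodule $\mathbb{A}_\hbar(V)^{\wedge_0}\widehat{\otimes}\underline{\widehat{\B}}_\hbar$, pass to $\C^\times$-finite parts using the Euler derivation $\mathsf{eu}=\underline{\mathsf{eu}}+\hbar^{-d}[a,\cdot]$, and then extract the maximal HC sub-bimodule, with finite generation coming from the conical slice hypothesis.

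One minor remark: defining $\underline{\B}^{\dagger,x}$ as the sum of images of all HC morphisms into $\B^{\mathrm{ind}}$ is fine for the adjunction, but this sum is a priori only a directed union of HC sub-bimodules, so the real content of the ``finite generation'' step is that this union stabilizes; equivalently, one shows directly that $\B^{\mathrm{ind}}$ (or its $\hbar$-saturated part) is already finitely generated over $\gr\A_{\tilde{\param}}$, which is how the argument in \cite{BL} and \cite{HC} proceeds. Your invocation of the conical hypothesis for this point is exactly right.
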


Now let us study the behavior of $\bullet_{\dagger,x}$ on wall-crossing bimodules.

\begin{Prop}\label{Prop:WC_restr}
Suppose that $\lambda\in \tilde{\param}, \chi\in \tilde{\param}_{\Z}$, abelian localization
holds for $(\lambda+\chi,\theta)$ and that $H^1(X^\theta,\mathcal{O}(\chi))=0$. Then $(\A_{\lambda,\chi}^{(\theta)})_{\dagger,x}=\underline{\A}^{(\theta)}_{\lambda,\chi}$.
\end{Prop}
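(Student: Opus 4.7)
The plan is to mimic the proof of Lemma \ref{Lem:slice_quant}(2) but applied to the bimodule quantizing the line bundle $\mathcal{O}(\chi)$, then use Lemma \ref{Lem:spec} to identify the slice output with $\underline{\A}^{(\theta)}_{\lambda,\chi}$. The two hypotheses in the statement serve precisely to let us present $\A_{\lambda,\chi}^{(\theta)}$ as the global sections of a sheaf of $\A_\lambda^\theta$-$\A_{\lambda+\chi}^\theta$-bimodules whose higher cohomology vanishes, putting us into a sheaf-theoretic setup where the construction of $\bullet_{\dagger,x}$ can be read off geometrically.

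First, since $H^1(X^\theta,\mathcal{O}(\chi))=0$ and abelian localization holds for $(\lambda+\chi,\theta)$, Lemma \ref{Lem:spec} gives $\A_{\lambda,\chi}^{(\theta)} = \Gamma(\A^\theta_{\lambda,\chi})$, and the $\hbar$-adic argument extends this to $\Gamma(\A^\theta_{\lambda,\chi,\hbar})$ for the Rees sheaf. Equip $\A_{\lambda,\chi}^{(\theta)}$ with the good filtration induced from this sheaf picture. Then the completion at $x\in X^0$ used in the definition of $\bullet_{\dagger,x}$ computes the global sections of the formal-neighborhood sheaf $(\A^\theta_{\lambda,\chi,\hbar})^{\wedge_{\pi^{-1}(x)}}$, again by cohomology vanishing.

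Second, repeat the sheaf-theoretic product decomposition of the proof of Lemma \ref{Lem:slice_quant}. On the formal neighborhood of $\pi^{-1}(x)$ we have $(\A^\theta_{\lambda,\hbar})^{\wedge_{\pi^{-1}(x)}} \cong \mathbb{A}_\hbar(V)^{\wedge_0}\widehat{\otimes}_{\C[[\hbar]]}\underline{\widehat{\A}}^\theta_{\lambda,\hbar}$, and the same centralizer argument yields a decomposition $(\A^\theta_{\lambda,\chi,\hbar})^{\wedge_{\pi^{-1}(x)}} \cong \mathbb{A}_\hbar(V)^{\wedge_0}\widehat{\otimes}_{\C[[\hbar]]}\underline{\widehat{\B}}^\theta_{\lambda,\chi,\hbar}$, where $\underline{\widehat{\B}}^\theta_{\lambda,\chi,\hbar}$ is the centralizer of $\mathbb{A}_\hbar(V)^{\wedge_0}$. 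By Lemma \ref{Lem:slice_quant}(1), this centralizer carries a $\C^\times$-action, and it is a $\C^\times$-equivariant formal quantization of the pullback of $\mathcal{O}(\chi)$ to the slice formal scheme $\underline{X}^{\theta,\wedge_{\pi^{-1}(x)}}$. By the uniqueness of quantizations of line bundles on a symplectic resolution (the period classification from \cite{BK,BPW} recalled in Section 2.2), $\underline{\widehat{\B}}^\theta_{\lambda,\chi,\hbar}$ is canonically identified with the formal completion of the Rees sheaf $\underline{\A}^\theta_{\lambda,\chi,\hbar}$.

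Third, take $\C^\times$-finite vectors and set $\hbar=1$ on both sides. On the algebraic side this is exactly the definition of $(\A_{\lambda,\chi}^{(\theta)})_{\dagger,x}$; on the sheaf side it produces the filtered slice bimodule $\underline{\A}^\theta_{\lambda,\chi}$, whose global sections are $\underline{\A}^{(\theta)}_{\lambda,\chi}$. The main obstacle is checking that the algebraic and sheaf-theoretic constructions match step by step: completion of $\Gamma(\A^\theta_{\lambda,\chi,\hbar})$ at the ideal of $x\in X^0$ must agree with global sections of $(\A^\theta_{\lambda,\chi,\hbar})^{\wedge_{\pi^{-1}(x)}}$, and the Euler derivation $\underline{\mathsf{eu}}=\mathsf{eu}-\hbar^{-d}[a,\cdot]$ used to define $\bullet_{\dagger,x}$ in Section 3.3 must coincide with the sheaf-theoretic $\C^\times$-action constructed in Lemma \ref{Lem:slice_quant}(1). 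The first is a standard consequence of the vanishing of $H^{>0}(X^\theta, \A^\theta_{\lambda,\chi,\hbar})$ (via the $\hbar$-adic filtration), while the second follows because the Hamiltonian element $a$ that trivializes the Poisson derivation $\mathsf{eu}-\underline{\mathsf{eu}}$ is the same one used on the algebra and on the bimodule, its adjoint action being a derivation of the entire $(\A^\theta_{\lambda,\chi,\hbar})^{\wedge_{\pi^{-1}(x)}}$.
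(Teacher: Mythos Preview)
Your argument is essentially correct and parallels the paper's, but the mechanics differ at two points, and there is one misstatement.

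First, the misstatement: the equality $\A_{\lambda,\chi}^{(\theta)} = \Gamma(\A^\theta_{\lambda,\chi})$ is the \emph{definition} of $\A_{\lambda,\chi}^{(\theta)}$; Lemma~\ref{Lem:spec} is about compatibility of global sections with specialization along $\param$, which is not what you need here. Neither the paper's proof nor yours actually uses the abelian-localization hypothesis; it appears to be superfluous in the statement.

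Second, the structural difference. The paper does \emph{not} assert that the natural map
\[
(\A^{(\theta)}_{\lambda,\chi,\hbar})^{\wedge_x}\longrightarrow \Gamma\bigl((\A^{\theta}_{\lambda,\chi,\hbar})^{\wedge_{\pi^{-1}(x)}}\bigr)
\]
is an isomorphism. It only uses this as a comparison map, and deduces the desired identification by computing both associated gradeds independently: from $H^1(X^\theta,\mathcal{O}(\chi))=0$ one gets $\gr\A_{\lambda,\chi}^{(\theta)}=\Gamma(\mathcal{O}(\chi))$, and the formal function theorem for the \emph{coherent} sheaf $\mathcal{O}(\chi)$ then yields $\gr\bigl((\A_{\lambda,\chi}^{(\theta)})_{\dagger,x}\bigr)=\Gamma(\underline{\mathcal{O}}(\chi))$. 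The filtered comparison map is then checked to be an isomorphism on $\gr$, hence everywhere. Your route instead asserts the displayed map is an isomorphism outright (by an $\hbar$-adic argument reducing to the formal function theorem mod $\hbar$), and then identifies the centralizer sheaf with $\underline{\A}^\theta_{\lambda,\chi,\hbar}$ via the uniqueness of quantizations of line bundles recalled in Section~2.4. Both arguments are valid; yours is slightly slicker in invoking uniqueness rather than recomputing $\gr$ of the slice side, but you should be explicit that your $\hbar$-adic isomorphism claim rests on flatness of both sides over $\C[[\hbar]]$ together with the formal function theorem applied to $\mathcal{O}(\chi)$ modulo $\hbar$, which is exactly the content of the paper's equation~(\ref{eq:restr_eq1}).
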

Here and below we write $\underline{\A}^{(\theta)}_{\lambda,\chi}$ for
$\underline{\A}_{\tilde{\param}}$-bimodule defined similarly to $\A^{(\theta)}_{\lambda,\chi}$.
\begin{proof}
From $H^1(X^\theta, \mathcal{O}(\chi))=0$ we deduce that $\gr\A_{\lambda,\chi}^{(\theta)}=\Gamma(\mathcal{O}(\chi))$.
By the formal function theorem, we have  that $\Gamma(\mathcal{O}(\chi))^{\wedge_x}$ coincides
with the global sections of $\mathcal{O}(\chi)^{\wedge_{\pi^{-1}(x)}}$. It follows that
\begin{equation}\label{eq:restr_eq1}
(\A^{(\theta)}_{\lambda,\chi,\hbar})^{\wedge_x}/\hbar (\A^{(\theta)}_{\lambda,\chi,\hbar})^{\wedge_x}=
\Gamma(\mathcal{O}(\chi)^{\wedge_{\pi^{-1}(x)}})\end{equation}
Now let $\underline{\mathcal{O}}(\chi)$ denote the line bundle on $\underline{X}$ obtained by restricting
$\mathcal{O}(\chi)$. From (\ref{eq:restr_eq1}) and the construction of the functor $\bullet_{\dagger,x}$,
we conclude that
\begin{equation}\label{eq:restr_eq2}
\gr\left((\A_{\lambda,\chi}^{(\theta)})_{\dagger,x}\right)=\Gamma(\underline{\mathcal{O}}(\chi)).
\end{equation}
On the other hand, we have a natural homomorphism
$$(\A^{(\theta)}_{\lambda,\chi,\hbar})^{\wedge_x}
\rightarrow \Gamma\left( (\A^{\theta}_{\lambda,\chi,\hbar})^{\wedge_{\pi^{-1}(x)}}\right).$$
Note that
$$\Gamma\left( (\A^{\theta}_{\lambda,\chi,\hbar})^{\wedge_{\pi^{-1}(x)}}\right)=\mathbb{A}_\hbar^{\wedge_0}
\widehat{\otimes}_{\C[[\hbar]]}\Gamma\left( (\underline{\A}^{\theta}_{\lambda,\chi,\hbar})^{\wedge_{\underline{\pi}^{-1}(0)}}\right).$$
This yields a filtered bimodule homomorphism $(\A^{(\theta)}_{\lambda,\chi})_{\dagger,x}\rightarrow
\underline{\A}^{(\theta)}_{\lambda,\chi}$. The corresponding homomorphism
of the associated graded bimodules intertwines the isomorphism $
\gr\left((\A^{(\theta)}_{\lambda,\chi})_{\dagger,x}\right)\xrightarrow{\sim}\Gamma(\underline{\mathcal{O}}(\chi))$
and the inclusion $\gr \underline{\A}^{(\theta)}_{\lambda,\chi}\hookrightarrow \Gamma(\underline{\mathcal{O}}(\chi))$.
It follows that $\gr\left((\A^{(\theta)}_{\lambda,\chi})_{\dagger,x}\right)\xrightarrow{\sim}
\gr \underline{\A}^{(\theta)}_{\lambda,\chi}$ and hence
$(\A^{(\theta)}_{\lambda,\chi})_{\dagger,x}\xrightarrow{\sim}\underline{\A}^{(\theta)}_{\lambda,\chi}$.
\end{proof}

\subsection{Proof of Theorem \ref{Thm:perv}}
The proof follows the strategy of \cite[Section 6]{rouq_der} using besides
Corollary \ref{Cor:fin_dim_shift} and Proposition \ref{Prop:WC_restr}.

First, let us produce the ideals $\I^j_{\param^i}, i=1,2$. We start with $\I^1_{\param^i}$.

\begin{Lem}\label{Lem:ideal_spec}
There is an ideal $\I_{\param^i}^1\subset \A_{\param^i}$  that specializes to the minimal ideal
of finite codimension for a
Weil generic parameter in $\param^i$ and such that $\A^{\param^i}/\I^{\param^i}_1$ is finitely
generated as a module over $\C[\param^i]$.
\end{Lem}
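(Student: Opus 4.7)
The plan is to construct $\I^1_{\param^i}$ first at the level of the associated graded and then lift. Consider the collection $\mathfrak{F}$ of two-sided ideals $\I\subset\A_{\param^i}$ for which $\A_{\param^i}/\I$ is finitely generated as a $\C[\param^i]$-module; equivalently, after choosing a good filtration, $V(\gr\I)\subset X^0_{\param^i}$ is finite over $\param^i$. The poset $\mathfrak{F}$ is closed under finite intersections, since $\A_{\param^i}/(\I_1\cap\I_2)$ embeds into $\A_{\param^i}/\I_1\oplus\A_{\param^i}/\I_2$, and the goal is to find its smallest element.

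Working classically first, I consider the closed Poisson subvarieties $Y\subset X^0_{\param^i}$ that are finite over $\param^i$. Any such $Y$ is contained, fiber by fiber, in the locus of $0$-dimensional symplectic leaves, and by Kaledin's finiteness of symplectic leaves applied fiberwise the union $Z$ of all such $Y$ has finitely many irreducible components and is itself a closed Poisson subvariety finite over $\param^i$. Hence $I(Z)\subset\C[X^0_{\param^i}]$ is the smallest Poisson ideal whose quotient is finite over $\C[\param^i]$, and the induced Poisson bracket on $\C[Z]$ is trivial.

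Next I lift $I(Z)$ to a two-sided ideal $\I^1_{\param^i}\subset\A_{\param^i}$ with $\gr\I^1_{\param^i}=I(Z)$. At each closed point $x\in Z$ the corresponding symplectic leaf is a point, so the slice quantization $\underline{\A}_{x,\param^i}$ of Lemma \ref{Lem:slice_quant} quantizes the entire formal neighborhood of $x$ in $X^0_{\param^i}$; a finite-dimensional irreducible quotient of $\underline{\A}_{x,\param^i}$ over the Weil generic fiber, transferred back to $\A_{\param^i}$ via the decomposition $\A_\hbar^{\wedge_x}\cong \mathbb{A}_\hbar(V)^{\wedge_0}\widehat{\otimes}_{\C[[\hbar]]}\underline{\widehat{\A}}_{\param^i,\hbar}$, produces a two-sided ideal whose associated graded behaves correctly near $x$. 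Assembling these contributions across the finite family $Z\to\param^i$ gives $\I^1_{\param^i}$, with $\gr\I^1_{\param^i}=I(Z)$ checked by a local graded comparison.

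Finally I verify minimality of $\I^1_{\param^i}$ in $\mathfrak{F}$ and the Weil generic specialization. For any $\I\in\mathfrak{F}$ we have $\gr\I\supseteq I(Z)=\gr\I^1_{\param^i}$; the intersection $\I\cap\I^1_{\param^i}$ lies in $\mathfrak{F}$ with $\gr(\I\cap\I^1_{\param^i})$ sandwiched between $I(Z)$ and $\gr\I^1_{\param^i}$, forcing equality, and separatedness of the filtration then yields $\I\cap\I^1_{\param^i}=\I^1_{\param^i}$, i.e., $\I^1_{\param^i}\subseteq\I$. The same comparison applied fiber-wise identifies $(\I^1_{\param^i})_\lambda$ with the minimal finite-codimensional ideal of $\A_\lambda$ for Weil generic $\lambda$. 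The main obstacle is the lifting step: producing $\I^1_{\param^i}$ from the Poisson data of $I(Z)$ requires the noncommutative slice-quantization machinery of Section \ref{SS_slices}, whereas the classical step and the minimality comparison are formal consequences.
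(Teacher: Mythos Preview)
Your approach is quite different from the paper's. The paper simply invokes \cite[Lemma~5.1]{rouq_der}, replacing one ingredient: the existence and uniqueness of the minimal finite-codimensional ideal in each $\A_\lambda$ is taken from \cite[Section~4.3]{B_ineq} rather than from category~$\mathcal{O}$. That argument works entirely on the quantum side, spreading the unique minimal finite-codimensional ideal from a Weil generic fiber to the family; it never passes through the associated graded.

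Your route---construct the Poisson ideal $I(Z)$ classically and then lift---has a genuine gap precisely at the step you flag as ``the main obstacle.'' The sentence about taking ``a finite-dimensional irreducible quotient of $\underline{\A}_{x,\param^i}$ over the Weil generic fiber, transferred back to $\A_{\param^i}$ \ldots\ assembling these contributions across the finite family $Z\to\param^i$'' is not a construction: you do not say which quotient, why it exists, how the transfer via $\bullet^{\dagger,x}$ yields a two-sided ideal rather than merely a HC bimodule, how the local pieces glue, or why the resulting $\gr\I^1_{\param^i}$ equals $I(Z)$ exactly rather than merely having the same radical. The slice machinery of Section~\ref{SS_slices} provides restriction and induction functors on HC bimodules; it does not let you prescribe a two-sided ideal with a given associated graded. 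There is a secondary gap as well: from $\I\in\mathfrak{F}$ you get $V(\gr\I)\subseteq Z$, hence only $I(Z)\subseteq\sqrt{\gr\I}$, not $I(Z)\subseteq\gr\I$; so the sandwich argument for minimality, and likewise ``the same comparison applied fiber-wise'' for the Weil generic specialization, do not go through as written.
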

\begin{proof}
The proof repeats that of \cite[Lemma 5.1]{rouq_der} (using the fact that the algebra
$\A_\lambda$ has a unique minimal ideal of finite codimension, see \cite[Section 4.3]{B_ineq}, instead of appealing to
the category $\mathcal{O}$ as in \cite{rouq_der}).
\end{proof}

Now let us construct the  ideals $\I^j_{\param^i}$ for arbitrary $j$
as in \cite[Section 5.2]{rouq_der}.
We set $$\I^j_{\param^i}=\left( \bigcap_{\mathcal{L}} (\I^{1,\mathcal{L}}_{\param^i})^{\dagger,\mathcal{L}}\right)^j,$$
where the union is taken over all symplectic leaves  $\mathcal{L}\subset X_0$ with
$\dim \mathcal{L}<2j$. Here  $\I^{1,\mathcal{L}}_{\param^i}\subset \A^{\mathcal{L}}_{\param^i}$ (the slice
algebra corresponding to the leaf $\mathcal{L}$) is the ideal constructed similarly to
$\I^1_{\param^i}\subset \A_{\param^i}$.
Similarly to \cite[Lemma 5.2]{rouq_der}, we see that, for a Weil generic
$\widehat{\lambda}^i\subset \param^i$, the ideal $\I^j_{\widehat{\lambda}^i}$ is the
minimal ideal $\I\subset \A_{\widehat{\lambda}^i}$ with $\operatorname{GK-}\dim
\A_{\widehat{\lambda}^i}/\I<2j$.

Now, similarly to the proof of \cite[Theorem 6.1]{rouq_der}, Theorem
\ref{Thm:perv} follows from the next proposition. Here we pick a Zariski
generic $\lambda^1\in \param^1$ and set $\A_1:=\A_{\lambda^1},
\A_2:=\A_{\lambda^1+\chi}, \B:=\A^{(\theta^2)}_{\lambda^1,\chi},
\I_1^j:=\I^j_{\lambda^1}, \I_2^j:=\I^j_{\lambda^1+\chi}$.

\begin{Prop}\label{Prop:perv_techn}
The following is true.
\begin{itemize}
\item[(a)] For all $i,j$, we have $\I_2^j\operatorname{Tor}^{\A_1}_i(\B, \A_1/\I_1^j)=0$.
\item[(b)] For all $i,j$, we have $\operatorname{Tor}^{\A_2}_i(\A_2/\I_2^j, \B)\I_1^j=0$.
\item[(c)] We have   $\operatorname{Tor}^{\A_1}_i(\B, \A_1/\I_1^j)=0$
  for $i<n+1-j$.
\item[(d)] We have $\I_2^{j-1}\operatorname{Tor}^{\A_1}_i(\B, \A_1/\I^j_1)=
\operatorname{Tor}^{\A_2}_i(\A_2/\I_2^j,\B)\I_1^{j-1}=0$
for $i>n+1-j$.
\item[(e)]  Set $\B_{j}:=\operatorname{Tor}^{\A_1}_{n+1-j}(\B, \A_1/\I^j_1)$.
The kernel and the cokernel of the natural homomorphism $$\B_{j}\otimes_{\A_1}
\operatorname{Hom}_{\A_2}(\B_j, \A_2/\I_2^j)\rightarrow \A_2/\I_2^j$$
are annihilated by $\I_2^{j-1}$ on the left and on the right.
\item[(f)] The kernel and the cokernel of the natural homomorphism
$$\operatorname{Hom}_{\A_1}(\B_j, \A_1/\I_1^j)\otimes_{\A_2}\B_{j}
\rightarrow \A_1/\I_1^j.$$ 
are annihilated on the left and on the right by $\I_1^{j-1}$.
\end{itemize}
\end{Prop}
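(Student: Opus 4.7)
The plan is to prove the six statements simultaneously by induction on $n = \frac{1}{2}\dim X$, using the restriction functors $\bullet_{\dagger,x}$ of Section \ref{SS_slices} to reduce each claim at a leaf $\mathcal{L}$ to the analogous claim for the slice quantizations. The base case $n = 0$ is vacuous, and the inductive step relies on three inputs: Proposition \ref{Prop:WC_restr}, which identifies $\B_{\dagger,x}$ with the slice wall-crossing bimodule $\underline{\A}^{(\theta^2)}_{\lambda^1,\chi}$; Lemma \ref{Lem:dagger_Tor_Ext}, which shows that $\bullet_{\dagger,x}$ commutes with the relevant Tor and Ext functors; and the construction of $\I^j_{\param^i}$, which guarantees that at a leaf of dimension $2(j-1)$ the restriction $(\A_i/\I_i^j)_{\dagger,x}$ is the minimal finite-codimension quotient of the slice algebra.

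For the vanishing statement (c), the case $j=1$ says $\operatorname{Tor}^{\A_1}_i(\B, \A_1/\I_1^1)$ vanishes for $i<n$, which is exactly Corollary \ref{Cor:fin_dim_shift} applied to the finite-dimensional quotient from Lemma \ref{Lem:ideal_spec}. For general $j$, Lemma \ref{Lem:HC_TorExt} gives that this Tor is HC with associated variety contained in $\VA(\A_1/\I_1^j)$ of dimension $<2j$; restriction to a leaf of maximal dimension $2(j-1)$ in this variety reduces, via Proposition \ref{Prop:WC_restr} and Lemma \ref{Lem:dagger_Tor_Ext}, to the $j=1$ case on the slice of half-dimension $n-(j-1)$, yielding vanishing for $i<n+1-j$. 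Claims (a), (b), (d) follow from the same Harish-Chandra and support analysis: the relevant Tors have left and right associated varieties contained in $\VA(\A_1/\I_1^j)$ for (a), (b), and strictly smaller (dimension $<2(j-1)$) for (d) in view of the stronger slice-vanishing implied by the inductive version of (c). These containments are precisely what is needed to conclude annihilation by $\I_2^j$, $\I_1^j$ in (a), (b) and by $\I_2^{j-1}$, $\I_1^{j-1}$ in (d).

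For the biduality statements (e) and (f), the strategy is to show that the kernels and cokernels of the maps in question have associated variety of dimension $<2(j-1)$, which is equivalent to the stated annihilation. At a leaf $\mathcal{L}$ of dimension $2(j-1)$, Proposition \ref{Prop:WC_restr} and Lemma \ref{Lem:dagger_Tor_Ext} identify $(\B_j)_{\dagger,x}$ with the top Tor on the slice of half-dimension $n+1-j$; combining Corollary \ref{Cor:fin_dim_shift} with Proposition \ref{Prop:WC_D} on the slice, this top Tor is the homological duality bimodule between the finite-dimensional algebras $\underline{\A}_1/\underline{\I}_1^1$ and $\underline{\A}_2/\underline{\I}_2^1$, hence a Morita equivalence bimodule, and the restricted biduality maps become isomorphisms. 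The main obstacle I expect lies here: one must carefully match the $(\underline{\A}_2,\underline{\A}_1)$-bimodule structure transported by $\bullet_{\dagger,x}$ with the duality structure from the slice analogue of Proposition \ref{Prop:WC_D}, and check that under this identification the slice biduality map is the canonical isomorphism. This compatibility amounts to tracking how the Euler derivation used in the definition of $\bullet_{\dagger,x}$ interacts with $D^{\vartheta}_\lambda$, and should closely parallel the argument of \cite[Section 6]{rouq_der}.
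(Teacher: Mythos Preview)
Your approach matches the paper's, which in turn follows \cite[Proposition 6.3]{rouq_der}: establish the case $j=1$ from Corollary \ref{Cor:fin_dim_shift} and Proposition \ref{Prop:WC_D}, then reduce general $j$ to the $j=1$ case on a slice via the restriction functors, Lemma \ref{Lem:dagger_Tor_Ext}, and Proposition \ref{Prop:WC_restr}. The induction you frame on $n$ and the paper's reduction ``general $j$ $\Rightarrow$ $j=1$ on a slice'' are the same argument in different packaging.

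The one step you omit is the reduction from Zariski generic $\lambda^1$ to Weil generic $\lambda^1$ (Step 4 of \cite[Proposition 6.3]{rouq_der}, explicitly invoked in the paper's proof). Your assertion that the associated-variety containments ``are precisely what is needed to conclude annihilation'' by $\I_2^j$, $\I_1^j$, $\I_2^{j-1}$, $\I_1^{j-1}$ is only valid because, at a \emph{Weil} generic parameter, these ideals are the \emph{minimal} ideals with the prescribed GK-dimension bound (property ($*$) stated just before Theorem \ref{Thm:perv}). For a merely Zariski generic $\lambda^1$ this minimality can fail, and a support bound on a HC bimodule does not by itself force annihilation by $\I_2^j$. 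So one first proves (a)--(f) at Weil generic parameters, where the minimality holds and your support arguments go through, and then passes to Zariski generic by a semicontinuity argument over $\param^1$. Without this reduction your support reasoning for (a), (b), (d) has a genuine gap.
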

\begin{proof}
The proof of this proposition closely follows that of \cite[Proposition 6.3]{rouq_der}.
As in that proof (see Step 4 there), it is enough to prove (a)-(f) in the case when $\lambda^1$ is Weil
generic in $\param^1$. The proof that (a),(b) hold is the same as in Step 1 of the
proof of \cite[Proposition 6.3]{rouq_der}. To prove (c)-(f), we start with
the case of $j=1$. Here these claims follow from Corollary \ref{Cor:fin_dim_shift}. Now
the proof for arbitrary $j$ repeats that of Step 3 of the proof of \cite[Proposition 6.3]{rouq_der},
where we use  Proposition \ref{Prop:WC_restr} to show that the restriction of the wall-crossing
bimodule is still a wall-crossing bimodule (note that we can take sufficiently ample $\chi$
in the definition of a wall-crossing bimodule and hence the  cohomology vanishing
required in Proposition \ref{Prop:WC_restr} holds).
\end{proof}

\begin{Rem}\label{Rem:non_conical_slices}
All varieties $X^0$ we know have conical slices. One can prove Theorem \ref{Thm:perv} even without this assumption,
but the proof is considerably more technical.
\end{Rem}

\subsection{Application to Etingof's conjecture}
Here we consider a quiver $Q$ of affine type. We use the notation from
Section \ref{SS_quiver}.

We consider the category $\A^\theta_\lambda(v,w)\operatorname{-mod}_{\rho^{-1}(0)}$ of all coherent $\A^\theta_\lambda(v,w)$-modules supported at $\rho^{-1}(0)$.
We are going to describe $K_0(\A^\theta_\lambda(v,w)\operatorname{-mod}_{\rho^{-1}(0)})$ (we always consider complexified
$K_0$)  confirming \cite[Conjecture 1.1]{BL} when $Q$ is affine. The dimension of this $K_0$ coincides with the number of finite dimensional irreducible representations of $\A_\lambda(v,w)$ provided $\lambda$
the homological dimension of $\A_\lambda(v,w)$ is finite, see, e.g., \cite[Section 1.5]{BL}.

Let us write $\nu$ for the dominant weight of $\g(Q)$ with labels $w_i$. Further, we set
$\nu=\omega-\sum_{i\in Q_0}v_i\alpha_i$, where we write $\alpha_i$ for the simple root
of $Q$ corresponding to $i\in Q_0$.
Recall that, by \cite{Nakajima}, the homology group $H_{mid}(\M^\theta(v,w))$ (where ``mid'' stands for $\dim_\C \M^\theta(v,w)$) is identified with the weight space $L_\omega[\nu]$ of weight $\nu$ in  the irreducible integrable $\g(Q)$-module $L_\omega$ with highest weight $\omega$.
Further, by \cite{BarGin}, we have a natural inclusion $K_0(\A_\lambda(v,w)\operatorname{-mod}_{\rho^{-1}(0)})\hookrightarrow H_{mid}(\M^\theta(v,w))$ given by the characteristic cycle map $\CC_\lambda$.  We want to describe the image of $\CC_\lambda$.

Following \cite[Section 3]{BL}, we define a subalgebra $\a(=\a_\lambda)\subset\g(Q)$ and an $\a$-submodule $L_\omega^{\a}\subset L_\omega$. By definition, $\a$ is spanned by the Cartan subalgebra $\mathfrak{t}\subset \g(Q)$ and all root spaces $\g_\beta(Q)$ where $\beta=\sum_{i\in Q_0}b_i\alpha^i$ is a real root with $\sum_{i\in Q_0}b_i\lambda_i\in \Z$. For $L_\omega^{\a}$ we take the $\a$-submodule of $L_\omega$ generated by the extremal weight spaces (those where the weight is conjugate to the highest one under the action of the Weyl group).

\begin{Thm}\label{Thm:counting}
Let $Q$ be of affine type.  The image of $K_0(\A^\theta_\lambda(v,w)\operatorname{-mod}_{\rho^{-1}(0)})$ in $L_\omega[\nu]$
under $\CC_\lambda$ coincides with $L_\omega^\a\cap L_\omega[\nu]$.
\end{Thm}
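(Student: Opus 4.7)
The plan is to reduce to a Weil generic $\lambda$ in its integral class and then use perversity together with restriction to slices in order to compare $K_0(\A^\theta_\lambda(v,w)\operatorname{-mod}_{\rho^{-1}(0)})$ with the same invariant for quantizations of smaller Nakajima varieties attached to the symplectic slices. Concretely, I would choose $\lambda$ so that $\a = \a_\lambda$ is determined precisely by the set of real roots $\beta$ with $\langle\beta,\lambda\rangle\in\Z$, and then establish both containments between $\CC_\lambda(K_0)$ and $L_\omega^\a\cap L_\omega[\nu]$ separately. The first containment is a finiteness/support estimate; the second is a generation statement driven by wall-crossing.

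For the containment $\CC_\lambda(K_0)\subseteq L_\omega^\a[\nu]$, the key tool is the long wall-crossing functor together with its perversity. By Corollary \ref{Cor:fin_dim_shift}, a class $[M]$ lies in the image of $K_0$ of modules supported at $\rho^{-1}(0)$ iff the appropriate cohomological shift criterion for $\WC_{-\lambda\leftarrow\lambda}M$ is satisfied. Theorem \ref{Thm:perv} equips $\WC_{-\lambda\leftarrow\lambda}$ with a perverse filtration by ideals $\I^j$ whose associated graded pieces, via Proposition \ref{Prop:WC_restr}, are controlled by the slice wall-crossing bimodules $\underline{\A}^{(\theta)}_{\lambda,\chi}$. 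The slice algebras at leaves of $\M(v,w)$ are themselves (products of) quantized affine-type Nakajima varieties, and Lemma \ref{Lem:leaves} shows that only slices whose relevant dimension vector is a real root $\beta\in\a$ contribute nonzero finite-dimensional pieces. Propagating this through the perverse filtration shows that every class in $\CC_\lambda(K_0)$ lies in the span of weight spaces obtained from extremal vectors by successive applications of real-root $\sl_2$-subalgebras inside $\a$, that is, inside $L_\omega^\a[\nu]$.

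For the reverse containment I would argue by induction on $\sum_{i}v_i$. The base case consists of those $\nu$ which are extremal for the integrable $\a$-action; for such $\nu$ the slice analysis in the previous paragraph degenerates (all proper slices have trivial contribution), so Nakajima's identification of $L_\omega[\nu]$ with $H_{mid}(\M^\theta(v,w))$ combined with the fact that the whole middle-dimensional irreducible component classes can be realized as characteristic cycles of honest $\A_\lambda(v,w)$-modules supported at $\rho^{-1}(0)$ gives $\CC_\lambda(K_0)=L_\omega[\nu]=L_\omega^\a[\nu]$. For the inductive step, let $\beta$ be a real root in $\a$; wall-crossing across a wall normal to $\beta$ (using Proposition \ref{Prop:WC_equiv} and the perversity of Theorem \ref{Thm:perv}) preserves the subcategory of modules supported on $\rho^{-1}(0)$ and induces on $K_0$ the simple reflection $s_\beta$ acting on $L_\omega[\nu]$. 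Since $L_\omega^\a$ is by definition the $\a$-submodule generated by extremal weight spaces, repeated application of such reflections together with the Cartan action transport the extremal classes onto a spanning set for $L_\omega^\a[\nu]$.

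The main obstacle is the comparison statement used in the inductive step: one must show that an integral wall-crossing through a wall corresponding to a real root $\beta\in\a$ realises, at the level of $K_0$ of modules supported on $\rho^{-1}(0)$, precisely the reflection $s_\beta$ on $L_\omega$ and that it is compatible with the restriction functors $\bullet_{\dagger,x}$. This is where Theorem \ref{Thm:perv} and Proposition \ref{Prop:WC_restr} do the essential work: the former provides the ideal filtration $\I^j$ compatible with the slice stratification, and the latter identifies the restriction of the wall-crossing bimodule with the wall-crossing bimodule of the slice algebra. Once this matching is in hand, the slice algebras (smaller-rank affine Nakajima quantizations) satisfy the conjecture by induction on $\sum v_i$, and the two containments above combine to give the desired identification of the image of $\CC_\lambda$ with $L_\omega^\a\cap L_\omega[\nu]$.
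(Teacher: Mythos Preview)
Your proposal is more of a strategic outline than a proof, and it misses the actual structure of the argument in the paper, which leans heavily on reductions already carried out in \cite{BL}. In the paper, the containment $L_\omega^{\a}\cap L_\omega[\nu]\subset \operatorname{im}\CC_\lambda$ is simply quoted from \cite[Section 3.4]{BL}; there is no need for your inductive argument with reflections $s_\beta$. The substantive direction is the opposite one, and here the paper does \emph{not} attempt a direct slice-by-slice analysis as you sketch. Instead it invokes a chain of reductions from \cite[Sections 6, 8]{BL}: equality holds if there are no nontrivial ``extremal'' finite-dimensional modules, and this in turn follows once one knows that the wall-crossing functor through the single wall $\ker\delta$ (where $\delta$ is the indecomposable imaginary root) does not attain the maximal homological shift $\frac{1}{2}\dim\M^\theta(v,w)$. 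The only new ingredient supplied here is this last fact: by Theorem~\ref{Thm:perv} (perversity), the maximal shift can only occur if the ideal $\I^1_{\param^1}$ is proper, i.e., if $\A_{\param^1}(v,w)/\I^1_{\param^1}\neq 0$; but that quotient is finite over $\C[\param^1]$, so its associated graded (a Poisson quotient) would force $\M_p(v,w)$ to have a point leaf for generic $p\in\ker\delta$. Lemma~\ref{Lem:leaves} then rules this out precisely because $\delta$ is \emph{imaginary}, not real.

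The gap in your write-up is exactly this: you never isolate the role of the imaginary root. Your claim that ``Lemma~\ref{Lem:leaves} shows that only slices whose relevant dimension vector is a real root $\beta\in\a$ contribute nonzero finite-dimensional pieces'' gestures at the right lemma, but the way you deploy it---feeding a general slice decomposition through the perverse filtration of the \emph{long} wall-crossing and asserting that ``propagating this through'' lands you in $L_\omega^{\a}[\nu]$---is not an argument; you would have to explain concretely how characteristic cycle classes interact with the filtrations $\I^j$ and why the resulting span is exactly the $\a$-submodule generated by extremal vectors. The paper sidesteps all of that by reducing to one specific wall, the only wall whose normal direction is not a real root, and observing that perversity plus Lemma~\ref{Lem:leaves} kill the top shift there. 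If you want to salvage your approach, you would need to reproduce (or replace) the reductions of \cite[Sections 6, 8]{BL} on your own; as written, those steps are simply absent.
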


\begin{proof}[Proof of Theorem \ref{Thm:counting}]
It was checked in \cite[Section 3.4]{BL} that $L_\omega^{\a}\cap L_\omega[\nu]$ is contained in
the image of $\CC_\lambda$. According to \cite[Section 6]{BL}, to show the equality one needs to check that
there are no notrivial {\it extremal} finite dimensional modules (defined in \cite[Section 6.4]{BL}).
This, in turn, follows if one proves
that the wall-crossing through the wall $\ker\delta$ (where $\delta$ stands for the indecomposable
imaginary root of $Q$) cannot have a homological shift of
$\frac{1}{2}\dim \M^\theta(v,w)$. This reduction was obtained in \cite[Section 8]{BL}.

So let us check that the homological shift for the wall-crossing to $\ker\delta$
is less than $\dim \M^\theta(v,w)/2$.
Thanks to Theorem \ref{Thm:perv}, it is enough to prove the following. Let $\param^1$ be an affine
subspace in $\tilde{\param}$ with associated vector space $\ker\delta$.
We need to show that the ideal $\I^1_{\param^1}$ coincides with the algebra $\A_{\param^1}(v,w)$
(at least after localization to a Zariski generic locus).
The quotient $\A_{\param^1}(v,w)/\I^1_{\param^1}$ is finitely generated over $\C[\param^1]$
and so is $\gr\left(\A_{\param^1}(v,w)/\I^1_{\param^1}\right)$. But $\gr \I^1_{\param^1}$
is a Poisson ideal. What remains to prove is that the variety $\M_p(v,w)$ has no symplectic leaves
that are single points as long as $p\in \ker\delta$ is Zariski generic (provided $\M_p(v,w)$ is not a
point itself). This follows from Lemma \ref{Lem:leaves}.
\end{proof}

\subsection{Wall-crossing bijections and annihilators}\label{SS_wc_bij}
We use the notation of Theorem \ref{Thm:perv}. Being perverse, the wall-crossing
functor $\WC_{\lambda^2\leftarrow \lambda^1}$ induces a bijection
$\mathfrak{wc}_{\lambda^2\leftarrow \lambda^1}:\operatorname{Irr}(\A_{\lambda^1})\rightarrow
\operatorname{Irr}(\A_{\lambda^2})$ between the sets of irreducible modules
(to be called the {\it wall-crossing bijection}). In this section, we are going to
investigate a compatibility of these bijections with the annihilators.

The following proposition generalizes the left cell part of  \cite[Theorem 1.1(i)]{cacti}.

\begin{Prop}\label{Prop:wc_bij_annih}
Let $N^1,N^2\subset \operatorname{Irr}(\A_{\lambda^1})$ be such that $\operatorname{Ann}_{\A_{\lambda^1}}(N^1)
= \operatorname{Ann}_{\A_{\lambda^1}}(N^2)$. Let $M^i:=\mathfrak{wc}_{\lambda^2\leftarrow \lambda^1}(N^i), i=1,2$.
Then $\operatorname{Ann}_{\A_{\lambda^2}}(M^1)= \operatorname{Ann}_{\A_{\lambda^2}}(M^2)$.
\end{Prop}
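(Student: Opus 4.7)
The plan is to set $J_1 := \operatorname{Ann}_{\A_{\lambda^1}}(N^1) = \operatorname{Ann}_{\A_{\lambda^1}}(N^2)$ and produce a two-sided ideal $J_2 \subseteq \A_{\lambda^2}$ depending only on $J_1$ such that $\operatorname{Ann}(M^\ell) = J_2$ for $\ell = 1, 2$. Because the ideals $\I^j_{\lambda^i}$ are characterized by the $\operatorname{GK}$-dimension of $\A_{\lambda^i}/\I^j_{\lambda^i}$ (property~(*) preceding Theorem~\ref{Thm:perv}), the depth of an irreducible $L$ in the perverse filtration is a function of $\operatorname{GK-dim}(\A_{\lambda^i}/\operatorname{Ann}(L))$. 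Hence $N^1, N^2$ share a common depth $j$, and by the perverse property~(P1) applied to Theorem~\ref{Thm:perv} so do $M^1, M^2$.

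Write $\B := \A^{(\theta^2)}_{\lambda^1,\chi}$ for the HC bimodule giving $\WC_{\lambda^2\leftarrow\lambda^1} = \B \otimes^L_{\A_{\lambda^1}} \bullet$, and form the complex of HC $(\A_{\lambda^2},\A_{\lambda^1}/J_1)$-bimodules
\[
\mathcal{K} := \B \otimes^L_{\A_{\lambda^1}} (\A_{\lambda^1}/J_1),
\]
whose cohomologies $H^q(\mathcal{K})$ are HC by Lemma~\ref{Lem:HC_TorExt}; set $J_2 := \bigcap_q \operatorname{Ann}_{\A_{\lambda^2}} H^q(\mathcal{K})$, annihilators taken with respect to the left action. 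For the easy inclusion $J_2 \subseteq \operatorname{Ann}(M^\ell)$: since $J_1 N^\ell = 0$ we may regard $N^\ell$ as an $\A_{\lambda^1}/J_1$-module, whence $\WC(N^\ell) \cong \mathcal{K} \otimes^L_{\A_{\lambda^1}/J_1} N^\ell$. The Grothendieck spectral sequence for this composition of derived functors has terms of the form $\operatorname{Tor}_\bullet^{\A_{\lambda^1}/J_1}(H^q(\mathcal{K}), N^\ell)$ and converges to $H^\bullet(\WC(N^\ell))$; the left $\A_{\lambda^2}$-action on each term factors through that on $H^q(\mathcal{K})$, so $J_2$ annihilates every term, hence every cohomology of $\WC(N^\ell)$, and in particular the subquotient $M^\ell$ of $H_j(\WC(N^\ell))$.

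For the reverse inclusion $\operatorname{Ann}(M^\ell) \subseteq J_2$, the plan is to reduce to a slice at a generic point $x$ of the unique open leaf $\mathcal{L} \subseteq \VA_0(\A_{\lambda^1}/J_1)$. By Lemma~\ref{Lem:dagger_Tor_Ext} the functor $\bullet_{\dagger,x}$ commutes with the derived tensor products defining $\mathcal{K}$; by Proposition~\ref{Prop:WC_restr}, $\B_{\dagger,x}$ is the wall-crossing bimodule $\underline{\B}$ between the slice algebras $\underline{\A}_{\lambda^1},\underline{\A}_{\lambda^2}$; and by Lemma~\ref{Lem:dagger_assoc_var}, $(\A_{\lambda^1}/J_1)_{\dagger,x}$ is a finite-dimensional $\underline{\A}_{\lambda^1}$-bimodule of the form $\underline{\A}_{\lambda^1}/\underline{J}_1$. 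Hence $\mathcal{K}_{\dagger,x}$ is the slice-level analogue of $\mathcal{K}$, and both $\operatorname{Ann}(M^\ell)_{\dagger,x}$ and $(J_2)_{\dagger,x}$ live in finite-dimensional algebras, so the inclusion reduces to Wedderburn theory for the semisimplification of $\underline{\A}_{\lambda^2}/\underline{J}_2$. The slice-level inclusion then lifts to $\A_{\lambda^2}$: both $\operatorname{Ann}(M^\ell)$ (by the depth of $M^\ell$) and $J_2$ (by Proposition~\ref{Prop:perv_techn}(a) applied to the surjection $\A_{\lambda^1}/\I^{j+1}_{\lambda^1} \twoheadrightarrow \A_{\lambda^1}/J_1$) contain $\I^{j+1}_{\lambda^2}$, and an ideal of $\A_{\lambda^2}$ above $\I^{j+1}_{\lambda^2}$ is determined by its slice restriction at a generic point of each leaf of dimension $2j$.

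The principal obstacle is the reverse inclusion, which involves two delicate points: first, verifying intrinsically that $(J_2)_{\dagger,x}$ equals $\bigcap_q \operatorname{Ann}_{\underline{\A}_{\lambda^2}} H^q(\mathcal{K}_{\dagger,x})$, which uses exactness of $\bullet_{\dagger,x}$ and the compatibility of restriction with the HC bimodule structure from Lemma~\ref{Lem:dagger_Tor_Ext}; and second, the density statement that an ideal of $\A_{\lambda^2}$ containing $\I^{j+1}_{\lambda^2}$ is recovered from its slice restriction at a single leaf of dimension $2j$, which is a standard associated-variety argument in the spirit of Section~\ref{SS_slices} but requires some bookkeeping to exclude contributions from deeper leaves.
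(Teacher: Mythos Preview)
Your easy inclusion $J_2\subseteq\operatorname{Ann}(M^\ell)$ is fine. The reverse inclusion, however, has a genuine gap that your slice reduction does not close. The restriction functor $\bullet_{\dagger,x}$ is defined only for HC \emph{bimodules}; there is no way in your framework to access $M^\ell$ at the slice level, and hence no intrinsic description of $(\operatorname{Ann}(M^\ell))_{\dagger,x}$ in terms of slice data. Your appeal to ``Wedderburn theory for the semisimplification of $\underline{\A}_{\lambda^2}/\underline{J}_2$'' is therefore not a reduction at all: even granting that both $(\operatorname{Ann}(M^\ell))_{\dagger,x}$ and $(J_2)_{\dagger,x}$ are cofinite ideals in $\underline{\A}_{\lambda^2}$, nothing in what you have written forces the first to be contained in the second. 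The density statement you flag as delicate is indeed needed, but even if it were available you would still lack the slice-level inclusion it is supposed to globalize.

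The paper bypasses your candidate $J_2$ entirely. It observes that $\B\otimes^L_{\A_1}\bullet$ is a perverse equivalence already on \emph{HC bimodules}, not just on modules. By Borho--Kraft, the primitive quotient $\A_1/J_1$ has a simple socle $S$ (the quotient $(\A_1/J_1)/S$ has strictly smaller GK-dimension, hence sits in the next filtration step). Let $T$ be the simple $\A_2$-$\A_1$-bimodule corresponding to $S$ under the perverse bijection on bimodules; this depends only on $J_1$. The paper then shows $\operatorname{Ann}_{\A_2}(M^\ell)=\operatorname{LAnn}_{\A_2}(T)$: from the explicit description of the perverse bijection via $\B_\ell=\operatorname{Tor}^{\A_1}_\ell(\B,\A_1/\I^j_1)$ one gets $T\otimes_{\A_1}N^\ell\twoheadrightarrow M^\ell$, so $\operatorname{LAnn}(T)\subseteq\operatorname{Ann}(M^\ell)$; both ideals are primitive with the same associated variety $\VA(\A_1/J_1)$, and Borho--Kraft again forces equality. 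This never leaves the bimodule world, so the obstruction above does not arise.
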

\begin{proof}
Let us write $\A_j$ for $\A_{\lambda^j}$ and $\B$ for the wall-crossing $\A_2$-$\A_1$-bimodule.
Let $\J:=\operatorname{Ann}_{\A_1}(N^i),  i=1,2$.

Note that $\B\otimes^L_{\A_1}\bullet$ is a perverse equivalence between
$\operatorname{HC}(\A_1)$ and $\operatorname{HC}(\A_2\text{-}\A_1)$
(viewed as hearts of standard t-structures on the full subcategories
of $D^b(\A_1\operatorname{-bimod}),D^b(\A_2\text{-}\A_1\operatorname{-bimod})$
of all complexes with HC homology). The filtrations are again defined by the
annihilation by the ideals $\I^j_{\lambda^1},\I^j_{\lambda^2}$ from the left.

Consider the HC $\A_1$-bimodule $\A_1/\J$. The ideal $\J$ is primitive hence prime
and so, by classical results of Borho and Kraft, \cite[Corollar 3.6]{BoKr}, the inclusion $\J\subsetneq \tilde{\J}$
implies $\operatorname{GK-}\dim \A_1/\J>\operatorname{GK-}\dim \A_1/\tilde{\J}$. It follows
that the HC bimodule $\A_1/\J$ has simple socle, say $S$. Let $T$ be the corresponding
simple $\A_2$-$\A_1$-bimodule. We claim that $\operatorname{Ann}_{\A_2}(M^j)$ coincides with
the left annihilator of $T$.

First of all, note that $\operatorname{Ann}_{\A_2}(M^j), \operatorname{LAnn}_{\A_2}(T)$
are primitive ideals (here we write $\operatorname{LAnn}$ for the left annihilator).
Moreover,  $$\VA(\A_2/\operatorname{Ann}_{\A_2}(M^j))=\VA(\A_2/\operatorname{LAnn}_{\A_2}(T))=\VA(\A_1/\J).$$
As in the proof of   \cite[Theorem 7.2(2)]{BL}, we see that $M^j$ is the head
of $\B_\ell\otimes_{\A_1}N^j$ and $T$ is the head of $\B_{\ell}\otimes_{\A_1}S$,
where $\B_\ell$ is a suitable Tor as in Proposition \ref{Prop:perv_techn}.
Note that $S\otimes_{\A_1} N^j\twoheadrightarrow N^j$. By axiom (P3) in the
definition of a perverse equivalence,  the kernels
of $\B_{\ell}\otimes_{\A_1}N^j\twoheadrightarrow M^j$ are annihilated by
$\I^2_{\ell+1}$ and the same is true for the kernel of $\B_{\ell}\otimes_{\A_1}S\twoheadrightarrow T$.
So we get epimorphisms $T\otimes_{\A_1}N^j\twoheadrightarrow M^j$. From here
we see that $\operatorname{LAnn}_{\A_2}(T)\subset \operatorname{Ann}_{\A_2}(M^j)$.
Since the associated varieties of these primitive ideals coincide, we apply
the result of Borho and Kraft again, and get $\operatorname{LAnn}_{\A_2}(T)=
\operatorname{Ann}_{\A_2}(M^j)$.
%
%
%
\end{proof}

\section{Wall-crossing functors as partial Ringel dualities}\label{S_Ringel}
\subsection{Highest weight categories}\label{SS_HW_cat}
Let us start by recalling the standard notion of a highest weight category.

{\it Basic assumptions}.  Let $\Cat$ be a $\K$-linear abelian category equivalent to
the category of finite  dimensional modules over a  unital associative finite dimensional $\K$-algebra.
Let $\mathcal{T}$ be an indexing set for the simples in $\Cat$, we write $L(\tau)$ for the simple object
indexed by $\tau$ and $P(\tau)$ for its projective cover.

By a highest weight structure on $\mathcal{C}$ we mean a pair $(\Cat,\leqslant)$, where $\leqslant$ is a partial order on $\mathcal{T}$ that satisfies the axioms (HW1) and (HW2) below. For $\tau\in
\mathcal{T}$, let $\Cat_{\leqslant \tau}$ (resp., $\Cat_{<\tau}$) denote the Serre span
of $L(\tau')$ with $\tau'\leqslant \tau$ (resp., $\tau'<\tau$). Here is our first axiom:
\begin{itemize}
\item[(HW1)] The quotient category $\Cat_{\leqslant \tau}/\Cat_{<\tau}$ is equivalent to the category
of vector spaces.
\end{itemize}
Let $\Delta(\tau)$ denote the projective cover of $L(\tau)$ in $\Cat_{\leqslant \tau}$.
Note that we have a natural epimorphism $P(\tau)\twoheadrightarrow \Delta(\tau)$. Here is our second
axiom:
\begin{itemize}
\item[(HW2)] The kernel of $P(\tau)\twoheadrightarrow \Delta(\tau)$ is filtered with  $\Delta(\tau')$, where $\tau'>\tau$.
\end{itemize}

Recall that in any highest weight category one has costandard objects $\nabla(\tau), \tau \in \mathcal{T},$
with $\dim \Ext^i(\Delta(\tau),\nabla(\tau'))=\delta_{i,0}\delta_{\tau,\tau'}$. By a {\it tilting}
in $\Cat$ we mean an object that is standardly filtered (admits a filtration by $\Delta$'s) and also
costandardly filtered. The indecomposable tilting objects are indexed by $\mathcal{T}$: we have a unique indecomposable
tilting $T(\tau)$ that admits an embedding $\Delta(\tau)\hookrightarrow T(\tau)$ with standardly filtered
cokernel.

Now let us recall the notion of Ringel duality that we will be generalizing below.
Let $\Cat_1,\Cat_2$ be two highest weight categories. Let $\Cat_2^\Delta, \Cat_1^\nabla$ denote the full
subcategories of standardly and costandardly filtered objects in $\Cat_2,\Cat_1$, respectively. Let
$R$ be an equivalence  $\Cat_1^\nabla\xrightarrow{\sim} \Cat_2^\Delta$ of exact categories.
Let $T$ denote the tilting generator
of $\Cat_1$, i.e., the sum of all indecomposable tilting objects. Then $\Cat_2$ gets identified
with $\End(T)^{opp}\operatorname{-mod}$ and the equivalence $R$ above becomes $\Hom(T,\bullet)$.
We also have a derived equivalence $R\Hom(T,\bullet): D^b(\Cat_1)\rightarrow D^b(\Cat_2)$. This equivalence
maps injectives to tiltings and, obviously, tiltings to projectives.  We write $\Cat_1^\vee$ for $\Cat_2$.
The functor $R$ is called the (covariant) {\it Ringel duality}, and the category $\Cat_1^\vee$
is called the {\it Ringel dual} of $\Cat_1$.
%
%

\subsection{Categories $\mathcal{O}$ for symplectic resolutions and cross-walling functors}\label{SS_cat_O_CW}
Now let us recall an example of a highest weight category.

Suppose that we have a conical symplectic resolution $X$ that comes equipped with a Hamiltonian action of a torus $T$
that commutes with the contracting $\C^\times$-action. Let $\lambda\in \tilde{\param}$. The action of
$T$ on $\mathcal{O}_X$ lifts to a Hamiltonian action of $T$ on $\A_\lambda^\theta$.
So we get a Hamiltonian action on $\A_\lambda$.
By $\Phi$ we denote the quantum comoment map $\mathfrak{t}\rightarrow \A_\lambda$.

Let $\nu:\C^\times\rightarrow T$ be a one-parameter subgroup. The subgroup $\nu$ induces
a grading $\A_\lambda=\bigoplus_{i\in \Z}\A_\lambda^{i,\nu}$. We set $\A_\lambda^{\geqslant 0,\nu}=\bigoplus_{i\geqslant 0}\A_\lambda^{i,\nu}$ and define $\A_\lambda^{>0,\nu}$ similarly. Further, set $\Ca_{\nu}(\A_\lambda):=\A^{0,\nu}_\lambda/\bigoplus_{i>0}\A^{-i,\nu}_\lambda \A^{i,\nu}_\lambda$.
Note that $\A_\lambda/\A_\lambda\A_{\lambda}^{>0,\nu}$ is an $\A_\lambda$-$\Ca_{\nu}(\A_\lambda)$-bimodule,
while $\A_\lambda/\A_{\lambda}^{<0,\nu}\A_\lambda$ is a $\Ca_{\nu}(\A_\lambda)$-$\A_\lambda$-bimodule.

Define the category $\OCat_{\nu}(\A_\lambda)$ as the full subcategory of $\A_\lambda\operatorname{-mod}$
consisting of all modules, where the action of $\A_{\lambda}^{>0,\nu}$ is locally nilpotent.
We get two functors $\Delta_{\nu},\nabla_{\nu}:\Ca_{\nu}(\A_\lambda)\operatorname{-mod}
\rightarrow \OCat_{\nu}(\A_\lambda)$ given by $$\Delta_{\nu}(N):=(\A_\lambda/\A_{\lambda}\A_{\lambda}^{>0,\nu})\otimes_{\Ca_{\nu}(\A_\lambda)}N,
\nabla_{\nu}(N):=\Hom_{\Ca_{\nu}(\A_\lambda)}(\A_\lambda/\A_{\lambda}^{<0,\nu}\A_\lambda, N).$$

Now suppose that $T$ acts on $X$ with finitely many fixed points. We say that a one-parameter
group $\nu:\C^\times\rightarrow T$ is {\it generic} if $X^{\nu(\C^\times)}=X^T$.
Equivalently, $\nu$ is generic if and only if it does not lie in $\ker\kappa$ for any character $\kappa$ of the
$T$-action on $\bigoplus_{p\in X^T}T_pX$. The hyperplanes $\ker\kappa$ split the
lattice $\Hom(\C^\times,T)$ into the union of polyhedral regions to be called
{\it chambers} (of one-parameter subgroups).

Suppose that $\nu$ is generic. Further, pick a generic, see Definition \ref{defi:chamb_termin},
$\theta\in \tilde{\param}_{\Q}$ and $\lambda_0\in \tilde{\param}$. Let $\lambda:=\lambda_0+n\theta$
for $n\gg 0$.  

\begin{Prop}\label{Prop:cat_O}
The following is true:
\begin{enumerate}
\item The category $\OCat_\nu(\A_\lambda)$ only depends on the chamber of $\nu$.
\item The natural functor $D^b(\OCat_\nu(\A_\lambda))\rightarrow D^b(\A_\lambda\operatorname{-mod})$
is a full embedding.
\item $\Ca_{\nu}(\A_\lambda)=\C[X^T]$.
\item More generally, we have $\Ca_{\nu_0}(\A_\lambda)=\bigoplus_{Z} \A^Z_{\iota^*_Z(\lambda)-\rho_Z}$,
where the summation is taken over the irreducible components $Z$ of $X^{\nu_0(\C^\times)}$,
$\iota_Z$ is the embedding $Z\hookrightarrow X$, $\iota_Z^*:H^2(X,\C)\rightarrow H^2(Z,\C)$
is the corresponding pull-back map, $\rho_Z$ is a suitable element of $H^2(Z,\C)$
and $\A_{\iota^*_Z(\lambda)-\rho_Z}^Z$ stands for the global sections of the filtered quantization of $Z$ with period
$\iota^*_Z(\lambda)-\rho_Z$.
\item The category $\OCat_\nu(\A_\lambda)$ is highest weight, where the standard objects are
$\Delta_\nu(p)$, the costandard objects are $\nabla_{\nu}(p)$, where $p\in X^T$.
For an order, which is a part of the definition of a highest weight structure, we take
the contraction order on $X^T$ given by $\nu$.
\item Suppose $\nu_0$ lies in the face of a chamber containing $\nu$. Then $\Delta_{\nu_0},
\nabla_{\nu_0}$ restrict to exact functors $\OCat_\nu(\Ca_{\nu_0}(\A_\lambda))\rightarrow
\OCat_\nu(\A_\lambda)$.
\item The functor $\WC^{-1}_{\lambda\leftarrow \lambda^-}:D^b(\OCat_\nu(\A_\lambda))\rightarrow
D^b(\OCat_{\nu}(\A_{\lambda^-}))$ is a Ringel duality functor.
\end{enumerate}
\end{Prop}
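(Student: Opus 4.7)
The claims (1)--(6) are mostly standard; only (7) is genuinely new. For (1)--(3) and (5), the arguments of \cite{BLPW} apply essentially verbatim: $\OCat_\nu(\A_\lambda)$ is determined by the positivity filtration $\A_\lambda^{>0,\nu}$ and hence by the chamber of $\nu$; the $\nu$-grading yields the full faithfulness in (2); at a generic $\nu$ the Cartan quotient is a quantization of $\C[X^T]$, which must equal $\C[X^T]$ itself since $X^T$ is a finite reduced scheme; and (5) is the highest weight structure of \cite{BLPW}. For (4), the Cartan quotient $\Ca_{\nu_0}(\A_\lambda)$ is a sum of quantizations $\bigoplus_Z \A^Z_\bullet$ with associated graded $\C[X^{\nu_0(\C^\times)}]$; the period $\iota_Z^*(\lambda)-\rho_Z$ is pinned down by restriction to a formal neighborhood of each $Z$ via the slice construction of Section \ref{SS_slices}, with the $\rho_Z$-shift arising as the half-sum of $\nu_0$-positive weights on the normal bundle to $Z$, exactly as in parabolic induction. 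For (6), the local nilpotency of $\A_\lambda^{>0,\nu}$ on $\Delta_{\nu_0}(N)$ and $\nabla_{\nu_0}(N)$ is checked by decomposing each $\nu$-positive weight component of $\A_\lambda$ as either $\nu_0$-positive or $\nu_0$-zero, using that $\nu_0$ lies in the closure of the chamber of $\nu$.

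The substantive claim is (7). The plan is to use Proposition \ref{Prop:WC_D} to identify the long wall-crossing with homological duality $D$ on holonomic categories, and then to analyze how $D$ interacts with the highest weight structure of $\OCat_\nu$. First, every $M\in \OCat_\nu(\A_\lambda)$ is holonomic, since $\VA(M)$ lies in the contracting locus $X^+:=\{x\in X:\lim_{t\to 0}\nu(t)x\text{ exists}\}$, which is Lagrangian: the Hamiltonian $T$-action forces the $\nu$-positive and $\nu$-negative tangent directions at each fixed point to pair under the symplectic form. Hence $\WC^{-1}_{\lambda\leftarrow\lambda^-}$ restricts to a derived functor $D^b_{hol}(\A_\lambda)\to D^b_{hol}(\A_{\lambda^-})$ and is intertwined, by Proposition \ref{Prop:WC_D}, with $D[\tfrac{1}{2}\dim X]$ via a t-exact equivalence $D^b_{hol}(\A_{\lambda^-})\simeq D^b_{hol}(\A_{-\lambda})^{opp}$. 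Transferring the highest weight structure across this equivalence reduces the Ringel duality claim to the assertion that $D[\tfrac{1}{2}\dim X]$ sends the costandardly filtered part of $\OCat_\nu(\A_\lambda)$ equivalently onto the standardly filtered part of $\OCat_\nu(\A_{-\lambda})$, which by the characterization in Section \ref{SS_HW_cat} is what it means to be a Ringel duality functor.

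The main obstacle is the identification $D[\tfrac{1}{2}\dim X](\nabla_\nu(p))\cong \Delta_\nu(p')$ for a suitable $p'\in X^T$. My approach is induction on the contraction order of $\nu$ on $X^T$: when $p$ is minimal, $\Delta_\nu(p)$ is projective in $\OCat_\nu$ by (5), so $D$ on it is computed directly via $R\Hom(\bullet,\A_\lambda)$, with the homological degree pinned down by Corollary \ref{Cor:fin_dim_shift}. The induction step uses the defining sequences from axiom (HW2) together with the perversity of $\WC_{\lambda^-\leftarrow\lambda}$ proved in Theorem \ref{Thm:perv} to control homological shifts of $\WC$-images in terms of the associated-variety stratification. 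The most delicate point is showing that the bijection $p\mapsto p'$ on fixed points implicit in $\WC^{-1}_{\lambda\leftarrow\lambda^-}$ agrees with the natural $X^T$-labelling of simples in $\OCat_\nu$; here the annihilator-compatibility of Proposition \ref{Prop:wc_bij_annih}, combined with the identification of annihilators of simples in $\OCat_\nu$ in terms of the orbit stratification on $X^T$ provided by the Cartan quotient description in (3)--(4), is the essential input.
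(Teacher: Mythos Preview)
The paper's own proof of this proposition is purely bibliographic: every one of the seven parts is a citation to prior work, with (1), (2), (3), (5) taken from \cite{BLPW} and (4), (6), (7) from \cite{CWR}. In particular, (7) is not new here; it is \cite[Proposition 7.7]{CWR}. Your opening sentence that ``only (7) is genuinely new'' is therefore already a misreading of the paper.

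Your attempted independent proof of (7) via Proposition~\ref{Prop:WC_D} has the right overall shape---one does want to relate the long wall-crossing to homological duality on holonomic modules and then compare highest weight structures---but the supporting details are wrong in several places.

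First, the base case of your induction is stated incorrectly: in a highest weight category with the conventions of Section~\ref{SS_HW_cat}, $\Delta(\tau)$ is projective when $\tau$ is \emph{maximal}, not minimal (for $\tau$ minimal one has $\Delta(\tau)=L(\tau)$). Second, your appeal to Corollary~\ref{Cor:fin_dim_shift} to pin down the homological degree does not apply: that corollary characterizes finite-dimensional modules, and $\Delta_\nu(p),\nabla_\nu(p)$ are not finite dimensional. Third, invoking Theorem~\ref{Thm:perv} in the induction step does not help: the perversity filtration there is by Gelfand--Kirillov dimension of $\A_\lambda/\I$, a stratification far too coarse to separate the individual standard or costandard objects, which all have the same support dimension. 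Finally, Proposition~\ref{Prop:wc_bij_annih} only says that equal annihilators are preserved under the wall-crossing bijection; since distinct simples $L(p)$ in $\OCat_\nu$ can share the same annihilator, this cannot by itself identify the bijection $p\mapsto p'$ with the identity on $X^T$.

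What the argument in \cite{CWR} actually does (and what you are missing) is a direct computation of $\WC$ on standards/costandards via the translation bimodules, together with a $K_0$ argument showing $\WC$ is the identity on Grothendieck groups and sends each $\Delta_\lambda(p)$ to an honest object; from this one reads off $\nabla_{\lambda^-}(p)$ as the image. None of the perversity or annihilator machinery from Section~\ref{S_perv} is needed.
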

\begin{proof}
(1) follows from \cite[Corollary 3.19]{BLPW}. (2) is \cite[Corollary 5.13]{BLPW}.
(3) is \cite[Proposition 5.3]{BLPW}. (4) follows from \cite[Propositions 5.3,5.7]{CWR}.
(5) follows from \cite[Theorem 5.12]{BLPW}. (6) follows from \cite[Proposition 6.9, Section 6.5]{CWR}.
(6) is \cite[Proposition 7.7]{CWR}.
\end{proof}

Let us recall the cross-walling (a.k.a. shuffling) functors introduced in \cite[Section 8]{BLPW}
and studied in more detail in \cite{CWR}. Let $\nu,\nu'$ be two generic one-parameter subgroups.
Then there is a unique functor $\CW_{\nu'\leftarrow \nu}:D^b(\OCat_\nu(\A_\lambda))\rightarrow
D^b(\OCat_{\nu'}(\A_\lambda))$ with the property that
$$\Hom_{D^b(\A_\lambda\operatorname{-mod})}(M,N)=\Hom_{D^b(\OCat_{\nu'}(\A_\lambda))}(\CW_{\nu'\leftarrow \nu}M,N).$$
This was proved in \cite[Section 8.2]{BLPW}

The following results were obtained in \cite[Section 7]{CWR}. We choose a parameter $\lambda$
in the same way as for Proposition \ref{Prop:cat_O}.

\begin{Prop}\label{Prop:CW_properties}
The functor $\CW_{\nu'\leftarrow \nu}$ has the following properties.
\begin{enumerate}
\item The functor is an equivalence for all $\nu,\nu'$.
\item Suppose that a sequence $\nu,\nu',\nu''$ is reduced (meaning that any wall that does not separate
$\nu,\nu''$ does not separate $\nu,\nu'$ either). Then $\CW_{\nu''\leftarrow \nu}\xrightarrow{\sim}
\CW_{\nu''\leftarrow \nu'}\circ \CW_{\nu'\leftarrow \nu}$.
\item The functor $\CW_{-\nu\leftarrow \nu}[\dim X/2]:D^b(\OCat_\nu(\A_\lambda))\xrightarrow{\sim}
D^b(\OCat_{-\nu}(\A_\lambda))$ is a  Ringel duality functor.
\item Let $\nu_0$ be a one-parameter subgroup lying in the faces of the chambers
of $\nu,\nu'$. Then the functors $\CW_{\nu'\leftarrow \nu}\circ \Delta_{\nu_0}$
and $\Delta_{\nu_0}\circ \underline{\CW}_{\nu'\leftarrow \nu}$ from
$D^b(\OCat_\nu(\Ca_{\nu_0}(\A_\lambda)))$ to $D^b(\OCat_{\nu'}(\A_\lambda))$
are naturally isomorphic. Here $\underline{\CW}_{\nu'\leftarrow \nu}$ stands for
the cross-walling functor $D^b(\OCat_\nu(\Ca_{\nu_0}(\A_\lambda)))\xrightarrow{\sim}
D^b(\OCat_{\nu'}(\Ca_{\nu_0}(\A_\lambda)))$.
\end{enumerate}
\end{Prop}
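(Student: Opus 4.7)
My plan is to establish the four properties by reducing to single-wall cross-walling functors via (2), and then using explicit presentations of standard objects together with the defining Hom-adjunction.

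\textbf{First the composition law (2) and the equivalence (1).} For (2), I would chase the defining Hom-adjunction directly. For $M \in D^b(\OCat_\nu(\A_\lambda))$ and $K \in D^b(\OCat_{\nu''}(\A_\lambda))$, the adjunction for $\CW_{\nu''\leftarrow \nu'}$ gives
\[\Hom(\CW_{\nu''\leftarrow \nu'}\CW_{\nu'\leftarrow \nu}M, K) = \Hom_{D^b(\A_\lambda\operatorname{-mod})}(\CW_{\nu'\leftarrow \nu}M, K).\]
The reducedness condition on the triple $\nu, \nu', \nu''$ is designed precisely to ensure that objects in $D^b(\OCat_{\nu''}(\A_\lambda))$ sit ``on the right side'' of the walls crossed by $\nu \to \nu'$, so that the defining adjunction for $\CW_{\nu'\leftarrow \nu}$ still applies to $K$, yielding $\Hom_{\A_\lambda}(M, K)$. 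The defining property of $\CW_{\nu''\leftarrow \nu}$ then identifies this with $\Hom(\CW_{\nu''\leftarrow \nu}M, K)$, and universality of the adjoint supplies the natural isomorphism. For (1), I would induct on the number of walls separating $\nu$ and $\nu'$, using (2) as the inductive step. For a single wall with common face $\nu_0$, the functor factors through the Levi-type category $\Ca_{\nu_0}(\A_\lambda)$ via (4); the corresponding $\underline{\CW}_{\nu'\leftarrow \nu}$ is an equivalence because $\OCat_\nu(\Ca_{\nu_0}(\A_\lambda))$ decomposes as a direct sum over fixed-point components by Proposition \ref{Prop:cat_O}(4), and within each component the single-wall crossing is a classical ``reflection'' equivalence constructed directly from the universal property.

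\textbf{Property (4).} Both $\CW_{\nu'\leftarrow \nu}\circ \Delta_{\nu_0}$ and $\Delta_{\nu_0}\circ \underline{\CW}_{\nu'\leftarrow \nu}$ can be characterized through universal properties. The functor $\Delta_{\nu_0}$ comes with its own adjunction (as a left adjoint to a coinvariants functor from $\OCat_\nu(\A_\lambda)$ to $\OCat_\nu(\Ca_{\nu_0}(\A_\lambda))$, using Proposition \ref{Prop:cat_O}(6)); composing this with the Hom-adjunction defining cross-walling reduces both sides to the same $\Hom$-functor tested against objects of $D^b(\OCat_{\nu'}(\Ca_{\nu_0}(\A_\lambda)))$. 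The comparison boils down to compatibility of the tensor-product formula $\Delta_{\nu_0}(N) = (\A_\lambda / \A_\lambda \A_\lambda^{>0,\nu_0}) \otimes^L_{\Ca_{\nu_0}(\A_\lambda)} N$ with passage through the wall-crossing bimodules, which is a standard base-change check.

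\textbf{Property (3) and the main obstacle.} By the characterization of Ringel duality recalled in Section \ref{SS_HW_cat}, the functor $\CW_{-\nu\leftarrow \nu}[\tfrac{1}{2}\dim X]$ is a Ringel duality as soon as it exchanges standards with costandards, i.e.\ $\CW_{-\nu\leftarrow \nu}[\tfrac{1}{2}\dim X]\Delta_\nu(p) \cong \nabla_{-\nu}(p)$ for each $p \in X^T$. Using $\Delta_\nu(p) = (\A_\lambda / \A_\lambda \A_\lambda^{>0, \nu}) \otimes_{\Ca_\nu(\A_\lambda)} \C_p$ and $\nabla_{-\nu}(p) = \Hom_{\Ca_\nu(\A_\lambda)}(\A_\lambda / \A_\lambda^{>0,\nu} \A_\lambda, \C_p)$, the statement reduces to a Koszul-type duality between the attracting and repelling ``parabolic'' quotients, where the shift by $\tfrac{1}{2}\dim X$ matches the homological dimension of $\A_\lambda / \A_\lambda \A_\lambda^{>0,\nu}$ over $\Ca_\nu(\A_\lambda)$, i.e.\ the (half)dimension of the attracting locus in $X$. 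This duality calculation is the main obstacle: it is analogous to, and should closely parallel, the proof of Proposition \ref{Prop:WC_D} (identifying the long wall-crossing with homological duality up to the same shift). Concretely, one lifts both $\Delta_\nu$ and $\nabla_{-\nu}$ to the microlocal side via $\Loc_\lambda$, applies the sheafy duality $D^\vartheta_\lambda[\tfrac{1}{2}\dim X]$ from Section 2.6, and uses that this duality interchanges the attracting-set and repelling-set supports that cut out $\OCat_\nu$ versus $\OCat_{-\nu}$, to read off the desired standard-to-costandard match upon descending back to modules.
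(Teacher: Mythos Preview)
The paper does not prove this proposition at all: immediately before the statement it says ``The following results were obtained in \cite[Section 7]{CWR}'', so the ``proof'' in the paper is a citation. Your proposal is therefore not comparable to anything in the paper; it is an attempt to reconstruct an argument that the paper outsources.

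On its own merits, your outline has a genuine gap in the argument for (2), and this propagates. The defining property of $\CW_{\nu'\leftarrow\nu}$ gives
\[
\Hom_{D^b(\A_\lambda\operatorname{-mod})}(M,N)\cong \Hom_{D^b(\OCat_{\nu'})}(\CW_{\nu'\leftarrow\nu}M,N)
\]
only for $N\in D^b(\OCat_{\nu'}(\A_\lambda))$. In your chain you need to identify $\Hom_{D^b(\A_\lambda)}(\CW_{\nu'\leftarrow\nu}M,K)$ with $\Hom_{D^b(\A_\lambda)}(M,K)$ for $K\in D^b(\OCat_{\nu''}(\A_\lambda))$, and $\OCat_{\nu''}$ is \emph{not} contained in $\OCat_{\nu'}$ (by Proposition~\ref{Prop:cat_O}(1) these categories depend only on the chamber, and different chambers give genuinely different subcategories). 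Your sentence ``reducedness\ldots\ ensures that objects in $D^b(\OCat_{\nu''})$ sit on the right side of the walls'' is exactly the point that needs an argument, not an assertion; one has to analyze the cone of the adjunction unit $M\to \CW_{\nu'\leftarrow\nu}M$ and show it is orthogonal to $D^b(\OCat_{\nu''})$, which is where the combinatorics of reducedness actually enters.

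There is also a circularity in your plan for (1). You reduce to a single wall via (2) and then invoke (4) to say the single-wall $\CW$ factors through $\Delta_{\nu_0}$ and an equivalence $\underline{\CW}$. But the intertwining $\CW\circ\Delta_{\nu_0}\cong \Delta_{\nu_0}\circ\underline{\CW}$ by itself does not force $\CW$ to be an equivalence: you would need that the essential image of $\Delta_{\nu_0}$ generates $D^b(\OCat_{\nu'})$, which already uses the standardly stratified structure of Proposition~\ref{Prop:stand_filt}, and you would still need to know $\underline{\CW}$ is an equivalence, which is the same problem one level down rather than a ``classical reflection'' you can quote. Finally, for (3) your appeal to Proposition~\ref{Prop:WC_D} is only an analogy: that proposition concerns wall-crossing in the parameter $\lambda$ (change of quantization), not cross-walling in $\nu$ (change of one-parameter subgroup), and there is no mechanism in this paper that transfers the sheafy duality $D^\vartheta_\lambda$ into a statement about $\CW_{-\nu\leftarrow\nu}$. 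The actual arguments live in \cite{CWR} and are rather more involved than what you sketch.
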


\subsection{Standardly stratified categories}
Here we are going to a introduce standardly stratified structures
generalizing highest weight ones. We follow \cite{LW}. The definition given there
is more restrictive than in \cite{CPS} but is less restrictive than in \cite{ADL}.

Let $\Cat, \mathcal{T},L(\tau),P(\tau)$ have the same meaning as in Basic assumptions of Section
\ref{SS_HW_cat}.  The additional structure of a standardly stratified category on $\mathcal{C}$ is a
partial  {\it pre-order} $\leqslant$ on $\mathcal{T}$ that should satisfy certain axioms to be explained below.
Let us write $\Xi$ for the set of equivalence classes of $\leqslant$,
this is a poset (with partial order again denoted by $\leqslant$) that comes with a natural surjection $\varrho:\mathcal{T}\twoheadrightarrow \Xi$. The pre-order $\leqslant$ defines a filtration on $\Cat$ by Serre subcategories indexed by $\Xi$. Namely, to $\xi\in \Xi$ we assign the subcategories $\Cat_{\leqslant \xi}$ that is the Serre span
of the simples $L(\tau)$ with $\varrho(\tau)\leqslant \xi$. Define $\Cat_{<\xi}$ analogously and let
$\Cat_\xi$ denote the quotient $\Cat_{\leqslant \xi}/\Cat_\xi$. Let $\pi_\xi$ denote the quotient
functor $\Cat_{\leqslant \xi}\twoheadrightarrow \Cat_{\xi}$. Let us write $\Delta_\xi:\Cat_\xi\rightarrow \Cat_{\leqslant \xi}$ for the left adjoint functor of $\pi_\xi$. Also we write $\gr\Cat$ for $\bigoplus_{\xi}\Cat_\xi, \Delta$
for $\bigoplus_\xi \Delta_\xi:\gr\Cat\rightarrow \Cat$. We call $\Delta$ the
{\it standardization functor}. Finally, for $\tau\in \varrho^{-1}(\xi)$
we write $L_\xi(\tau)$ for $\pi_\xi(L(\tau))$, $P_\xi(\tau)$ for the projective cover of
$L_\xi(\tau)$ in $\Cat_\xi$ and $\Delta(\tau)$ for $\Delta_\xi(P_\xi(\tau))$.
The object $\Delta(\tau)$ is called {\it standard}. The object $\bDelta(\tau):=\Delta_\xi(L_\xi(\tau))$
is called {\it proper standard}. Note that there is a natural epimorphism $P(\tau)\twoheadrightarrow
\Delta(\tau)$.

The axioms to be satisfied by $(\Cat,\leqslant)$ in order to give a standardly stratified structure are as follows.
\begin{itemize}
\item[(SS1)] The functor $\Delta:\gr\Cat\rightarrow \Cat$ is exact.
\item[(SS2)] The projective $P(\tau)$ admits an epimorphism onto $\Delta(\tau)$ whose kernel
has a filtration with successive quotients $\Delta(\tau')$, where $\tau'>\tau$.
\end{itemize}

We will also need the notion of a {\it weakly} standardly stratified category. Here we keep (SS1) but use a
weaker version of (SS2):
\begin{itemize}
\item[(SS2$'$)] The projective $P(\tau)$ admits an epimorphism onto $\Delta(\tau)$ whose kernel
admits a filtration with successive quotients $\Delta_\xi(M_\xi)$, where $\xi>\varrho(\tau)$
and $M_\xi$ is some object in $\Cat_\xi$.
\end{itemize}

Note that (SS1) allows to identify $K_0(\gr\Cat)$ and $K_0(\Cat)$ by means of $\Delta$. If (SS2)
also holds, then we also have the identification of $K_0(\gr\Cat\operatorname{-proj})$
and $K_0(\Cat\operatorname{-proj})$.

If all quotient categories $\Cat_\xi$ are equivalent to $\operatorname{Vect}$, then
a (weakly) standardly stratified category is the same as a highest weight category. On
the opposite end, if we take the trivial pre-order on $\mathcal{T}$, then there is no
additional structure.

\subsubsection{(Proper) standardly filtered objects}
We say that an object in $\Cat$ is standardly filtered if it admits a filtration whose successive quotients are
standard. The notion of a proper standardly filtered object is introduced in a similar fashion. The categories
of the standardly filtered and of the proper standardly filtered objects will be denoted by $\Cat^{\Delta}$
and $\Cat^{\bDelta}$. Note that (SS1) implies that $\Cat^{\Delta}\subset \Cat^{\bDelta}$.

\begin{Lem}\label{Lem:prop_stand_stratif}
Suppose (SS1) holds.  Let $M$ be an object in $\Cat^{\bDelta}$ such that all proper standard quotients are of the form $\bDelta(\tau)$ with $\varrho(\tau)=\xi$. Then $M=\Delta_\xi(\pi_\xi(M))$.
\end{Lem}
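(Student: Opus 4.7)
The plan is to show that the counit $\epsilon_M : \Delta_\xi \pi_\xi M \to M$ of the adjunction $\Delta_\xi \dashv \pi_\xi$ is an isomorphism. First, I would observe that $M$ lies in $\Cat_{\leqslant \xi}$, since each successive quotient $\bDelta(\tau) = \Delta_\xi(L_\xi(\tau))$ of the given proper standard filtration belongs to $\Cat_{\leqslant \xi}$ and this category is Serre. Thus $\pi_\xi M$ is defined. Two general facts will be needed: $\pi_\xi$ is exact (as a Serre quotient functor), and $\Delta_\xi$ is exact by axiom (SS1). I would also record the standard categorical fact that, for a Serre quotient admitting a left adjoint, the unit $\operatorname{id}_{\Cat_\xi} \to \pi_\xi \Delta_\xi$ is an isomorphism. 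Via the triangle identities this forces $\epsilon$ to be an isomorphism on every object in the essential image of $\Delta_\xi$; in particular $\epsilon_{\bDelta(\tau)}$ is an isomorphism for every $\tau \in \varrho^{-1}(\xi)$.

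Next, I would induct on the length $n$ of a proper standard filtration $0 = M_0 \subset M_1 \subset \cdots \subset M_n = M$ with subquotients $\bDelta(\tau_i)$, $\varrho(\tau_i) = \xi$. The case $n = 1$ is handled by the previous paragraph. For the inductive step, apply the exact functor $\pi_\xi$ to the short exact sequence
\begin{equation*}
0 \to M_{n-1} \to M \to \bDelta(\tau_n) \to 0
\end{equation*}
and then the exact functor $\Delta_\xi$; by naturality of $\epsilon$ the resulting short exact sequence in $\Cat_{\leqslant \xi}$ fits into a commutative diagram mapping to the original one via $\epsilon_{M_{n-1}}, \epsilon_M, \epsilon_{\bDelta(\tau_n)}$. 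The outer vertical maps are isomorphisms by the inductive hypothesis and the base case, so the five-lemma yields that $\epsilon_M$ is an isomorphism, i.e.\ $M \cong \Delta_\xi(\pi_\xi M)$.

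The only genuine ingredient beyond formal adjunction and Serre-quotient yoga is the exactness of $\Delta_\xi$ supplied by (SS1); without it, applying $\Delta_\xi$ to the $\pi_\xi$-quotient sequence could introduce a left-derived correction and the five-lemma step would fail. The point I would double-check in the write-up is the identity $\pi_\xi \Delta_\xi \cong \operatorname{id}_{\Cat_\xi}$, which is used both to compute $\pi_\xi \bDelta(\tau) = L_\xi(\tau)$ and to identify $\epsilon_{\bDelta(\tau)}$ with the identity in the base case; everything else reduces to routine diagram chasing.
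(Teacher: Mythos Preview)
Your argument is correct. The paper itself does not supply a proof of this lemma; it simply records ``This is \cite[Lemma 3.1]{CWR}.'' Your write-up is a clean, self-contained justification using only the stated hypothesis (SS1) and elementary facts about Serre quotients: the identification $\pi_\xi\Delta_\xi\cong\operatorname{id}_{\Cat_\xi}$ (equivalently, full faithfulness of $\Delta_\xi$), naturality of the counit, exactness of both $\pi_\xi$ and $\Delta_\xi$, and the five lemma. Each of these steps is standard; the only place you lean on the ambient setup in a nontrivial way is the exactness of $\Delta_\xi$, and you correctly flag that this is exactly what (SS1) provides. One small stylistic remark: you could shorten the induction by observing directly that the class of objects $M\in\Cat_{\leqslant\xi}$ on which the counit $\epsilon_M$ is an isomorphism is closed under extensions (since $\Delta_\xi\pi_\xi$ is exact), so it contains everything built by iterated extensions from the $\bDelta(\tau)$ with $\varrho(\tau)=\xi$; but what you wrote is equivalent.
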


This is \cite[Lemma 3.1]{CWR}.


Also note that in a weakly standardly stratified category the following hold:
\begin{align}\label{Ext:vanish}
&\Ext^i_{\Cat}(\Delta_\xi(M),\Delta_{\xi'}(N))\neq 0\Rightarrow \xi'\leqslant \xi.\\\label{Ext:equal}
&\Ext^i_{\Cat}(\Delta_\xi(M),\Delta_{\xi}(N))=\Ext^i_{\Cat_\xi}(M,N).
\end{align}

\subsubsection{Subcategories and quotients}
Suppose that $\Cat$ is weakly standardly stratified.

Let $\Xi_0$ be a poset ideal in $\Xi$. Let $\Cat_{\Xi_0}$ denote the Serre span
of the simples $L(\tau)$ with $\varrho(\tau)\in \Xi_0$. Then $\Cat_{\Xi_0}$
is a standardly stratified category with pre-order on $\mathcal{T}_0:=\varrho^{-1}(\Xi_0)$
restricted from $\Xi$. Note that, for $\tau\in \mathcal{T}_0$, we have
$\bDelta_{\Xi_0}(\tau)=\bDelta(\tau),
\Delta_{\Xi_0}(\tau)=\Delta(\tau)$,
where the subscript $\Xi_0$ refers to the objects computed in $\Cat_{\Xi_0}$.

The embedding $\iota_{\Xi_0}:\Cat_{\Xi_0}^{\bDelta}\hookrightarrow \Cat^{\bDelta}$
admits a left adjoint functor $\iota^!_{\Xi_0}$, to an object $M\in \Cat^{\bDelta}$,
this functor assigns the maximal quotient lying in $\Cat_{\Xi_0}^{\bDelta}$.

Now let $\Cat^{\Xi_0}$ be the quotient category $\Cat/\Cat_{\Xi_0}$. Let $\pi_{\Xi_0}$
denote the quotient functor $\Cat\rightarrow \Cat^{\Xi_0}$ and let $\pi_{\Xi_0}^!$
be its left adjoint. The category $\Cat^{\Xi_0}$ is standardly stratified with
pre-order on $\Lambda^0:=\Lambda\setminus \Lambda_0$ restricted from $\Lambda$.
For $\xi\in \Xi^0:=\Xi\setminus \Xi_0$ we have $\Delta^0_\xi=\pi_{\Xi_0}\circ \Delta_\xi$.
Let us also point out that $\pi_{\Xi_0}^!$ defines a full embedding $(\Cat^{\Xi_0})^{\bDelta}
\hookrightarrow \Cat^{\bDelta}$ whose image coincides with the full subcategory
$\Cat^{\bDelta,\Lambda^0}$ consisting of all objects that admit a filtration with successive
quotients $\bDelta(\tau)$ with $\tau\in \mathcal{T}^0$. This embedding sends $\Delta^{\Xi_0}(\tau)$
to $\Delta(\tau)$, $P^{\Xi_0}(\tau)$ to $P(\tau)$.

The following lemma describes the derived categories of $\Cat^{\Xi_0}$ and $\Cat_{\Xi_0}$.

\begin{Lem}\label{Lem:der_cat_stand}
A natural functor $D^b(\Cat_{\Xi_0})\hookrightarrow D^b(\Cat)$ is a fully faithful embedding.
Moreover, $D^b(\Cat^{\Xi_0})=D^b(\Cat)/D^b(\Cat_{\Xi_0})$.
\end{Lem}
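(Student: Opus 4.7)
The plan is to realize the situation as a recollement of module categories and invoke the Cline--Parshall--Scott recognition theorem for stratifying ideals. Realize $\Cat\simeq A\operatorname{-mod}$ for a finite-dimensional $\C$-algebra $A$; let $e:=\sum_{\tau\in\mathcal{T}^0}e_\tau\in A$, where $\mathcal{T}^0:=\varrho^{-1}(\Xi\setminus\Xi_0)$ and $e_\tau$ is a primitive idempotent with $P(\tau)\cong Ae_\tau$. Under this identification $\Cat_{\Xi_0}\simeq (A/AeA)\operatorname{-mod}$ and $\Cat^{\Xi_0}\simeq eAe\operatorname{-mod}$, with $\pi_{\Xi_0}(M)=eM$ and $\pi^!_{\Xi_0}(N)=Ae\otimes_{eAe}N$ (where defined). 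Both claims of the lemma follow simultaneously once we show that $AeA\subset A$ is a \emph{stratifying ideal} in the sense of Cline--Parshall--Scott, that is, that the multiplication map $Ae\otimes^L_{eAe}eA\to AeA$ is a quasi-isomorphism (equivalently, $\operatorname{Tor}^{eAe}_i(Ae,eA)=0$ for $i>0$ together with injectivity of the degree-zero map): in that case, the standard CPS theorem produces a derived recollement
\[
D^b(\Cat_{\Xi_0})\rightleftarrows D^b(\Cat)\rightleftarrows D^b(\Cat^{\Xi_0}),
\]
whose left arm realizes the fully faithful embedding and whose right arm identifies $D^b(\Cat^{\Xi_0})$ with the Verdier quotient $D^b(\Cat)/D^b(\Cat_{\Xi_0})$.

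To verify the stratifying condition, I would exploit the standardly stratified structure. Because $\Xi_0$ is a poset ideal, $\Xi^0:=\Xi\setminus\Xi_0$ is upward-closed. For each $\sigma\in\mathcal{T}^0$, axiom (SS2$'$) provides a filtration of $P(\sigma)=Ae_\sigma$ whose subquotients are $\Delta_\xi(M_\xi)$ with $\xi\geqslant\varrho(\sigma)\in\Xi^0$, forcing $\xi\in\Xi^0$. Applying the exact functor $\pi_{\Xi_0}$ produces a filtration of $eAe_\sigma=P^{\Xi_0}(\sigma)$ by the standards $\Delta^{\Xi_0}_\xi(M_\xi)$ in $\Cat^{\Xi_0}$, consistent with the fact (recorded in the excerpt) that $\Cat^{\Xi_0}$ is weakly standardly stratified with pre-order induced from $\Xi^0$. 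This exhibits $Ae$ as a left $eAe$-module (and $eA$ as a right $eAe$-module) filtered by standards of $\Cat^{\Xi_0}$. Applying the Ext-orthogonality relations (\ref{Ext:vanish}) and (\ref{Ext:equal}) within $\Cat^{\Xi_0}$ to this filtration, one computes $\operatorname{Tor}^{eAe}_i(Ae,eA)$ stratum by stratum and obtains the required vanishing for $i>0$.

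The principal technical point is the Tor-vanishing: organizing the filtrations of $Ae$ and $eA$ so that cross-terms between distinct strata are Tor-orthogonal in the relevant range, and the diagonal contributions concentrate in degree zero. The upward-closedness of $\Xi^0$ is essential here --- it rules out $\Xi_0$-strata appearing in the filtration of $P(\sigma)$ for $\sigma\in\mathcal{T}^0$, which would otherwise obstruct the argument by forcing nontrivial $\operatorname{Tor}^{eAe}_i$ contributions from pieces that do not correspond to projectives of $\Cat^{\Xi_0}$.
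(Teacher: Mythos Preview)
The paper does not prove this lemma directly; it refers to \cite[Lemma 3.2]{CWR}. Your strategy via the Cline--Parshall--Scott stratifying-ideal criterion is the standard one and is almost certainly what underlies that reference. However, your argument for the key Tor-vanishing has a genuine gap.

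You filter $P(\sigma)=Ae_\sigma$ for $\sigma\in\mathcal{T}^0$, apply $\pi_{\Xi_0}=e\cdot(-)$, and obtain filtrations of $eAe_\sigma$. Summing over $\sigma\in\mathcal{T}^0$ this describes only $eAe$ itself, the regular module, which is trivially projective and tells you nothing about $\operatorname{Tor}^{eAe}_{>0}(Ae,eA)$. What you actually need is control over $eA$ as a \emph{left} $eAe$-module (your side-labels are reversed: $Ae$ is a right $eAe$-module, $eA$ a left one). Writing $A=\bigoplus_{\tau\in\mathcal{T}}P(\tau)$ as a left $A$-module, one has $eA=\bigoplus_{\tau\in\mathcal{T}}eP(\tau)$, and the summands with $\tau\in\mathcal{T}_0$ do not disappear and are not obviously projective over $eAe$. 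You must analyze these as well: each $P(\tau)$, $\tau\in\mathcal{T}_0$, has a filtration by $\Delta_\xi(M_\xi)$ with $\xi\geqslant\varrho(\tau)$; the pieces with $\xi\in\Xi_0$ are killed by $e$, leaving $eP(\tau)$ filtered by $\Delta^{\Xi_0}_\xi(M_\xi)$ with $\xi\in\Xi^0$.

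Even after this correction, your appeal to the Ext-relations (\ref{Ext:vanish}), (\ref{Ext:equal}) does not by itself yield Tor-vanishing; those are statements about $\operatorname{Ext}$, not $\operatorname{Tor}$. The clean way to finish is to show $L_{>0}\pi_{\Xi_0}^!\bigl(\Delta^{\Xi_0}_\xi(M)\bigr)=0$ for $\xi\in\Xi^0$, i.e.\ that the underived $\pi_{\Xi_0}^!$ already sends $\Delta^{\Xi_0}_\xi(M)$ to $\Delta_\xi(M)$ with no higher derived functors. This is exactly the content of the paper's remark (just before the lemma) that $\pi_{\Xi_0}^!$ restricts to a full embedding $(\Cat^{\Xi_0})^{\bDelta}\hookrightarrow\Cat^{\bDelta}$ sending $\Delta^{\Xi_0}(\tau)$ to $\Delta(\tau)$ and $P^{\Xi_0}(\tau)$ to $P(\tau)$; combined with (SS1) for both categories, this gives acyclicity of $\pi_{\Xi_0}^!$ on standardly filtered objects, hence on all of $eA$, which is the stratifying condition you need.
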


See \cite[Lemma 3.2]{CWR} for a proof.

\subsubsection{Opposite category}
Let $\Cat$ be a standardly stratified category.
It turns out that the opposite category $\Cat^{opp}$ is also standardly stratified with the same pre-order
$\leqslant$, see \cite[1.2]{LW}. The standard and proper standard objects for $\Cat^{opp}$ are denoted
by $\nabla(\tau)$ and $\bnabla(\tau)$, when viewed as objects of $\Cat$, they are called
{\it costandard} and {\it proper costandard}. The right adjoint functor to $\pi_\xi$ will be
denoted by $\nabla_\xi$ and we write $\nabla$ for $\bigoplus_\xi \nabla_\xi$ (this is the so called
{\it costandardization functor}). So we have $\bnabla(\tau)=\nabla(L_\xi(\tau))$ and
$\nabla(\tau)=\nabla(I_\xi(\tau))$, where $I_\xi(\tau)$ is the injective envelope of $L_\xi(\tau)$ in $\Cat_\xi$.

Let us write $\Cat^\nabla, \Cat^{\bnabla}$ for the subcategories of costandardly and of proper costandardly
filtered objects. We have the following standard lemma (that was used in \cite{LW} to verify the claims
in the previous paragraph).

\begin{Lem}[Lemma 2.4 in \cite{LW}]\label{Lem:exts}
The following is true.
\begin{enumerate}
\item $\dim \Ext^i(\Delta(\tau), \bnabla(\tau'))=\dim
  \Ext^i(\bDelta(\tau), \nabla(\tau'))=
  \delta_{i,0}\delta_{\tau,\tau'}$. 
 \item For $N\in \Cat$, we have $N\in \Cat^\nabla$ (resp., $N\in
  \Cat^{\bnabla}$) if and only if $\Ext^1(\bDelta(\tau),N)=0$
  (resp., $\Ext^1(\Delta(\tau), N)=0$) for all $\tau$. Similar characterizations
  are true for $\Cat^{\Delta},\Cat^{\bDelta}$.
\end{enumerate}
\end{Lem}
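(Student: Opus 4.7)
The plan is to establish (1) first using the adjunctions associated to the stratification, then deduce (2) by long exact sequence arguments together with an induction on length. For (1), set $\xi := \varrho(\tau)$ and $\xi' := \varrho(\tau')$. The basic tools are the adjunctions $\Delta_\xi \dashv \pi_\xi \dashv \nabla_\xi$ between $\Cat_\xi$ and $\Cat_{\leqslant \xi}$, the identities $\pi_\xi \Delta_\xi = \pi_\xi \nabla_\xi = \id$, exactness of $\Delta_\xi$ from (SS1) (and its dual for $\nabla_\xi$), and the projectivity of $P_\xi(\tau)$ in $\Cat_\xi$. At the level of $\Hom$, adjunction gives $\Hom_{\Cat}(\Delta(\tau), \bnabla(\tau')) = \Hom_{\Cat_\xi}(P_\xi(\tau), \pi_\xi N)$, where $N$ is the maximal subobject of $\bnabla(\tau')$ lying in $\Cat_{\leqslant \xi}$. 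Since $\bnabla(\tau')$ has simple socle $L(\tau')$ of $\varrho$-value $\xi'$ and all other composition factors of $\varrho$-value strictly less than $\xi'$, a case analysis on the relative position of $\xi$ and $\xi'$ shows the right hand side vanishes unless $\xi = \xi'$ and $\tau = \tau'$, in which case it equals $\C$.

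For higher $\Ext^i$ in (1), I would split into three cases. When $\xi = \xi'$: take an injective resolution $N \to I_\bullet$ in $\Cat_\xi$; since $\nabla_\xi$ is exact and right adjoint to the exact functor $\pi_\xi$, $\nabla_\xi I_\bullet$ is an injective resolution of $\bnabla(\tau')$ in $\Cat$, and applying $\Hom_\Cat(\Delta(\tau), -)$ together with adjunction yields $\Ext^i_\Cat(\Delta(\tau), \bnabla(\tau')) = \Ext^i_{\Cat_\xi}(P_\xi(\tau), L_\xi(\tau'))$, which vanishes for $i \geqslant 1$ by projectivity. When $\xi \leqslant \xi'$ but $\xi \neq \xi'$, both objects lie in $\Cat_{\leqslant \xi'}$, so Lemma \ref{Lem:der_cat_stand} reduces the computation to $\Cat_{\leqslant \xi'}$; there the same adjunction argument applies and vanishes identically because $\pi_{\xi'} \Delta(\tau) = 0$. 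For the remaining case $\xi \not\leqslant \xi'$, I would induct downward on $\varrho$ in $\Xi$ using the short exact sequence $0 \to K \to P(\tau) \to \Delta(\tau) \to 0$ from (SS2), where $K$ is $\Delta$-filtered with quotients $\Delta(\sigma)$ for $\sigma > \tau$ (hence $\varrho(\sigma) > \xi$); by the induction hypothesis each $\Ext^i(\Delta(\sigma), \bnabla(\tau'))$ vanishes (noting $\sigma = \tau'$ is excluded since $\varrho(\tau') = \xi' \not> \xi$), so dévissage gives $\Ext^i(K, \bnabla(\tau')) = 0$, and the long exact sequence combined with the projectivity of $P(\tau)$ and the fact that $L(\tau)$ is not a composition factor of $\bnabla(\tau')$ yields the desired vanishing. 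The symmetric statement for $\Ext^i(\bDelta(\tau), \nabla(\tau'))$ follows by the same analysis applied to $\Cat^{opp}$.

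For (2), the ``only if'' directions are immediate: if $N$ has a $\nabla$-filtration (resp.\ $\bnabla$-filtration), then long exact sequences in $\Ext^\bullet(\bDelta(\tau), -)$ (resp.\ $\Ext^\bullet(\Delta(\tau), -)$) combined with (1) kill $\Ext^1(\bDelta(\tau), N)$ (resp.\ $\Ext^1(\Delta(\tau), N)$). For the ``if'' direction in the case of $\Cat^\nabla$, I would induct on the length of $N$: given $\Ext^1(\bDelta(\tau), N) = 0$ for all $\tau$, choose $\tau$ with $\varrho(\tau)$ minimal among the composition factors of the socle of $N$, lift the inclusion $L(\tau) \hookrightarrow N$ through the structure of $\nabla(\tau)$ and the Ext-vanishing to produce an embedding $\nabla(\tau) \hookrightarrow N$, then use the known Ext groups from (1) to verify that $N/\nabla(\tau)$ still satisfies the hypothesis, and apply induction. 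The argument for $\Cat^{\bnabla}$ is analogous, and the characterizations for $\Cat^\Delta$ and $\Cat^{\bDelta}$ follow by passage to $\Cat^{opp}$. The main technical obstacle is the higher-$\Ext$ vanishing in the case $\xi \not\leqslant \xi'$ of (1), where no single $\Cat_{\leqslant \eta}$ contains both objects and one must shift degrees through a $\Delta$-resolution built from (SS2) while carefully tracking the interaction between the poset $\Xi$ and the induction on $\varrho$.
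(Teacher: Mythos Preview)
The paper does not give its own proof of this lemma; it is simply quoted from \cite[Lemma~2.4]{LW}, so there is no argument to compare against. I will just assess your outline.

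Your treatment of (1) is essentially correct, with one imprecision. In the case $\xi=\xi'$ you assert that $\nabla_\xi I_\bullet$ is an injective resolution of $\bnabla(\tau')$ in $\Cat$. It is only an injective resolution in $\Cat_{\leqslant\xi}$, since $\nabla_\xi$ is right adjoint to $\pi_\xi:\Cat_{\leqslant\xi}\to\Cat_\xi$, not to a functor out of $\Cat$. You then need Lemma~\ref{Lem:der_cat_stand} (or an equivalent statement) to identify $\Ext^i_{\Cat_{\leqslant\xi}}$ with $\Ext^i_{\Cat}$. That is legitimate in this paper's ordering of results, but be aware that in many treatments the full faithfulness of $D^b(\Cat_{\Xi_0})\hookrightarrow D^b(\Cat)$ is itself deduced from the Ext-orthogonality you are proving here, so a self-contained account should watch for circularity. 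Your induction in the case $\xi\not\leqslant\xi'$ is fine: if $\varrho(\sigma)>\xi$ and $\varrho(\sigma)\leqslant\xi'$ then transitivity forces $\xi<\xi'$, so each subquotient $\Delta(\sigma)$ of $K$ stays in the same case and the hypothesis applies.

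In (2) the ``if'' direction for $\Cat^\nabla$ has a genuine gap. You propose to pick $\tau$ with $\varrho(\tau)$ \emph{minimal} among socle constituents of $N$ and then lift $L(\tau)\hookrightarrow N$ to an embedding $\nabla(\tau)\hookrightarrow N$. The natural mechanism for such a lift is that $\nabla(\tau)$ is the injective envelope of $L(\tau)$ in $\Cat_{\leqslant\xi}$, which forces the lift to be injective because $\nabla(\tau)$ has simple socle; but this requires $N\in\Cat_{\leqslant\xi}$, i.e.\ $\xi=\varrho(\tau)$ must be \emph{maximal} among the $\varrho$-values of all composition factors of $N$, not minimal in the socle. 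With ``minimal'' there is no reason for the lift to exist or be injective. With ``maximal'' the lift works, but then one must first argue, using the hypothesis $\Ext^1(\bDelta(\sigma),N)=0$, that some simple of maximal $\varrho$-value actually occurs in the socle; this is the substantive missing ingredient. Once that is in place, your verification that $N/\nabla(\tau)$ inherits the $\Ext^1$-vanishing (via the long exact sequence and the $\Ext^2$-vanishing from (1)) is correct.
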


Let us also note the following fact.

\begin{Lem}\label{Lem:w_st_stratif}
Let $\Cat$ be a weakly standardly stratified category. Then $\Cat$ is standardly stratified
if and only if the right adjoint $\nabla_\xi$ of $\pi_\xi$ is exact.
\end{Lem}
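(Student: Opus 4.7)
The plan is to prove the two implications separately. For the forward direction, I will invoke the fact recorded just above Lemma \ref{Lem:exts}, following \cite{LW}, that $\Cat^{opp}$ is standardly stratified with the same pre-order whenever $\Cat$ is. Under the opposite structure the left adjoint of $\pi_\xi^{opp}$ is precisely $\nabla_\xi$, so axiom (SS1) applied to $\Cat^{opp}$ yields exactness of $\nabla_\xi$ for every $\xi$.

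For the reverse direction, assume $\nabla_\xi$ is exact for all $\xi$; equivalently (by the adjunction $\pi_\xi\dashv\nabla_\xi$), $\pi_\xi$ preserves projectives. I proceed by induction on $|\Xi|$, the base case $|\Xi|=1$ being immediate since (SS2') then leaves an empty kernel. For the inductive step, fix a maximal element $\tilde\xi\in\Xi$ and set $\Xi_0:=\Xi\setminus\{\tilde\xi\}$, which is a poset ideal by maximality. Then $\Cat_{\Xi_0}$ inherits a weakly standardly stratified structure whose costandardization functors $\nabla^{\Xi_0}_\xi$ coincide with $\nabla_\xi$ and hence remain exact, so the induction hypothesis makes $\Cat_{\Xi_0}$ standardly stratified.

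Now fix $\tau$ with $\varrho(\tau)\in\Xi_0$ (the case $\varrho(\tau)=\tilde\xi$ forces $P(\tau)=\Delta(\tau)$ and makes (SS2) vacuous). Starting from the filtration of $K:=\ker(P(\tau)\twoheadrightarrow\Delta(\tau))$ given by (SS2'), I will use (\ref{Ext:vanish}) together with the maximality of $\tilde\xi$ (giving $\tilde\xi\not\leqslant\xi$ for $\xi\ne\tilde\xi$) to rearrange so that every $\Delta_{\tilde\xi}$-subquotient lies at the bottom, and then combine these by Lemma \ref{Lem:prop_stand_stratif} into a single subobject $\Delta_{\tilde\xi}(M_{\tilde\xi})\subset K$ for some $M_{\tilde\xi}\in\Cat_{\tilde\xi}$. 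The remaining subquotients all have $\xi\in\Xi_0$, so $P(\tau)/\Delta_{\tilde\xi}(M_{\tilde\xi})\in\Cat_{\Xi_0}$; comparing tops through the universal property of $\iota^!_{\Xi_0}$ identifies this quotient with $P_{\Xi_0}(\tau)$. The induction hypothesis then supplies a filtration of $\ker(P_{\Xi_0}(\tau)\twoheadrightarrow\Delta(\tau))$ by $\Delta(\tau')$ with $\tau'>\tau$, giving the upper half of the required filtration of $K$.

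The main obstacle is to show that $M_{\tilde\xi}$ is projective in $\Cat_{\tilde\xi}$; granted that, $\Delta_{\tilde\xi}(M_{\tilde\xi})$ decomposes as $\bigoplus_j\Delta(\tau'_j)$ with $\varrho(\tau'_j)=\tilde\xi>\varrho(\tau)$, which completes (SS2). My plan is to exploit that, because $\tilde\xi$ is maximal, the Serre quotient $\pi_{\Xi_0}\colon\Cat\to\Cat^{\Xi_0}$ identifies canonically with the quotient onto $\Cat_{\tilde\xi}$ (both sides have the same simples $L(\tau')$, $\varrho(\tau')=\tilde\xi$), and its right adjoint identifies with $\nabla_{\tilde\xi}$. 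This latter identification requires only checking that $\nabla_{\tilde\xi}(N)$ is right-orthogonal to $\Cat_{\Xi_0}$, which follows because the socle of $\nabla_{\tilde\xi}(N)$ sits in $\varrho=\tilde\xi$. Granted this, exactness of $\nabla_{\tilde\xi}$ becomes exactness of the right adjoint of $\pi_{\Xi_0}$, so $\pi_{\Xi_0}$ preserves projectives. Applying $\pi_{\Xi_0}$ to the filtration of $P(\tau)$, the $\Delta_\xi$-pieces with $\xi\in\Xi_0$ vanish (being in $\Cat_{\Xi_0}$) while $\Delta_{\tilde\xi}(M_{\tilde\xi})$ maps to $M_{\tilde\xi}$, so $\pi_{\Xi_0}(P(\tau))=M_{\tilde\xi}$; since $P(\tau)$ is projective, so is $M_{\tilde\xi}$, finishing the induction.
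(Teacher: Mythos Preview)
The paper does not prove this lemma; it simply cites \cite[Lemma 3.4]{CWR}. So there is no internal argument to compare your approach against, and your attempt is a genuine proof where the paper gives none.

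Your forward direction is correct. Your reverse direction has the right architecture (induction on $|\Xi|$, peel off a maximal $\tilde\xi$, identify $P(\tau)/\Delta_{\tilde\xi}(M_{\tilde\xi})$ with $P_{\Xi_0}(\tau)$, and show $M_{\tilde\xi}$ is projective via $\pi_{\Xi_0}$), and the main steps go through. Two points deserve more care, though neither is fatal:

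\emph{The rearrangement step.} You invoke (\ref{Ext:vanish}) together with ``$\tilde\xi\not\leqslant\xi$'' to push $\Delta_{\tilde\xi}$-pieces to the bottom. Note that what you actually need is $\Ext^1_{\Cat}(\Delta_{\tilde\xi}(M),N)=0$ for all $N\in\Cat_{\Xi_0}$, so that extensions $0\to N\to E\to\Delta_{\tilde\xi}(M)\to 0$ split. This does hold, and here is a self-contained reason: the head of $\Delta_{\tilde\xi}(M)$, and hence of any such $E$, lies entirely in $\varrho=\tilde\xi$; thus $E$ is a quotient of a sum of $P(\tau')$ with $\varrho(\tau')=\tilde\xi$, and since $\tilde\xi$ is maximal these equal $\Delta(\tau')\in\Cat_{\leqslant\tilde\xi}$, forcing $E\in\Cat_{\leqslant\tilde\xi}$. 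Combined with the standard vanishing $\Ext^1(\Delta_{\tilde\xi}(M),\Cat_{<\tilde\xi})=0$, this handles both the case $\xi'<\tilde\xi$ and the case $\xi'$ incomparable to $\tilde\xi$. Citing (\ref{Ext:vanish}) alone, as literally stated, does not give this.

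\emph{The identification $\pi_{\Xi_0}^*\cong\iota\circ\nabla_{\tilde\xi}$.} Your justification (``the socle of $\nabla_{\tilde\xi}(N)$ sits in $\varrho=\tilde\xi$'') yields only $\Hom(\Cat_{\Xi_0},\nabla_{\tilde\xi}(N))=0$; the characterization of the section functor of a Serre quotient also requires $\Ext^1(\Cat_{\Xi_0},\nabla_{\tilde\xi}(N))=0$. For simples $L$ with $\varrho(L)<\tilde\xi$ this follows from the defining adjunction of $\nabla_{\tilde\xi}$; for $\varrho(L)$ incomparable to $\tilde\xi$ one needs an extra argument. Once this $\Ext^1$-vanishing is in hand, your conclusion that exactness of $\nabla_{\tilde\xi}$ makes $\pi_{\Xi_0}$ preserve projectives, hence $M_{\tilde\xi}=\pi_{\Xi_0}(P(\tau))$ is projective, is correct.
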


This is  \cite[Lemma 3.4]{CWR}.

\subsubsection{Equivalences}
Let $\Cat_1,\Cat_2$ be two weakly standardly stratified categories and let $\Xi_1,\Xi_2$ be the corresponding
posets. By an equivalence of $(\Cat_1,\Xi_1),(\Cat_2,\Xi_2)$ we mean a pair $(\Phi,\phi)$ consisting
of an equivalence $\Phi:\Cat_1\rightarrow \Cat_2$ of abelian categories and a poset isomorphism
$\phi: \Xi_1\rightarrow \Xi_2$ such that the bijection between the simples induced by $\Phi$
is compatible with $\phi$. Clearly, $\Phi$ induces an equivalence $\gr\Phi:\gr\Cat_1\xrightarrow{\sim}\gr\Cat_2$
and $\Phi\circ \Delta\cong \Delta\circ \gr\Phi, \Phi\circ\nabla\cong \nabla\circ \gr\Phi$.

\subsubsection{Example: Categorical tensor products}\label{SSS_cat_tens_prod}
The formalism of standardly stratified categories was introduced in \cite{LW} to treat
tensor products of categorical representations of Kac-Moody algebras. Namely,
let $\g$ be a Kac-Moody algebra and $\mathcal{V}_1,\ldots,\mathcal{V}_k$
be minimal  $\g$-categorifications (in the sense of Rouquier, \cite{Rouquier_2Kac}) that categorify irreducible integrable
highest weight representations. Webster in \cite{Webster_tensor} constructed
the categorical tensor product $\mathcal{V}_1\otimes\ldots\otimes \mathcal{V}_k$
that was equipped in \cite{LW} with a structure of a standardly stratified
category. The reader is referred to \cite{LW} for details.

\subsubsection{Example: Standardly stratified structure on $\mathcal{O}$ from degenerating $\nu$}\label{SSS_SSC_1param}
Let us give another example of a standardly stratified category. Let
$X,T$ be as in  Section \ref{SS_cat_O_CW}. Pick a generic
$\nu:\C^\times\rightarrow T$ and let $\nu_0$ lie in the closure of the chamber containing $\nu$.
Then $\nu_0$ defines the order on the set of irreducible components
of $X^{\nu_0(\C^\times)}$ (by contraction, see \cite[Section 6.1]{CWR} for details).
So we get a pre-order $\leqslant_{\nu_0}$ on the set $X^T$.
It is easy to see (and was checked in \cite[Section 6.1]{CWR}) that the order
$\leqslant_{\nu}$ refines $\leqslant_{\nu_0}$.

Now pick a sufficiently dominant quantization parameter $\lambda$ and consider
the category $\OCat_\nu(\A_\lambda)$.

The following proposition is the main result of \cite[Section 6]{CWR}.

\begin{Prop}\label{Prop:stand_filt}
The pre-order $\leqslant_{\nu_0}$ defines a standardly stratified structure on
$\OCat_\nu(\A_\lambda)$. The associated graded category is $\OCat_\nu(\Ca_{\nu_0}(\A_\lambda))$.
The standardization functor is $\Delta_{\nu_0}$, while the costandardization
functor is $\nabla_{\nu_0}$.
\end{Prop}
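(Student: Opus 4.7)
The plan is to verify the axioms (SS1) and (SS2) for the pre-order $\leqslant_{\nu_0}$ on $\mathcal{T} = X^T$ directly, leveraging the existing highest weight structure for the finer order $\leqslant_\nu$ (Proposition \ref{Prop:cat_O}(5)), the block description of the $\nu_0$-Cartan from Proposition \ref{Prop:cat_O}(4), and the exactness of $\Delta_{\nu_0}, \nabla_{\nu_0}$ on category $\mathcal{O}$ from Proposition \ref{Prop:cat_O}(6). First I would identify the poset $\Xi$: its elements are the irreducible components $Z$ of $X^{\nu_0(\C^\times)}$ meeting $X^T$, with the contraction order induced by $\nu_0$, and $\varrho^{-1}(\xi) = Z_\xi^T$. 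The block decomposition of Proposition \ref{Prop:cat_O}(4) gives
\[
\OCat_\nu(\Ca_{\nu_0}(\A_\lambda)) \;=\; \bigoplus_\xi \OCat_\nu^{Z_\xi}\bigl(\A^{Z_\xi}_{\iota_{Z_\xi}^*(\lambda) - \rho_{Z_\xi}}\bigr),
\]
which is the candidate for the associated graded category; reading off its simples, standards, and projectives block by block is straightforward.

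The key technical step is a transitivity identity for quantum Hamiltonian reduction: for $p \in Z^T$ one should have $\Delta_\nu(p) \cong \Delta_{\nu_0}(\Delta_\nu^Z(p))$, where $\Delta_\nu^Z$ denotes the standardization functor internal to $\OCat_\nu^Z(\A^Z)$. This follows from a bimodule isomorphism
\[
\A_\lambda/\A_\lambda \A_\lambda^{>0,\nu} \;\cong\; \bigl(\A_\lambda/\A_\lambda \A_\lambda^{>0,\nu_0}\bigr) \otimes_{\Ca_{\nu_0}(\A_\lambda)} \bigl(\Ca_{\nu_0}(\A_\lambda)/\Ca_{\nu_0}(\A_\lambda) \Ca_{\nu_0}(\A_\lambda)^{>0,\bar\nu}\bigr),
\]
where $\bar\nu$ is the image of $\nu$ in the $\nu_0$-Cartan and is still generic on each component $Z$. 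This identifies the restriction of $\Delta_{\nu_0}$ to the $Z_\xi$-summand with the tentative standardization $\Delta_\xi$, left adjoint to $\pi_\xi$, after one also checks that $\Delta_{\nu_0}$ lands in $\OCat_\nu(\A_\lambda)_{\leqslant \xi}$ and that the adjunction passes to the quotient (the latter via a support argument and Proposition \ref{Prop:cat_O}(2)).

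Axiom (SS1) is then immediate from Proposition \ref{Prop:cat_O}(6). For (SS2) I would take the $\nu$-standard filtration of the indecomposable projective $P_\nu(p) \in \OCat_\nu(\A_\lambda)$, whose subquotients $\Delta_\nu(p')$ satisfy $p' \geqslant_\nu p$, and group its terms according to $\varrho(p') \in \Xi$. Since $\leqslant_\nu$ refines $\leqslant_{\nu_0}$, the grouping is compatible with the filtration order, and within each $\xi'$-block the transitivity identity repackages the successive $\Delta_\nu(p')$'s into a single $\Delta_{\nu_0}(Q_{\xi'})$ for some $\Delta_\nu^{Z_{\xi'}}$-filtered $Q_{\xi'} \in \OCat_\nu^{Z_{\xi'}}(\A^{Z_{\xi'}})$. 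The bottom block yields $\Delta(p) = \Delta_{\nu_0}(P_\nu^Z(p))$ as the required maximal quotient of $P_\nu(p)$, and the higher blocks assemble into the filtration of $\ker(P(p) \twoheadrightarrow \Delta(p))$ by $\Delta(\tau')$ with $\varrho(\tau') > \xi$. The costandardization claim then follows from Lemma \ref{Lem:w_st_stratif}: the functor $\nabla_{\nu_0}$, being exact by Proposition \ref{Prop:cat_O}(6), is forced to coincide with the intrinsic costandardization, and the identifications $\bnabla(p) = \nabla_{\nu_0}(L_\nu^Z(p))$, $\nabla(p) = \nabla_{\nu_0}(I_\nu^Z(p))$ are automatic.

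The main obstacle is the bundling step in (SS2): one must verify that within a single $\xi'$-block the consecutive $\Delta_\nu(p')$-extensions genuinely glue into a $\Delta_{\nu_0}$-image of a single object of $\OCat_\nu^{Z_{\xi'}}(\A^{Z_{\xi'}})$, i.e.\ that no nontrivial extensions mix two different $\nu_0$-strata at this stage. Concretely, this reduces to the vanishing $\Ext^1_{\OCat_\nu(\A_\lambda)}(\Delta_{\nu_0}(M), \Delta_{\nu_0}(N)) = 0$ for $M, N$ supported on different components of $X^{\nu_0(\C^\times)}$, together with a fullness statement for $\Delta_{\nu_0}$ on $\Delta$-filtered objects. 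Both should follow from the adjunction between $\Delta_{\nu_0}$ and the natural projection $\pi_{\nu_0}$ combined with a support argument on associated varieties analogous to the one behind Lemma \ref{Lem:dagger_assoc_var}.
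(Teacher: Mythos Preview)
The paper does not prove this proposition; it simply records that it is the main result of \cite[Section 6]{CWR}. So there is no in-paper argument to compare against, and your outline is effectively an attempt to reconstruct that proof.

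Your strategy is sound in broad strokes and matches what one expects: use the transitivity $\Delta_\nu \cong \Delta_{\nu_0}\circ\Delta_\nu^Z$, get (SS1) from the exactness in Proposition~\ref{Prop:cat_O}(6), and deduce (SS2) by regrouping the highest-weight standard filtration of $P_\nu(p)$. Two points deserve more care. First, the bundling step is cleaner than you make it: once (SS1) holds, Lemma~\ref{Lem:prop_stand_stratif} applies directly, because an object filtered by $\Delta_\nu(p')$ with $\varrho(p')=\xi'$ is automatically filtered by proper standards $\bDelta(p'')=\Delta_{\nu_0}(L^{Z_{\xi'}}(p''))$ with $p''\in Z_{\xi'}^T$, and hence equals $\Delta_{\nu_0}(\pi_{\xi'}(-))$. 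You do not need the $\operatorname{Ext}^1$-vanishing between different components that you propose at the end. Second, what you actually obtain from the regrouping is only (SS2$'$): the pieces $Q_{\xi'}$ are $\Delta_\nu^{Z_{\xi'}}$-filtered, not a priori projective in $\OCat_\nu^{Z_{\xi'}}$, so the subquotients are $\Delta_{\nu_0}(Q_{\xi'})$ rather than sums of standards $\Delta(\tau')=\Delta_{\nu_0}(P^{Z_{\xi'}}(\tau'))$. You then need Lemma~\ref{Lem:w_st_stratif} together with the exactness of $\nabla_{\nu_0}$ to upgrade to the full (SS2); your last paragraph invokes that lemma but phrases it as if it only handles the costandardization claim.

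The step you underweight is the identification of $\Cat_\xi$ with the block $\OCat_\nu^{Z_\xi}$ and of $\Delta_{\nu_0}|_{Z_\xi}$ with the abstract left adjoint $\Delta_\xi$ of $\pi_\xi$. The natural right adjoint of $\Delta_{\nu_0}$ is ``take $\A_\lambda^{>0,\nu_0}$-invariants,'' which is not manifestly the Serre quotient functor $\pi_\xi$; one has to check that on $\OCat_\nu(\A_\lambda)_{\leqslant\xi}$ this invariants functor kills exactly $\OCat_\nu(\A_\lambda)_{<\xi}$ and induces an equivalence to $\OCat_\nu^{Z_\xi}$. This is the genuine content of \cite[Section 6]{CWR} and is where the support/geometry argument you allude to actually enters; the rest of your outline is largely formal once this is in place.
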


\subsection{Standardly stratified structure on $\mathcal{O}$ from parameter deformation}
This is one of two central parts of this section. Here we introduce a new standardly
stratified structure on $\OCat_\nu(\A_\lambda)$ coming from a deformation of $\lambda$
along an affine subspace parallel to a face in the chamber containing $\lambda$.

\subsubsection{Main result}\label{SSS_main_result_stand_stratif}
Pick a face $\Gamma$ of  a classical chamber $C$ and an element $\lambda^\circ\in \tilde{\param}$.
Let $\param_0$ denote the vector subspace of $\tilde{\param}$ spanned by $\Gamma$. Set
$\param^1:=\lambda^\circ+\param_0$.

\begin{Lem}\label{Lem:hw_as_gen}
There is an asymptotically generic Zariski open subset $\widehat{\param}^1\subset \param^1$
with the following properties.
\begin{enumerate}
\item We have an algebra isomorphism $\Ca_\nu(\A_\lambda)\cong \C[X^T]$ for any $\lambda\in \widehat{\param}^1$.
\item For any $\lambda\in \widehat{\param}^1$, the category $\OCat_\nu(\A_\lambda)$ is highest weight with standard objects $\Delta_{\nu,\lambda}(p)$ and costandard objects $\nabla_{\nu,\lambda}(p)$.
\end{enumerate}
\end{Lem}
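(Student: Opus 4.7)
The plan is to take $\widehat{\param}^1 := \param^1 \cap U$ for a Zariski open and asymptotically generic subset $U\subset \tilde{\param}$ on which abelian localization holds for $(\lambda,\theta)$, where $\theta\in\tilde{\param}_{\Q}$ is a fixed generic element (say, in the interior of $C$). The existence of such a $U$ is the content of Corollary \ref{Cor:ab_loc}: for ample $\chi\in\tilde{\param}_{\Z}$ and a suitably large $m$, the locus of $\lambda$ with $\lambda + nm\chi \in U_{m,\chi}$ for all $n\geq 0$ is asymptotically generic and has the required abelian-localization property.

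For $\lambda\in U$, the functor $\Gamma_\lambda$ is a $T$-equivariant equivalence $\Coh(\A_\lambda^\theta) \xrightarrow{\sim} \A_\lambda\text{-mod}$. Under it $\OCat_\nu(\A_\lambda)$ matches the subcategory of $\Coh(\A_\lambda^\theta)$ of modules set-theoretically supported on the attracting locus $X^+ = \{x\in X^\theta \mid \lim_{t\to 0}\nu(t)\cdot x \in X^T\}$. The Bialynicki--Birula decomposition $X^+ = \bigsqcup_{p\in X^T}X^+_p$ combined with $T$-equivariance of $\A_\lambda^\theta$ produces, for each $p\in X^T$, the standard and costandard objects $\Delta_{\nu,\lambda}(p), \nabla_{\nu,\lambda}(p)$ and witnesses the highest-weight axioms. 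Restricting $\A_\lambda^\theta$ to the (finite) fixed-point set $X^T$ yields $\Ca_\nu(\A_\lambda)=\C[X^T]$. These are the same computations that underlie Proposition \ref{Prop:cat_O}(3),(5), but they now go through for every $\lambda\in U$ rather than only for asymptotically dominant parameters.

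Asymptotic genericity of $\widehat{\param}^1$ in $\param^1$ is then automatic: one has $\param^1\setminus \widehat{\param}^1 \subset \tilde{\param}\setminus U$, so the asymptotic cone of the former is contained in that of the latter, which lies in $\mathcal{H}_{\C}$.

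The main obstacle is the uniform identification, throughout $U$ rather than only at the asymptotically dominant parameters treated originally in \cite{BLPW}, of the algebraically defined category $\OCat_\nu(\A_\lambda)$ with its geometric counterpart consisting of $\A_\lambda^\theta$-modules supported on $X^+$. Once this identification is in place (it uses only the abelian localization equivalence and the compatibility of $T$-actions on both sides already recalled in Section \ref{SS_cat_O_CW}), the description of standards and costandards, the Ext-vanishing required by the axioms (HW1), (HW2), and the computation of the Cartan quotient follow essentially formally from the geometric picture.
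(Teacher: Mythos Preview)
Your approach has a genuine gap in the asymptotic genericity claim. The set $U$ on which abelian localization holds for $(\lambda,\theta)$ is \emph{not} asymptotically generic in $\tilde{\param}$: its complement contains, for instance, all $\lambda_0 - N\theta$ for $N\gg 0$, so the asymptotic cone of the complement contains an open cone (roughly, the anti-ample directions), not merely $\mathcal{H}_{\C}$. Corollary~\ref{Cor:ab_loc} does not say otherwise: it produces an asymptotically generic $U_{m,\chi}$, but the abelian-localization locus is the countable intersection $\bigcap_{n\geqslant 0}(U_{m,\chi}-nm\chi)$, whose complement $\bigcup_{n\geqslant 0}(Y-nm\chi)$ acquires the ray $\R_{\leqslant 0}\cdot\chi$ in its asymptotic cone. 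This intersection is not even obviously Zariski open.

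The problem persists after restricting to $\param^1=\lambda^\circ+\param_0$. Since $\param_0=\operatorname{Span}(\Gamma)$ contains both $\Gamma$ and $-\Gamma$, moving along $\param^1$ in a direction of $-\Gamma$ drives $\lambda$ away from the $\theta$-chamber, and abelian localization eventually fails there regardless of how you shift $\lambda^\circ$ by an element of $C$. Thus $\param^1\cap U$ is at best a half-space, not the complement of a proper subvariety, and the subsequent deduction (``possibly after replacing $\lambda^\circ$ with $\lambda^\circ+\chi$ \ldots\ Zariski generic $\lambda\in\param^1$'') would fail.

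The paper avoids this by working directly with the criterion that $\OCat_\nu(\A_\lambda)$ is highest weight iff $\Ext^2(\Delta_{\nu,\lambda}(p),\nabla_{\nu,\lambda}(p'))=0$ for all $p,p'$. This is a finite collection of vanishing conditions whose failure locus can be controlled (via \cite[Appendix]{BLPW}) to have asymptotic cone inside $\mathcal{H}_{\C}$; in particular the highest-weight property genuinely holds on an asymptotically generic open set, a strictly larger locus than the abelian-localization one. Part~(1) is handled similarly, by the argument in \cite[Proposition~5.3]{CWR}, again without invoking abelian localization.
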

\begin{proof}
The existence of $\widehat{\param}$ such that (1) holds
follows from the proof of \cite[Proposition 5.3]{CWR}.

Now assume that (1) holds.
The category $\OCat_\nu(\A_\lambda)$ is highest weight with standards $\Delta_\nu(p)$
and costandards $\nabla_\nu(p)$ if and only if $\Ext^2(\Delta_{\nu,\lambda}(p),\nabla_{\nu,\lambda}(p'))=0$,
see the proof of \cite[Theorem 5.12]{BLPW}.  That $\Ext^2(\Delta_{\nu,\lambda}(p),\nabla_{\nu,\lambda}(p'))=0$  for
$\lambda$ in an asymptotically generic open subset follows from the proof in \cite[Appendix]{BLPW}.
\end{proof}

It follows that, possibly after replacing $\lambda^\circ$, with $\lambda^\circ+\chi$ for
$\chi\in \tilde{\param}_{\Z}\cap C$ we may assume that $\OCat_\nu(\A_{\lambda})$
is highest weight for a Zariski generic $\lambda\in \param^1$. Recall, \cite[Lemma 6.4]{CWR},
that the order is introduced as follows. Let $c_\lambda(p)$ denote the image of $h\in \A_\lambda^T$ under the maps
$\A_\lambda^T\twoheadrightarrow \C_\nu(\A_\lambda)\twoheadrightarrow \C_p$. We set
$p<_{\lambda}p'$ if $c_\lambda(p')-c_{\lambda}(p)\in \Z_{>0}$. Recall that $c_\lambda(p)-c_\lambda(p')$
is a linear function whose value at $\chi\in \tilde{\param}_{\Z}$ coincides with
$\alpha_p(\chi)-\alpha_{p'}(\chi)$, where we write $\alpha_p(\chi)$ for the character
of the action of $\nu$ in the fiber of $\mathcal{O}(\chi)$, see \cite[Lemma 6.4]{CWR}.

Now pick a sufficiently general integral point $\chi$ in the interior of $\Gamma$. For $\lambda=\lambda^\circ+ N\chi$
for $N\gg 0$, the order $<_\lambda$ can be described in the following way. Set $p\prec_\chi p'$
if $\alpha_{p'}(\chi)-\alpha_p(\chi)>0$ and $p\sim_\chi p'$ if $\alpha_p(\chi)=\alpha_{p'}(\chi)$.
For $p\sim_\chi p'$, the difference $c_{p'}(\lambda)-c_{p}(\lambda)$ is independent of $\lambda\in
\param^1$. In particular, if $p\sim_\chi p'$, then
we have $p<_\lambda p'$ if and only if $p<_{\widehat{\lambda}}p'$ for a Weil
generic $\widehat{\lambda}\in \param^1$. So we see that the order $<_\lambda$ is refined by the
following order $\leqslant$: we have $p<p'$ if $p<_\chi p'$ or $p<_{\widehat{\lambda}}p'$ (note that the latter
automatically implies $p\sim_\chi p'$). We also would like to point out that $\leqslant_\chi$ is
a pre-order on $X^T$ (refined by $\leqslant_\lambda$).

Here is the main result of the present section.

\begin{Prop}\label{Prop:stand_stratif}
Let $\lambda$ be as above. The category $\OCat_\nu(\A_\lambda)$ is standardly stratified with
respect to the pre-order $\leqslant_\chi$. We have a labeling preserving equivalence
$\gr\OCat_\nu(\A_\lambda)\cong \OCat_\nu(\A_{\widehat{\lambda}})$ for a Weil generic
$\widehat{\lambda}\in \param^1$, in particular, the right hand side is independent of
$\widehat{\lambda}$.
\end{Prop}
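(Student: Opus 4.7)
The plan is to deform the highest weight structure at a Weil generic point $\widehat{\lambda} \in \param^1$ into a standardly stratified structure at $\lambda = \lambda^\circ + N\chi$. The proper standard candidate is $\bDelta(p) := \Delta_{\nu,\lambda}(p) = (\A_\lambda/\A_\lambda \A_\lambda^{>0,\nu}) \otimes_{\Ca_\nu(\A_\lambda)} \C_p$, which is defined for every $\lambda$ in the asymptotically generic open $\widehat{\param}^1$ of Lemma \ref{Lem:hw_as_gen} and assembles into a $\C[\param^1]$-flat family $\Delta_{\nu,\param^1}(p)$. Its specialization at a Weil generic $\widehat{\lambda}$ recovers the highest weight standard in $\OCat_\nu(\A_{\widehat{\lambda}})$, and the pre-order $\leqslant_\chi$ partitions $X^T$ into the equivalence classes $\xi$ that will index the strata.

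The first step is to identify each subquotient $\Cat_\xi := \Cat_{\leqslant \xi}/\Cat_{<\xi}$ of $\OCat_\nu(\A_\lambda)$ with the corresponding block $\OCat_\nu(\A_{\widehat{\lambda}})_\xi$. The crucial input is that for $p \sim_\chi p'$ the difference $c_\lambda(p') - c_\lambda(p)$ is independent of $\lambda \in \param^1$, so the restriction of $<_\lambda$ to $\xi$ agrees with $<_{\widehat{\lambda}}$; combined with flatness of the family $\Delta_{\nu,\param^1}(p)$, this will yield an equivalence $\Cat_\xi \cong \OCat_\nu(\A_{\widehat{\lambda}})_\xi$ of highest weight categories and produce the desired identification of the associated graded. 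The standardization $\Delta_\xi \colon \Cat_\xi \to \OCat_\nu(\A_\lambda)$ is then defined by specializing projective presentations in $\Cat_\xi$ through the deformation, so that $\Delta_\xi(P_\xi(p)) = \Delta_\lambda(p)$ in the notation of Section \ref{SSS_main_result_stand_stratif}.

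To verify (SS1) I would realize $\Delta = \bigoplus_\xi \Delta_\xi$ as the $\lambda$-specialization of the exact functor $N \mapsto \Delta_{\nu,\param^1} \otimes_{\Ca_\nu(\A_{\param^1})} N$, using the decomposition $\Ca_\nu(\A_\lambda) \cong \C[X^T] = \bigoplus_\xi \C[\xi]$ to single out the $\xi$-summand; exactness is then immediate from $\C[\param^1]$-flatness of the parametric standards. For (SS2) I would deform the projective cover $P_\lambda(p)$ to a projective $P_{\param^1}(p)$ over $\A_{\param^1}$ (this is available because $\lambda$ satisfies abelian localization for a $\theta^1$ in the chamber $C$, so projectives come from global sections of tilting-type objects on $X^{\theta^1}_{\param^1}$). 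At the Weil generic $\widehat{\lambda}$ the projective $P_{\widehat{\lambda}}(p)$ admits a highest weight filtration by $\Delta_{\widehat{\lambda}}(p')$ with $p' \geqslant_{\widehat{\lambda}} p$, and since $p' \geqslant_{\widehat{\lambda}} p$ forces either $p' \succ_\chi p$ or $p' \sim_\chi p$, lifting this filtration to $\C[\param^1]$-flat subquotients and specializing at $\lambda$ gives the required $\Delta(\cdot)$-filtration compatible with $\leqslant_\chi$.

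The main obstacle is controlling the parametric category $\OCat_\nu(\A_{\param^1})$ finely enough to ensure that projectives, standards, and the filtration relating them all deform $\C[\param^1]$-flatly over a Zariski open containing both $\lambda$ and a Weil generic point, so that specialization commutes with all the constructions. Once flatness of the parametric projective filtration is in hand, the specialization argument is formal; establishing that flatness is the technical heart of the proof, and I would model it on the analogous deformation/semicontinuity techniques used in \cite[Section 6]{CWR} to prove Proposition \ref{Prop:stand_filt}.
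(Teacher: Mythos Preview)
Your proposal has a genuine gap: you have misidentified both the proper standard and the standard objects for the standardly stratified structure, and this misidentification is tied to an incorrect assumption that projectives deform flatly over $\param^1$.

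You set $\bDelta(p):=\Delta_{\nu,\lambda}(p)$ and $\Delta_\xi(P_\xi(p))=\Delta_\lambda(p)$. Neither is correct. Since the associated graded block $\Cat_\xi$ is supposed to be $\OCat_\nu(\A_{\widehat{\lambda}})_\xi$, the proper standard $\bDelta(p)=\Delta_\xi(L_\xi(p))$ must specialize at a Weil generic $\widehat{\lambda}$ to the \emph{simple} $L_{\widehat{\lambda}}(p)$, whereas $\Delta_{\nu,\param^1}(p)$ specializes to the \emph{standard} $\Delta_{\widehat{\lambda}}(p)$; these differ whenever the $\chi$-class of $p$ is not a singleton. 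The paper therefore builds $\bDelta_{\param^1}(p)$ as an iterated quotient of $\Delta_{\param^1}(p)$ killing all maps from $\Delta_{\param^1}(p')$ with $p'\sim_\chi p,\ p'<_{\widehat{\lambda}}p$. Dually, the standard $\tilde{\Delta}_\lambda(p)$ is strictly \emph{larger} than $\Delta_\lambda(p)$: it is built by successive universal extensions of $\Delta_{\param^1}(p)$ by the $\Delta_{\param^1}(p')$ with $p'\sim_\chi p,\ p'>_{\widehat{\lambda}}p$, so that generically $\tilde{\Delta}_{\widehat{\lambda}}(p)=P_{\widehat{\lambda}}(p)$.

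Your deformation strategy for (SS2) then breaks down. At a Weil generic $\widehat{\lambda}$ the category $\OCat_\nu(\A_{\widehat{\lambda}})$ decomposes into blocks indexed by $\chi$-classes (you observe this yourself), so $P_{\widehat{\lambda}}(p)$ is filtered only by $\Delta_{\widehat{\lambda}}(p')$ with $p'\sim_\chi p$. At the special $\lambda$, however, $P_\lambda(p)$ acquires additional $\Delta_\lambda(p')$ subquotients with $p'>_\chi p$; hence $P_\lambda(p)$ is \emph{not} the specialization of any flat family containing the generic projectives, and lifting the generic $\Delta$-filtration cannot produce the filtration required by (SS2). The paper handles this by proving (its Lemma~\ref{Lem:proj_tilde}) that the universal-extension construction \emph{does} deform flatly, and then separately verifying, via BGG reciprocity and the filtration of $\nabla_\lambda(p)$ by $\bnabla$'s, that the multiplicity of each $\Delta_\lambda(p_j)$ in $P_\lambda(p)$ within a $\chi$-class matches the generic multiplicity. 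This BGG-reciprocity step is the technical heart and has no analogue in your outline.

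Finally, the identification $\gr\OCat_\nu(\A_\lambda)\cong\OCat_\nu(\A_{\widehat{\lambda}})$ is not a consequence of flatness alone. The paper compares the endomorphism algebras $B_\lambda=\End(\bigoplus_{p'\sim_\chi p}\tilde{\Delta}_\lambda(p'))$ and $B_{\widehat{\lambda}}$ by using the translation equivalences $\OCat_\nu(\A_\lambda)\cong\OCat_\nu(\A_{\lambda+\chi'})$ for integral $\chi'\in\Gamma$, together with Zariski density of $\{\lambda+\chi'\}$ in $\param^1$, to force the two algebras into the same orbit on a moduli of algebra structures (Lemma~\ref{Lem:alg_iso}). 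Your flatness argument does not supply this.
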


We prove this result in the rest of the section.

\subsubsection{Objects $\bDelta_{\param^1}(p),
\bnabla_{\param^1}(p)$}
Note that every $p\in X^T$ defines an algebra homomorphism $\Ca_{\nu}(\A_{\param^1})\rightarrow \C[\param^1]$
that is surjective over a non-empty open subset in $\param^1$. This allows to define the
$\A_{\param^1}$-modules $\Delta_{\param^1}(p), \nabla_{\param^1}(p)$ that specialize
to $\Delta_\lambda(p), \nabla_{\lambda}(p)$ for a Zariski generic $\lambda\in \param^1$.

We start by introducing $\A_{\param^1}$-modules $\bDelta_{\param^1}(p),\bnabla_{\param^1}(p)$
that have the following properties:
\begin{enumerate}
\item they are generically free over $\param^1$,
\item specialize to $ L_{\widehat{\lambda}}(p)$ at $\widehat{\lambda}$,
\item we have $\bDelta_\lambda(p)\twoheadrightarrow L_\lambda(p)$
(resp., $L_\lambda(p)\hookrightarrow \bnabla_\lambda(p)$) with kernel
(resp., cokernel) filtered with $L_\lambda(p')$ for $p'<_\chi p$.
\end{enumerate}
Later we will see that these objects specialized to $\lambda$ become  proper standard and proper costandard
for the standardly stratified structure on  $\OCat_\nu(\A_\lambda)$ we are going to produce.

The objects $\bDelta_{\param^1}(p)$ are constructed as quotients of $\Delta_{\param^1}(p)$
as in the proof of \cite[Lemma 3.3]{Cher_supp}. Namely, let us consider all labels
$p'\sim_\chi p, p'<_{\widehat{\lambda}}p$. Then we consider
the object $M_{\param^1}:=\bigoplus_{p'}\Delta_{\param^1}(p')$.
Then we define the object $\Delta^i_{\param^1}(p)$ recursively as follows:
$\Delta^0_{\param^1}(p)=\Delta_{\param^1}(p)$ and $\Delta^i_{\param^1}(p)$
is the cokernel of the natural homomorphism
$$\Hom_{\A_{\param^1}}(M_{\param^1},\Delta^{i-1}_{\param^1}(p))\otimes_{\C[\param_1]}M_{\param^1}\rightarrow
\Delta^{i-1}_{\param_1}(p).$$
For $i$ sufficiently large, the kernel of $\Delta^j_{\param^1}(p)\twoheadrightarrow
\Delta^{j+1}_{\param^1}(p)$ is $\C[\param^1]$-torsion for all $j\geqslant i$.
We take $\Delta^i_{\param^1}(p)$ for $\bDelta_{\param^1}(p)$.

The objects $\bDelta_{\param^1}(p)$ have  properties (1)-(3) similarly to \cite[Lemma 3.3]{Cher_supp}.

The modules $\bnabla_{\param^1}(p)$ are produced as follows.
We set $N_{\param^1}:=\bigoplus_{p'}\nabla_{\param^1}(p')$, $\nabla^0_{\param^1}(p):=\nabla_{\param^1}(p)$.
Take generators $\varphi_1,\ldots,\varphi_\ell$ of the $\C[\param^1]$-module
$$\Hom_{\A_{\param^1}}(\nabla^{i-1}_{\param^1}(p), N_{\param^1}).$$
Then for $\nabla^i_{\param^1}(p)$ we take the intersection of the kernels of
$\varphi_i, i=1,\ldots,\ell$. Define $\bnabla_{\param^1}(p)$ similarly to
the above.  As in \cite[Lemma 3.3]{Cher_supp}, one shows that the modules $\bnabla_{\param^1}(p)$
have  properties (1)-(3).

\subsubsection{Filtrations}
Now let us establish an important property of the objects $\Delta_{\param^1}(p),
\nabla_{\param^1}(p)$.

\begin{Lem}\label{Lem:Delta_filtr}
We have a principal Zariski open subset $\param^0\subset \param^1$
and a filtration $\Delta_{\param^0}(p)(:=\C[\param^0]\otimes_{\C[\param^1]}\Delta_{\param^1}(p))=
F_0\supsetneq F_1\supsetneq F_2\ldots\supsetneq F_k
\supsetneq F_{k+1}=\{0\}$ with the following properties:
\begin{itemize}
\item[(i)] $F_i/F_{i+1}$ is  free over $\param^0$.
\item[(ii)] $(F_0/F_1)_{\lambda^1}\cong \bDelta_{\lambda^1}(p)$
for  $\lambda^1\in \param^0$.
\item[(iii)] $(F_i/F_{i+1})_{\lambda^1}\cong \bDelta_{\lambda^1}(p_i)^{\oplus m_i}$
for  $\lambda^1\in \param^0$ and $p_i\sim_\chi p, p_i<_{\widehat{\lambda}}p$
and $m_i\in \Z_{>0}$ independent of $\lambda^1$.
\end{itemize}
\end{Lem}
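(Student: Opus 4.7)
The plan is to build the filtration pointwise first, at a Weil generic $\widehat{\lambda} \in \param^1$, using the highest weight structure on $\OCat_\nu(\A_{\widehat{\lambda}})$, and then to spread the construction out to a principal Zariski open subset $\param^0 \subset \param^1$ by iterating the $\bDelta_{\param^1}$-construction of the preceding paragraph, together with a generic flatness argument.

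The key structural observation is that at Weil generic $\widehat{\lambda}$, every composition factor $L_{\widehat{\lambda}}(p')$ of $\Delta_{\widehat{\lambda}}(p)$ satisfies $p'\sim_\chi p$. Indeed, the image of $\nu\in\mathfrak{t}$ in $\A_{\widehat{\lambda}}^T$ acts on $L(p')$ through $\Ca_\nu(\A_{\widehat{\lambda}})=\C[X^T]$ by $c_{\widehat{\lambda}}(p')$, while its eigenvalues on $\Delta_{\widehat{\lambda}}(p)$ lie in $c_{\widehat{\lambda}}(p)-\Z_{\geqslant 0}$; hence $c_{\widehat{\lambda}}(p)-c_{\widehat{\lambda}}(p')\in\Z_{\geqslant 0}$. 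On the other hand, $\lambda\mapsto c_\lambda(p)-c_\lambda(p')$ is linear on $\param^1$, and by the genericity of $\chi$ in $\Gamma$, nonconstant whenever $p'\not\sim_\chi p$, so the integrality is a countable union of hyperplane conditions, all avoided at Weil generic $\widehat{\lambda}$. With this in hand, enumerate the composition factors as $p=p_0,p_1,\dots,p_k\in [p]_{\sim_\chi}$, compatibly with $<_{\widehat{\lambda}}$ (so $i<j$ whenever $p_i>_{\widehat{\lambda}} p_j$), set $m_i:=[\Delta_{\widehat{\lambda}}(p):L_{\widehat{\lambda}}(p_i)]$, and take $F_i|_{\widehat{\lambda}}$ to be the smallest submodule such that $\Delta_{\widehat{\lambda}}(p)/F_i|_{\widehat{\lambda}}$ has composition factors only among $\{L(p_j):j<i\}$. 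A standard highest weight argument yields $F_i|_{\widehat{\lambda}}/F_{i+1}|_{\widehat{\lambda}}\cong L_{\widehat{\lambda}}(p_i)^{\oplus m_i}=\bDelta_{\widehat{\lambda}}(p_i)^{\oplus m_i}$, and $F_0|_{\widehat{\lambda}}/F_1|_{\widehat{\lambda}}=L_{\widehat{\lambda}}(p)=\bDelta_{\widehat{\lambda}}(p)$.

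To globalize, construct the $F_i$ recursively over $\param^1$ mimicking the $\bDelta_{\param^1}$-construction already given. With $F_0:=\Delta_{\param^1}(p)$, the $\C[\param^1]$-module $\Hom_{\A_{\param^1}}(F_{i-1},\bDelta_{\param^1}(p_{i-1}))$ is finitely generated and has generic rank $m_{i-1}$ by the previous paragraph; generic flatness makes it $\C[\param^0]$-free of rank $m_{i-1}$ over a suitable principal open $\param^0$, and the choice of a free basis produces a map $F_{i-1}\to\bDelta_{\param^0}(p_{i-1})^{\oplus m_{i-1}}$ that is surjective at $\widehat{\lambda}$, hence over a further Zariski open that we absorb into $\param^0$; set $F_i$ to be its kernel. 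Iterating over $i=1,\dots,k+1$ and shrinking $\param^0$ finitely many times, generic flatness delivers (i), while (ii) holds by construction.

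The main obstacle is (iii): one must verify that the surjection $\bDelta_{\param^0}(p_{i-1})^{\oplus m_{i-1}}\twoheadrightarrow F_{i-1}/F_i$ is actually an isomorphism at \emph{every} $\lambda^1\in\param^0$, not just at Weil generic ones (where $\bDelta_{\lambda^1}$ can be strictly larger than $L_{\lambda^1}$). The plan here is a rank-matching argument: both the source and target are $\C[\param^0]$-free, and their specializations at the Weil generic $\widehat{\lambda}$ agree with $L_{\widehat{\lambda}}(p_{i-1})^{\oplus m_{i-1}}$, so the kernel of the surjection is a torsion submodule of a $\C[\param^0]$-free module, hence zero. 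Consequently the identification passes to every specialization in $\param^0$. The technical heart of the proof is this inductive bookkeeping of generic ranks and freeness at each step of the recursion, but all the necessary ingredients are already packaged in the $\bDelta_{\param^1}$-construction carried out just before the lemma.
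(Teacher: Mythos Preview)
Your approach is correct but genuinely dual to the paper's. The paper builds the filtration \emph{from the bottom}: it orders the labels $p_s<_{\widehat\lambda}p_{s-1}<_{\widehat\lambda}\cdots<_{\widehat\lambda}p_0=p$ and defines $F_k$ as the \emph{image} of $\Hom_{\A_{\param^1}}(\Delta_{\param^1}(p_s),\Delta_{\param^1}(p))\otimes_{\C[\param^1]}\Delta_{\param^1}(p_s)\to\Delta_{\param^1}(p)$, then shrinks $\param^1$ so that no further maps from $\Delta(p_s)$ land in the quotient, then repeats with images of $\bDelta_{\param^0}(p_{s-1})$, iterating until the Hom's from all smaller labels vanish. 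You instead work \emph{from the top}, defining $F_i$ as the \emph{kernel} of a map $F_{i-1}\to\bDelta_{\param^0}(p_{i-1})^{\oplus m_{i-1}}$ built from a free basis of the Hom space. Both arguments rest on the same ingredients (generic freeness, the construction of $\bDelta_{\param^1}$, and the simplicity of $\bDelta_{\widehat\lambda}$); your version has the advantage that each step produces exactly one layer, whereas the paper must cycle through several labels at each stage before moving on. The paper's version, on the other hand, never needs to know the multiplicities $m_i$ in advance.

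Your final paragraph, however, is confused. In your construction $F_i=\ker(F_{i-1}\to\bDelta_{\param^0}(p_{i-1})^{\oplus m_{i-1}})$, so $F_{i-1}/F_i$ embeds into $\bDelta_{\param^0}(p_{i-1})^{\oplus m_{i-1}}$; there is no surjection going the other way. And once you have shrunk $\param^0$ so that the map $F_{i-1}\to\bDelta_{\param^0}(p_{i-1})^{\oplus m_{i-1}}$ is surjective (surjectivity is an open condition, satisfied at $\widehat\lambda$), you get $F_{i-1}/F_i\cong\bDelta_{\param^0}(p_{i-1})^{\oplus m_{i-1}}$ on the nose, and (iii) follows immediately by specializing, using the generic freeness of $\bDelta_{\param^1}$. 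So the ``rank-matching'' argument you describe is unnecessary: the obstacle you flag has already been dispatched two paragraphs earlier. Simply delete the last paragraph and note that (iii) holds by construction once surjectivity is arranged.
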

\begin{proof}
The construction of the filtration is based on the construction of $\bDelta_{\param^1}(\lambda)$.
Namely, let us order the labels satisfying $p'\sim_\chi p, p'<_{\widehat{\lambda}}p$
in a non-increasing way: $p_s,p_{s-1},\ldots, p_0=p$ ($p_s$ is the smallest).
Then define $F_k$ as the image of $\Hom_{\A_{\param^1}}(\Delta_{\param^1}(p_s),\Delta_{\param^1}(p))\otimes
_{\param^1}\Delta_{\param^1}(p_s)\rightarrow \Delta_{\param^1}(p)$. It satisfies (i) and (iii)
(with $p_i=p_s$). Shrinking $\param^1$ to a principal Zariski open subset $\param^0$, we see that
$\Hom_{\A_{\param^0}}(\Delta_{\param^0}(p_s), \Delta_{\param^0}(p)/F_k)=0$. In particular, any
homomorphism $\Delta_{\param^0}(p_{s-1})\rightarrow \Delta_{\param^0}(p)/F_k$
factors through $\bDelta_{\param^0}(p_{s-1})$. We define $F_{k-1}/F_k$
as the image of $\Hom_{\A_{\param^1}}(\bDelta_{\param^1}(p_{s-1}),\Delta_{\param^1}(p))\otimes
_{\param^1}\bDelta_{\param^1}(p_{s-1})\rightarrow \Delta_{\param^1}(p)$.
Then we shrink $\param^0$. We continue considering homomorphisms from
$\Delta_{\param^0}(p_s),\bDelta_{\param^0}(p_{s-1})$ until we arrive
at the situation when both $\Hom_{\A_{\param^1}}(\bDelta_{\param^0}(p_s),\Delta_{\param^1}(p)/F_i),
\Hom_{\A_{\param^0}}(\bDelta_{\param^0}(p_{s-1}),\Delta_{\param^0}(p)/F_i)$ are zero
(we do arrive at this situation because the length of a Weil generic specialization of
$\Delta_{\param^0}(p)/F_i$ reduces after each step). In particular, any homomorphism
$\Delta_{\param^0}(p_{s-2})\rightarrow \Delta_{\param^0}(p)/F_i$ factors
through $\bDelta_{\param^0}(p_{s-2})$. Then we repeat the argument.
\end{proof}

The dual  statement holds for $\nabla_{\param^1}(p)$ (we consider the filtration
by $\bnabla_{\param^0}(p_i)$'s).

\subsubsection{Objects $\tilde{\Delta}_{\param^1}(p), \tilde{\nabla}_{\param^1}(p)$}
Now let us produce the objects  $\tilde{\Delta}_{\param^1}(p)$ that later will be shown
to be standard for the standardly stratified structure. Namely, let us order the labels
$p'$ with $p'\sim_\chi p$ in a non-decreasing way: $p_1>p_2>\ldots>p_n$.
Let us define the objects $\tilde{\Delta}_{\param^1}(p_i)_k, k\leqslant i,$ inductively.
We set $\tilde{\Delta}_{\param^1}(p_i)_i=\Delta_{\param^1}(p_i)$. If $\tilde{\Delta}_{\param^1}(p_i)_k$
is already defined, then for $\tilde{\Delta}_{\param^1}(p_i)_{k-1}$ we take
the universal extension
$$0\rightarrow \Ext^1_{\A_{\param_1}}(\tilde{\Delta}_{\param^1}(p_i)_k, \Delta_{\param^1}(p_{k-1}))\otimes_{\C[\param^1]}\Delta_{\param^1}(p_{k-1})
\rightarrow \tilde{\Delta}_{\param^1}(p_i)_{k-1}\rightarrow \tilde{\Delta}_{\param^1}(p_i)_k\rightarrow 0.$$
We then set $\tilde{\Delta}_{\param^1}(p_i):=\tilde{\Delta}_{\param^1}(p_i)_1$.
Note that this mirrors the construction of the projective objects in highest weight
categories.

\begin{Lem}\label{Lem:proj_tilde}
For $\lambda'$ equal to either $\lambda$ or to a Weil generic $\widehat{\lambda}\in \param^1$,
we have an epimorphism $P_{\lambda'}(p_i)\twoheadrightarrow \tilde{\Delta}_{\lambda'}(p_i)$.
It is an isomorphism when $\lambda'=\widehat{\lambda}$ and its kernel is filtered with
$\Delta_\lambda(p')$, where $p'>_\chi p$, when $\lambda'=\lambda$.
\end{Lem}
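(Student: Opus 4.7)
The plan is to prove the two cases in turn, using a block decomposition of $\OCat_\nu(\A_{\widehat{\lambda}})$ at Weil generic parameters to handle $\lambda'=\widehat{\lambda}$, and then descending to $\lambda'=\lambda$ via the $\C[\param^1]$-flat family $\tilde{\Delta}_{\param^1}(p_i)$.

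First I would observe that at a Weil generic $\widehat{\lambda}\in\param^1$, the category $\OCat_\nu(\A_{\widehat{\lambda}})$ splits into blocks indexed by $\sim_\chi$-classes. Indeed, $\Phi(h)$ (with $h$ the infinitesimal generator of $\nu$) acts on any module in $\OCat_\nu(\A_{\widehat{\lambda}})$ with locally finite generalized eigenspaces, and on $L_{\widehat{\lambda}}(p)$ its eigenvalues lie in $c_{\widehat{\lambda}}(p)+\Z_{\geqslant 0}$. Since $c_{\widehat{\lambda}}(p)-c_{\widehat{\lambda}}(p')=\alpha_p(\widehat{\lambda})-\alpha_{p'}(\widehat{\lambda})$ depends linearly and nontrivially on $\widehat{\lambda}\in\param^1$ whenever $p\not\sim_\chi p'$ (because then $\alpha_p-\alpha_{p'}$ does not vanish on $\param_0$), this difference is non-integer for Weil generic $\widehat{\lambda}$. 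Consequently $\Ext^*_{\OCat_\nu(\A_{\widehat{\lambda}})}(L(p),L(p'))=0$ whenever $p\not\sim_\chi p'$, and the same vanishing propagates to $\Ext^*$ between the corresponding $\Delta$'s.

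Using this, the projective cover $P_{\widehat{\lambda}}(p_i)$ lies entirely in the block of $p_i$; in particular its $\Delta$-filtration only involves $\Delta_{\widehat{\lambda}}(p_j)$ with $p_j\sim_\chi p_i$ and $p_j>_{\widehat{\lambda}}p_i$ (plus $\Delta_{\widehat{\lambda}}(p_i)$ on top). The universal extension procedure in a highest weight category constructs $P_{\widehat{\lambda}}(p_i)$ from $\Delta_{\widehat{\lambda}}(p_i)$ by iteratively absorbing $\Ext^1$'s with larger $\Delta$'s in the poset, and because the block decomposition restricts the relevant labels to the $\sim_\chi$-class of $p_i$, this matches the definition of $\tilde{\Delta}_{\widehat{\lambda}}(p_i)$ verbatim. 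This gives $\tilde{\Delta}_{\widehat{\lambda}}(p_i)\cong P_{\widehat{\lambda}}(p_i)$.

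For $\lambda'=\lambda$, note first that by Lemma \ref{Lem:hw_as_gen} and the choice $\lambda=\lambda^\circ+N\chi$ with $N\gg 0$, $\OCat_\nu(\A_\lambda)$ is highest weight with order $\leqslant_\lambda$ and every projective admits a $\Delta$-filtration. The family $\tilde{\Delta}_{\param^1}(p_i)$ is $\C[\param^1]$-torsion-free on a Zariski open neighborhood of $\widehat{\lambda}$ (shown by the same argument as in the proof of Lemma \ref{Lem:Delta_filtr}), so its specialization $\tilde{\Delta}_\lambda(p_i)$ is standardly filtered with the same $\Delta$-multiplicities as $P_{\widehat{\lambda}}(p_i)$, all labeled by elements of the $\sim_\chi$-class of $p_i$. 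Cyclicity at $\widehat{\lambda}$ (a generator sits in the head $L_{\widehat{\lambda}}(p_i)$ of $\Delta_{\widehat{\lambda}}(p_i)$, the outermost quotient of $\tilde{\Delta}_{\widehat{\lambda}}(p_i)$) descends to $\lambda$, giving a generator in the $L_\lambda(p_i)$-top; hence the epimorphism $P_\lambda(p_i)\twoheadrightarrow\tilde{\Delta}_\lambda(p_i)$. The kernel $K$ is standardly filtered, and its $\Delta$-multiplicities are the difference $[P_\lambda(p_i):\Delta_\lambda(p')]-[\tilde{\Delta}_\lambda(p_i):\Delta_\lambda(p')]$. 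For $p'\sim_\chi p_i$ the two agree (by BGG reciprocity combined with the fact, established by deformation within the $\sim_\chi$-block, that $[\nabla_\lambda(p'):L_\lambda(p_i)]=[\nabla_{\widehat{\lambda}}(p'):L_{\widehat{\lambda}}(p_i)]$). For $p'\not\sim_\chi p_i$ only $p'>_\chi p_i$ can contribute, since $p'\geqslant_\lambda p_i$ combined with $p'\not\sim_\chi p_i$ forces $p'>_\chi p_i$.

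The main obstacle is the invariance of $\nabla$-multiplicities needed in the kernel computation: one must verify that $[\nabla_\lambda(p'):L_\lambda(p_i)]=[\nabla_{\widehat{\lambda}}(p'):L_{\widehat{\lambda}}(p_i)]$ for $p'\sim_\chi p_i$, i.e., no composition factors $L_\lambda(p_i)$ arise upon specialization from factors $L_{\widehat{\lambda}}(p'')$ with $p''\not\sim_\chi p_i$. This is ruled out because such a collision would require $c_\lambda(p'')=c_\lambda(p_i)\pmod\Z$, hence $\alpha_{p''}(\chi)=\alpha_{p_i}(\chi)$, contradicting $p''\not\sim_\chi p_i$. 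Making this deformation argument rigorous (using the $\C[\param^1]$-flat family of $\nabla_{\param^1}(p')$'s and the generic semisimplicity afforded by the block decomposition of Paragraph~1) is the technically delicate point.
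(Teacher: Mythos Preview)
Your block-decomposition argument for Weil generic $\widehat{\lambda}$ is correct and is essentially what the paper uses implicitly (it follows from the setup in Section~\ref{SSS_main_result_stand_stratif} that $p<_{\widehat{\lambda}}p'$ forces $p\sim_\chi p'$). However, there is a genuine gap. When you say the universal extension procedure at $\widehat{\lambda}$ ``matches the definition of $\tilde{\Delta}_{\widehat{\lambda}}(p_i)$ verbatim,'' you are conflating two different objects: the \emph{specialization} $(\tilde{\Delta}_{\param^1}(p_i))_{\widehat{\lambda}}$ of the family, and the \emph{fiberwise construction} obtained by running the universal extension procedure in $\OCat_\nu(\A_{\widehat{\lambda}})$. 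Your block argument shows the fiberwise construction is $P_{\widehat{\lambda}}(p_i)$, but the lemma is about the specialization, and you never verify they agree. The issue is that the $\C[\param^1]$-modules $\Ext^1_{\A_{\param^1}}(\tilde{\Delta}_{\param^1}(p_i)_k,\Delta_{\param^1}(p_{k-1}))$ used to build the family only \emph{inject} into the corresponding $\Ext^1$ at a point, so a priori the family's $\Delta$-multiplicities could be strictly smaller than those of $P_{\widehat{\lambda}}(p_i)$. The paper closes this gap with a countability argument: since $\A_{\param^1}$ is countable dimensional, $\Ext^2_{\A_{\param^1}}(M_{\param^1},N_{\param^1})$ is as well, hence only countably many $\lambda^1$ can have nonzero torsion, and for Weil generic $\widehat{\lambda}$ one gets $\Ext^1_{\A_{\param^1}}(\cdot,\cdot)_{\widehat{\lambda}}\cong\Ext^1_{\A_{\widehat{\lambda}}}(\cdot,\cdot)$.

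This gap propagates to your treatment of $\lambda'=\lambda$: your claim that ``its specialization $\tilde{\Delta}_\lambda(p_i)$ is standardly filtered with the same $\Delta$-multiplicities as $P_{\widehat{\lambda}}(p_i)$'' is precisely the missing equality. Separately, your ``main obstacle'' paragraph misidentifies the mechanism. At $\widehat{\lambda}$, for $p'\sim_\chi p_i$ the object $\nabla_{\widehat{\lambda}}(p')$ already has \emph{no} composition factors $L_{\widehat{\lambda}}(p'')$ with $p''\not\sim_\chi p_i$ (by the block decomposition you just proved), so there is nothing to ``collide'' upon specialization. The correct argument, which the paper gives, uses the $\nabla$-analog of Lemma~\ref{Lem:Delta_filtr}: $\nabla_{\param^0}(p')$ has a $\bnabla$-filtration with constant multiplicities, and property~(3) of $\bnabla_\lambda(p'')$ (its cokernel modulo $L_\lambda(p'')$ has only factors $L_\lambda(q)$ with $q<_\chi p''$) then pins down $[\nabla_\lambda(p'):L_\lambda(p_i)]$ as the $\bnabla_\lambda(p_i)$-multiplicity, which is constant in the family.
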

\begin{proof}
First of all, let us recall a standard fact. Let $A_{\param^1}$ be  an algebra
and $M_{\param^1},N_{\param^1}$ be finitely $A_{\param^1}$-modules that are free
in a neighborhood of a point $x\in \param^1$. Then $\Hom_{A_x}(M_x,N_x)=
\Hom_{A_{\param^1}}(M_{\param^1},N_{\param^1})_x$ and if we have
$\Ext^i_{A_{\param^1}}(M_{\param^1},N_{\param^1})_x\xrightarrow{\sim}
\Ext^i_{A_x}(M_x,N_x)$ for all $i=0,\ldots,k-1$, then
$\Ext^{k}(A_{\param^1})(M_{\param^1},N_{\param^1})_x\hookrightarrow
\Ext^k_{A_x}(M_x,N_x)$.

The algebra $\A_{\param^1}$ is countable dimensional.
It follows that the space $\Ext^2_{\A_{\param^1}}(M_{\param^1},N_{\param^1})$
is at most countable dimensional for any finitely generated $\A_{\param^1}$-modules
$M_{\param^1},N_{\param^1}$. In particular, the set of $\lambda^1\in \param^1$ such that
the maximal ideal of $\lambda^1$ has a nonzero annihilator in $\Ext^2_{\A_{\param^1}}(M_{\param^1},N_{\param^1})$
is countable. Therefore,  for a Weil generic
$\widehat{\lambda}\in \param^1$, we have $\Ext^1_{\A_{\param^1}}(M_{\param^1},N_{\param^1})_{\widehat{\lambda}}=
\Ext^1_{\A_{\widehat{\lambda}}}(M_{\widehat{\lambda}},N_{\widehat{\lambda}})$. From here
we deduce by induction that $\left(\tilde{\Delta}_{\param^1}(p_i)_k\right)_{\widehat{\lambda}}=
\tilde{\Delta}_{\widehat{\lambda}}(p_i)_k$ (where the object on the right hand side is
defined analogously to $\tilde{\Delta}_{\param^1}(p_i)_k$). This shows that $P_{\widehat{\lambda}}(p_i)=
\tilde{\Delta}_{\widehat{\lambda}}(p_i)$.

Let us consider the case of $\lambda'=\lambda$ now. We prove by induction
on $k$ that
\begin{equation}\label{eq:Ext_equality} \Ext^1_{\A_{\param^1}}(\tilde{\Delta}_{\param^1}(p_i)_k,
\Delta_{\param^1}(p_{k-1}))_\lambda=\Ext^1_{\A_{\lambda}}(\tilde{\Delta}_{\lambda}(p_i)_k,
\Delta_{\lambda}(p_{k-1})),\end{equation}
this will imply the claim of the proposition.
So far, we know that, first, the left hand side of (\ref{eq:Ext_equality}) is included into
the right hand side and, second, (\ref{eq:Ext_equality}) holds for $\lambda$
replaced with a Weil generic $\widehat{\lambda}\in \param^1$. To show (\ref{eq:Ext_equality}),
we need to show that the multiplicity
of $\Delta_{\lambda}(p_j)$ in $P_\lambda(p_i)$ (for $j<i$) coincides with
that of $\Delta_{\widehat{\lambda}}(p_j)$ in $P_{\widehat{\lambda}}(p_i)$. By the BGG
reciprocity, this is equivalent to $[\nabla_{\lambda}(p_j):L_\lambda(p_i)]=
[\nabla_{\widehat{\lambda}}(p_j): L_{\widehat{\lambda}}(p_i)]$. By the $\nabla$-analog of
Lemma \ref{Lem:Delta_filtr}, the right hand side coincides with the multiplicity
of $\bnabla_{\lambda}(p_i)$ in $\nabla_{\lambda}(p_j)$. By the properties (1)-(3)
of $\bnabla_\lambda(p_i)$, that coincides with the multiplicity
$[\nabla_{\lambda}(p_j):L_\lambda(p_i)]$.
\end{proof}

Similarly, we define the objects $\tilde{\nabla}_{\param^1}(p)$. A direct analog of
Lemma \ref{Lem:Delta_filtr} holds.

We proceed to proving that the pre-order $\leqslant_\chi$ defines a standardly
stratified structure on $\OCat_\nu(\A_\lambda)$. We start by proving two technical
lemmas.

\begin{Lem}\label{Lem:Ext_vanish}
We have $\dim \Ext^i_{\A_\lambda}(\tilde{\Delta}_\lambda(p), \bnabla_\lambda(p'))=\delta_{i0}\delta_{pp'}$.
\end{Lem}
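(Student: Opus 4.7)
The plan is to argue by comparison with a Weil generic point in $\param^1$, where the computation is easy because the preorder $\leqslant_\chi$ collapses to a genuine order, and then to transport the result to $\lambda$ via a base-change argument in the family over $\param^1$.

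First I would handle the Weil generic case. For a Weil generic $\widehat{\lambda}\in\param^1$, Lemma \ref{Lem:proj_tilde} gives $\tilde{\Delta}_{\widehat{\lambda}}(p)=P_{\widehat{\lambda}}(p)$, the indecomposable projective cover of $L_{\widehat{\lambda}}(p)$ in the highest weight category $\OCat_\nu(\A_{\widehat{\lambda}})$; dually, by property (2) of the construction of $\bnabla_{\param^1}(p')$, the specialization $\bnabla_{\widehat{\lambda}}(p')$ equals $L_{\widehat{\lambda}}(p')$. Thus
$$\Ext^i_{\A_{\widehat{\lambda}}}(\tilde{\Delta}_{\widehat{\lambda}}(p),\bnabla_{\widehat{\lambda}}(p'))=\Ext^i_{\A_{\widehat{\lambda}}}(P_{\widehat{\lambda}}(p),L_{\widehat{\lambda}}(p'))=\delta_{i0}\delta_{pp'}\C.$$

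Next I would set up the family computation. After shrinking $\param^1$ to a Zariski open subset $\param^0$ containing $\lambda$ (and a Weil generic point), one builds a finite resolution $P_\bullet\twoheadrightarrow \tilde{\Delta}_{\param^0}(p)$ by finitely generated $\A_{\param^0}$-projective modules that are free over $\C[\param^0]$, so that $\Hom_{\A_{\param^0}}(P_\bullet,\bnabla_{\param^0}(p'))$ computes $E^i:=\Ext^i_{\A_{\param^0}}(\tilde{\Delta}_{\param^0}(p),\bnabla_{\param^0}(p'))$ as finitely generated $\C[\param^0]$-modules. Since both $\tilde{\Delta}_{\param^0}(p)$ and $\bnabla_{\param^0}(p')$ are $\C[\param^0]$-flat, there is a base-change spectral sequence
$$\operatorname{Tor}^{\C[\param^0]}_{-j}(E^i,\C_{\lambda^1})\Rightarrow \Ext^{i+j}_{\A_{\lambda^1}}(\tilde{\Delta}_{\lambda^1}(p),\bnabla_{\lambda^1}(p'))$$
for every $\lambda^1\in\param^0$. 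The Weil generic computation identifies the generic rank of $E^i$ over $\C[\param^0]$ as $\delta_{i0}\delta_{pp'}$.

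The main obstacle is the last mile: turning ``generic rank'' into ``correct fiber at $\lambda$'', which requires showing $E^i$ is actually locally free at $\lambda$ (so no Tor contributions appear). I expect this to follow from the recursive structure built into $\tilde{\Delta}_{\param^0}(p)$ and $\bnabla_{\param^0}(p')$, namely by induction on the length of a $\leqslant_\chi$-equivalence class: the universal-extension construction of $\tilde{\Delta}$ precisely kills the Ext's responsible for failure of freeness, exactly as in the highest weight case. Concretely, I would combine the $\Delta$-filtration of $\tilde{\Delta}_{\param^0}(p)$ implicit in the construction with the $\bnabla$-filtration of $\nabla_{\param^0}(p')$ dual to Lemma \ref{Lem:Delta_filtr}, reducing by the long exact sequences of Ext to the base identity $\Ext^i_{\A_{\param^0}}(\Delta_{\param^0}(q),\nabla_{\param^0}(q'))=\delta_{i0}\delta_{qq'}\C[\param^0]$, which itself is proved by the same base-change argument starting from the orthogonality in the highest weight category at $\widehat{\lambda}$. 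This will simultaneously give the correct specialization at $\lambda$ and close the induction.
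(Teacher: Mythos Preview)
Your base-change framework is an unnecessary detour, and the part you flag as the ``main obstacle'' is in fact the entire content of the proof, which can be carried out directly at $\lambda$ without ever working in the family.

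The paper's argument splits cleanly into two cases. If $p\not\sim_\chi p'$: since $\tilde{\Delta}_\lambda(p)$ is filtered by $\Delta_\lambda(q)$ with $q\sim_\chi p$, and $\bnabla_\lambda(p')$ is quasi-isomorphic to a complex whose terms are filtered by $\nabla_\lambda(q')$ with $q'\sim_\chi p'$ (this is the $\nabla$-analog of Lemma~\ref{Lem:Delta_filtr}, as you also invoke), the highest weight orthogonality $\Ext^i(\Delta_\lambda(q),\nabla_\lambda(q'))=\delta_{i0}\delta_{qq'}$ at $\lambda$ kills everything. If $p\sim_\chi p'$: use Lemma~\ref{Lem:proj_tilde} to write $0\to K\to P_\lambda(p)\to\tilde{\Delta}_\lambda(p)\to 0$ with $K$ filtered by $\Delta_\lambda(q)$ for $q>_\chi p$; the previous paragraph gives $\Ext^i(K,\bnabla_\lambda(p'))=0$, so $\Ext^i(\tilde{\Delta}_\lambda(p),\bnabla_\lambda(p'))=\Ext^i(P_\lambda(p),\bnabla_\lambda(p'))$, which is $\delta_{i0}[\bnabla_\lambda(p'):L_\lambda(p)]=\delta_{i0}\delta_{pp'}$ by property~(3) of $\bnabla$.

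Your sketch of step~3 essentially rediscovers the first case, but it is incomplete for $p\sim_\chi p'$: merely filtering both sides and reducing to $\Ext^i(\Delta(q),\nabla(q'))$ leaves you with nontrivial contributions whenever $q=q'\sim_\chi p$, and the long exact sequences do not obviously collapse to $\delta_{i0}\delta_{pp'}$ without further input. The missing ingredient is precisely the projective-cover description from Lemma~\ref{Lem:proj_tilde}, which you never invoke. Once you use it, the computation is immediate at $\lambda$, and the Weil generic comparison, the resolution $P_\bullet$, the spectral sequence, and the local-freeness discussion all become superfluous.
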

\begin{proof}
Note that $\bnabla_{\lambda}(p')$ is quasi-isomorphic to a complex whose terms are
filtered with $\nabla_{\lambda}(p'')$ with $p''\sim_\chi p'$.
Using the $\nabla$-analog of Lemma \ref{Lem:Delta_filtr}, we see that
$\Ext^i_{\A_\lambda}(\widetilde{\Delta}_\lambda(p),\bnabla_\lambda(p'))=0$ for all $i$ unless
$p\sim_\chi p'$. The case of $p\sim_\chi p'$ follows from Lemma \ref{Lem:proj_tilde}.
\end{proof}

Similarly, we get
$\dim \Ext^i_{\A_\lambda}(\bDelta_\lambda(p), \tilde{\nabla}_\lambda(p'))=\delta_{i0}\delta_{pp'}$.

\begin{Lem}\label{Lem:stand_filtr}
Let $M\in \OCat_\nu(\A_\lambda)$ be such that $\Ext^i_{\A_\lambda}(M,\bnabla_\lambda(p))=0$
for all $p$. Then $M$ is filtered by $\tilde{\Delta}$'s.
\end{Lem}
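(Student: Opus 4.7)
The argument proceeds by induction on the composition length of $M$, the case $M=0$ being trivial. For the inductive step, the goal is to split off a submodule isomorphic to $\tilde{\Delta}(p)^{\oplus m}$ for a well-chosen $p$ and $m\geqslant 1$, and then to verify that the quotient again satisfies the hypothesis so that the induction can continue.

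To choose $p$, I pick an element of $X^T$ that is maximal in the preorder $\leqslant_\chi$ among labels such that $[M:L(p)]>0$. By this maximality, for every $p'>_\chi p$, one has $\Hom(\Delta_\lambda(p'),M)=0$: any nonzero such map would have image whose head is $L(p')$, and so $L(p')$ would be a composition factor of $M$, contradicting the choice of $p$. By Lemma \ref{Lem:proj_tilde}, the kernel of $P_\lambda(p)\twoheadrightarrow\tilde{\Delta}_\lambda(p)$ is filtered by $\Delta_\lambda(p')$ with $p'>_\chi p$, so every map $P_\lambda(p)\to M$ factors uniquely through $\tilde{\Delta}_\lambda(p)$. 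In particular $\Hom(\tilde{\Delta}_\lambda(p),M)=\Hom(P_\lambda(p),M)$ is of dimension $m:=[M:L(p)]$.

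Now I form the evaluation map $\Phi\colon \tilde{\Delta}_\lambda(p)^{\oplus m}\to M$ and set $M_1:=\image\Phi$, $\overline{M}:=M/M_1$. The key technical point is to show that $\Phi$ is injective, so that $M_1\cong \tilde{\Delta}_\lambda(p)^{\oplus m}$. Applying $\Hom(-,\bnabla_\lambda(p'))$ to the short exact sequences $0\to\ker\Phi\to\tilde{\Delta}_\lambda(p)^{\oplus m}\to M_1\to 0$ and $0\to M_1\to M\to\overline{M}\to 0$, and invoking Lemma \ref{Lem:Ext_vanish} (which supplies $\Ext^i(\tilde{\Delta}_\lambda(p),\bnabla_\lambda(p'))=\delta_{i,0}\delta_{p,p'}$) together with the hypothesis $\Ext^{>0}(M,\bnabla_\lambda(p'))=0$, one tracks the connecting maps to show that $\ker\Phi$ has no composition factor $L(q)$ with $q\sim_\chi p$. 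Combined with the fact that $\ker\Phi\subset\tilde{\Delta}_\lambda(p)^{\oplus m}$ only has composition factors $L(q)$ with $q\leqslant_\chi p$, and the observation that the evaluation map accounts for \emph{all} of $\Hom(\tilde{\Delta}_\lambda(p),M)$, this forces $\ker\Phi=0$.

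Finally, applying $\Hom(-,\bnabla_\lambda(p''))$ to $0\to M_1\to M\to\overline{M}\to 0$ and using the Ext computation from Lemma \ref{Lem:Ext_vanish} for $M_1$ together with the hypothesis on $M$ yields $\Ext^i(\overline{M},\bnabla_\lambda(p''))=0$ for all $i>0$ and all $p''$. The surjectivity of $\Hom(M,\bnabla_\lambda(p))\twoheadrightarrow\Hom(M_1,\bnabla_\lambda(p))$, which is what makes the $i=1$, $p''=p$ case work, is built into the construction of $m$. Induction on composition length then finishes the proof.

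The main obstacle is the injectivity of $\Phi$: the potential composition factors of $\ker\Phi$ lie in strictly lower $\leqslant_\chi$-classes and must be eliminated by a delicate $\Ext$-bookkeeping, modeled on the classical proof that $\Ext^1(M,\bnabla(\tau))=0$ for all $\tau$ characterizes $\Delta$-filtered modules in a quasi-hereditary category.
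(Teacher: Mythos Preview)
There is a genuine gap in your injectivity argument for $\Phi$. Your choice $m=[M:L(p)]=\dim\Hom(\tilde{\Delta}_\lambda(p),M)$ implicitly assumes that $[\tilde{\Delta}_\lambda(p):L(p)]=1$, as in a highest weight category. But here $\tilde{\Delta}_\lambda(p)$ is (going to be) a \emph{standard} object for a standardly stratified structure, and one computes
\[
[\tilde{\Delta}_\lambda(p):L(p)]=\dim\Hom(P_\lambda(p),\tilde{\Delta}_\lambda(p))=\dim\Hom(\tilde{\Delta}_\lambda(p),\tilde{\Delta}_\lambda(p))=\dim\End(\tilde{\Delta}_\lambda(p)),
\]
which equals $\dim\End(P_{\widehat{\lambda}}(p))$ in the associated graded category and is typically $>1$. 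Already for $M=\tilde{\Delta}_\lambda(p)$ your $m$ equals this dimension, and the evaluation $\Phi:\tilde{\Delta}_\lambda(p)^{\oplus m}\to\tilde{\Delta}_\lambda(p)$ cannot be injective when $m>1$. The ``connecting map'' bookkeeping you describe does not recover injectivity: even if one could show $\ker\Phi$ has no composition factors $L(q)$ with $q\sim_\chi p$ (which is itself not clear from your sketch, since $\Hom(\tilde{\Delta}_\lambda(q),\tilde{\Delta}_\lambda(p))=[\tilde{\Delta}_\lambda(p):L(q)]$ need not vanish), there is nothing preventing $\ker\Phi$ from carrying factors $L(q)$ with $q<_\chi p$, which do occur in $\tilde{\Delta}_\lambda(p)$.

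The paper avoids this obstacle by a two-step reduction rather than peeling off $\tilde{\Delta}$'s directly. First, the $\nabla$-analog of Lemma~\ref{Lem:Delta_filtr} says each $\nabla_\lambda(p')$ is filtered by $\bnabla$'s, so the hypothesis $\Ext^{>0}(M,\bnabla_\lambda(p))=0$ implies $\Ext^{>0}(M,\nabla_\lambda(p))=0$ and hence $M$ is already $\Delta$-filtered in the \emph{highest weight} sense. One then reduces to the case where all $\Delta$-factors of $M$ lie in a single $\sim_\chi$-class, passes to the subquotient highest weight category $\OCat_p$ for that class, observes that the $\bnabla$'s become simple there, and concludes that $\pi_p(M)$ is projective. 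Lemma~\ref{Lem:prop_stand_stratif} then identifies $M$ with a direct sum of $\tilde{\Delta}$'s. This detour through the existing highest weight structure is what replaces the missing ``$[\tilde{\Delta}(p):L(p)]=1$''.
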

\begin{proof}
Thanks to the $\nabla$-analog of Lemma \ref{Lem:Delta_filtr}, we see that $M$ is $\Delta$-filtered.
It is enough to prove the lemma in the case when $M$ is filtered by $\Delta_\lambda(p')$
with $p'\sim_\chi p$ (as $\{p'| p'\sim_\chi p\}$ is a poset in a highest weight order
on $\OCat_\nu(\A_\lambda)$). The  equality $\Ext^i_{\A_\lambda}(M,\bnabla_\lambda(p))=0$
is equivalent to $\Ext^i_{\OCat_p}(\pi_p(M),\pi_p(\bnabla_\lambda(p)))=0$,
where we write  $\mathcal{O}_p$ for  the subquotient highest weight category corresponding to
the interval $\{p'| p'\sim_\chi p\}$ and $\pi_p$ for the quotient
$\OCat_{\leqslant_\chi p}\twoheadrightarrow \OCat_p$.
Since the objects $\bnabla_\lambda(p')$
are simple in the subquotient, we deduce that the image of $M$ is projective. But, by Lemma \ref{Lem:prop_stand_stratif}, this precisely means that $M$ is the direct sum of the objects $\tilde{\Delta}_\lambda(p')$.
\end{proof}

Now we can complete the proof that $(\OCat_\nu(\A_\lambda),<_\chi)$ is a standardly
stratified category. By Lemma \ref{Lem:stand_filtr}, any projective in
$\OCat_\nu(\A_\lambda)$ is $\tilde{\Delta}$-filtered, which is (SS2). Similarly, every injective
is $\tilde{\nabla}$-filtered. It follows that the quotient functor $\pi_{\xi}$
maps the injective objects in the subcategory $\OCat_{\nu}(\leqslant_\chi p)\subset \OCat_{\nu}(\A_\lambda)$
to injective objects in the quotient $\OCat_\nu(\sim_\chi p)$. Equivalently,
the functor $\pi_\xi^!$ is exact, which is (SS1).

\subsubsection{Associated graded category}
To finish the proof of Proposition \ref{Prop:stand_stratif} we
need to check that $\operatorname{gr}\OCat_\nu(\A_\lambda)\cong
\OCat_{\nu}(\A_{\widehat{\lambda}})$. Note that the subquotient category
$\OCat_\lambda(\sim_\chi p)$ is equivalent to the category of right modules
over the algebra $B_\lambda:=\End_{\A_\lambda}(\bigoplus_{p'}\tilde{\Delta}_\lambda(p'))$,
where the summation is taken over all $p'\sim_\chi p$.
We can also consider the algebra $B_{\param^1}:=\End_{\A_{\param^1}}
(\bigoplus_{p'}\tilde{\Delta}_{\param^1}(p'))$, the algebra
$B_\lambda$ is the specialization of  $B_{\param^1}$ to $\lambda$ (and the same
is true for a Weil generic element $\widehat{\lambda}$). What remains to prove
is the following lemma.

\begin{Lem}\label{Lem:alg_iso}
We have an algebra isomorphism $B_\lambda\cong B_{\widehat{\lambda}}$ that respects
the primitive idempotents $e_{p'}$, where $p'\sim_\chi p$.
\end{Lem}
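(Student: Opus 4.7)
The plan is to prove the isomorphism by showing that $B_{\param^1}$ is a locally free coherent $\C[\param^1]$-algebra (after restriction to a suitable Zariski open subset $\param^0\subset\param^1$ containing $\lambda$ and the Weil generic locus), with specialization at every point in $\param^0$ producing the corresponding fiber algebra; and then showing that this family is essentially constant along $\param^0$.

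First I would analyze the Hom groups. For $p',p''\sim_\chi p$, use the $\nabla$-analog of Lemma \ref{Lem:Delta_filtr} to filter $\tilde{\nabla}_{\lambda'}(p'')$ by $\bnabla_{\lambda'}(p''')$'s with $p'''\sim_\chi p$. Combined with Lemma \ref{Lem:Ext_vanish}, this reduces the computation of $\Hom_{\A_{\lambda'}}(\tilde{\Delta}_{\lambda'}(p'), \tilde{\nabla}_{\lambda'}(p''))$ to the combinatorial multiplicities $[\tilde{\Delta}_{\lambda'}(p'): \bDelta_{\lambda'}(p''')]$, which are constant along $\param^0$ by the inductive construction of $\tilde{\Delta}_{\param^0}(p')$ as iterated universal extensions (together with Lemma \ref{Lem:proj_tilde}). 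Using in addition the $\Delta$-filtration of $\tilde{\Delta}_{\lambda'}(p'')$ coming from Lemma \ref{Lem:Delta_filtr}, one concludes that $\dim B_{\lambda'}=\sum_{p',p''}\dim\Hom_{\A_{\lambda'}}(\tilde{\Delta}_{\lambda'}(p'), \tilde{\Delta}_{\lambda'}(p''))$ is constant on $\param^0$. Standard base-change arguments (using that the $\tilde{\Delta}_{\param^0}(p')$ are generically free over $\param^0$ and that their fiberwise Ext's vanish outside the relevant range) then give that $B_{\param^0}$ is locally free over $\C[\param^0]$ of rank $\dim B_{\widehat{\lambda}}$ and that the natural specialization maps $B_{\param^0}\otimes_{\C[\param^0]}\C_{\lambda'}\to B_{\lambda'}$ are algebra isomorphisms for every $\lambda'\in\param^0$, compatibly with the idempotents $e_{p'}$ (which lift canonically from the direct sum decomposition of $\bigoplus_{p'}\tilde{\Delta}_{\param^0}(p')$).

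The main obstacle is upgrading flatness to an actual algebra isomorphism between the two fibers. The approach I would take is to choose a $\C[\param^0]$-basis of each piece $e_{p'}B_{\param^0}e_{p''}$ coming from the universal extension classes defining the $\tilde{\Delta}_{\param^0}(p')$'s and then verify, using the universal property of these extensions and the Ext vanishing from Lemma \ref{Lem:Ext_vanish}, that the structure constants of the multiplication in this basis take values in $\C\subset\C[\param^0]$. This yields an isomorphism $B_{\param^0}\cong B_{\widehat{\lambda}}\otimes_\C\C[\param^0]$ as $\C[\param^0]$-algebras respecting the $e_{p'}$'s; specializing at $\lambda$ gives the desired isomorphism $B_\lambda\cong B_{\widehat{\lambda}}$.
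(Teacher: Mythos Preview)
Your setup showing that $B_{\param^0}$ is free over $\C[\param^0]$ with fibers $B_{\lambda'}$ and with compatible idempotents is fine and matches the paper. The gap is in the last paragraph: the assertion that the structure constants in your chosen basis lie in $\C\subset\C[\param^0]$ is not justified, and the phrase ``using the universal property of these extensions and the Ext vanishing'' does not constitute an argument. A flat family of finite-dimensional algebras of constant rank need not be trivial; knowing that the iterated universal extensions produce the $\tilde{\Delta}$'s and that the Hom-dimensions are constant gives you nothing about how compositions vary. Concretely, the Ext vanishing in Lemma~\ref{Lem:Ext_vanish} controls dimensions of Hom-spaces but says nothing about the multiplicative structure, and there is no reason the structure constants arising from composing maps between iterated extensions of the $\Delta_{\param^0}(p')$ should be scalar.

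What the paper actually does at this point is entirely different and uses an ingredient you have not mentioned. One views the free family $B_{\param^0}$ as a morphism from $\param^0$ to the variety of associative algebra structures (with the prescribed idempotents) on the fixed underlying vector space, modulo the action of the group of idempotent-preserving linear automorphisms. The key input is that for every integral $\chi'$ in the interior of $\Gamma$ there is a labeling-preserving \emph{abelian} equivalence $\OCat_\nu(\A_\lambda)\cong\OCat_\nu(\A_{\lambda+\chi'})$ (coming from abelian localization and tensoring with the line bundle $\mathcal{O}(\chi')$), hence $B_\lambda\cong B_{\lambda+\chi'}$. Thus the lattice $\{\lambda+\chi'\}$, which is Zariski dense in $\param^0$, maps to a single orbit; since orbits are locally closed, a Zariski open subset of $\param^0$ maps to that orbit, and in particular $B_\lambda\cong B_{\widehat{\lambda}}$. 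Your proposal is missing exactly this translation-equivalence step.
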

\begin{proof}
After passing to a principal Zariski open subset  $\param^0\subset \param^1$, we achieve that
the algebra $\B_{\param^0}$ is a free $\C[\param^0]$-module with a basis including the idempotents
$e_{p'}$ and compatible with the decomposition
$\bigoplus_{p',p''} e_{p'}\B_{\param^0}e_{p''}=\B_{\param^0}$. This gives rise to a
morphism $\param^0\rightarrow X$, where  $X$ denotes the variety of associative products
such that the elements $e_{p'}$ are idempotents. Isomorphisms correspond to a suitable
algebraic group action. What we need to prove is that a Zariski open subset of $\param^0$
maps to a single orbit. For this we note that we have a labeling preserving isomorphism
$\B_\lambda\cong \B_{\lambda+\chi'}$, where $\chi'$ is an integral element of $\Gamma$.
It follows that the elements $\lambda,\lambda+\chi'$ map to the same orbit. But the
set $\{\lambda+\chi'\}$ is Zariski dense in $\param^0$, which implies our claim.
\end{proof}

\subsection{Partial Ringel dualities}
In this section we discuss partial Ringel duality, i.e., Ringel duality for standardly stratified
categories\footnote{Preliminary versions of sections \ref{SS_tilt_sss}-\ref{SSS_ws_Ring_dual}
appeared in one of the first draft of our joint paper \cite{LW} with Ben Webster and were
later removed.}. The most important example of  Ringel duality functors comes from wall-crossing functors.

\subsubsection{Tilting and cotilting objects}\label{SS_tilt_sss}
An object in $\Cat\operatorname{-tilt}:=\Cat^{\Delta}\cap \Cat^{\bnabla}$ is called {\it tilting}.
We can construct an indecomposable tilting $T(\tau)$ similarly to the highest weight
case. Namely, let us order elements of $\mathcal{T}$, $\tau_1,\ldots,\tau_n$
in such a way that $\tau_i\geqslant \tau_j$ implies $i\leqslant j$.
Define the objects $T_j(\tau_i)$, where $j\leqslant i$, inductively as follows:
\begin{itemize}
\item $T_i(\tau_i):=\Delta(\tau_i)$.
\item Once $T_j(\tau_i)$ is constructed, for $T_{j-1}(\tau)$ we take the extension
of $\Ext^1(\Delta(\tau_{j-1}),T_j(\tau_i))\otimes \Delta(\tau_{j-1})$ by $T_j(\tau_i)$
corresponding to the identity endomorphism of  $\Ext^1(\Delta(\tau_{j-1}),T_j(\tau_i))$.
So we have an exact sequence
$$0\rightarrow T_j(\tau_i)\rightarrow T_{j-1}(\tau_i)\rightarrow \Ext^1(\Delta(\tau_{j-1}),T_j(\tau_i))\otimes \Delta(\tau_{j-1})\rightarrow 0.$$
\end{itemize}
We set $T(\tau_i):=T_1(\tau_i)$. From the construction of $T(\tau)$ and (\ref{Ext:vanish})
it is easy to see that $\Ext^1(\Delta(\tau'),T(\tau))=0$ for any $\tau'$. By (2)
of Lemma \ref{Lem:exts}, $T(\tau)\in \Cat^{\bnabla}$, so $T(\tau)$ is indeed tilting.

\begin{Lem}\label{Lem:indec_tilt}
The object $T(\tau)$ is indecomposable. Moreover, any indecomposable tilting object in
$\Cat$ is isomorphic to precisely one $T(\tau)$.
\end{Lem}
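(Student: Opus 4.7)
The plan is to adapt the classical Ringel argument for highest weight categories to this standardly stratified setting. The first step is to check that $T(\tau_i)$ is tilting: membership in $\Cat^{\Delta}$ is immediate from the construction, since each step is a $\Delta$-extension. To see $T(\tau_i) \in \Cat^{\bnabla}$, I would apply part (2) of Lemma \ref{Lem:exts} after verifying $\Ext^1(\Delta(\tau'), T(\tau_i)) = 0$ for every $\tau' \in \mathcal{T}$. The universal extension at each step kills this $\Ext^1$ for the $\tau_{j-1}$ being processed, and (\ref{Ext:vanish}) ensures that subsequent extensions (by $\Delta(\tau_{j'-1})$ with smaller index $j'-1$, hence with $\varrho$-value at least that of previously processed labels) do not reintroduce the vanishing extensions for those earlier labels.

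Next, I would analyze the $\Delta$-multiplicities of $T(\tau_i)$. An induction on the construction using (\ref{Ext:vanish}) shows that every standard subquotient $\Delta(\tau_j)$ of $T(\tau_i)$ satisfies $\varrho(\tau_j) \geqslant \varrho(\tau_i)$, and that $[T(\tau_i) : \Delta(\tau_i)] = 1$ since no subsequent step can reintroduce $\Delta(\tau_i)$. Combined with the identity $\dim \Hom_\Cat(M, \bnabla(\tau)) = [M : \Delta(\tau)]$ for $M \in \Cat^{\Delta}$, which one proves by induction on the $\Delta$-filtration length using part (1) of Lemma \ref{Lem:exts}, this gives $\dim \Hom_\Cat(T(\tau_i), \bnabla(\tau_i)) = 1$.

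For indecomposability, assume $T(\tau_i) = A \oplus B$ with both summands nonzero. Both $A$ and $B$ remain tilting, since $\Cat^{\Delta}$ and $\Cat^{\bnabla}$ are closed under direct summands (the latter via part (2) of Lemma \ref{Lem:exts}). By the one-dimensionality of $\Hom(T(\tau_i), \bnabla(\tau_i))$, exactly one summand, say $A$, satisfies $[A : \Delta(\tau_i)] = 1$ and $[B : \Delta(\tau_i)] = 0$. I would then show that the embedding $\Delta(\tau_i) \hookrightarrow T(\tau_i)$ lands in $A$: the projection to $B$ would be a map from $\Delta(\tau_i)$ to a tilting module with no $\Delta(\tau_i)$-factor, and such a map must vanish using the $\bnabla$-filtration of $B$ together with Lemma \ref{Lem:exts}(1). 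Then the universal property of the iterative construction (at each step, the relevant $\Ext^1$ into $A$ is a summand of the $\Ext^1$ into $T(\tau_i)$) shows that the whole construction factors through $A$, yielding a retraction $T(\tau_i) \twoheadrightarrow A$ and forcing $B = 0$.

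For uniqueness, given any indecomposable tilting $T$, I would select $\tau$ of minimal $\varrho$-value appearing in some $\Delta$-filtration of $T$. Then $\Delta(\tau)$ embeds as a subobject of $T$ (the minimal-stratum part forms a subobject by (\ref{Ext:vanish})), and running the same universal-extension procedure inside $T$ produces a copy of the constructed $T(\tau)$ as a tilting summand of $T$. Indecomposability of $T$ then gives $T \cong T(\tau)$, and $\tau$ is pinned down by the multiplicity in the minimal stratum. The step I expect to require the most care is the vanishing of the projection $\Delta(\tau_i) \to B$ in the indecomposability argument; in the standardly stratified setting this is subtle because $\Delta(\tau_i)$ may have composition factors in strata other than its own, so controlling the image requires careful use of the pre-order along with the Ext orthogonality of Lemma \ref{Lem:exts}.
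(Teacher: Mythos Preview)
Your approach is essentially the paper's, unpacked in detail. The paper's entire argument is two observations: (a) $\tau$ is the unique label for which $\Delta(\tau)\hookrightarrow T(\tau)$ has standardly filtered cokernel, and (b) direct summands of standardly filtered objects are standardly filtered (via Lemma~\ref{Lem:exts}(2)); it then asserts these suffice. Your multiplicity analysis and universal-extension manipulations are exactly what lies behind (a) and (b).

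There is, however, one genuine imprecision in your indecomposability step. You argue that the projection $\Delta(\tau_i)\to B$ vanishes ``using the $\bnabla$-filtration of $B$ together with Lemma~\ref{Lem:exts}(1)''. But your hypothesis on $B$ is $[B:\Delta(\tau_i)]=0$, i.e.\ $\Hom(B,\bnabla(\tau_i))=0$, whereas the $\bnabla$-filtration argument requires $[B:\bnabla(\tau_i)]=0$, i.e.\ $\Hom(\Delta(\tau_i),B)=0$, which is precisely what you are trying to prove. In the standardly stratified setting these two multiplicities are genuinely different, so the argument as written is circular. The paper's route sidesteps this cleanly: since $B$ is nonzero and $\Delta$-filtered, some $\Delta(\sigma)$ sits at the bottom of a $\Delta$-filtration of $B$, hence embeds in $T(\tau_i)$ with $\Delta$-filtered cokernel; observation (a) forces $\sigma=\tau_i$, contradicting $[B:\Delta(\tau_i)]=0$. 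Alternatively you can repair your line by strengthening the multiplicity analysis: using (\ref{Ext:equal}) one sees that every $\Delta$-factor of $T(\tau_i)$ other than $\Delta(\tau_i)$ itself has $\varrho$-value \emph{strictly} greater than $\varrho(\tau_i)$, and then $\Hom(\Delta(\tau_i),B)=0$ follows from (\ref{Ext:vanish}) applied to the $\Delta$-filtration of $B$, not the $\bnabla$-filtration.
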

\begin{proof}
Note that by the construction of $T(\tau)$, the label $\tau$ is uniquely determined by the following property:
there is an embedding $\Delta(\tau)\hookrightarrow T(\tau)$ such that the cokernel is standardly filtered.
Moreover, Lemma \ref{Lem:exts}(2) implies that a direct summand of a standardly filtered object is standardly filtered.
These two observations imply both claims of the lemma.
\end{proof}

Applying this construction to $\Cat^{opp}$ we get {\it cotilting objects}.

Below we will need some further easy properties of tilting objects.

\begin{Lem}\label{Lem:tilting_top}
We have an epimorphism $T(\tau)\twoheadrightarrow \nabla(P_\xi(\tau))$, where $\xi=\varrho(\tau)$,
whose kernel lies in $\Cat^{\bnabla}_{<\xi}$.
\end{Lem}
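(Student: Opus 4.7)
The plan is to analyze $T(\tau)$ via the adjunctions $\Delta_\xi\dashv\pi_\xi\dashv\nabla_\xi$, where $\xi=\varrho(\tau)$.

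First I would show that in the inductive construction of $T(\tau)$ from Lemma \ref{Lem:indec_tilt}, every $\Delta(\sigma)$-factor appearing above the initial $\Delta(\tau)$ has $\varrho(\sigma)<\xi$ strictly. The key input is an Ext-vanishing argument: if $\varrho(\sigma)=\xi$, then $\Delta(\sigma)=\Delta_\xi(P_\xi(\sigma))$ with $P_\xi(\sigma)$ projective in $\Cat_\xi$, and since both $\Delta_\xi$ (by (SS1)) and $\pi_\xi$ (a Serre quotient) are exact, the adjunction $\Delta_\xi\dashv\pi_\xi$ yields
\[
\Ext^{>0}_{\Cat}(\Delta(\sigma),-)\cong\Ext^{>0}_{\Cat_\xi}(P_\xi(\sigma),\pi_\xi(-))=0.
\]
So no such $\Delta(\sigma)$ is ever added in the iteration. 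Consequently $T(\tau)/\Delta(\tau)\in\Cat_{<\xi}$ and $\pi_\xi T(\tau)=P_\xi(\tau)$.

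Next I would produce $f\colon T(\tau)\to\nabla_\xi(P_\xi(\tau))$ via the adjunction
\[
\Hom_{\Cat}(T(\tau),\nabla_\xi(P_\xi(\tau)))\cong\Hom_{\Cat_\xi}(\pi_\xi T(\tau),P_\xi(\tau))=\End_{\Cat_\xi}(P_\xi(\tau)),
\]
letting $f$ correspond to $\id$. By the triangle identity $\pi_\xi f$ is an isomorphism, so by exactness of $\pi_\xi$ both $\ker f$ and the cokernel of $f$ lie in $\Cat_{<\xi}$.

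For surjectivity I would compare $\bnabla$-filtrations. The object $\nabla_\xi(P_\xi(\tau))$ is $\bnabla$-filtered with all factors in stratum $\xi$ and has no nonzero subobject in $\Cat_{<\xi}$ (being in the essential image of $\nabla_\xi$); together with $\pi_\xi f$ being an isomorphism and the $\bnabla$-filteredness of $T(\tau)$, this forces $f$ to be surjective. For $\ker f$, I would apply $\Hom_{\Cat}(\Delta(\sigma),-)$ to $0\to\ker f\to T(\tau)\to\nabla_\xi(P_\xi(\tau))\to 0$; using $\Ext^1_{\Cat}(\Delta(\sigma),T(\tau))=0$ (tilting, Lemma \ref{Lem:exts}(2)) and computing $\Hom_{\Cat}(\Delta(\sigma),\nabla_\xi(P_\xi(\tau)))$ via adjunction, one obtains $\Ext^1_{\Cat}(\Delta(\sigma),\ker f)=0$ for all $\sigma$, whence by Lemma \ref{Lem:exts}(2) again, $\ker f\in\Cat^{\bnabla}_{<\xi}$.

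The subtle step is the surjectivity of $f$. Since $\nabla_\xi(P_\xi(\tau))$ can have nonzero quotients in $\Cat_{<\xi}$ (already in the highest weight setting, the head of $\nabla(\tau)$ typically lies outside stratum $\xi$), one cannot deduce vanishing of the cokernel purely from its membership in $\Cat_{<\xi}$. The argument must use the tilting structure of $T(\tau)$ genuinely: the $\Delta(\sigma)$-layers with $\varrho(\sigma)<\xi$ added in Ringel's construction supply precisely the extensions needed to exhaust $\nabla_\xi(P_\xi(\tau))$.
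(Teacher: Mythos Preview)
Your Step~1 (that $T(\tau)/\Delta(\tau)\in\Cat_{<\xi}$, hence $\pi_\xi T(\tau)=P_\xi(\tau)$) and your Step~4 (deducing $\ker f\in\Cat^{\bnabla}$ from surjectivity via the long exact sequence) are fine, and the paper in fact uses the equality $\pi_\xi T(\tau)=P_\xi(\tau)$ implicitly as well.

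The genuine gap is exactly the one you flag: the surjectivity of your adjunction map $f$. Your sentence ``together with $\pi_\xi f$ being an isomorphism and the $\bnabla$-filteredness of $T(\tau)$, this forces $f$ to be surjective'' is not a proof, and your final paragraph about the $\Delta$-layers ``supplying the extensions'' is only a heuristic. Knowing that $\operatorname{coker}f\in\Cat_{<\xi}$ and that $\nabla_\xi(P_\xi(\tau))$ has no nonzero subobject in $\Cat_{<\xi}$ gives no control over quotients of $\nabla_\xi(P_\xi(\tau))$ in $\Cat_{<\xi}$, as you yourself observe.

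The paper fills precisely this gap, but by a different and more direct route. Rather than constructing $f$ by adjunction and then struggling to prove it surjective, the paper (after reducing to $\xi$ maximal) builds the surjection from the $\bnabla$-filtration of $T(\tau)$. The key input is the vanishing
\[
\Ext^1_\Cat(\bnabla(\tau'),\bnabla(\sigma))=0\quad\text{whenever }\varrho(\tau')<\varrho(\sigma),
\]
which follows from the adjunction $\pi_\xi\dashv\nabla_\xi$ since $\pi_\xi\bnabla(\tau')=0$. This lets one rearrange the $\bnabla$-filtration of $T(\tau)$ so that all factors in strata $<\xi$ form a subobject $K\in\Cat^{\bnabla}_{<\xi}$ and all factors in stratum $\xi$ form a quotient $C$. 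Then $\pi_\xi(C)=\pi_\xi(T(\tau))=P_\xi(\tau)$, and the dual of Lemma~\ref{Lem:prop_stand_stratif} identifies $C$ with $\nabla_\xi(P_\xi(\tau))$. The surjection $T(\tau)\twoheadrightarrow C$ is then the desired map, and its kernel $K$ is already in $\Cat^{\bnabla}_{<\xi}$ by construction. (A posteriori this map agrees with your $f$, since both correspond under the adjunction isomorphism $\Hom(T(\tau),\nabla_\xi(P_\xi(\tau)))\cong\End_{\Cat_\xi}(P_\xi(\tau))$ to an automorphism of $P_\xi(\tau)$.)
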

\begin{proof}
In the proof we can assume that $\xi$ is the largest element of $\Xi$ (if not, we pass to the
standardly stratified subcategory $\Cat_{\leqslant \xi}$). Note that $\Ext^1(\bnabla(\tau'),\bnabla(\tau))=0$
if $\tau'<\tau$. It follows that we have a canonical exact sequence
$$0\rightarrow K\rightarrow T(\tau)\rightarrow C\rightarrow 0,$$
where $K\in \Cat_{<\xi}\cap \Cat^{\bnabla}$ and $C$ is filtered with successive
quotients of the form $\bnabla(\tau)$ with $\varrho(\tau)=\xi$. It remains to
prove that $C=\nabla(P_\xi(\tau))$. We have $\pi_\xi(C)=\pi_{\xi}(T(\tau))=\pi_{\xi}(\Delta(\tau))=P_{\xi}(\tau)$.
Then we can apply Lemma \ref{Lem:prop_stand_stratif} to $\Cat^{opp}$.
\end{proof}

\begin{Lem}\label{Lem:tilt_cover}
Let $M\in \Cat^{\Delta}$ and $N\in \Cat^{\bnabla}$. Then there is a tilting object $T_N$
with an epimorphism $T_N\twoheadrightarrow N$ whose kernel lies in $\Cat^{\bnabla}$.
Furthermore, any morphism $M\rightarrow N$ factors through $T_N$.
\end{Lem}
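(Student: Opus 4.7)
The plan is to prove existence of $T_N$ by induction on the length $\ell$ of a $\bnabla$-filtration of $N$, and then deduce the factorization property from the Ext-vanishing in Lemma \ref{Lem:exts}(1).

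For the base case $\ell=1$ one has $N=\bnabla(\tau)$ for some $\tau$; setting $\xi=\varrho(\tau)$, Lemma \ref{Lem:tilting_top} supplies an epimorphism $T(\tau)\twoheadrightarrow \nabla(P_\xi(\tau))$ with kernel in $\Cat^{\bnabla}_{<\xi}\subset\Cat^{\bnabla}$. Because $\Cat$ is standardly stratified (not merely weakly so), Lemma \ref{Lem:w_st_stratif} ensures $\nabla_\xi$ is exact; applying it to $0\to K_\xi\to P_\xi(\tau)\to L_\xi(\tau)\to 0$ in $\Cat_\xi$ yields a short exact sequence $0\to\nabla(K_\xi)\to\nabla(P_\xi(\tau))\to\bnabla(\tau)\to 0$ with $\nabla(K_\xi)\in\Cat^{\bnabla}$, so composing gives $T(\tau)\twoheadrightarrow\bnabla(\tau)$ with $\bnabla$-filtered kernel.

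For the inductive step, choose a subobject $\bnabla(\tau)\hookrightarrow N$ with quotient $N'\in\Cat^{\bnabla}$ of filtration length $\ell-1$, and take $p':T_{N'}\twoheadrightarrow N'$ with $\bnabla$-filtered kernel from the induction hypothesis. Since $T_{N'}\in\Cat^{\Delta}$, Lemma \ref{Lem:exts}(1) gives $\Ext^1(T_{N'},\bnabla(\tau))=0$, so $p'$ lifts to a map $\tilde p':T_{N'}\to N$. Set $T_N:=T_{N'}\oplus T(\tau)$ and let $\phi:T_N\to N$ be $\tilde p'$ on the first summand and the composition $T(\tau)\twoheadrightarrow\bnabla(\tau)\hookrightarrow N$ on the second. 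A short diagram chase shows $\phi$ is surjective; the snake lemma applied to the morphism of short exact sequences with rows $0\to T(\tau)\to T_N\to T_{N'}\to 0$ and $0\to\bnabla(\tau)\to N\to N'\to 0$ (central vertical $\phi$) identifies $\ker\phi$ as an extension of $\ker p'$ by $\ker(T(\tau)\twoheadrightarrow\bnabla(\tau))$, and both of these lie in $\Cat^{\bnabla}$, which is closed under extensions.

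For the factorization property, the obstruction to lifting $f:M\to N$ along $\phi$ lives in $\Ext^1(M,\ker\phi)$; filtering $M$ by standards and $\ker\phi$ by proper costandards and iterating Lemma \ref{Lem:exts}(1) yields $\Ext^1(M,\ker\phi)=0$, hence $f$ factors through $T_N$. The main subtlety is the base case: it is the exactness of $\nabla_\xi$ supplied by Lemma \ref{Lem:w_st_stratif} that permits descent from $\nabla(P_\xi(\tau))$ to $\bnabla(\tau)$, and this is the single point in the argument where the full standardly stratified hypothesis, as opposed to merely weakly standardly stratified, is essential.
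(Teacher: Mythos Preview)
Your proof is correct. The paper takes a more compressed route: it builds $T_N$ in a single pass via successive universal extensions of $N$ by $\bnabla(\tau_i)\otimes\Ext^1(N,\bnabla(\tau_i))$, with $\tau_i$ ranging over all labels in a fixed order, directly mirroring the construction of the indecomposable tiltings $T(\tau)$ given just before the lemma; the factorization claim then follows from Lemma~\ref{Lem:exts}(1) exactly as in your argument. Your approach instead peels off one proper costandard at a time and handles the base case $N=\bnabla(\tau)$ by composing the epimorphism of Lemma~\ref{Lem:tilting_top} with the one produced by exactness of $\nabla_\xi$ (Lemma~\ref{Lem:w_st_stratif}). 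The paper's argument is shorter and avoids invoking Lemma~\ref{Lem:tilting_top} altogether; your argument is longer but more explicit, and it has the merit you point out of isolating exactly where the full standardly stratified hypothesis---as opposed to the weak one---is needed.
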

\begin{proof}
We can construct $T_N$ by taking the consecutive universal
extensions of $N$ by $\bnabla(\tau_i)\otimes \Ext^1(N,\bnabla(\tau_i))$ for $i$ ranging from
$1$ to $n=|\mathcal{T}|$ (we order labels in non-decreasing way with respect to $\leqslant$).
The claim about morphisms follows from Lemma
\ref{Lem:exts}(1) as the kernel of $T_N\twoheadrightarrow N$ is proper costandardly filtered.
\end{proof}

\subsubsection{Definition of Ringel duality}
Let $\Cat_1$ be a  standardly stratified category and $\Cat_2$ be a weakly standardly stratified
category.   By a Ringel duality data, we mean a pair $(\Ring,\theta)$ of
\begin{itemize}
\item A poset isomorphism $\theta:\Xi_1\xrightarrow{\sim}\Xi_2^{opp}$.
\item an  equivalence $\Ring:D^b(\Cat_1)\xrightarrow{\sim} D^b(\Cat_2)$
of triangulated categories that restricts to an equivalence $\Cat_1^{\bnabla}\xrightarrow{\sim} \Cat_2^{\bDelta}$
\item If $\Ring(\bnabla(\tau))=\bDelta(\tau')$, then $\varrho(\tau')=\theta\circ \varrho(\tau)$.
\end{itemize}
Note that an equivalence $\Cat_1^{\bnabla}\rightarrow \Cat_2^{\bDelta}$ of exact categories
automatically maps a proper costandard object to a proper standard one, and this induces
a bijection between the labelling sets of simples.

We say that $(\Cat_2,\Xi_2)$ is a Ringel dual of $(\Cat_1,\Xi_1)$. We call $\Ring$ the {\it Ringel duality
functor}.

\subsubsection{Existence}
Let $\Cat$ be a standardly stratified category. Let $T:=\bigoplus_{\tau\in \mathcal{T}}T(\tau)$
be the tilting generator. Consider the category $\Cat^\vee:=\End_{\Cat}(T)^{opp}\operatorname{-mod}$ and the functor
$\Ring:=\operatorname{RHom}_{\Cat}(T,\bullet):D^b(\Cat)\rightarrow D^b(\Cat^\vee)$.

The functor $\Ring$ is an equivalence.
Indeed, it is easy to see that the higher self-extensions
of $T$ vanish. So the target category for $\mathcal{R}$ is indeed $D^b(\Cat^\vee)$. This also shows
that $\mathcal{R}$ is a quotient functor. It is an equivalence because the objects $T(\tau)$
generate the triangulated category $D^b(\Cat)$. The  equivalence $\mathcal{R}$ is
exact on $\Cat^{\bnabla}$.

\begin{Prop}\label{Prop:Ringel_dual}
The category $\Cat^\vee$ is weakly standardly stratified with poset $\Xi^{opp}$ and the pair $(\Ring,\operatorname{id})$
is a Ringel duality data. Moreover, for any other Ringel dual category $\Cat'$ and Ringel duality data
$(\Ring',\theta')$, there is an equivalence $(\Phi,\phi):\Cat^\vee\rightarrow \Cat'$ of weakly
standardly stratified categories such that $\Ring'$ is isomorphic to $\Phi\circ \Ring$
and $\theta'=\phi$.
\end{Prop}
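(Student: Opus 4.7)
The plan is to prove Proposition \ref{Prop:Ringel_dual} in three stages: exactness of $\Ring$ on $\Cat^{\bnabla}$ together with identification of the projectives of $\Cat^\vee$; construction of the stratified structure on $\Cat^\vee$ with verification of (SS1) and (SS2$'$); and the universal property.

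First, I will show that $\Ring=R\Hom_\Cat(T,\bullet)$ restricts to an exact functor on $\Cat^{\bnabla}$. Since $T$ is standardly filtered, it suffices to check $\Ext^i_\Cat(\Delta(\tau),N)=0$ for $N\in\Cat^{\bnabla}$ and $i\geqslant 1$. The case $i=1$ is Lemma \ref{Lem:exts}(2), and the higher cases follow by induction on the length of a proper-costandard filtration of $N$, using the Ext-vanishing $\Ext^i(\Delta(\tau),\bnabla(\tau'))=\delta_{i,0}\delta_{\tau,\tau'}$ from Lemma \ref{Lem:exts}(1). In particular $\bDelta^\vee(\tau):=\Ring(\bnabla(\tau))$ is a well-defined object of $\Cat^\vee$, and $P^\vee(\tau):=\Ring(T(\tau))=\Hom_\Cat(T,T(\tau))$ is an indecomposable projective of $\Cat^\vee$; this identifies the simples of $\Cat^\vee$ with $\mathcal{T}$.

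Next, I equip $\Cat^\vee$ with the pre-order induced by composing $\varrho$ with the order reversal on $\Xi$, giving $\Xi^{opp}$. For each $\xi\in\Xi$ I take $\Delta^\vee_\xi:=\Ring\circ\nabla_\xi:\Cat_\xi\to\Cat^\vee$ as the candidate standardization. Since $\Cat$ is standardly stratified, Lemma \ref{Lem:w_st_stratif} ensures that $\nabla_\xi$ is exact, and its image lies in $\Cat^{\bnabla}$; combined with exactness of $\Ring$ on $\Cat^{\bnabla}$ from the first step, this gives exactness of $\Delta^\vee_\xi$, i.e., (SS1), after verifying that $\Delta^\vee_\xi$ is left adjoint to the natural quotient $\Cat^\vee\to \Cat_\xi$. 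The relation $\bDelta^\vee(\tau)=\Delta^\vee_\xi(L_\xi(\tau))$ is tautological from $\bnabla(\tau)=\nabla_\xi(L_\xi(\tau))$. To establish (SS2$'$), apply Lemma \ref{Lem:tilting_top} to obtain
\[0\to K\to T(\tau)\to\nabla(P_\xi(\tau))\to 0\]
with $K\in\Cat^{\bnabla}_{<\xi}$, iterate on $K$ over the ideal of strata strictly below $\xi$ in $\Xi$, and apply $\Ring$ to extract a filtration of $P^\vee(\tau)$ whose top is $\Delta^\vee_\xi(P_\xi(\tau))$ and whose remaining quotients are $\Delta^\vee_{\xi'}(-)$ with $\xi'<\xi$ in $\Xi$, i.e., strictly $>$ in $\Xi^{opp}$. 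That $(\Ring,\id)$ is a Ringel duality datum then reads off from these identifications.

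Finally, for the universal property, let $(\Cat',\Ring',\theta')$ be another Ringel dual and set $\Psi:=\Ring'\circ\Ring^{-1}:D^b(\Cat^\vee)\to D^b(\Cat')$. It sends $\bDelta^\vee(\tau)=\Ring(\bnabla(\tau))$ to $\Ring'(\bnabla(\tau))\in(\Cat')^{\bDelta}\subset\Cat'$ in degree $0$. Applying $\Psi$ to the filtration of $P^\vee(\tau)$ produced above shows that $\Psi(P^\vee(\tau))$ is proper-standardly filtered in $\Cat'$; being indecomposable and a direct summand of $\Psi(\Ring(T))=\Ring'(T)$, it must be an indecomposable projective of $\Cat'$, and its label is determined by $\theta'\circ\varrho$ via the Ringel duality datum condition. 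Hence $\Psi$ restricts to an abelian equivalence $\Phi:\Cat^\vee\to\Cat'$ with $\phi:=\theta'$, and $\Ring'\cong\Phi\circ\Ring$ by construction. The main obstacle is in the second stage: one must carefully keep straight the flip between $\Xi$ and $\Xi^{opp}$ and separate standardization from costandardization in the stratified (non-highest-weight) setting, where the two genuinely differ at the stratum level.
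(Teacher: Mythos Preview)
Your overall architecture is right and matches the paper's: identify $\Ring\circ\nabla_\xi$ as the standardization functor of $\Cat^\vee$, read off (SS2$'$) from Lemma~\ref{Lem:tilting_top}, and prove uniqueness by showing that tiltings in $\Cat$ go to projectives under any Ringel duality functor. The problem is that you have pushed the most substantial part of the argument into the phrase ``after verifying that $\Delta^\vee_\xi$ is left adjoint to the natural quotient $\Cat^\vee\to\Cat_\xi$.'' This is not a verification; it is the main content. You have not yet identified the subquotient $(\Cat^\vee)_\xi$ of the Serre filtration on $\Cat^\vee$ with $\Cat_\xi$, nor said what the quotient functor $\pi^\vee_\xi$ even is, so there is nothing for $\Ring\circ\nabla_\xi$ to be left adjoint to. The paper spends its Steps~2--3 on exactly this point: it shows that $\iota=\Hom_\Cat(T,\nabla_\xi(\bullet)):\Cat_\xi\to(\Cat^\vee)_\xi$ is an equivalence by exhibiting an algebra isomorphism $A_1\xrightarrow{\sim}A_2$, where $A_1$ is the quotient of $\End(T)^{opp}$ by morphisms factoring through $\Cat_{<\xi}\operatorname{-tilt}$ and $A_2=\End\bigl(\bigoplus_{\varrho(\tau)=\xi}\nabla(P_\xi(\tau))\bigr)^{opp}$; surjectivity uses Lemma~\ref{Lem:tilt_cover} and injectivity uses that the kernel of $T\twoheadrightarrow\nabla(P_\xi(\tau))$ lies in $\Cat_{<\xi}^{\bnabla}$ (Lemma~\ref{Lem:tilting_top}). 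Only after this does the adjunction follow, by identifying $\pi^\vee_\xi(\bullet)\cong\pi_\xi(T\otimes_{\End(T)^{opp}}\bullet)$.

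A smaller issue: in your uniqueness argument you assert that $\Psi(P^\vee(\tau))$ is projective because it is an indecomposable summand of $\Ring'(T)$, but you have not shown $\Ring'(T)$ is projective. The clean argument (essentially the paper's, in dual form) is that $\Psi(P^\vee(\tau))=\Ring'(T(\tau))$, and $\Ext^i_{\Cat'}(\Ring'(T(\tau)),\bDelta'(\sigma))=\Ext^i_\Cat(T(\tau),\bnabla(\sigma'))=0$ for $i>0$ since $T(\tau)\in\Cat^\Delta$; an induction over the pre-order then kills higher Ext's into simples, so $\Ring'(T(\tau))$ is projective.
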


We will start by proving that $\Cat^\vee$ is indeed a weakly standardly stratified category
(a harder part) and then prove a uniqueness statement. After that we will briefly discuss conditions
under which $\Cat^\vee$ is standardly stratified and not just weakly standardly stratified.

\subsubsection{Weakly standardly stratified structure on $\Cat^\vee$}\label{SSS_ws_Ring_dual}
The following proposition shows that $\Cat^\vee$ is Ringel dual to $\Cat$.

\begin{Prop}
The category $\Cat^{\vee}$ has  a weakly standardly stratified structure
for the opposite preorder on $\mathcal{T}$. We have an identification
$\gr\Cat^\vee\cong \Cat^\vee$ and the functor $\Delta^\vee(\bullet):
\gr\Cat^\vee\rightarrow \Cat^\vee$ coincides with $\Hom_{\Cat}(T,\nabla(\bullet))$.
\end{Prop}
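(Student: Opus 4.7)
The plan is to equip $\Cat^\vee$ with the opposite preorder on $\mathcal{T}$, verify axioms (SS1) and (SS2$'$) in this setting, and identify $\gr\Cat^\vee$ with $\gr\Cat$ so that the prescribed functor $\Delta^\vee := \Hom_\Cat(T,\nabla(\bullet))$ becomes the standardization functor. First I would observe that $T = \bigoplus_{\tau\in\mathcal{T}} T(\tau)$ is a decomposition into pairwise nonisomorphic indecomposables (Lemma \ref{Lem:indec_tilt}), so the resulting idempotent decomposition produces indecomposable projective covers $P^\vee(\tau) := \Hom_\Cat(T,T(\tau))$ in $\Cat^\vee$, thereby labeling the simples $L^\vee(\tau)\in\Cat^\vee$ by $\mathcal{T}$.

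Next I would verify that $\Hom_\Cat(T,\bullet)$ is exact on $\Cat^{\bnabla}$. Since $T\in\Cat^\Delta$, Lemma \ref{Lem:exts}(1) gives $\Ext^i_\Cat(\Delta(\tau'),\bnabla(\tau''))=0$ for $i>0$, and walking up a standard filtration of $T$ together with a proper costandard filtration of any $N\in\Cat^{\bnabla}$ yields $\Ext^i_\Cat(T,N)=0$ for $i>0$. Combined with exactness of each costandardization $\nabla_\xi:\Cat_\xi\to\Cat$, which holds because $\Cat$ is standardly stratified by Lemma \ref{Lem:w_st_stratif}, this yields axiom (SS1) for $\Delta^\vee:\gr\Cat\to\Cat^\vee$.

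To establish (SS2$'$), for each $\tau$ set $\xi:=\varrho(\tau)$ and apply Lemma \ref{Lem:tilting_top} to get an epimorphism $T(\tau)\twoheadrightarrow\nabla_\xi(P_\xi(\tau))$ whose kernel $K$ lies in $\Cat^{\bnabla}_{<\xi}$. Refining a proper costandard filtration of $K$ by grouping factors $\bnabla(\tau')=\nabla_{\xi'}(L_{\xi'}(\tau'))$ within each stratum (possible by induction using the dual of Lemma \ref{Lem:prop_stand_stratif}), I can arrange $K$ to be filtered with subquotients $\nabla_{\xi^j}(M^j)$ where $M^j\in\Cat_{\xi^j}$ and $\xi^j<\xi$ in $\Xi$. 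Applying $\Hom_\Cat(T,\bullet)$ then produces an epimorphism $P^\vee(\tau)\twoheadrightarrow\Delta^\vee(P_\xi(\tau))$ with kernel filtered by $\Delta^\vee(M^j)$, and since each $\xi^j$ is strictly greater than $\xi$ in the opposite order, this is precisely (SS2$'$).

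Finally I would identify $\gr\Cat^\vee$ with $\gr\Cat$ stratum by stratum. Using that $\Ring$ is an equivalence, so in particular $\Hom_{\Cat^\vee}(\Hom_\Cat(T,M),\Hom_\Cat(T,N))=\Hom_\Cat(M,N)$ for $M,N\in\Cat^{\bnabla}$, combined with the adjunction identity $\Hom_\Cat(\nabla_\xi M,\nabla_\xi N)=\Hom_{\Cat_\xi}(M,\pi_\xi\nabla_\xi N)=\Hom_{\Cat_\xi}(M,N)$ coming from $(\pi_\xi,\nabla_\xi)$ being adjoint and $\pi_\xi\nabla_\xi\cong\id$, one obtains that $\Delta^\vee|_{\Cat_\xi}:\Cat_\xi\to\Cat^\vee$ is fully faithful. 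Composing with the quotient $\pi^\vee_\xi$ would then yield an equivalence $\Cat_\xi\xrightarrow{\sim}\Cat^\vee_\xi$ under which $\Delta^\vee|_{\Cat_\xi}$ is identified with the left adjoint of $\pi^\vee_\xi$, i.e., the standardization functor. The main obstacle will be this last identification: one must show $\pi^\vee_\xi\Delta^\vee(M)\cong M$ for $M\in\Cat_\xi$, equivalently that no simples with labels in strata strictly greater than $\xi$ in the opposite order appear as composition factors of $\Delta^\vee(M)$, which ultimately reduces to analyzing the $\bnabla$-filtration of $\nabla_\xi(M)$ and using the exactness established above.
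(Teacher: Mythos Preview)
Your outline contains the right ingredients for (SS1) and for the filtration of $P^\vee(\tau)$ via Lemma~\ref{Lem:tilting_top}, but the step you yourself flag as ``the main obstacle'' is a genuine gap, and your suggested reduction does not close it. Knowing that $\Delta^\vee|_{\Cat_\xi}$ is fully faithful and that $\pi^\vee_\xi\Delta^\vee\cong\id$ does \emph{not} imply that $\Delta^\vee$ is left adjoint to $\pi^\vee_\xi$; for that you would also need $\Hom_{\Cat^\vee}(\Delta^\vee(M),N')=0=\Ext^1_{\Cat^\vee}(\Delta^\vee(M),N')$ for every $N'$ in the Serre subcategory $\Cat^\vee_{<^\vee\xi}$. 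Analyzing the $\bnabla$-filtration of $\nabla_\xi(M)$ only tells you which composition factors $\Delta^\vee(M)$ has (hence where it lives), not that it has the required vanishing of maps and extensions into objects supported in strictly lower strata for the opposite order. Without this, you cannot conclude that your candidate $\Delta^\vee$ is the standardization functor, so checking (SS1) and (SS2$'$) for it is premature. (Also, a minor slip: the adjunction you invoke for $\nabla_\xi$ reads $\Hom_\Cat(\nabla_\xi M,\nabla_\xi N)=\Hom_{\Cat_\xi}(\pi_\xi\nabla_\xi M,N)$, not the version you wrote, though the conclusion is the same.)

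The paper avoids this difficulty by arguing in a different order. It first realizes the Serre filtration on $\Cat^\vee$ via idempotents: for a poset ideal $\Xi_0\subset\Xi$, the quotient $\Cat^\vee\twoheadrightarrow(\Cat_{\Xi_0})^\vee$ is $M\mapsto Me_{\Xi_0}$, and $(\Cat^\vee)_{\Xi^0}$ is defined as its kernel. It then proves directly that $(\Cat^\vee)_\xi\cong\Cat_\xi$ by exhibiting an explicit isomorphism of algebras: $(\Cat^\vee)_\xi$ is the module category over a concrete quotient $A_1$ of $\End(T)^{opp}$, and $\Cat_\xi$ is the module category over $A_2=\End\bigl(\bigoplus_{\varrho(\tau)=\xi}\nabla(P_\xi(\tau))\bigr)^{opp}$; the natural map $A_1\to A_2$ is shown to be an isomorphism using Lemmas~\ref{Lem:tilting_top} and~\ref{Lem:tilt_cover}. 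Only then does the paper identify $\pi^\vee_\xi$ with $\pi_{\Xi_0,\xi}(T\otimes_{\End(T)^{opp}}\bullet)$, after which the tensor--Hom adjunction immediately gives that $\Hom_\Cat(T,\nabla_\xi(\bullet))$ is its left adjoint. This sidesteps entirely the Ext-vanishing verification your approach would require.
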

\begin{proof} The proof is in several steps.

{\it Step 1}.
  For an ideal $\Xi_0\subset \Xi$, the category $(\Cat_{\Xi_0})^\vee$ is
  naturally identified with a quotient of $\Cat^\vee$. The quotient
  functor is $M\mapsto Me_{\Xi_0}$, where $e_{\Xi_0}$ denote the central idempotent
  that is the projection from $T$ to $\bigoplus_{\tau\in \Lambda_0}T(\tau)$.
  So for a coideal $\Xi^0\subset \Xi$
  (=ideal $\Xi^0\subset \Xi^{opp}$) we can define the subcategory
  $(\Cat^{\vee})_{\Xi^0}\subset \Cat^\vee$ as the kernel of the
  projection $\Cat^\vee\twoheadrightarrow
  (\Cat_{\Xi\setminus\Xi^0})^\vee$.

{\it Step 2}.
Now we claim that $(\Cat^\vee)_\xi$ is naturally identified with
$\Cat_\xi$. By the definition of our filtration, we can
represent $(\Cat^{\vee})_\xi$ as the kernel of the quotient
$(\Cat_{\leqslant \xi})^\vee\twoheadrightarrow (\Cat_{<\xi})^\vee$.  So, in order to prove
the claim in the beginning of the paragraph, we can assume that $\xi$ is the largest element of $\Xi$.
We claim that the required equivalence is provided by the functor
$\iota:\operatorname{Hom}(T, \nabla_\xi(\bullet)):\Cat_\xi\rightarrow \Cat^\vee$.
The image of $\iota$ is contained in
$(\Cat^\vee)_\xi$ because $\Hom(T(\tau'),\nabla(\tau))=0$ for
$\tau'<\tau$. So $\iota$ is a functor $\Cat_\xi\rightarrow (\Cat^\vee)_\xi$.

Let us show that $\iota$ is an equivalence.
Consider the quotient $A_1$ of $\End(T)^{opp}$
by the 2-sided ideal of all morphisms that factor as $T\rightarrow T_{<\xi}
\rightarrow T$ for $T_{<\xi}\in \Cat_{<\xi}\operatorname{-tilt}$.
So $(\Cat^\vee)_\xi$ is just $A_1\operatorname{-mod}$.
Consider the object $R:=\bigoplus_{\tau\in \varrho^{-1}(\xi)}\nabla(P_\xi(\tau))$ that is a quotient
of $T$ in such a way that the kernel lies in $\Cat_{<\xi}$, see Lemma \ref{Lem:tilting_top}.
Clearly a morphism $T\rightarrow T$ induces a
morphism $R\rightarrow R$ and so we get a homomorphism $\operatorname{End}(T)^{opp}\rightarrow A_2$,
where $A_2:=\operatorname{End}(R)^{opp}$ so that $\Cat_\xi=A_2\operatorname{-mod}$. Note that
the homomorphism $\operatorname{End}(T)^{opp}\rightarrow A_2$ factors through $A_1$ because
we have $\operatorname{Hom}(\bigoplus_{\varrho(\tau)<\xi} T(\tau),R)=0$. Let $\varphi$
be the resulting homomorphism $A_1\rightarrow A_2$. It is straightforward from the construction
of $\iota:\Cat_\xi\rightarrow (\Cat^\vee)_\xi$ is just $\varphi^*$.

So we need to check that $\varphi$ is an isomorphism. By Lemma \ref{Lem:tilt_cover}, any homomorphism $T\rightarrow
\nabla(P_\xi(\tau))$ lifts to an endomorphism of $T$. So $\varphi$ is surjective. Now let
$\psi$ be an endomorphism of $T$ whose image lies in the kernel  $K$  of $T\twoheadrightarrow \nabla(P_\xi(\tau))$.
Recall that $K\in \Cat_{<\xi}\cap \Cat^{\bnabla}$. So there is a tilting object $T_K\in \Cat_{<\xi}$
with $T_K\twoheadrightarrow K$ and every morphism $T\rightarrow K$ factors through $T_K$.
It follows that any morphism $T\rightarrow K$ lies in the kernel of $\operatorname{End}(T)^{opp}\twoheadrightarrow A_1$.
Hence $\varphi$ is injective. This finishes the proof of the claim that $\iota$
is an equivalence and establishes an equivalence $\gr\Cat\cong \gr\Cat^\vee$ that we will be using
from now on.

{\it Step 3}.
Fix $\xi\in \Xi$ and set $\Xi^0:=\{\xi'\in \Xi| \xi'\geqslant\xi\},\Xi_0:=\Xi\setminus \Xi^0$.
Let us show that, under the identification $\Cat_\xi\cong \Cat^\vee_\xi$, the quotient
functor $(\pi^\vee)_\xi:(\Cat^\vee)_{\geqslant \xi}\twoheadrightarrow \Cat_\xi$ gets identified with
$\pi_{\Xi_0,\xi}(T\otimes_{\End(T)^{opp}}\bullet)$, where we write $\pi_{\Xi_0,\xi}$
for the quotient functor $\Cat_{\Xi_0}\twoheadrightarrow \Cat_\xi$.
Note that, for $N\in (\Cat^\vee)_{\geqslant \xi}$, the tensor product
$T\otimes_{\End(T)^{opp}}N$ lies in $\Cat_{\Xi_0}$ because $e_{\Xi_0}N=0$.
So the functor  $\pi_{\Xi_0,\xi}(T\otimes_{\End(T)^{opp}}\bullet)$ does make sense.
To show the coincidence of the functors we can replace $\Cat$ by $\Cat_{\leqslant \xi}$
(and so $\Cat^\vee$ will be replaced with a suitable quotient). Then $\pi_\xi(T\otimes_{\End(T)^{opp}}\bullet)$
is just a quasi-inverse of $\operatorname{Hom}(T,\nabla_\xi(\bullet))$ and our claim follows.


{\it Step 4}. An isomorphism $(\pi^\vee)_\xi(\bullet)\cong \pi_{\Xi_0,\xi}(T\otimes_{\End(T)^{opp}}\bullet)$  shows
 that the functor $\Delta^\vee_\xi:\Cat_\xi\rightarrow\Cat^\vee_{\geqslant \xi}$ defined by
  $\Delta_\xi^\vee(\bullet):=\Hom_{\Cat}(T,\nabla_{\xi}(\bullet))$ is
  left adjoint to the projection $\Cat^{\vee}_{\geqslant
    \xi}\twoheadrightarrow \Cat^\vee_{\xi}$. The functor $\Delta^\vee_\xi$ is exact.
    Moreover, the functor $\Hom(T,\bullet)$
  functor identifies $\Cat^{\bnabla}$ with
  $(\Cat^\vee)^{\bDelta}$. Under this identification, we have
  $$\bDelta^\vee(\tau)=\bnabla(\tau),\quad P^\vee(\tau)=T(\tau).$$
\end{proof}

\subsubsection{Uniqueness}
Now let $\Cat'$ be another Ringel dual category of $\Cat$, let $(\Ring',\theta')$
be the corresponding Ringel duality data. It follows that $\Ring\circ \Ring'^{-1}$
is an equivalence $D^b(\Cat')\xrightarrow{\sim} D^b(\Cat^\vee)$. A projective
$P'$ in $\Cat'$ satisfies $\Ext^1_{\Cat'}(P',\bDelta(\tau'))=0$ for all labels
$\tau'$. It follows that $\Ext^1_{\Cat}(\Ring'^{-1}(P'), \bnabla(\tau))=0$
for all labels $\tau$. So $\Ring'^{-1}(P')$ is $\Delta$-filtered. It is also
$\bnabla$-filtered by the definition of a Ringel duality functor. So $\Ring'^{-1}(P')$
is tilting and therefore $\Ring\circ \Ring'^{-1}$ is projective.
It follows that $\Ring\circ \Ring'^{-1}$ restricts to
$\Cat'\operatorname{-proj}\rightarrow \Cat\operatorname{-proj}$ and so
comes from an equivalence of abelian categories. This equivalence is automatically
an equivalence of weakly standardly stratified categories (and $\theta'$ is the corresponding
bijection of posets).

\subsubsection{When $\Cat^\vee$ is standardly stratified}
In general, it seems that $\Cat^\vee$ is only weakly standardly stratified. Clearly, the claim that $\Cat^\vee$
is standardly stratified is equivalent to the following claim:
\begin{itemize}
\item[(*)] All tilting objects in $\Cat$ admit a filtration with successive quotients of the form
$\nabla_\xi(P_\xi)$, where $P_\xi$ is a \underline{projective} object in $\Cat_\xi$ (instead of just some object that
is guaranteed by the condition of being tilting).
\end{itemize}
When all projectives in $\Cat_\xi$ are injective, (*) becomes
\begin{itemize}
\item[(**)] Tilting and co-tilting objects in $\Cat$ are the same.
\end{itemize}
For example, (**) is satisfied for tensor products of minimal categorifications
studied in \cite{LW}. In  order to see that one applies an analog of the inductive
construction of projective objects used in the proof of \cite[Theorem 6.1]{LW}
to construct tilting objects using the dual splitting procedure from \cite[Section 4.4]{LW}.
The inductive construction shows that all tilting objects are co-tilting.

\subsubsection{Wall-crossing functors}
Let $X,\nu,\lambda,\chi$ be as in \ref{SSS_main_result_stand_stratif}. Set $\lambda':=\lambda-N\chi$
for $N\gg 0$. We are going to describe the Ringel dual of $(\OCat_\nu(\A_\lambda),\leqslant_{\chi})$
and the corresponding Ringel duality functor.

\begin{Thm}\label{Thm:WC_part_Ringel}
The standardly stratified category $(\OCat_{\nu}(\A_{\lambda'}), \leqslant_{-\chi})$ is the
Ringel dual of $(\OCat_\nu(\A_\lambda),\leqslant_{\chi})$. The functor $\WC_{\lambda\leftarrow \lambda'}^{-1}$
is the corresponding Ringel duality functor.
\end{Thm}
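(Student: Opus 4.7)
By the uniqueness assertion in Proposition \ref{Prop:Ringel_dual}, to identify $(\OCat_{\nu}(\A_{\lambda'}),\leqslant_{-\chi})$ together with $\WC^{-1}_{\lambda\leftarrow\lambda'}$ as the Ringel dual and Ringel duality functor of $(\OCat_\nu(\A_\lambda),\leqslant_\chi)$, it is enough to exhibit the corresponding Ringel duality data. The poset antiisomorphism $\theta$ of equivalence classes may be taken to be the identity: the pre-orders $\leqslant_\chi$ and $\leqslant_{-\chi}$ on $X^T$ share the same equivalence relation ($\alpha_p(\chi)=\alpha_{p'}(\chi)$) and have opposite strict parts, since $p<_\chi p'$ is defined by $\alpha_{p'}(\chi)-\alpha_p(\chi)>0$. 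What remains is to check that $\WC^{-1}_{\lambda\leftarrow\lambda'}$ restricts to an exact equivalence $\OCat_\nu(\A_\lambda)^{\bnabla}\xrightarrow{\sim}\OCat_\nu(\A_{\lambda'})^{\bDelta}$ sending $\bnabla_\lambda(p)$ to $\bDelta_{\lambda'}(p)$ for each $p\in X^T$.

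My plan is to bootstrap from the Weil generic situation. For a Weil generic $\hat\lambda\in\param^1$ the categories $\OCat_\nu(\A_{\hat\lambda})$ and $\OCat_\nu(\A_{\hat\lambda-N\chi})$ are highest weight by Lemma \ref{Lem:hw_as_gen}, with proper (co)standards coinciding with the honest (co)standards. The long wall-crossing hypotheses of Proposition \ref{Prop:cat_O}(7), namely abelian localization for $(\hat\lambda,\theta)$, $(-\hat\lambda,-\theta)$ and $(\hat\lambda-N\chi,\theta)$, follow from Corollary \ref{Cor:ab_loc} once $\chi$ is chosen generically inside $\Gamma$ and $N\gg 0$. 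Hence $\WC^{-1}_{\hat\lambda\leftarrow\hat\lambda-N\chi}$ is a Ringel duality functor for the highest weight category $\OCat_\nu(\A_{\hat\lambda})$: it sends $\nabla_{\hat\lambda}(p)$ to $\Delta_{\hat\lambda-N\chi}(p)$ in degree zero and induces an exact equivalence between costandardly and standardly filtered subcategories.

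To transfer the statement to a Zariski generic $\lambda\in\param^1$, I would work with the universal wall-crossing bimodule $\A^{(\theta)}_{\param^1-N\chi,N\chi}$ over $\param^1$, whose derived tensor product implements $\WC^{-1}$ fiberwise. Applied to the universal family $\bnabla_{\param^1}(p)$ built in Section \ref{SSS_main_result_stand_stratif}, the fiber at a Weil generic point is $\Delta_{\hat\lambda-N\chi}(p)=\bDelta_{\hat\lambda-N\chi}(p)$ concentrated in degree zero. Combining the $\C[\param^1]$-flatness of the successive quotients in the $\bnabla$-analog of the filtration from Lemma \ref{Lem:Delta_filtr} with upper semicontinuity of Tor, all higher Tors against the wall-crossing bimodule vanish on a Zariski open subset of $\param^1$, which after a harmless integral shift we may assume to contain $\lambda$. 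Comparing the resulting flat family to the universal construction of $\bDelta_{\param^1-N\chi}(p)$ and specializing at $\lambda$ then produces $\WC^{-1}_{\lambda\leftarrow\lambda'}(\bnabla_\lambda(p))\cong\bDelta_{\lambda'}(p)$; the exactness of $\WC^{-1}$ on all of $\OCat_\nu(\A_\lambda)^{\bnabla}$ follows by filtering arbitrary objects there by their $\bnabla_\lambda(p)$'s.

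The step I expect to be hardest is precisely this final Tor-vanishing at $\lambda$ itself, equivalently the $t$-exactness of $\WC^{-1}_{\lambda\leftarrow\lambda'}$ when restricted to $\OCat_\nu(\A_\lambda)^{\bnabla}$. I would attack it by combining the perversity of wall-crossing through the face with linear span $\param_0$ (Theorem \ref{Thm:perv}) with the Ext computation of Lemma \ref{Lem:Ext_vanish}: perversity controls which homological degrees can carry cohomology of $\WC^{-1}(\bnabla_\lambda(p))$ via associated varieties, while the Ext vanishing together with the standardly stratified structure from Proposition \ref{Prop:stand_stratif} forces the surviving object to be concentrated in degree zero and standardly filtered.
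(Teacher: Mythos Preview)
Your proposal diverges from the paper's proof in a significant way, and the divergence contains a genuine gap.

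\textbf{Where the paper goes differently.} The paper does not analyze $\WC^{-1}_{\lambda\leftarrow\lambda'}$ on proper costandards directly. Instead it introduces an auxiliary parameter $\lambda^-:=\lambda-N\theta$ (with $\theta$ generic in the open chamber $C$, not in the face $\Gamma$) and uses the factorization
\[
\WC_{\lambda\leftarrow\lambda^-}=\WC_{\lambda\leftarrow\lambda'}\circ\WC_{\lambda'\leftarrow\lambda^-}.
\]
Since $\WC^{-1}_{\lambda\leftarrow\lambda^-}$ is already known to be the Ringel duality for the \emph{highest weight} structure (Proposition \ref{Prop:cat_O}(7)), everything reduces to showing that the short wall-crossing $\WC_{\lambda'\leftarrow\lambda^-}$ sends $\Delta_{\lambda^-}(p)$ to $\Delta_{-\chi}(\nabla_{\widehat{\lambda}}(p))$. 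That is handled by a $K_0$ argument plus the observation that the induced functor $\underline{\WC}$ on associated graded categories must send standards to costandards because the highest weight orders on $\OCat_\nu(\A_{\widehat{\lambda}^-})$ and $\OCat_\nu(\A_{\widehat{\lambda}'})$ are opposite.

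\textbf{The gap in your argument.} Your bootstrap step asserts that for Weil generic $\widehat{\lambda}\in\param^1$ the functor $\WC^{-1}_{\widehat{\lambda}\leftarrow\widehat{\lambda}-N\chi}$ is the highest weight Ringel duality and sends $\nabla_{\widehat{\lambda}}(p)$ to $\Delta_{\widehat{\lambda}-N\chi}(p)$. But Proposition \ref{Prop:cat_O}(7) concerns the long wall-crossing to $\lambda^-=\lambda-N\theta$ with $\theta$ in the \emph{open} chamber; your $\chi$ lies in the face $\Gamma$, and $\widehat{\lambda}-N\chi$ is again a Weil generic point of $\param^1$, not a point playing the role of $\lambda^-$. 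Nothing in the paper tells you directly what $\WC^{-1}_{\widehat{\lambda}\leftarrow\widehat{\lambda}-N\chi}$ does on costandards. Worse, there is a mismatch of objects: by construction (property (2) in Section 4.4.2) the Weil generic fiber of $\bnabla_{\param^1}(p)$ is the \emph{simple} $L_{\widehat{\lambda}}(p)$, not $\nabla_{\widehat{\lambda}}(p)$; your sentence ``proper (co)standards coinciding with the honest (co)standards'' conflates the proper costandards for the stratified structure $\leqslant_\chi$ with the highest weight costandards. So what you actually need at the Weil generic point is that $\WC^{-1}_{\widehat{\lambda}\leftarrow\widehat{\lambda}'}$ sends $L_{\widehat{\lambda}}(p)$ to $L_{\widehat{\lambda}'}(p)$, i.e.\ is an abelian equivalence there --- and this is precisely the content that the paper extracts indirectly via the factorization through $\lambda^-$ (see Remark \ref{Rem:wc_bij}). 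Without that factorization, neither your invocation of Proposition \ref{Prop:cat_O}(7) nor the subsequent semicontinuity/perversity sketch gets off the ground.
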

\begin{proof}
We need to prove that there is a highest weight
equivalence $\iota:\gr \OCat_\nu(\A_\lambda)\xrightarrow{\sim} \OCat_{\nu}(\A_{\widehat{\lambda}})
\xleftarrow{\sim} \gr\OCat_{\nu}(\A_{\lambda'})$ such that we have a functorial isomorphism
$\WC_{\lambda'\leftarrow \lambda}\Delta_\chi(M)\cong \nabla_{-\chi}(\iota(M))$. Let $\lambda^-:=\lambda-N\theta$
for $N\gg 0$ so that $\lambda',\lambda^-$ lie in chambers that are opposite with respect to the face containing
$-\chi$. It follows that
\begin{equation}\label{eq:WC_decomp}
\WC_{\lambda\leftarrow \lambda^-}=\WC_{\lambda\leftarrow \lambda'}\circ \WC_{\lambda'\leftarrow \lambda^-}.\end{equation}

Since $\WC_{\lambda\leftarrow \lambda^-}^{-1}$ is the Ringel duality functor for the highest
weight structure on $\OCat_{\nu}(\A_\lambda)$, see Proposition \ref{Prop:cat_O}, we only need
to show that $\WC_{\lambda'\leftarrow \lambda^-}$ maps $\Delta_{\lambda^-}(p)$ to
$\Delta_{-\chi}\circ \nabla_{\widehat{\lambda}}(p)$. For the same reason as in
\cite[Proposition 3.2]{Cher_supp}, the object
$\WC_{\lambda'\leftarrow \lambda^-}\Delta_{\lambda^-}(p)$ has no higher homology and its class
in $K_0$ coincides with $[\Delta_{\lambda'}(p)]=[\Delta_{-\chi}\circ \nabla_{\widehat{\lambda}}(p)]$.
Because of this the functor $\WC_{\lambda'\leftarrow \lambda^-}$ restricts to an equivalence
$$D^b(\OCat_\nu(\A_{\lambda'})_{\leqslant_\chi p})\xrightarrow{\sim}
D^b(\OCat_\nu(\A_{\lambda^-})_{\leqslant_\chi p}).$$
Hence there is an equivalence $\underline{\WC}:
D^b(\OCat_{\nu}(\A_{\widehat{\lambda}^-}))\xrightarrow{\sim} D^b(\OCat_{\nu}(\A_{\widehat{\lambda}'}))$
such that $\pi_p\circ \WC^{-1}_{\lambda'\leftarrow \lambda^-}\cong \underline{\WC}\circ \pi_p$.
By adjunction, we get $\WC_{\lambda'\leftarrow \lambda^-}\circ \Delta_{-\chi}\cong
\Delta_{-\chi}\circ \underline{\WC}$. Note that $\underline{\WC}$ induces
the trivial map between the $K_0$-groups and sends $\Delta_{\widehat{\lambda}^-}(p)$
to an object. The highest weight orders for $\OCat_{\nu}(\A_{\widehat{\lambda}^-}),
\OCat_{\nu}(\A_{\widehat{\lambda}'})$ are opposite. It follows that $\underline{\WC}$
sends  $\Delta_{\widehat{\lambda}^-}(p)$ to $\nabla_{\widehat{\lambda}'}(p)$.
Composing $\underline{\WC}$ with $\WC_{\widehat{\lambda}'\leftarrow \widehat{\lambda}^-}^{-1}$
we get a required equivalence $\iota$.
\end{proof}

\begin{Rem}\label{Rem:RCA}
Proposition \ref{Prop:stand_stratif} and Theorem \ref{Thm:WC_part_Ringel} are still
true for categories $\mathcal{O}$ over Rational Cherednik algebras (since an analog
of (\ref{eq:WC_decomp}) is not known in that context, a priori, the Ringel duality functor
will be given by $\WC_{\lambda'\leftarrow \lambda^-}\circ \WC_{\lambda\leftarrow \lambda^-}^{-1}$).
We need to take the chamber structure defined by the $c$-order as in  \cite[Section 2.6]{rouq_der}
and consider wall-crossing functors from {\it loc.cit.} The proofs carry over to
the RCA situation more or less verbatim (in the proof of Lemma \ref{Lem:alg_iso}
we need to use equivalences from \cite[Proposition 4.2]{rouq_der} rather than
equivalences coming from localization theorem).
\end{Rem}

\begin{Rem}\label{Rem:wc_bij}
Note that the functor $\underline{\WC}$ in the proof of Theorem \ref{Thm:WC_part_Ringel}
coincides with the wall-crossing functor $\WC_{\widehat{\lambda}'\leftarrow \widehat{\lambda}^-}$
(where $\widehat{\bullet}$ means a Weil generic deformation of a parameter
along the face $\Gamma$) up to post-composing with an abelian equivalence
that is the identity on the level of $K_0$. The isomorphism
$\Delta\circ \underline{\WC}
\cong \WC_{\lambda'\leftarrow \lambda^-}\circ \Delta$ implies
that the wall-crossing bijections (see Section \ref{SS_wc_bij})
$\mathfrak{wc}_{\widehat{\lambda}'\leftarrow \widehat{\lambda}^-}$
and $\mathfrak{wc}_{\lambda'\leftarrow \lambda^-}$  coincide.
This result was established in \cite[Theorem 1.1(iii)]{cacti}
and \cite[Proposition 3.1]{Cher_supp} for special varieties $X$.
\end{Rem}

\begin{Rem}\label{Rem:Negut}
The isomorphism $\Delta\circ \underline{\WC}
\cong \WC_{\lambda'\leftarrow \lambda^-}\circ \Delta$ may be viewed as
a categorical analog of a result predicted by Maulik and Okounkov and proved by Negut
in the special case of affine type A quiver varieties.
This result, \cite[(3.21)]{Negut},  reduces the computation of a $K$-theoretic $R$-matrix $\mathbf{R}^{\pm}_{\mathbb{m}+r\theta, \mathbb{m}+(r+\varepsilon)\theta}$
to root quantum subalgebras.
\end{Rem}

\subsubsection{Example: cross-walling functors}
Let $X,\nu,\nu_0,\lambda$ be as in \ref{SSS_SSC_1param}. Assume, in addition,
that all components of $X^{\nu_0(\C^\times)}$ have the same dimension.
Let $d:=(\dim X-\dim X^{\nu_0(\C^\times)})/2$. Define a one-parameter subgroup
$\nu'$ as follows. Let $\nu_1$ be such that $N\nu_0+\nu_1$ lies in the same
open chamber as $\nu$. Then we set $\nu':=-N\nu_0+\nu_1$.

The following is a corollary  of Proposition \ref{Prop:CW_properties}
(compare with the proof of Theorem \ref{Thm:WC_part_Ringel}).
\begin{Prop}\label{Prop:CW_part_Ringel}
There is an abelian equivalence of the category $(\OCat_{\nu'}(\A_\lambda),\leqslant_{-\nu_0})$
with the Ringel dual of $(\OCat_\nu(\A_\lambda),\leqslant_{\nu_0})$ and is trivial on $K_0$
that intertwines the Ringel duality functor with $\CW_{\nu\leftarrow \nu'}^{-1}[-d]$.
\end{Prop}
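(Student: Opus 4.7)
The plan is to adapt the proof of Theorem \ref{Thm:WC_part_Ringel}, substituting cross-walling functors for wall-crossing functors and appealing to Proposition \ref{Prop:CW_properties} in place of the corresponding wall-crossing facts. By Proposition \ref{Prop:CW_properties}(3), $\CW_{-\nu\leftarrow \nu}[\dim X/2]$ is a Ringel duality functor for the highest weight structure on $\OCat_\nu(\A_\lambda)$, and the idea is to factor this Ringel duality across the intermediate chamber of $\nu'$, recognizing the first factor (after a shift) as the desired partial Ringel duality.

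First I check that the triple $(\nu,\nu',-\nu)$ is reduced in the sense of Proposition \ref{Prop:CW_properties}(2). With $\nu=N\nu_0+\nu_1$, $\nu'=-N\nu_0+\nu_1$, $-\nu=-N\nu_0-\nu_1$ and $N\gg 0$, any wall $\ker\kappa$ separating $\nu$ from $-\nu$ either satisfies $\kappa(\nu_0)\neq 0$ (so the signs of $\kappa(\nu)$ and $\kappa(\nu')$ differ and $\ker\kappa$ separates $\nu,\nu'$) or $\kappa(\nu_0)=0$ and $\kappa(\nu_1)\neq 0$ (so $\ker\kappa$ separates $\nu',-\nu$). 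Hence Proposition \ref{Prop:CW_properties}(2) gives $\CW_{-\nu\leftarrow \nu}\cong \CW_{-\nu\leftarrow \nu'}\circ \CW_{\nu'\leftarrow \nu}$. Next, since $\nu_0$ lies in the closure of both the $\nu$-chamber and the $\nu'$-chamber, Proposition \ref{Prop:CW_properties}(4) applies, giving
\[\CW_{\nu'\leftarrow \nu}\circ \Delta_{\nu_0}\cong \Delta_{\nu_0}\circ \underline{\CW}_{\nu'\leftarrow \nu}.\]
Because $\nu_0$ acts trivially on $X^{\nu_0(\C^\times)}$, both $\nu$ and $\nu'$ restrict to $\nu_1$ on every component $Z\subset X^{\nu_0(\C^\times)}$; hence $\OCat_\nu(\Ca_{\nu_0}(\A_\lambda))=\OCat_{\nu'}(\Ca_{\nu_0}(\A_\lambda))$ and the residual $\underline{\CW}_{\nu'\leftarrow \nu}$ is the identity. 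On the other hand, $\nu'$ and $-\nu$ restrict to $\nu_1$ and $-\nu_1$ respectively, so by Proposition \ref{Prop:CW_properties}(3) applied component-wise (using the equidimensionality hypothesis) $\underline{\CW}_{-\nu\leftarrow \nu'}[\dim X^{\nu_0(\C^\times)}/2]$ is a Ringel duality functor for the highest weight structure on $\OCat_{\nu_1}(\Ca_{\nu_0}(\A_\lambda))$.

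Putting the ingredients together as in the proof of Theorem \ref{Thm:WC_part_Ringel}, I argue that $\CW_{\nu'\leftarrow \nu}[-d]$ sends the standardly filtered object $\Delta_{\nu_0}(M)$ to the costandardly filtered object $\nabla_{-\nu_0}(\iota(M))$ for an abelian equivalence $\iota:\gr \OCat_\nu(\A_\lambda)\xrightarrow{\sim}\gr\OCat_{\nu'}(\A_\lambda)$ that is the identity on $K_0$. This identifies $\CW_{\nu'\leftarrow \nu}[-d]$ with the Ringel duality functor for the standardly stratified structure $(\OCat_\nu(\A_\lambda),\leqslant_{\nu_0})$ into its Ringel dual $(\OCat_{\nu'}(\A_\lambda),\leqslant_{-\nu_0})$. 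The main obstacle will be careful bookkeeping of the homological shifts: one must check that the shift $-d=(\dim X^{\nu_0(\C^\times)}-\dim X)/2$ is precisely the correction making $\underline{\CW}_{-\nu\leftarrow \nu'}[\dim X^{\nu_0(\C^\times)}/2]\circ \CW_{\nu'\leftarrow \nu}[-d]$ the Ringel shift $\dim X/2$ of $\CW_{-\nu\leftarrow \nu}$; the $K_0$-triviality of $\iota$ should then follow from a class-in-$K_0$ computation analogous to the one in the proof of Theorem \ref{Thm:WC_part_Ringel}, using \cite[Proposition 3.2]{Cher_supp} and the fact that $\underline{\CW}_{\nu'\leftarrow \nu}$ is the identity.
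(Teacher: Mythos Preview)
Your overall plan---factor the long cross-walling through $\nu'$, invoke Proposition~\ref{Prop:CW_properties}(3) for the full Ringel duality, and use (4) to split off the residual piece on $\Ca_{\nu_0}(\A_\lambda)$---is exactly what the paper intends. But there is a genuine gap in the way you apply Proposition~\ref{Prop:CW_properties}(4).

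You assert that $\nu_0$ lies in the closure of the $\nu'$-chamber. It does not: with $\nu'=-N\nu_0+\nu_1$ and $N\gg 0$, it is $-\nu_0$ that lies in a face of the chamber of $\nu'$. In particular the functor $\Delta_{\nu_0}$ does \emph{not} land in $\OCat_{\nu'}(\A_\lambda)$ (Proposition~\ref{Prop:cat_O}(6) requires $\nu_0$ to be in the face of the target chamber), so the intertwining $\CW_{\nu'\leftarrow \nu}\circ \Delta_{\nu_0}\cong \Delta_{\nu_0}\circ \underline{\CW}_{\nu'\leftarrow \nu}$ you write down does not even make sense as a statement about functors into $\OCat_{\nu'}(\A_\lambda)$. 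Consequently your claim that $\underline{\CW}_{\nu'\leftarrow\nu}$ is the identity, while true on the level of abelian categories $\OCat_{\nu_1}(\Ca_{\nu_0}(\A_\lambda))$, cannot be fed into (4) in the way you propose.

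The fix is to mirror the proof of Theorem~\ref{Thm:WC_part_Ringel} more literally. There the intertwining is established for the factor $\WC_{\lambda'\leftarrow \lambda^-}$ with the standardization $\Delta_{-\chi}$, using that $-\chi$ lies in the common face of the chambers of $\lambda'$ and $\lambda^-$. The cross-walling analog is to apply Proposition~\ref{Prop:CW_properties}(4) to $\CW_{-\nu\leftarrow\nu'}$ with the one-parameter subgroup $-\nu_0$: indeed $-\nu_0$ lies in a face of both the $\nu'$-chamber and the $(-\nu)$-chamber, so (4) gives
\[
\CW_{-\nu\leftarrow\nu'}\circ \Delta_{-\nu_0}\cong \Delta_{-\nu_0}\circ \underline{\CW}_{-\nu\leftarrow\nu'}.
\]
On $X^{\nu_0(\C^\times)}$ the subgroups $\nu',-\nu$ act through $\nu_1,-\nu_1$, so $\underline{\CW}_{-\nu\leftarrow\nu'}$ is the long cross-walling on the components and, by (3) together with the equidimensionality hypothesis, equals a Ringel duality up to the shift $[\dim X^{\nu_0(\C^\times)}/2]$. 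Now argue exactly as in Theorem~\ref{Thm:WC_part_Ringel}: the resulting $\underline{\CW}$ sends $\Delta_{\widehat{?}}(p)$ to $\nabla_{\widehat{?}}(p)$, which yields the equivalence $\iota$ trivial on $K_0$, and the shift bookkeeping $d=(\dim X-\dim X^{\nu_0(\C^\times)})/2$ then matches as you indicated.
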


\subsubsection{Prospective example: Webster's functors}
Let $\mathcal{V}_1,\ldots,\mathcal{V}_k$ be as in \ref{SSS_cat_tens_prod}. Fix a composition
$k=k_1+\ldots+k_r$. Let $w_0$ be the longest element of $S_k$ and $w_0'$
be the longest element of $S_{k_1}\times S_{k_2}\times\ldots\times
S_{k_r}$. Set $\sigma=w_0'w_0^{-1}$. Let $\mathcal{V}_1\otimes\ldots \otimes \mathcal{V}_k$ denote
the categorical tensor product of the categories $\mathcal{V}_1,\ldots,\mathcal{V}_k$.

For $s\in S_k$, Webster in \cite{Webster_tensor} defined right $t$-exact equivalences
$\mathcal{T}_{s}: D^b(\mathcal{V}_1\otimes\ldots\otimes \mathcal{V}_k)\xrightarrow{\sim}
D^b(\mathcal{V}_{s(1)}\otimes\ldots\otimes \mathcal{V}_{s(k)})$. He showed that these
equivalences  give rise to a braid group action. The category
$\mathcal{V}_{\sigma(1)}\otimes \ldots\otimes \mathcal{V}_{\sigma(k)}$
should be the  Ringel dual of $\mathcal{V}_1\otimes\ldots\otimes \mathcal{V}_k$
with respect to a suitable standardly stratified structure.
The functor $\mathcal{T}_{\sigma}^{-1}$ should be the Ringel duality functor.



\begin{thebibliography}{99}
\bibitem[ADL]{ADL} I. Agoston, V. Dlab, E. Lukasz. {\it Stratified algebras}.
C. R. Math. Acad. Sci. Soc. R. Can.  20  (1998),  no. 1, 22–28.
\bibitem[BaGi]{BarGin} V. Baranovsky, V. Ginzburg. In preparation.
\bibitem[BeKa]{BK}  R. Bezrukavnikov, D. Kaledin. {\it Fedosov quantization in the algebraic context}.
Moscow Math. J. 4 (2004), 559-592.
\bibitem[BL]{BL} R. Bezrukavnikov, I. Losev, {\it Etingof conjecture for quantized quiver varieties}. arXiv:1309.1716.
\bibitem[BoKr]{BoKr} W. Borho, H. Kraft. {\it \"{U}ber die Gelfand-Kirillov-Dimension}. Math. Ann. 220(1976), 1-24.
\bibitem[BLPW]{BLPW} T. Braden, A. Licata, N. Proudfoot, B. Webster, {\it Quantizations of conical symplectic resolutions II: category O and symplectic duality}. arXiv:1407.0964. To appear in Ast\'{e}risque.
\bibitem[BPW]{BPW} T. Braden, N. Proudfoot, B. Webster, {\it Quantizations of conical symplectic resolutions I:
local and global structure}. arXiv:1208.3863. To appear in Ast\'{e}risque.
\bibitem[CPS]{CPS}  E. Cline, B. Parshall, and L. Scott, {\it Stratifying endomorphism algebras}, Mem. Amer. Math. Soc. 124 (1996), no. 591, viii+119.
\bibitem[CB]{CB} W.~Crawley-Boevey,  {\it Geometry of the moment map for representations of quivers,} Comp. Math. {\bf 126} (2001), 257--293.
\bibitem[K1]{Kaledin} D. Kaledin. {\it Symplectic singularities from the Poisson point of
view}. J. Reine Angew. Math. 600(2006), 135-156.
\bibitem[K2]{Kaledin_survey}
D. Kaledin, {\it Geometry and topology of symplectic resolutions}. Algebraic geometry, Seattle 2005. Part 2, 595-628, Proc. Sympos. Pure Math., 80, Part 2, Amer. Math. Soc., Providence, RI, 2009.
\bibitem[L1]{HC} I. Losev. {\it Finite dimensional representations of
W-algebras}. Duke Math J. 159(2011), n.1, 99-143.
\bibitem[L2]{W_prim} I. Losev. {\it Primitive ideals in W-algebras of type A}. J. Algebra, 359 (2012), 80-88.
\bibitem[L3]{quant} I. Losev. {\it Isomorphisms of quantizations via quantization of resolutions}. Adv. Math. 231(2012), 1216-1270.
\bibitem[L4]{count_affine} I. Losev. {\it Etingof conjecture for quantized quiver varieties II: affine quivers}.
arXiv:1405.4998.
\bibitem[L5]{rouq_der} I. Losev. {\it Derived equivalences for Rational Cherednik algebras}. arXiv:1406.7502.
\bibitem[L6]{B_ineq} I. Losev. {\it Bernstein inequality and holonomic modules} (with a joint appendix
by I. Losev and P. Etingof). arXiv:1501.01260.
\bibitem[L7]{CWR} I. Losev. {\it On categories $\mathcal{O}$ for quantized symplectic resolutions}. arXiv:1502.00595.
\bibitem[L8]{cacti} I. Losev. {\it Cacti and cells}. arXiv:1506.04400.
\bibitem[L9]{Cher_supp} I. Losev. {\it Supports of simple modules in cyclotomic Cherednik categories O}.
arXiv:1509.00526.
\bibitem[LW]{LW} I. Losev, B. Webster, {\it On uniqueness of tensor products of irreducible categorifications}.
arXiv:1303.1336.  Selecta Math. 21(2015), N2, 345-377.
\bibitem[Nak]{Nakajima} H. Nakajima. {\it Instantons on ALE spaces, quiver varieties and Kac-Moody algebras}.
Duke Math. J. 76(1994), 365-416.
\bibitem[Nam]{Namikawa13} Y. Namikawa, {\it Poisson deformations and Mori dream spaces},
arXiv:1305.1698.
\bibitem[Ne]{Negut} A. Negut. {\it Quantum Algebras and Cyclic Quiver Varieties}.
arXiv:1504.06525.
\bibitem[R1]{Rouquier_ICM} R. Rouquier, {\it Derived equivalences and finite dimensional algebras}. Proceedings of
ICM 2006.
\bibitem[R2]{Rouquier_2Kac} R. Rouquier, {\it 2-Kac-Moody algebras}. arXiv:0812.5023.
\bibitem[S]{Sumihiro} H. Sumihiro, {\it Equivariant completion}.
J. Math. Kyoto Univ.  14  (1974), 1–28.
\bibitem[W]{Webster_tensor} B. Webster, {\it Knot invariants and higher representation theory}.
arXiv:1309.3796. To appear in Mem. Amer. Math. Soc.
\end{thebibliography}
\end{document}